\documentclass{amsart}

\usepackage{amssymb,latexsym,amscd,amsmath,mathtools,tikz-cd}
\usepackage{hyperref}
\numberwithin{equation}{section}

\newtheorem{theorem}{Theorem}[section]
\newtheorem{maintheorem}[theorem]{Main Theorem}
\newtheorem{proposition}[theorem]{Proposition}

\newtheorem{corollary}[theorem]{Corollary}

\newtheorem{lemma}[theorem]{Lemma}

\theoremstyle{definition}

\newtheorem{remark}[theorem]{Remark}
\newtheorem{example}[theorem]{Example}
\newtheorem{definition}[theorem]{Definition}

\def\ZZ{\mathbb{Z}}

\def\CC{\mathbb{C}}

\def\wnot{w_\mathrm{o}}

\def\bb{\mathfrak{b}}
\def\gg{\mathfrak{g}}
\def\nn{\mathfrak{n}}
\def\hh{\mathfrak{h}}
\def\nn{\mathfrak{n}}
\def\ii{\mathbf{i}}

\def\kk{\Bbbk}

\def\sl{\mathfrak{sl}}

\addtolength{\evensidemargin}{-1in}
\addtolength{\oddsidemargin}{-1in}
\addtolength{\textwidth}{4cm}
\addtolength{\textheight}{2cm}

\usepackage{mathrsfs}
\begin{document}


\medskip

\title
{Factorizable Module Algebras}

\author{Arkady Berenstein}
\address{\noindent Department of Mathematics, University of Oregon,
Eugene, OR 97403, USA} \email{arkadiy@uoregon.edu}

\author{Karl Schmidt}
\address{\noindent Department of Mathematics, University of Oregon,
Eugene, OR 97403, USA} \email{karls@uoregon.edu}

\begin{abstract} The aim of this paper is to introduce and study a large class of $\gg$-module algebras which we call \emph{factorizable} by generalizing the Gauss factorization of square or rectangular matrices. This class includes coordinate algebras of corresponding reductive groups $G$, their parabolic subgroups, basic affine spaces and many others. It turns out that products of factorizable algebras are also factorizable and it is easy to create a factorizable algebra out of virtually any $\gg$-module algebra. We also have quantum versions of all these constructions in the category of $U_q(\gg)$-module algebras. Quite surprisingly, our quantum factorizable algebras are naturally acted on by the quantized enveloping algebra $U_q(\gg^*)$ of the dual Lie bialgebra $\gg^*$ of $\gg$. 
\end{abstract}

\maketitle

\tableofcontents

\section{Introduction}
\label{sect:Introduction}

The aim of this paper is to introduce and study a  class of $\gg$-module algebras which we call \emph{factorizable} by generalizing the Gauss factorization of square or rectangular matrices. 

More precisely, let $\gg$ be a complex semisimple Lie algebra. A commutative $\CC$-algebra $A$ which is also a $\gg$-module is called a $\gg$-module algebra if $\gg$ acts on $A$ by derivations. Given such a commutative $\gg$-module algebra $A$, we say that $A$ is \emph{factorizable} over a $\gg$-equivariant subalgebra $A_0$ if the restriction of the multiplication map of $A$, $\mu:A^+\otimes A_0\to A$, is an isomorphism of vector spaces. Here $A^+$ stands for the subalgebra of highest weight vectors in $A$, i.e., the kernel of the action of the maximal nilpotent subalgebra $\nn_+\subset \gg$. See Section \ref{sect:classical} for the precise definitions.

Factorizable algebras abound in ``nature" with $A_0=\CC[U]$, where $U$ is the maximal unipotent subgroup of the corresponding Lie group $G$, which is a $\gg$-module algebra isomorphic to  the graded dual of the Verma module $M_0$, as a $\gg$-module. Our first result shows that the natural objects in the representation theory of $\gg$ are factorizable.

\begin{theorem}
\label{thm:G/U}
	Up to a localization, the algebra $A=\CC[G/U]^{\otimes n}$ is factorizable over $A_0=\CC[U]$ for any $n\ge 1$.
\end{theorem}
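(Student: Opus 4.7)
The plan is to handle the base case $n=1$ via the classical Gauss decomposition and then bootstrap to general $n$ using the geometric identification $(G/U)^n \cong G \times^U (G/U)^{n-1}$ combined with a Nakayama-type argument.

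For $n=1$, I would localize $\CC[G/U]$ at the product $\Delta = \prod_i v^*_{\omega_i}$ of fundamental highest weight vectors. Then $\CC[G/U][\Delta^{-1}]$ equals the coordinate ring of the open Bruhat cell $U_- H U / U \cong U_- \times H$, which factors as $\CC[U_-] \otimes \CC[H]$. The evaluation $v^*_\lambda(u_- h U) = \lambda(h)$ identifies the localized highest weight vectors $A^+[\Delta^{-1}]$ with $\CC[H]$, while the complementary tensor factor $\CC[U_-]$ is a $\gg$-equivariant subalgebra isomorphic to $A_0 = \CC[U] \cong M_0^{gr*}$ as a $\gg$-module algebra. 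This gives the factorization $\mu \colon A^+ \otimes A_0 \xrightarrow{\sim} A$ after localization.

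For $n \geq 2$, I would use the $G$-equivariant isomorphism
\[
(G/U)^n \xrightarrow{\sim} G \times^U (G/U)^{n-1},\qquad (g_1 U, \ldots, g_n U) \mapsto [g_1, (g_1^{-1} g_2 U, \ldots, g_1^{-1} g_n U)],
\]
under which the diagonal $\nn_+$-action becomes left multiplication on the first ($G$-)factor alone. Trivializing the resulting bundle over the big Bruhat cell of $G/U$ yields
\[
A_{\mathrm{loc}} \cong \CC[U_-] \otimes \CC[H] \otimes \CC[(G/U)^{n-1}],
\]
with $A_0 = \CC[U] \cong \CC[U_-]$ embedded via the first-factor Gauss component. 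To verify that the multiplication $A^+_{\mathrm{loc}} \otimes A_0 \to A_{\mathrm{loc}}$ is a bijection, I would restrict to the slice $\{u_- = 1\}$: the induced map $A^+_{\mathrm{loc}} \to A_{\mathrm{loc}}/\mathfrak{m} A_{\mathrm{loc}} \cong \CC[H] \otimes \CC[(G/U)^{n-1}]$ (where $\mathfrak{m}$ is the augmentation ideal at $1 \in U_-$) should be a vector space isomorphism, from which a Nakayama-type argument gives the full factorization $A_{\mathrm{loc}} = A^+_{\mathrm{loc}} \otimes A_0$.

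The main obstacle is showing that this restriction map is an isomorphism: surjectivity amounts to lifting every element of $\CC[H] \otimes \CC[(G/U)^{n-1}]$ to an $\nn_+$-invariant in $A_{\mathrm{loc}}$, and injectivity to showing no nonzero $\nn_+$-invariant is divisible by the ideal $\mathfrak{m}$ of the slice. Both reduce to the cyclicity property that $V(\mu_1) \otimes \cdots \otimes V(\mu_n)$ is generated as a $U(\nn_-)$-module by its highest weight vectors, combined with the Gauss decomposition of the first tensor factor.
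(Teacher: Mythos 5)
Your route is genuinely different from the paper's. The paper localizes one tensor factor at the multiplicative set of highest weight vectors, identifies the result with $\CC[G/U]^{\otimes(n-1)}\otimes\CC[B]$ (which contains $\CC[U]$ and is generated by its highest weight vectors as a $\gg$-module algebra), and then invokes Corollary \ref{cor:highest}; all the real work — in particular the injectivity of $\mu$ — is hidden in Theorem \ref{thm:factoring} and the $\ii$-adapted (dual canonical) basis of $\CC[U]$. You instead argue geometrically through $(G/U)^n\cong G\times^U(G/U)^{n-1}$ and the Gauss cell. Your $n=1$ step is correct and is essentially the standard picture the paper also uses (cf.\ Example \ref{ex:mat}). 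For $n\ge 2$ your setup is sound and, if pushed slightly further, yields a cleaner proof than you indicate: the trivialization of $G\times^U(G/U)^{n-1}$ over the open cell can be chosen $N_+$-equivariantly (send $[g,x]$ to $(gU,\ \sigma(gU)^{-1}gx)$ for the section $\sigma$ of $G\to G/U$ over the cell satisfying $\sigma(ngU)=n\sigma(gU)$), so that in the resulting product decomposition $N_+\times H\times(G/U)^{n-1}$ the $\nn_+$-action lives \emph{entirely} on the first factor, acting there freely by left translation. Then $A_{\mathrm{loc}}\cong\CC[U]\otimes\bigl(\CC[H]\otimes\CC[(G/U)^{n-1}]\bigr)$ with $A^+_{\mathrm{loc}}$ equal to the second tensor factor, and the factorization is the tautological isomorphism $\CC[X\times Y]\cong\CC[X]\otimes\CC[Y]$ — no slice restriction and no Nakayama needed. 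This bypasses the adapted-basis machinery entirely for this theorem (though the paper needs that machinery anyway for the quantum analogue, Theorem \ref{thm:qG/U}, where no underlying variety is available).

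As written, however, the proposal has two genuine gaps. First, the ``Nakayama-type argument'' does not apply: $A_{\mathrm{loc}}$ is not a finitely generated $\CC[U]$-module, and no grading with finite-dimensional or bounded-below pieces is available (the $H$-weight spaces of $\CC[G/U]^{\otimes n}$ are infinite-dimensional and the weights occurring in $A^+_{\mathrm{loc}}$ are unbounded below because of the inverted $v_\lambda$'s), so surjectivity of $\mu$ modulo $\mathfrak m$ only gives $A_{\mathrm{loc}}=\operatorname{im}\mu+\mathfrak m^kA_{\mathrm{loc}}$ for all $k$, i.e.\ $\mathfrak m$-adic density, not surjectivity. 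Second, the claim that the slice restriction $A^+_{\mathrm{loc}}\to\CC[H]\otimes\CC[(G/U)^{n-1}]$ is a bijection is essentially the whole theorem, and the stated reduction to ``cyclicity of $V(\mu_1)\otimes\cdots\otimes V(\mu_n)$ under $U(\nn_-)$'' is not an argument: cyclicity plausibly gives surjectivity of $\mu$ (this is in effect what Corollary \ref{cor:highest} extracts from it), but it says nothing about injectivity, i.e.\ about why no nonzero $\nn_+$-invariant lies in $\mathfrak m A_{\mathrm{loc}}$. Both gaps disappear once you make the trivialization $N_+$-equivariant as above; alternatively, injectivity can be imported from part (1) of Theorem \ref{thm:factoring}, which is exactly the role the adapted basis plays in the paper.
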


\begin{remark}
\label{rem:classical Gauss}
	If we replace $\CC[G/U]$ with $A_1=\CC[x_1,\ldots,x_m]$ and let $\gg=\sl_m(\CC)$, then after localization by leading principal minors, $A_1^{\otimes n}=\CC[Mat_{m,n}]$ ($n\ge m-1$) is factorizable over $\CC[U_m^-]$ where $U_m^-$ is the group of all lower unitriangular matrices in $SL_m(\CC)$. This recovers the Gauss factorization of $m\times n$ matrices. See Example \ref{ex:mat} for details in a particular case.
\end{remark}

\begin{remark} Our factorizations are rather different from the well-known ones such as $U(\gg)=U(\gg_-)\otimes \nolinebreak U(\gg_+)$ prescribed by the Poincar\'e-Birkhoff-Witt theorem for any decomposition of $\gg$ into a direct sum of Lie subalgebras $\gg_-$ and $\gg_+$ or the Kostant harmonic decompositions of $S(\gg)$ into the $\gg$-invariants and $\gg$-harmonic elements.

\end{remark}

It turns out that factorizability of module algebras is not difficult to establish and reproduce.

\begin{theorem}
\label{thm:U+}

For any complex semisimple Lie algebra $\gg$, we have:

	(a) (Theorems \ref{thm:b-} \& \ref{thm:g}) Let $A$ be a $\gg$-module algebra containing a $\gg$-module subalgebra isomorphic to $\CC[U]$ and let $A^+$ denote the subalgebra of all highest weight vectors in $A$.	Then the algebra $A'=A^+\otimes \CC[U]$ has a natural $\gg$-module algebra structure such that $A'$ is factorizable over $\CC[U]$.
	
	
	(b) The assignments $A\mapsto A'$ define a functor $R$ from the category $\mathcal{A}_\gg$ of $\gg$-module algebras containing a $\gg$-module subalgebra isomorphic to $\CC[U]$ to the category ${\mathcal C}_\gg$ of $\gg$-module algebras which are factorizable over $\CC[U]$, a full subcategory of $\mathcal{A}_\gg$.
	
\end{theorem}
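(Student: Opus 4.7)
The plan is to prove part (a) by first constructing the $\gg$-module algebra structure on $A' := A^+ \otimes \CC[U]$; once that is in place, factorizability essentially reads off. As a commutative algebra, $A'$ is the ordinary tensor product of the two commutative subalgebras $A^+$ and $\CC[U]$ of $A$. For the $\gg$-action, I take $\hh$ to act diagonally via the weight gradings inherited from $A^+$ and $\CC[U]$, while $\nn_+$ kills the $A^+$-factor and acts on $\CC[U]$ as originally given, extended by the Leibniz rule as $x \cdot (a \otimes u) = a \otimes (x \cdot u)$ for $x \in \nn_+$.

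The subtle step is the $\nn_-$-action, which is precisely what Theorems \ref{thm:b-} and \ref{thm:g} should supply (first as a $\bb_-$-module algebra structure, then extended to a full $\gg$-module algebra structure). A naive prescription $\nn_- \cdot A^+ = 0$ violates Jacobi: for $a \in A^+$ of $\hh$-weight $\lambda$, the Leibniz extension would give $f_i \cdot (a \otimes 1) = a \otimes (f_i \cdot 1)$ and hence $[e_i, f_i](a \otimes 1) = a \otimes (e_i f_i \cdot 1) = 0$ (since $e_i f_i \cdot 1 = h_i \cdot 1 = 0$ in $\CC[U]$), whereas the bracket must act as $\lambda(h_i)(a \otimes 1)$. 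Hence $f_i \cdot (a \otimes 1)$ must acquire additional summands of lower $\hh$-weight that recover the missing $\lambda(h_i)$ under $e_i$. The correct prescription is a coaction-like map $\delta : A^+ \to A^+ \otimes \CC[U]$, uniquely determined by requiring (i) $1 \otimes \CC[U]$ to be a $\gg$-submodule isomorphic to $\CC[U]$; (ii) $A^+ \otimes 1$ to carry the prescribed $\hh$-weights; and (iii) $\gg$ to act on $A'$ by derivations. Existence and uniqueness use the identification $\CC[U] \cong M_0^*$, which provides a canonical basis of weight vectors dual to a PBW basis of the enveloping algebra of $\nn_-$.

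Granted the construction, factorizability drops out: since $\nn_+$ acts on $A'$ only through the $\CC[U]$-factor and the highest-weight subspace of $\CC[U]$ is $\CC \cdot 1$, we obtain $(A')^+ = A^+ \otimes 1 \cong A^+$. The restricted multiplication map $(A')^+ \otimes \CC[U] \to A'$ then coincides with the identity on $A^+ \otimes \CC[U]$ and is trivially a linear isomorphism.

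For part (b), any morphism $\phi : A \to B$ in $\mathcal{A}_\gg$ is a unital $\gg$-equivariant algebra map, hence restricts to a map of highest-weight subalgebras $\phi^+ : A^+ \to B^+$ and automatically matches the distinguished copies of $\CC[U]$. Setting $R(\phi) := \phi^+ \otimes \mathrm{id}_{\CC[U]}$ yields an algebra map, and $\gg$-equivariance is inherited from the uniform construction of the $\gg$-structures on $A'$ and $B'$ out of $\hh$-weight data on $A^+, B^+$ and the fixed $\gg$-module algebra $\CC[U]$. Functoriality $R(\mathrm{id}) = \mathrm{id}$, $R(\phi \circ \psi) = R(\phi) \circ R(\psi)$ is immediate from the componentwise definition, and fullness of $\mathcal{C}_\gg \subset \mathcal{A}_\gg$ follows from the observation that both $A^+$ and the embedded $\CC[U]$ are intrinsic features of any object of $\mathcal{A}_\gg$. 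The main obstacle throughout is the precise construction and the Jacobi/derivation verification of the $\nn_-$-action in part (a).
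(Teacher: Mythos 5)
You correctly diagnose where the difficulty lies (the $\nn_-$-action on $A'=A^+\otimes\CC[U]$ cannot be the naive Leibniz extension of $f_i\cdot 1=0$, by the $[e_i,f_i]=h_i$ computation you give), and the factorizability and functoriality conclusions would indeed read off once the $\gg$-structure is in hand. But the core of the proof is missing. You assert that the $\nn_-$-action is \emph{uniquely determined} by the three conditions (i) $1\otimes\CC[U]$ is a $\gg$-submodule copy of $\CC[U]$, (ii) $A^+\otimes 1$ has the prescribed $\hh$-weights, and (iii) $\gg$ acts by derivations. This is false: by the paper's Theorem \ref{thm:g}, \emph{any} $\bb_-$-module algebra structure on $A^+$ compatible with the given $\hh$-action produces a $\gg$-module algebra structure on $A^+\otimes\CC[U]$ satisfying (i)--(iii) (e.g.\ the one with all $f_i$ acting by zero, as well as the intended one), so these conditions pin down neither existence nor the particular action the theorem requires. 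The action the paper actually uses is built from data you never invoke: the restriction to $A^+$ of the \emph{original} $f_i$-action on $A$ together with multiplication by the embedded $x_i$, via the formula $f_i\rhd a=f_i(a)-h_i(a)x_i$ (Theorem \ref{thm:b-}), followed by the explicit extension $f_i\rhd(a\otimes x)=f_i(a)\otimes x+h_i(a)\otimes x_ix+a\otimes f_i(x)$ (Theorem \ref{thm:g}). Your appeal to $\CC[U]\cong M_0^*$ and a PBW basis does not substitute for this: it neither produces the operators nor verifies that they satisfy the Serre relations, which the paper identifies as the genuinely hard step and proves by a nontrivial detour (the trivializing isomorphism of the smash product $\CC[U]\rtimes U(\bb_-)$, a quantum computation, and specialization at $q=1$ in Proposition \ref{prop:hat}).

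This gap propagates into part (b). You claim $\gg$-equivariance of $\phi^+\otimes\mathrm{id}$ is "inherited from the uniform construction of the $\gg$-structures out of $\hh$-weight data on $A^+,B^+$"; since the construction in fact depends on the original $f_i$-action and on multiplication by $\varphi_A(x_i)$ inside $A$, the correct argument is that $\phi$ intertwines these two ingredients (the latter because morphisms in $\mathcal{A}_\gg$ respect the distinguished embeddings of $\CC[U]$ --- this is part of the definition of morphism, not automatic as you suggest), hence $\phi^+$ is a morphism for the new $\bb_-$-structures and $\phi^+\otimes\mathrm{id}$ is $\gg$-equivariant. Your part (a) endgame ($( A')^+=A^+\otimes 1$ and $\mu$ being the identity on $A^+\otimes\CC[U]$) is fine once the structure exists.
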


We can think of $R$ as a ``remembering" functor because it is right adjoint to the ``forgetful" functor $F:{\mathcal C}_\gg\to \mathcal{A}_\gg$. Clearly, the composition $R\circ F$ is isomorphic to the identity functor on $\mathcal{C}_\gg$. Now $\mathcal{A}_\gg$ has a natural tensor multiplication such that  for $A,B\in \mathcal{A}_\gg$, the tensor product $A\otimes B$ is a $\gg$-module algebra, plus the embedding $\CC[U]=1\otimes \CC[U]\subset A\otimes B$.

\begin{proposition} (Proposition \ref{prop:tensor}) $\mathcal{C}_\gg$ is closed under the natural tensor multiplication on $\mathcal{A}_\gg$.

%
	
\end{proposition}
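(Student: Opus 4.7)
The plan is to combine the factorizations $A \cong A^+ \otimes \CC[U]$ and $B \cong B^+ \otimes \CC[U]$ inside $A \otimes B$ and reduce the proposition to the base case that $\CC[U] \otimes \CC[U]$ is factorizable over $1 \otimes \CC[U]$. The key preliminary observation is that the multiplication isomorphism $\mu_A : A^+ \otimes \CC[U] \to A$ is automatically $\nn_+$-equivariant: since $\nn_+$ acts on $A$ by derivations and $\nn_+ \cdot A^+ = 0$, the diagonal $\nn_+$-action on the source $A^+ \otimes \CC[U]$ is concentrated on the $\CC[U]$-factor, and $\mu_A$ intertwines it with the usual action on $A$. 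The analogous statement holds for $\mu_B$.

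Tensoring these maps, I obtain an $\nn_+$-equivariant vector space isomorphism
$$A^+ \otimes \CC[U] \otimes B^+ \otimes \CC[U] \xrightarrow{\sim} A \otimes B,$$
where $\nn_+$ acts on the source diagonally on the two $\CC[U]$-factors and trivially on $A^+ \otimes B^+$. Taking $\nn_+$-invariants (and rearranging tensor factors) yields
$$(A \otimes B)^+ \cong A^+ \otimes B^+ \otimes (\CC[U] \otimes \CC[U])^+,$$
and under this identification, multiplication by the distinguished subalgebra $1 \otimes \CC[U] \subset A \otimes B$ corresponds to multiplying into the last $\CC[U]$-factor. Thus the factorization map for $A \otimes B$ decomposes as $\mathrm{id}_{A^+ \otimes B^+}$ tensored with the factorization map for $\CC[U] \otimes \CC[U]$ over $1 \otimes \CC[U]$, reducing the claim to this single base case.

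For the base case, I view $\CC[U] \otimes \CC[U] = \CC[U \times U]$ with the diagonal $U$-action. The algebraic automorphism $\sigma : (u_1, u_2) \mapsto (u_1^{-1} u_2, u_2)$ of $U \times U$ carries this action to left translation on the second coordinate alone, so $\sigma^*$ identifies $(\CC[U] \otimes \CC[U])^+$ with $\CC[U] \otimes 1$ while preserving $1 \otimes \CC[U]$; the multiplication $(\CC[U] \otimes \CC[U])^+ \otimes (1 \otimes \CC[U]) \to \CC[U] \otimes \CC[U]$ then becomes the obvious tensor identity. The only step worth flagging as nontrivial is the $\nn_+$-equivariance of $\mu_A$ and $\mu_B$ in precisely this form; once it is in place, the collection of the two $\CC[U]$'s into a single diagonal invariant and the geometric base case on $U \times U$ are both essentially immediate.
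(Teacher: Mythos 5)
Your argument is correct, but it takes a genuinely different route from the paper's. The paper proves surjectivity of $\mu:(A\otimes B)^+\otimes(\varphi_A(\CC[U])\otimes\CC)\to A\otimes B$ by showing that the image of $\mu$ is a $\gg$-module subalgebra of $A\otimes B$ --- the key computation being $f_i(cx)=(f_i(c)-h_i(c)x_i)x+h_i(c)x_ix+cf_i(x)$ with $f_i(c)-h_i(c)x_i\in C^+$, i.e.\ the hidden $\bb_-$-action of Theorem \ref{thm:b-} --- and that this image contains a generating set ($A^+\otimes 1$, $1\otimes B^+$, $\varphi_A(x_i)\otimes1$, and $\varphi_A(x_i)\otimes1-1\otimes\varphi_B(x_i)$); injectivity comes from Theorem \ref{thm:factoring}(1). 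You instead use the algebra isomorphisms $A\cong A^+\otimes\CC[U]$ and $B\cong B^+\otimes\CC[U]$, which are $\nn_+$-equivariant by Theorem \ref{thm:factoring}, to peel off $A^+\otimes B^+$ as an $\nn_+$-trivial tensor factor and reduce everything to the single statement that $\CC[U]\otimes\CC[U]$ with the diagonal action is factorizable over $1\otimes\CC[U]$, which you settle by the change of variables $(u_1,u_2)\mapsto(u_1^{-1}u_2,u_2)$ on $U\times U$. The reduction is clean and correct and isolates exactly where the content lies. Two caveats: first, to verify membership in $\mathcal{C}_\gg$ as defined in Section 2 you should run Theorem \ref{thm:factoring}(2) backwards to recover adaptedness and $\nu_{\bf i}(A\otimes B\setminus\{0\})=\nu_{\bf i}(\CC[U]\setminus\{0\})$ from your isomorphism (immediate, but worth saying). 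Second, your base case silently uses two standard facts the paper never establishes: that the abstractly defined $\nn_+$-module algebra $\CC[U]$ is the coordinate ring of $U$ with the derivations $e_i$ integrating to the translation action of $U$ on itself, and that Lie-algebra invariants coincide with group invariants for this locally unipotent action. Both hold in characteristic $0$, but the paper's purely algebraic argument avoids them --- which is also why that argument transports essentially verbatim to the braided quantum setting of Proposition \ref{prop:qtensor}, where your rearrangement of commuting tensor factors and the automorphism of $U\times U$ have no direct analogue.
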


However, neither category is monoidal because each lacks a unit object. It is possible to fix the issue in both categories by tensoring over $\CC[U]$ rather than over $\CC$. See the discussion in Remark \ref{rmk:unitissue}, where we make this more concrete.

It turns out that we can build factorizable algebras over $\CC[U]$ out of $\bb_-$-module algebras, where $\bb_-$ is the lower Borel subalgebra of $\gg$. In fact, all factorizable algebras can be obtained this way.

\begin{maintheorem}(Theorems \ref{thm:b-} \& \ref{thm:equiv}) 
\label{thm:construction} For any semisimple Lie algebra $\gg$, the assignments $A\mapsto A^+$ defines a functor $P$ from $\mathcal{A}_\gg$
to the category 
$\bb_- -{\bf ModAlg}$ of $\bb_-$-module algebras. Moreover, the composition $P\circ F$ is an equivalence of categories ${\mathcal C}_\gg\widetilde \to \bb_- -{\bf ModAlg}$.
 
%
\end{maintheorem}

\begin{remark} 
\label{rem:forgetmenot}
Informally speaking, the theorem asserts that the ``forgetful" highest weight vector functor $A\mapsto A^+$ from 
$\mathcal{A}_\gg$ to $Alg_\CC$, in fact, ``remembers almost everything."

\end{remark}


The  functor $P$ from Theorem \ref{thm:construction} is highly nontrivial: it involves a quite mysterious $\bb_-$ action on $A$ such that $A^+$ is a $\bb_-$-invariant subalgebra 
(see Section \ref{sect:classical} for details). 
Namely, the action of the Cartan subalgebra  of $\bb_-$ is inherited from that of $\gg$, but the action of the Chevalley generators $f_i$ of $\nn_-\subset \bb_-$ is given by the formula 
\begin{equation}
\label{eq:fihat}
f_i\triangleright a = f_i(a)-h_i(a)x_i
\end{equation}
for $a\in A$, where $h_i$ is the $i$-th Cartan generator and $x_i$ is the $i$-th ``near-diagonal element" in $\CC[U]$ ($\subset A$).

It is not difficult to show that $f_i\triangleright A^+\subset A^+$, but it is much harder to prove that the operators $f_i\triangleright$ satisfy the Serre relations (Theorem \ref{thm:b-}).

It turns out that all of the above results, including the mysterious Serre relations, can be quantized as well. Namely, we replace $\gg$ with its quantized enveloping algebra $U_q(\gg)$ and proceed as follows. Given a $U_q(\gg)$-module algebra $A_0$, we say that a $U_q(\gg)$-module algebra $A$ is \emph{factorizable} over a $U_q(\gg)$-equivariant subalgbra $A_0$ if the restriction of the multiplication map of $A$, $\mu:A^+\otimes A_0\to A$, is an isomorphism of vector spaces. By quantizing our default choice of $A_0$ in the classical case, we will now focus on $A_0=\CC_q[U]$, the quantized coordinate algebra of $U$, which is isomorphic to $U_q(\nn_+)$. See Section \ref{sect:quantum} for the precise definitions.


\begin{theorem}
\label{thm:qG/U}
	Up to a localization, the braided $n$-fold tensor power of $\CC_q[G/U]$, $A=\CC_q[G/U]^{\underline{\otimes} n}$, is factorizable over $A_0=\CC_q[U]$ for any $n\ge 1$.
\end{theorem}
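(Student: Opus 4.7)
The plan is to mimic the proof of the classical Theorem \ref{thm:G/U} while carefully tracking the braiding structure on $U_q(\gg)$-ModAlg. I would proceed by induction on $n$.

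\emph{Base case $n=1$.} Using the quantum Peter-Weyl decomposition $\CC_q[G/U]=\bigoplus_{\lambda\in P^+}V_\lambda$ with multiplication given by the Cartan projections $V_\lambda\otimes V_\mu\to V_{\lambda+\mu}$, I would identify the highest weight subalgebra $(\CC_q[G/U])^+=\bigoplus_{\lambda\in P^+}\CC\cdot\Delta_\lambda$ as the monoid algebra $\CC[P^+]$ (via $\Delta_\lambda\cdot\Delta_\mu=\Delta_{\lambda+\mu}$). After inverting the fundamental highest weight vectors $\Delta_{\omega_1},\ldots,\Delta_{\omega_r}$ this promotes to the full group algebra $\CC[P]$. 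I would then show that the multiplication map $A^+\otimes\CC_q[U]\to\CC_q[G/U]$ is an isomorphism after this localization: surjectivity amounts to a quantum Gauss decomposition expressing any $v\in V_\lambda$ as $\Delta_\lambda\cdot u$ for some $u\in\CC_q[U]$, and injectivity follows from a weight-by-weight character count, using that $\CC_q[U]\cong U_q(\nn_+)$ has a known weight decomposition complementary to that of $A^+$.

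\emph{Inductive step.} For $n\geq 2$, write $\CC_q[G/U]^{\underline{\otimes}n}=B\,\underline{\otimes}\,\CC_q[G/U]$ with $B=\CC_q[G/U]^{\underline{\otimes}(n-1)}$, which by induction is factorizable over $\CC_q[U]$. I would then establish the following braided analog of Proposition \ref{prop:tensor}: if $B$ and $C$ are $U_q(\gg)$-module algebras factorizable over $\CC_q[U]$, then so is the braided tensor product $B\,\underline{\otimes}\,C$ with $\CC_q[U]$ embedded through the last factor. Combining this closure statement with the base case yields the theorem.

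\emph{Main obstacle.} The crux of the argument is the braided analog of Proposition \ref{prop:tensor}. In the braided product one has $(b_1\otimes c_1)(b_2\otimes c_2)=b_1\,(R^{(2)}\triangleright b_2)\otimes(R^{(1)}\triangleright c_1)\,c_2$, so the multiplication is twisted by the $R$-matrix of $U_q(\gg)$ and the individual factorizations $B\cong B^+\otimes\CC_q[U]$ and $C\cong C^+\otimes\CC_q[U]$ do not naively assemble into a factorization of $B\,\underline{\otimes}\,C$; one has to verify that the $R$-matrix action preserves the highest weight condition and the embedded copy of $\CC_q[U]$. A promising route is to leverage the quantum analog of Main Theorem \ref{thm:construction}: the functor $P\colon A\mapsto A^+$ should translate the braided tensor product on the factorizable side into an explicit tensor product in the category of $U_q(\bb_-)$-module algebras (with its own simpler braided twist coming from the $R$-matrix of $U_q(\bb_-)$), thereby reducing the induction to a transparent $U_q(\bb_-)$-equivariance check.
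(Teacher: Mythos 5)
Your route is genuinely different from the paper's. The paper does no induction on $n$ and never computes $(\CC_q[G/U])^+$ explicitly: it localizes only the \emph{last} tensor factor at the normal Ore set $\mathcal{S}=\{v_\lambda\}$, identifies the result with $\CC_q[G/U]^{\underline{\otimes}(n-1)}\underline{\otimes}\,\CC_q[B]$ where $\CC_q[B]=U_q(\bb_+)^*$, observes that this algebra is generated by its highest weight vectors (the images of the $v_\lambda$ in each factor remain highest weight in the braided product), and then invokes Corollary \ref{cor:qhighest}, which converts ``generated by $(A')^+$'' directly into factorizability. Your base case plus closure of factorizability under $\underline{\otimes}$ is essentially Proposition \ref{prop:qtensor} of the paper, so the ingredients you want do exist; but note that the paper proves Proposition \ref{prop:qtensor} by the same ``image of $\mu$ is a subalgebra containing a generating set'' device as Corollary \ref{cor:qhighest}, not by transporting the braided product through the equivalence $P_q$. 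Moreover, the quantum counterpart of $\bb_-$-{\bf ModAlg} is $U_q(\gg^*)$-{\bf ModAlg}, not $U_q(\bb_-)$-{\bf ModAlg}, and the induced fusion products on that category are not the naive tensor product, so the ``transparent equivariance check'' you hope to reduce to is not available in the form you describe.

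Two concrete gaps remain. First, localization: $\CC_q[G/U]$ is noncommutative, so inverting $\Delta_{\omega_1},\dots,\Delta_{\omega_r}$ (in \emph{every} tensor factor, as your induction requires) demands verifying the Ore condition; the paper handles this via normality of highest weight vectors and Lemmas \ref{lem:ore}--\ref{lem:braidore}, and your proposal is silent on it. Second, injectivity of $\mu$ in the base case cannot be settled by a naive weight-by-weight count: the weight spaces of the localized algebra $\bigoplus_{\lambda\in\Lambda}M_\lambda^\vee$ are infinite-dimensional, so you must first refine the grading by the $\Lambda$-decomposition (or, as the paper does, get injectivity for free from the $\ii$-adaptedness of the dual canonical basis of $\CC_q[U]$, Proposition \ref{prop:adapted}, via Theorem \ref{thm:qfactoring}(1)). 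With these two points repaired, your induction does go through.
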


\begin{remark} Similarly to the classical case (Remark \ref{rem:classical Gauss}), if we replace $\CC_q[G/U]$ with $A_1=\CC_q[x_1,\ldots,x_m]$, the algebra of $q$-polynomials and let $\gg=\sl_m(\CC)$, then the braided $n$-fold tensor power $A_1^{\underline{\otimes} n}=\CC_q[Mat_{m,n}]$ ($n\ge m-1$) is factorizable over $\CC_q[U_m]$, after localization by leading principal quantum minors. This recovers the Gauss factorization of quantum $m\times n$ matrices (see, e.g., \cite{dks}). See Example \ref{ex:qmat} for details in a particular case. 
\end{remark}

As in the classical case, factorizability of module algebras in the quantum case is also easy to establish and reproduce. 

\begin{theorem}
\label{thm:Uq+}

For any complex semisimple Lie algebra $\gg$, we have:

(a) (Theorems \ref{thm:qgd} \& \ref{thm:qg}) Let $A$ be a $U_q(\gg)$-module algebra containing a $U_q(\gg)$-module subalgebra isomorphic to $\CC_q[U]$ and let $A^+$ denote the subalgebra of all highest weight vectors in $A$. Then the vector space $A'=A^+\otimes \CC_q[U]$ has the structure of a $U_q(\gg)$-module algebra and is factorizable over $\CC_q[U]$.


(b) The assignments $A\mapsto A'$ define a functor $R_q$ from the category ${\mathcal{A}}_{\gg}^q$ of $U_q(\gg)$-module algebras containing a $U_q(\gg)$-module subalgebra isomorphic to $\CC_q[U]$ to the category ${\mathcal{C}}_{\gg}^q$ of $U_q(\gg)$-module algebras which are factorizable over $\CC_q[U]$, a full subcategory of ${\mathcal{A}}_{\gg}^q$.
\end{theorem}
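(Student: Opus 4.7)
The plan is to quantize the argument sketched for Theorem \ref{thm:U+}, carrying every step over from $\gg$ to $U_q(\gg)$ and inserting the necessary $K_i$-factors. In the classical case, the proof went through three stages: (i) define a secondary $\bb_-$-action on $A$ that preserves $A^+$; (ii) upgrade $A^+$ to a $\bb_-$-module algebra; (iii) put a $\gg$-module algebra structure on $A' = A^+ \otimes \CC[U]$ in which $\nn_+$ acts only on the second factor. I would follow the same road map, replacing $\CC[U]$ by its quantization $\CC_q[U] \cong U_q(\nn_+)$ and modeling the multiplication on $A'$ as a braided/smash product using the Hopf pairing between $U_q(\bb_-)$ and $U_q(\bb_+)$.

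The first concrete step is to define, for each simple root $\alpha_i$, a modified operator $F_i \triangleright$ on $A$ quantizing formula (1.1), of the form
\[
F_i \triangleright a \;=\; F_i(a) \;-\; c_i(K_i\text{-weight of } a)\, \cdot \, x_i ,
\]
where $x_i \in \CC_q[U] \subset A$ is the quantum near-diagonal generator and $c_i$ is an explicit $q$-number-valued scalar chosen so that the failure of $F_i \triangleright$ to commute with the $E_j$ is precisely matched by $[E_j, F_i] = \delta_{ij}(K_i - K_i^{-1})/(q_i - q_i^{-1})$ together with $E_j(x_i) = \delta_{ij}$. A short check then shows that $F_i \triangleright A^+ \subset A^+$. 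The main obstacle will be verifying that the operators $F_i \triangleright$, together with the $K_i$ inherited from $U_q(\gg)$, satisfy the defining relations of $U_q(\bb_-)$ on all of $A$, in particular the quantum Serre relations. This is the genuine quantum analogue of the ``mysterious'' Serre relations of Theorem \ref{thm:construction}; I would prove it by writing out each term of the Serre polynomial applied to $a \in A$, expanding using the $q$-Leibniz rule for $F_i$ and the braided commutation relations among the $x_i$ inside $\CC_q[U]$, and then identifying the cancellations with the Serre identity already holding for the genuine $F_i$-action on $A$.

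Once this is done, I would put the $U_q(\gg)$-module algebra structure on $A' = A^+ \otimes \CC_q[U]$ by declaring $U_q(\bb_-)$ to act diagonally (using the secondary action on $A^+$ and the natural action on $\CC_q[U]$), while the generators $E_i$ act only on the second tensor factor via the natural $U_q(\gg)$-action on $\CC_q[U] \cong U_q(\nn_+)$. The multiplication on $A'$ is the braided product with braiding determined by the Hopf pairing, which ensures it is $U_q(\gg)$-equivariant; checking this amounts to the Serre computation above together with standard smash-product identities. Factorizability is then automatic: one sees directly that $(A')^+ = A^+ \otimes 1$ and that the multiplication $(A')^+ \otimes \CC_q[U] \to A'$ is the identity after this identification.

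Part (b) is then formal. Any morphism $\varphi : A \to B$ in $\mathcal{A}_\gg^q$ is $U_q(\gg)$-equivariant and fixes the embedded $\CC_q[U]$, so it restricts to a map $\varphi^+: A^+ \to B^+$ that commutes with each $F_i \triangleright$ (the correction term $c_i x_i$ being intrinsic to the ambient datum). Setting $R_q(\varphi) = \varphi^+ \otimes \mathrm{id}$ gives a morphism $A' \to B'$ in $\mathcal{C}_\gg^q$, and functoriality is immediate. Finally, $\mathcal{C}_\gg^q \subset \mathcal{A}_\gg^q$ is a full subcategory by definition, since a factorizable algebra over $\CC_q[U]$ in particular contains $\CC_q[U]$ as a $U_q(\gg)$-equivariant subalgebra.
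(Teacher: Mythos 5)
Your road map imports the classical three-stage argument, but the quantization at stage (i) is wrong in a way that propagates through the whole proof. The paper's modified operator is
\[
F_{i,1}\triangleright a \;=\; F_i(a)-\frac{x_ia-K_i^{-1}(a)x_i}{q_i-q_i^{-1}},
\]
whose correction term is a genuine $q$-commutator involving \emph{left} multiplication by $x_i$. Since $A$ is noncommutative and $x_i$ need not $q$-commute with $a\in A^+$, this is not of your proposed form ``(weight-dependent scalar)$\,\cdot\, x_i$'' (equivalently, a scalar times $ax_i$); your candidate and the paper's operator genuinely differ, and the Serre-relation bookkeeping you describe would be carried out for the wrong operator. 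Moreover, the paper establishes the quantum Serre relations not by expanding the Serre polynomial termwise but by exhibiting $F_{i,1}\triangleright$ as the difference of two commuting copies of the Nichols algebra $U_q(\nn_-)=\mathcal{B}(V)$ acting on a cross product (Theorems \ref{thm:adjoint}, \ref{thm:diff}, \ref{thm:commute}); a direct expansion is not how the ``mysterious'' relations are verified.

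The second, more structural gap is that a single family of modified $F_i$'s cannot work: the target of the highest-weight functor in the quantum case is $U_q(\gg^*)$-{\bf ModAlg}, not $U_q(\bb_-)$-{\bf ModAlg}. One needs the second family
\[
F_{i,2}\triangleright a=\frac{x_iK_i^{-1}(a)-ax_i}{q_i-q_i^{-1}},
\]
which measures precisely the failure of $A^+$ and $\CC_q[U]$ to $q$-commute inside $A$. This operator is indispensable at stage (iii): the multiplication on $A'=A^+\otimes\CC_q[U]$ must satisfy
$(1\otimes x_i)(a\otimes 1)=K_i(a)\otimes x_i+(q_i-q_i^{-1})F_{i,2}K_i(a)\otimes 1$,
so your ``braided product with braiding determined by the Hopf pairing'' (which would produce only the $K_i(a)\otimes x_i$ term) defines the wrong algebra and fails to reconstruct $A$ when $A$ is factorizable. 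Likewise the action of $F_i$ on $A'$ is not diagonal: it is
$F_i\triangleright(a\otimes x)=(F_{i,1}(a)+F_{i,2}K_i(a))\otimes x+\frac{K_i(a)-K_i^{-1}(a)}{q_i-q_i^{-1}}\otimes x_ix+K_i^{-1}(a)\otimes F_i(x)$,
mixing both generators of $U_q(\gg^*)$ and a cross term in $x_i$ (Theorem \ref{thm:qg}, built from the Yetter--Drinfeld coaction $\delta(x_i)=K_i\otimes x_i+(q_i-q_i^{-1})F_{i,2}K_i\otimes 1$ via Propositions \ref{prop:ydCqU} and \ref{prop:ydtensor}). Part (b) of your argument is essentially fine once (a) is repaired, but as written the construction of $A'$ does not yield a $U_q(\gg)$-module algebra.
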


We can think of $R_q$ as a ``remembering" functor because it is right adjoint to the ``forgetful" functor $F_q:{\mathcal{C}}_{\gg}^q\to {\mathcal{A}}_{\gg}^q$. Clearly, the composition $R_q\circ F_q$ is the identity functor on $\mathcal{C}_\gg^q$. In order to tensor multiply objects of these categories, we need to ``trim" it a bit. Namely, we consider the full subcategory $\underline{\mathcal{A}}_\gg^q$ consisting of \emph{weight} module algebras in $\mathcal{A}_\gg^q$ satisfying some additional mild conditions (see Section \ref{sect:quantum}). It turns out that $\underline{\mathcal{A}}_\gg^q$ has a natural \emph{braided} tensor product which we denote by $\underline{\otimes}$ (see, e.g. \cite{majid} and Section \ref{sect:quantum} below). Similar to the classical case, for $A, B\in \underline{\mathcal{A}}_{\gg}^{q}$, $A\underline{\otimes}B$ is naturally in $\underline{\mathcal{A}}_\gg^q$ with an embedding $\CC_q[U]=1\underline{\otimes} \CC_q[U]\subset A\underline{\otimes}B$. As in the classical case, this natural multiplication of course lacks a unit object.

Since $\mathcal{C}_\gg^q$ is a full subcategory of $\mathcal{A}_\gg^q$, we can define $\underline{\mathcal{C}}_\gg^q$ as the intersection of $\mathcal{C}_\gg^q$ and $\underline{\mathcal{A}}_\gg^q$.

\begin{proposition} (Proposition \ref{prop:qtensor}) $\underline{\mathcal{C}}_\gg^q$ is closed under the braided tensor multiplication in $\underline{\mathcal{A}}_{\gg}^{q}$.

%
\end{proposition}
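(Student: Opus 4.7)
The plan is to verify directly that $A \underline{\otimes} B$ is factorizable over $\CC_q[U] = 1 \underline{\otimes} \CC_q[U]$ by showing that the multiplication map $\mu: (A \underline{\otimes} B)^+ \otimes \CC_q[U] \to A \underline{\otimes} B$ is an isomorphism of vector spaces. The approach mirrors the proof of Proposition \ref{prop:tensor} in the classical case while carefully tracking the braiding $\Psi$.

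First I would check that $A \underline{\otimes} B$ lies in $\underline{\mathcal{A}}_\gg^q$: the braided tensor product construction makes $A \underline{\otimes} B$ into a $U_q(\gg)$-module algebra, the inclusion $1 \underline{\otimes} \CC_q[U] \hookrightarrow A \underline{\otimes} B$ is automatic, and the weight conditions descend from those of $A$ and $B$ since weight spaces of $A \underline{\otimes} B$ decompose as sums of products of weight spaces of $A$ and $B$.

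For surjectivity of $\mu$, I would use factorizability of $B$ to write $A \underline{\otimes} B = (A \underline{\otimes} B^+) \cdot (1 \underline{\otimes} \CC_q[U])$, reducing the problem to showing $A \underline{\otimes} B^+ \subseteq (A \underline{\otimes} B)^+ \cdot (1 \underline{\otimes} \CC_q[U])$. Factorizability of $A$ then gives $a = \sum a_i^+ u_i$ with $a_i^+ \in A^+$ and $u_i \in \CC_q[U]$; moving each $u_i$ past $b^+$ using the braiding $\Psi$ in the braided product rewrites $a \underline{\otimes} b^+$ as $\sum_j c_j (1 \underline{\otimes} v_j)$ with $c_j \in (A \underline{\otimes} B)^+$ and $v_j \in \CC_q[U]$. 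For injectivity I would compare weight-graded Hilbert series on both sides of $\mu$, using that $\CC_q[U]$ has a PBW-type basis and the factorizabilities of $A$ and $B$ yield a clean free-module structure on each weight component.

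The hard part is the explicit production of highest weight vectors in $A \underline{\otimes} B$ from data $a \underline{\otimes} b^+$ with $a \notin A^+$. Classically for $\gg = \sl_2$, the prototype is $t \otimes 1 - 1 \otimes t \in \CC[U] \otimes \CC[U]$, which is highest weight even though neither summand is; the quantum analog requires the $R$-matrix and is essentially the inverse of the cocycle underlying the $\bb_-$-action (Equation \ref{eq:fihat} and its quantum version from Theorem \ref{thm:b-}). Once this correction procedure is in place, combined with the weight-graded dimension count, both surjectivity and injectivity of $\mu$ follow.
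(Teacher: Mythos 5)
Your surjectivity argument is essentially the paper's: the decisive input is that the corrected elements $\varphi_A(x_i)\otimes 1-1\otimes\varphi_B(x_i)$ lie in $(A\underline{\otimes}B)^+$ (your ``$t\otimes 1-1\otimes t$'' prototype --- note that no $R$-matrix is needed to see this, only the coproduct $\Delta(E_i)=E_i\otimes K_i+1\otimes E_i$), together with the fact that the image of $\mu$ is a subalgebra, which is exactly the mechanism of the proof of Corollary \ref{cor:qhighest} (namely $x_ic-K_i(c)x_i\in(A\underline{\otimes}B)^+$ whenever $c\in(A\underline{\otimes}B)^+$). The paper packages this as: the image of $\mu_L$ is a subalgebra containing $A^+\otimes 1$, $1\otimes B^+$, $\varphi_A(x_i)\otimes 1$, and $\varphi_A(x_i)\otimes 1-1\otimes\varphi_B(x_i)$, hence a generating set of $A\underline{\otimes}B$. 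Your reduction via $A\underline{\otimes}B=(A\underline{\otimes}B^+)(1\underline{\otimes}\CC_q[U])$ is a legitimate repackaging, but you should make the reordering step explicit: products of highest weight vectors with elements of $1\underline{\otimes}\CC_q[U]$ come out interleaved, and one needs the subalgebra property of the image to collect all the $\CC_q[U]$ factors on the right.

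The genuine gap is injectivity. Comparing weight-graded Hilbert series cannot work here: objects of $\underline{\mathcal{C}}_\gg^q$ are only assumed to be weight module algebras, so the weight spaces of $A^+$, $B^+$, and hence of $(A\underline{\otimes}B)^+\otimes\CC_q[U]$ may well be infinite-dimensional; and even when they are finite-dimensional, establishing that the two sides of $\mu$ have equal graded dimensions is not easier than the injectivity you are trying to prove. The paper instead gets injectivity for free from Theorem \ref{thm:qfactoring}(1): the copy $\varphi_A(\CC_q[U])\otimes\CC(q)$ (equivalently $1\otimes\varphi_B(\CC_q[U])$) is a $U_q(\bb_+)$-module subalgebra of $A\underline{\otimes}B$ carrying an $\ii$-adapted basis by Proposition \ref{prop:adapted}, and applying the extraction operators $E_{\bf i}^{({\bf j})}$ to a purported kernel element isolates its leading coefficient. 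You should replace the Hilbert-series step by this argument. Finally, note that membership in $\underline{\mathcal{C}}_\gg^q$ is defined by adaptedness of $A\underline{\otimes}B$ together with $\nu_{\bf i}(A\underline{\otimes}B\setminus\{0\})=\nu_{\bf i}(\CC_q[U]\setminus\{0\})$ for all $\ii\in R(\wnot)$, not directly by factorizability; once $\mu$ is an isomorphism these follow from Theorem \ref{thm:qfactoring}(2), so your endpoint does suffice, but this last translation should be stated.
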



That is, the category $\underline{\mathcal{C}}_\gg^q$ of factorizable $U_q(\gg)$-weight module algebras is ``almost" monoidal but it lacks a unit object as in the classical case.

Similar to the classical case (Theorem \ref{thm:construction}), we can build factorizable module algebras over $\CC_q[U]$ out of some module algebras. However, unlike the expected $U_q(\bb_-)$-module algebras, we will deal with $U_q(\gg^*)$-module algebras, where $\gg^*$ is the dual Lie bialgebra of $\gg$ and all factorizable algebras are obtained this way.

\begin{maintheorem}
\label{thm:constructionq}
(Theorems \ref{thm:qgd} \& \ref{thm:qequiv}) For any semisimple Lie algebra $\gg$, the assignments $A\mapsto A^+$ defines a functor $P_q$ from $\mathcal{A}_{\gg}^q$ to the category $U_q(\gg^*)-{\bf ModAlg}$ of $U_q(\gg^*)$-module algebras. Moreover, the composition $P_q\circ F_q$ is an equivalence of categories $\mathcal{C}_{\gg}^q\widetilde{\to}U_q(\gg^*)-{\bf ModAlg}$.
	
\end{maintheorem}

\begin{remark} 
\label{rem:forgetmenotq}
In the spirit of Remark \ref{rem:forgetmenot}, the theorem asserts that the assignment $A\mapsto A^+$ is the forgetful functor  which ``remembers almost everything."
\end{remark}

\begin{remark} We firmly believe that the emergence of the Lie bialgebra $\gg^*$ here is not a mere coincidence, but rather a manifestation of the ``semiclassical story" behind the quantum one. We plan to investigate it in a separate publication, when all relevant objects are Poisson algebras with a compatible action of the Poisson-Lie group $G$, where its Poisson-Lie dual $G^*$ emerges naturally.
\end{remark}


The functor $P_q$ from Theorem \ref{thm:constructionq} is highly nontrivial: it involves a quite mysterious $U_q(\gg^*)$ action on $A$ such that $A^+$ is a $U_q(\gg^*)$-invariant subalgebra (see Section \ref{sect:quantum} for details). Namely, the Cartan subalgebra action of $U_q(\gg^*)$ is inherited from that of $U_q(\gg)$, but the action of the generators $F_{i,1}$ and $F_{i,2}$ of $U_q(\gg^*)$ is given by the formulas
\begin{equation}
\label{eq:fihatq}
F_{i,1}\triangleright a = F_i(a)-\frac{x_ia-K_i^{-1}(a)x_i}{q_i-q_i^{-1}}, \quad F_{i,2}\triangleright a = \frac{x_iK_i^{-1}(a)-ax_i}{q_i-q_i^{-1}}
\end{equation}
for $a\in A^+$, where $K_i$ is the $i$-th Cartan generator of $U_q(\gg)$, $q_i=q^{d_i}$, and $x_i$ is the $i$-th generator of $\CC_q[U]\subset A$.

As in the classical case, the most non-trivial part of the proof of the main theorem is to show that  operators $F_{i,1}\rhd$ and $F_{i,2}\rhd$ given by \eqref{eq:fihatq} satisfy the quantum Serre relations, which we establish in Theorem \ref{thm:qgd}. Also, while not difficult, it is still surprising that in \eqref{eq:fihatq}, all operators $F_{i,1}\rhd$ commute with all  $F_{i,2}\rhd$.


\section{Definitions, Notation, and Results}
\label{sect:Definitions, Notation, and Results}

In this section, we will recall and introduce the relevant definitions and notation necessary to present our main results, which will also be included. We begin by defining the main object of study in this paper: module algebras.

\begin{definition}
	Let $\kk$ be an arbitrary field and $H$ a $\kk$-bialgebra. A $\kk$-algebra $A$ is called an \emph{$H$-module algebra} if it is an $H$-module, multiplication $(-)\cdot(-):A\otimes_\kk A\to A$ is a homomorphism of $H$-modules, and $h(1)=\epsilon(h)$ for $h\in H$, where $\epsilon$ is the counit of $H$. That is to say, if we denote in sumless Sweedler notation $\Delta(h)=h_{(1)}\otimes h_{(2)}$ for $h\in H$, then for $a,b\in A$, $h(a\cdot b)=h_{(1)}(a)\cdot h_{(2)}(b)$ and $h(1)=\epsilon(h)$. A homomorphism of $H$-module algebras is a homomorphism of $H$-modules and algebras. We will denote the category of $H$-module algebras by $H$-{\bf ModAlg}. If $\gg$ is a Lie algebra over $\kk$, then we will shorten ``$U(\gg)$-module algebra" and ``$U(\gg)-{\bf ModAlg}$" to ``$\gg$-module algebra" and ``$\gg$-{\bf ModAlg}", respectively.
\end{definition}

	Let $\gg$ be a semisimple complex Lie algebra, with $I\times I$ symmetrizable Cartan matrix $C=(c_{i,j})$ and fixed choice of symmetrizers, $(d_i)_{i\in I}$, for $C$, i.e. $d_ic_{i,j}=d_jc_{j,i}$ for $i,j\in I$. Then $\gg$ has a triangular decomposition $\gg=\nn_-\oplus\hh\oplus\nn_+$. Here $\hh$ is a Cartan subalgebra with $\dim \hh=|I|$ and its dual $\hh^*$ has basis $\{\alpha_i\ |\ i\in I\}$, the simple roots of the associated root system. Let $(\cdot,\cdot)$ be the symmetric bilinear form on $\hh^*$ satisfying $(\alpha_i,\alpha_j)=d_ic_{i,j}$. As usual, we set $\alpha_i^\vee:=\frac{2\alpha_i}{(\alpha_i,\alpha_i)}$. By $\Lambda=\{\lambda\in \hh^*\ |\ (\alpha_i^\vee,\lambda)\in \ZZ\ \forall i\in I\}$, we denote the set of integral weights. Denote by $\omega_i\in \Lambda$ is the $i$-th fundamental weight, which satisfies $(\alpha_i^\vee,\omega_j)=\delta_{i,j}$ for $i,j\in I$. Let $W$ be the Weyl group of $\gg$, generated by the simple reflections $\{s_i\ |\ i\in I\}$, and with longest element $\wnot$. Given $w\in W$, $R(w)$ is the set of all ${\bf i}=(i_1,i_2,\ldots,i_m)\in I^m$ such that $s_{i_1}s_{i_2}\cdots s_{i_m}$ is a reduced expression for $w$.

\subsection{Quantum Factorization}
\label{sect:quantum}
Throughout this section, all tensor products will be taken over $\CC(q)$ unless otherwise specified and written $-\otimes-$ rather than $-\otimes_{\CC(q)}-$.
	
	 Recall that $U_q(\gg)$ is the quantum enveloping algebra of $\gg$, generated by elements $\{K_i^{\pm 1},E_i,F_i\ |\ i\in I\}$ subject to the relations
	\begin{center}
		$\begin{array}{c  c}
			K_iK_j=K_jK_i; &K_iK_i^{-1}=K_i^{-1}K_i=1;\\\\
			K_iE_jK_i^{-1}=q_i^{c_{i,j}}E_j;& K_iF_jK_i^{-1}=q_i^{-c_{i,j}}F_j;
		\end{array}$
		\[E_iF_j-F_jE_i=\delta_{i,j}\frac{K_i-K_i^{-1}}{q_i-q_i^{-1}};\]
		\[\sum_{k=0}^{1-c_{i,j}}(-1)^k E_i^{(k)}E_jE_i^{(1-c_{i,j}-k)}=0\text{ if }i\ne j;\]
		\[\sum_{k=0}^{1-c_{i,j}}(-1)^k F_i^{(k)}F_jF_i^{(1-c_{i,j}-k)}=0\text{ if }i\ne j;\]
	\end{center}
where $y_i^{(n)}=\frac{1}{(n)_{q_i}!}y_i^n$, $(n)_{q_i}!=(1)_{q_i}(2)_{q_i}\cdots (n)_{q_i}$, $(m)_{q_i}=\dfrac{q_i^m-q_i^{-m}}{q_i-q_i^{-1}}$, and $q_i=q^{d_i}$.
$U_q(\gg)$ is a Hopf algebra with comultiplication $\Delta$, counit $\epsilon$, and antipode $S$ given on generators by
	
	\begin{center}
	$\begin{array}{c c c}
		\Delta(K_i^{\pm1})=K_i^{\pm1}\otimes K_i^{\pm1}; & \epsilon(K_i^{\pm1})=1 & S(K_i^{\pm1})=K_i^{\mp1};\\\\
		\Delta(E_i)=E_i\otimes K_i+1\otimes E_i; & \epsilon(E_i)=0; & S(E_i)=-E_iK_i^{-1};\\\\
		\Delta(F_i)=F_i\otimes 1+K_i^{-1}\otimes F_i; & \epsilon(F_i)=0; & S(F_i)=-K_iF_i.\\\\
	\end{array}$
	\end{center}

Now, $\mathcal{K}$ is the Hopf subalgebra of $U_q(\gg)$ generated by all $K_i$ as a $\CC(q)$-algebra, while $U_q(\bb_+)$ (respectively $U_q(\bb_-)$) is the Hopf subalgebra of $U_q(\gg)$ generated by all $K_i^{\pm1}$ and $E_i$ (respectively $K_i^{\pm 1}$ and $F_i$). We will assume henceforth that for any $i\in I$, the action of $E_i$ on any $U_q(\bb_+)$-modules is \emph{locally nilpotent}. In other words, if $M$ is a $U_q(\bb_+)$-module, we will assume that for each $x\in M$ and $i\in I$, there exists some $n\ge 0$ such that $E_i^n(x)=0$. Note that every $U_q(\gg)$-module is a $U_q(\bb_+)$-module, so we are assuming these are ``bounded above". We do not assume the same for the action of $F_i$. For a $U_q(\bb_+)$-module $M$, we designate $M^+:=\{m\in M\ |\ E_i(m)=0\ \forall i\in I\}$, the set of \emph{highest weight vectors}. If $A$ is a $U_q(\bb_+)$-module algebra, then $A^+$ is a $U_q(\bb_+)$-module subalgebra.

Suppose $M$ is a $U_q(\bb_+)$-module. For each $i\in I$ and $x\in M\setminus\{0\}$, set $\ell_i(x)=\max\{\ell\in\ZZ_{\ge0}\ |\ E_i^\ell(x)\ne 0\}$ and $E_i^{(top)}(x)=E_i^{(\ell_i(x))}(x)$. 
Given ${\bf i}\in I^m$ for some $m\ge 0$ and $x\in M\setminus\{0\}$, we also use the shorthand 
	\[E_{\bf i}^{(top)}(x)=E_{i_m}^{(top)}E_{i_{m-1}}^{(top)}\cdots E_{i_1}^{(top)}(x)\]
and define $\nu_{\bf i}:M\setminus \{0\}\to \ZZ_{\ge 0}^m$, $x\mapsto(a_1,a_2,\ldots,a_m)$ by the following:
	\[a_k=\ell_{i_k}(E_{i_{k-1}}^{(top)}E_{i_{k-2}}^{(top)}\cdots E_{i_1}^{(top)}(x)).\]
	
Lastly, for ${\bf j}=(j_1,\ldots,j_m)\in \ZZ_{\ge 0}^m$, we set $E_{\bf i}^{(\bf j)}:=E_{i_m}^{(j_m)}E_{i_{m-1}}^{(j_{m-1})}\cdots E_{i_1}^{(j_1)}.$

\begin{definition}
Let $A$ be a $U_q(\bb_+)$-module algebra, $w\in W$, and ${\bf i}\in R(w)$. If $E_{\bf i}^{ (top)}(x)\in A^+$ for all $x\in A\setminus\{0\}$, then we say $A$ is \emph{{\bf i}-adapted}. We say a basis $\mathcal{B}$ for $A$ is an \emph{{\bf i}-adapted basis} if
	\begin{enumerate}
		\item $E_{\bf i}^{(top)}(b)=1$ for all $b\in \mathcal{B}$.
		\item The restriction of $\nu_{\bf i}$ to $\mathcal{B}$ is an injective map $\mathcal{B}\hookrightarrow \ZZ_{\ge 0}^m$, where $m$ is the length of $w$.
	\end{enumerate}

If there exists any $w\in W$ and ${\bf i}\in R(w)$ so that $A$ is {\bf i}-adapted, then we say more generally that $A$ is \emph{adapted}. 
\end{definition}


\begin{remark}
	Our notion of an {\bf i}-adapted $U_q(\bb_+)$-module algebra is different than P. Caldero's notion of adapted algebra in \cite{caldero}, though they do have some examples in common. On the other hand, our notion of {\bf i}-adapted basis is stronger than the similar notion of an adapted basis for $(A,\nu_{\bf i})$ as in \cite{kavehmanon}.
\end{remark} 

It turns out that if $A_0$ possesses an ${\bf i}$-adapted basis for some ${\bf i}\in R(w)$ and is a ``large enough" $U_q(\bb_+)$-module subalgebra of $A$, then $A$ is factorizable over $A_0$. The following theorem makes this precise.

\begin{theorem}
\label{thm:qfactoring}
Let $A$ be a $U_q(\bb_+)$-module algebra. Suppose $A_0$ is a $U_q(\bb_+)$-module subalgebra of $A$ possessing an {\bf i}-adapted basis $\mathcal{B}$ for some reduced $\ii$. Then:
\begin{enumerate}
	\item The restriction $\mu:A^+\otimes A_0\to A$ of the multiplication in $A$ is an injective homomorphism of $U_q(\bb_+)$-modules.
	\item The map $\mu$ is an isomorphism if and only if $A$ is {\bf i}-adapted and $\nu_{\bf i}(A\setminus\{0\})=\nu_{\bf i}(A_0\setminus\{0\})$.
\end{enumerate}
\end{theorem}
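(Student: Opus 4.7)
The plan is to reduce everything to a single clean identity: for any $a\in A^+$ and $b\in A$,
\[
E_\ii^{(\jj)}(a\cdot b)=a\cdot E_\ii^{(\jj)}(b),\qquad \jj\in\ZZ_{\ge 0}^m.
\]
This follows by applying the quantum-binomial expansion of $\Delta(E_i^{(n)})$ to $a\otimes b$: every cross term involves $E_i^{(k)}(a)$ with $k\ge 1$ and thus vanishes, leaving $E_i^{(n)}(ab)=a\cdot E_i^{(n)}(b)$; iterating along $\ii=(i_1,\ldots,i_m)$ keeps the prefactor in $A^+$, so the collapse reapplies at each step. A short companion induction on $m$ from the definition of $\nu_\ii$ shows that for \emph{any} nonzero $y\in A$ and any $\jj >_{\mathrm{lex}}\nu_\ii(y)$ (lex order with first coordinate most significant) one has $E_\ii^{(\jj)}(y)=0$, while $E_\ii^{(\nu_\ii(y))}(y)=E_\ii^{(top)}(y)$. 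For $e\in\mathcal{B}$ the latter equals $1$ by $\ii$-adaptedness of the basis, and combined with injectivity of $\nu_\ii|_\mathcal{B}$ one also obtains $\nu_\ii(A_0\setminus\{0\})=\nu_\ii(\mathcal{B})$. That $\mu$ is a $U_q(\bb_+)$-module homomorphism is automatic, being the restriction of the multiplication on the module algebra $A$.

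For Part (1), write an element of $A^+\otimes A_0$ as $\sum_{k,\ell}c_{k,\ell}\,a_k\otimes e_\ell$ with $\{a_k\}\subset A^+$ linearly independent and $e_\ell\in\mathcal{B}$; assume it lies in $\ker\mu$ but is nonzero, and let $\jj_0$ be the lex-maximal $\nu_\ii(e_\ell)$ with some $c_{k,\ell}\ne 0$. Applying $E_\ii^{(\jj_0)}$ to $\sum_{k,\ell}c_{k,\ell}\,a_k e_\ell=0$ kills every term with $\nu_\ii(e_\ell) <_{\mathrm{lex}}\jj_0$, injectivity of $\nu_\ii|_\mathcal{B}$ isolates a unique $\ell_0$, and the central identity collapses the rest to $\sum_k c_{k,\ell_0}\,a_k=0$; linear independence of the $a_k$ then forces $c_{k,\ell_0}=0$ for all $k$---a contradiction.

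For Part (2) the ``$\Leftarrow$'' direction is an induction on $\nu_\ii(x)$ in the well-founded lex order: given $0\ne x\in A$ with $\jj=\nu_\ii(x)$, the hypothesis $\nu_\ii(A\setminus\{0\})=\nu_\ii(\mathcal{B})$ provides a unique $e\in\mathcal{B}$ with $\nu_\ii(e)=\jj$; set $a=E_\ii^{(top)}(x)\in A^+$ using $\ii$-adaptedness of $A$. The central identity and the lex-order lemma yield $E_\ii^{(\jj')}(x-ae)=0$ for all $\jj'\ge_{\mathrm{lex}}\jj$, so either $x=ae\in\mathrm{Im}(\mu)$ or $\nu_\ii(x-ae) <_{\mathrm{lex}}\jj$, in which case $x-ae\in\mathrm{Im}(\mu)$ by induction and hence $x\in\mathrm{Im}(\mu)$; the base case $\jj=(0,\ldots,0)$ forces $x=E_\ii^{(top)}(x)\in A^+$ so that $x=x\cdot 1\in\mathrm{Im}(\mu)$. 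The ``$\Rightarrow$'' direction uses the same setup in reverse: expand any $0\ne x\in A$ as $\sum_{k,\ell}c_{k,\ell}\,a_k e_\ell$ via surjectivity, with $a_k\in A^+$ linearly independent and $e_\ell\in\mathcal{B}$; if $\jj_0$ is the lex-maximal $\nu_\ii(e_\ell)$ appearing, the central identity gives $E_\ii^{(\jj_0)}(x)=\sum_k c_{k,\ell_0}\,a_k\in A^+\setminus\{0\}$ and $E_\ii^{(\jj')}(x)=0$ for $\jj' >_{\mathrm{lex}}\jj_0$, so $\nu_\ii(x)=\jj_0\in\nu_\ii(\mathcal{B})$ and $E_\ii^{(top)}(x)\in A^+$.

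The main technical obstacle is the lex-order lemma underlying both directions: each $E_{i_k}^{(top)}$ is nonlinear with exponent depending on the input, and after one step $E_{i_1}^{(j_1)}(y)$ is only an $E_{i_1}$-highest weight vector, not a full highest weight vector, so the recursion must be driven by the precise inductive data recorded in $\nu_\ii$ rather than any naive ``highest weight'' vanishing.
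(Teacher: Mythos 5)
Your proof is correct and follows essentially the same route as the paper's: applying the divided powers $E_{\bf i}^{({\bf j}_0)}$ for the lex-maximal value ${\bf j}_0$ of $\nu_{\bf i}$ to isolate coefficients (giving injectivity and the ``only if'' direction), and inducting on $\nu_{\bf i}$ in the well-ordered lexicographic order for surjectivity. Your ``central identity'' $E_{\bf i}^{({\bf j})}(ab)=a\,E_{\bf i}^{({\bf j})}(b)$ for $a\in A^+$ is exactly what the paper uses implicitly when it writes $E_{\bf i}^{({\bf j}_n)}(\mu(a))=\mu(E_{\bf i}^{({\bf j}_n)}(a))=a_n$; you have merely made the coproduct computation explicit.
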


We will prove Theorem \ref{thm:qfactoring} in Section \ref{pf:qfactoring}. Theorem \ref{thm:qfactoring} demonstrates a close relationship between being {\bf i}-adapted and being factorizable over a $U_q(\bb_+)$-module subalgebra possessing an {\bf i}-adapted basis. The following theorem explores this relationship from a different angle.

\begin{theorem}
\label{thm:qclassify}
	Let $A$ be an {\bf i}-adapted $U_q(\bb_+)$-module algebra for some reduced $\ii$ and suppose $A_0$ is a $U_q(\bb_+)$-module subalgebra of $A$. Then $\mu:A^+\otimes A_0\to A$ as in Theorem \ref{thm:qfactoring} is an isomorphism of $U_q(\bb_+)$-modules if and only if $A_0$ possesses an {\bf i}-adapted basis and $\nu_{\bf i}(A_0\setminus\{0\})=\nu_{\bf i}(A\setminus\{0\})$.
\end{theorem}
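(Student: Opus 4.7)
My plan is to obtain the backward implication immediately from Theorem~\ref{thm:qfactoring}(2) --- given the hypothesis that $A$ is $\ii$-adapted, once $A_0$ possesses an $\ii$-adapted basis and $\nu_\ii(A_0\setminus\{0\})=\nu_\ii(A\setminus\{0\})$, the conclusion of (2) delivers $\mu$ as an isomorphism --- and to put all the effort into the forward direction. Assuming $\mu$ is an isomorphism with $A$ $\ii$-adapted, I would establish, in order: (1) $A_0^+:=A_0\cap A^+=\CC(q)\cdot 1$; (2) $\nu_\ii(A\setminus\{0\})=\nu_\ii(A_0\setminus\{0\})$; (3) the existence of an $\ii$-adapted basis of $A_0$.

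For (1), any $c\in A_0\cap A^+$ gives $\mu(1\otimes c)=c=\mu(c\otimes 1)$, where the first tensor lies in $A^+\otimes A_0$ with $1\in A^+$, $c\in A_0$, and the second with $c\in A^+$, $1\in A_0$. Injectivity of $\mu$ forces $1\otimes c=c\otimes 1$ in $A^+\otimes A_0$, which is impossible unless $c\in\CC(q)\cdot 1$ (otherwise one extends $\{1,c\}$ to bases of $A^+$ and of $A_0$, and the two tensors become distinct basis vectors of $A^+\otimes A_0$). For (2), the inclusion $\nu_\ii(A_0\setminus\{0\})\subseteq\nu_\ii(A\setminus\{0\})$ is immediate; conversely, writing $a\in A\setminus\{0\}$ uniquely as $a=\sum_\alpha a_\alpha^+ b_\alpha$ via $\mu^{-1}$ with $(a_\alpha^+)$ linearly independent in $A^+$ and $b_\alpha\in A_0$, the identity $\Delta(E_i)=E_i\otimes K_i+1\otimes E_i$ combined with $E_i(a_\alpha^+)=0$ yields $E_i^{(n)}(a_\alpha^+ b_\alpha)=a_\alpha^+\,E_i^{(n)}(b_\alpha)$. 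Injectivity of $\mu$ together with linear independence of the $a_\alpha^+$ then forces $\ell_i(a)=\max_\alpha \ell_i(b_\alpha)$, and iterating through the word $\ii$ produces $\nu_\ii(a)=\nu_\ii(b_\alpha)$ for at least one $\alpha$.

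Step (3) is the crux. Combining (1) with $\ii$-adaptedness of $A$ and stability of $A_0$ under $U_q(\bb_+)$, every nonzero $b\in A_0$ satisfies $E_\ii^{(top)}(b)\in A_0\cap A^+=\CC(q)\cdot 1$, a nonzero scalar; hence for each $\jj\in\nu_\ii(A_0\setminus\{0\})$ I can rescale to obtain $b_\jj\in A_0$ with $\nu_\ii(b_\jj)=\jj$ and $E_\ii^{(top)}(b_\jj)=1$. Linear independence of $\{b_\jj\}$ follows from a leading-term maneuver: in a relation $\sum_{\jj\in S}c_\jj b_\jj=0$, setting $\jj^*:=\max_{\mathrm{lex}} S$ and applying $E_\ii^{(\jj^*)}$ annihilates every $b_\jj$ with $\jj<_{\mathrm{lex}}\jj^*$ (at the first index $k$ where $\jj_k<\jj^*_k$, the corresponding $E_{i_k}^{(\jj^*_k)}$ vanishes at that stage of the computation) and returns $c_{\jj^*}\cdot 1$, forcing $c_{\jj^*}=0$. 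Spanning is by descending induction on the well-founded lex order on $\ZZ_{\ge 0}^m$: if $b\in A_0\setminus\{0\}$ has $\nu_\ii(b)=\jj$ and $E_\ii^{(top)}(b)=c\cdot 1$, then $E_\ii^{(\jj)}(b-c\,b_\jj)=0$, and a coordinate-by-coordinate use of $\ell_i(x+y)\le\max(\ell_i(x),\ell_i(y))$ shows that $b-c\,b_\jj$ is either zero or has $\nu_\ii<_{\mathrm{lex}}\jj$ (if it matched $\jj$ in every coordinate then $E_\ii^{(top)}(b-c\,b_\jj)$ would equal $c-c=0$, contradicting maximality of top). The principal obstacle is precisely this leading-term bookkeeping for the nonlinear operation $E_\ii^{(top)}$ --- once (1) is in hand, applying $E_\ii^{(\jj^*)}$ at the lex-maximal index is the unifying trick that drives both the linear independence and the inductive reduction.
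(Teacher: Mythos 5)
Your proposal is correct and follows essentially the same route as the paper's proof: the backward direction is quoted from Theorem~\ref{thm:qfactoring}(2), and the forward direction proceeds by showing $(A_0)^+=\CC(q)$ from injectivity, transferring $\ii$-adaptedness to $A_0$, constructing the normalized representatives $b_{\bf j}$, and establishing independence and spanning by the same $E_{\bf i}^{({\bf j}^*)}$ leading-term argument and lexicographic induction. The only difference is cosmetic (you verify $\nu_{\bf i}(A\setminus\{0\})=\nu_{\bf i}(A_0\setminus\{0\})$ before building the basis, the paper does it after), and you actually supply slightly more detail than the paper at the step $(A_0)^+=\CC(q)$.
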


Theorem \ref{thm:qclassify} is proved in Section \ref{pf:qclassify}. We now restrict our focus to a specific $U_q(\gg)$-module algebra, namely $\CC_q[U]$. As a $\CC(q)$-algebra, $\CC_q[U]$ is generated by the set $\{x_i\ |\ i\in I\}$, subject to the quantum Serre relations:
	\[\sum_{k=0}^{1-c_{i,j}}(-1)^k x_i^{(k)}x_jx_i^{(1-c_{i,j}-k)}=0\text{ if }i\ne j.\]
The $U_q(\gg)$-module structure on $\CC_q[U]$ is summarized in the following equations:
\begin{align*}
	K_i^{\pm1}(x_j)&=q_i^{\mp c_{i,j}}x_j \quad \text{for all } i,j\in I\\
	E_i(x_j)&=\delta_{i,j}\quad \text{for all } i,j\in I\\
	F_i(x)&=\frac{x_ix-K_i^{-1}(x)x_i}{q_i-q_i^{-1}}\quad \text{for all } i\in I \text{ and } x\in \CC_q[U].
\end{align*}

Of course, the actions of $K_i^{\pm1}$ and $E_i$ must be extended to all of $\CC_q[U]$ by the rules
	\begin{align*}
		K_i^{\pm1}(xx')&=K_i^{\pm1}(x)K_i^{\pm1}(x')\quad \text{for all } i\in I \text{ and } x,x'\in \CC_q[U]\\
		E_i(xx')&=E_i(x)K_i(x')+xE_i(x')\quad \text{for all } i\in I,\text{ and } x,x'\in \CC_q[U].
	\end{align*}
	
The first author and A. Zelevinsky observed in \cite[Proposition 3.5]{berensteinzelevinsky} that $\CC_q[U]$ possesses a basis $\mathcal{B}^{dual}$ such that, for ${\bf i}\in R(\wnot)$, the restriction of $\nu_{\bf i}$ to $\mathcal{B}^{dual}$ is injective. Note that they use the notation $\mathcal{A}$ in place of $\CC_q[U]$ and view it only as a $U_q(\nn_+)$-module. As hinted by the notation, $\mathcal{B}^{dual}$ is the so-called \emph{dual canonical basis}. In Section \ref{pf:adapted}, we prove the following proposition.

\begin{proposition}
\label{prop:adapted}
	Given any ${\bf i}\in R(\wnot)$, $\mathcal{B}^{dual}$ is an ${\bf i}$-adapted basis for $\CC_q[U]$.
\end{proposition}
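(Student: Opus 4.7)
Condition $(2)$ of the definition of an $\mathbf{i}$-adapted basis holds for $\mathcal{B}^{dual}$ by the Berenstein--Zelevinsky result cited immediately above the proposition (\cite{berensteinzelevinsky}, Proposition 3.5): for any $\mathbf{i}\in R(\wnot)$, the restriction of $\nu_{\mathbf{i}}$ to $\mathcal{B}^{dual}$ is injective. So the task reduces to verifying condition $(1)$: $E_{\mathbf{i}}^{(top)}(b)=1$ for every $b\in \mathcal{B}^{dual}$.

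The plan is to combine two facts. The first is the crystal-basis property of $\mathcal{B}^{dual}$ as Kashiwara's upper global basis: for each $i\in I$ and each $b\in \mathcal{B}^{dual}$, the divided power $E_i^{(\ell_i(b))}(b)=E_i^{(top)}(b)$ is again an element of $\mathcal{B}^{dual}$ (and not merely a scalar multiple of one, provided one uses the canonical normalization of the upper global basis). Iterating this statement along the reduced word $\mathbf{i}=(i_1,\ldots,i_m)$ yields $E_{\mathbf{i}}^{(top)}(b)\in \mathcal{B}^{dual}$.

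The second fact is a weight computation. Writing $\nu_{\mathbf{i}}(b)=(a_1,\ldots,a_m)$, each $E_{i_k}^{(a_k)}$ shifts weight by $a_k\alpha_{i_k}$, so
\[\mathrm{wt}\bigl(E_{\mathbf{i}}^{(top)}(b)\bigr)=\mathrm{wt}(b)+\sum_{k=1}^{m} a_k\alpha_{i_k}.\]
For $\mathbf{i}\in R(\wnot)$, the standard weight-exhaustion property of the string parametrization at the longest element asserts that $-\mathrm{wt}(b)=\sum_k a_k\alpha_{i_k}$, so $E_{\mathbf{i}}^{(top)}(b)$ has weight $0$. The only weight-$0$ element of $\mathcal{B}^{dual}$ is $1$, hence $E_{\mathbf{i}}^{(top)}(b)=1$, establishing condition $(1)$.

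The main obstacle is justifying the first fact with the precise normalization---that $E_{i_k}^{(top)}$ sends a dual canonical basis element to another dual canonical basis element exactly, not merely up to a $q$-power. This is not formal from the underlying crystal combinatorics and relies on Kashiwara's existence and uniqueness theorem for the upper global basis; once granted, the weight argument forces the normalizing scalar to be exactly $1$. Alternatively, one can bypass invoking upper global bases by arguing via the $\mathbf{i}$-PBW decomposition used in \cite{berensteinzelevinsky}: the element $b\in \mathcal{B}^{dual}$ with $\nu_{\mathbf{i}}(b)=(a_1,\ldots,a_m)$ has leading $\mathbf{i}$-PBW term normalized so that $E_{\mathbf{i}}^{(top)}$ peels off the successive divided powers and terminates at $1$.
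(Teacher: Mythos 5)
Your proposal is correct, but it reaches condition (1) by a genuinely different route from the paper. Both arguments rest on the same two borrowed facts: the injectivity of $\nu_{\bf i}|_{\mathcal{B}^{dual}}$ from \cite{berensteinzelevinsky}, and the exact (not merely up-to-$q$-power) statement that $E_j^{(top)}$ maps $\mathcal{B}^{dual}$ into $\mathcal{B}^{dual}$ --- the paper also simply quotes the latter as well known, so your worry about normalization applies equally to both proofs and is legitimately discharged by Kashiwara's global basis theory. Where you diverge is in how you force $E_{\bf i}^{(top)}(b)$ down to $1$: you invoke the weight-exhaustion property of the string parametrization at $\wnot$ (equivalently, that $\tilde e_{i_m}^{\max}\cdots\tilde e_{i_1}^{\max}$ sends every element of $B(\infty)$ to the highest weight element), so that $E_{\bf i}^{(top)}(b)$ lands in the one-dimensional weight-zero component and must equal $1$. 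The paper instead proves Lemma \ref{lem:ewtop}, that $E_{\bf i}^{(top)}=E_{\bf i'}^{(top)}$ on $\mathcal{B}^{dual}$ for any two reduced words of the same $w$ (via the factorization $\CC_q[U]=(\CC_q[U]_{>w})(\CC_q[U]_{\le w})$ and Kimura's multiplicative decomposition $b'b''=b+\xi$), and then observes that every $j\in I$ terminates some reduced word of $\wnot$, whence $E_{\wnot}^{(top)}(b)\in(\CC_q[U])^+=\CC(q)$ and membership in $\mathcal{B}^{dual}$ forces it to be $1$. Your argument is shorter and avoids the quantum Schubert cell machinery, at the cost of importing the crystal-theoretic exhaustion fact (which is a standard companion to the BZ injectivity statement, so this is a fair trade); the paper's detour buys the reduced-word independence of $E_w^{(top)}$ for arbitrary $w$, an intermediate result it wants for its own sake.
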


\begin{remark}
Based on the recent paper \cite{kimuraoya}, we expect that the dual canonical basis $\mathcal{B}^{dual}\cap U_q(w)$ in each quantum Schubert cell $U_q(w)$  is $\ii$-adapted for any reduced word $\ii$ for $w$.
\end{remark}

Combining Proposition \ref{prop:adapted} with Theorem \ref{thm:qfactoring}, we are led to the following corollary, though it does still require some proof.

\begin{corollary}
\label{cor:qhighest}
	Let $A$ be a $U_q(\gg)$-module algebra containing $\CC_q[U]$ as a $U_q(\gg)$-module subalgebra. If there exists a $U_q(\gg)$-module algebra $A'$ containing $A$ as a $U_q(\gg)$-module subalgebra, such that $A'$ is generated by $(A')^+$ as a $U_q(\gg)$-module algebra, then $\mu:A^+\otimes \CC_q[U]\to A$ as in Theorem \ref{thm:qfactoring} is an isomorphism of $U_q(\bb_+)$-modules.
\end{corollary}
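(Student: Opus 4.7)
The plan is to apply Theorem~\ref{thm:qfactoring} with $A_0 = \CC_q[U]$ and any fixed reduced word $\ii \in R(\wnot)$. By Proposition~\ref{prop:adapted}, $\CC_q[U]$ carries the $\ii$-adapted basis $\mathcal{B}^{dual}$, so the hypotheses of Theorem~\ref{thm:qfactoring} are met and part (1) immediately furnishes injectivity of $\mu$. It remains to verify the two conditions of part (2): (a) $A$ itself is $\ii$-adapted, and (b) $\nu_\ii(A\setminus\{0\}) = \nu_\ii(\CC_q[U]\setminus\{0\})$.

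For (a), I would first establish the stronger claim that $A'$ is $\ii$-adapted and then restrict. Since $A'$ is generated by $(A')^+$ as a $U_q(\gg)$-module algebra and $(A')^+$ is already a subalgebra of $A'$, the triangular decomposition of $U_q(\gg)$ together with the fact that $E_i$'s annihilate $(A')^+$ and $\mathcal{K}$ preserves it yields $A' = U_q(\nn_-)\cdot (A')^+$. Consequently every weight vector $x\in A'$ lies in a $U_q(\gg)$-submodule generated by finitely many highest weight vectors $y_j\in (A')^+$, and each such submodule is a quotient of a direct sum of Verma modules. In any Verma module $M(\lambda)$, the iterated top operator $E_\ii^{(top)}$ for $\ii\in R(\wnot)$ sends every nonzero vector into the one-dimensional highest-weight line $\CC v_\lambda$; this is the structural heart of the argument and is proved by induction on weight, using the identification $M(\lambda)\cong U_q(\nn_-)$ and the string parametrization underlying the proof of Proposition~\ref{prop:adapted}. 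Hence $E_\ii^{(top)}(A'\setminus\{0\})\subseteq (A')^+$. Since $A$ is a $U_q(\bb_+)$-submodule of $A'$ with $A\cap(A')^+ = A^+$, the restriction of $E_\ii^{(top)}$ to $A\setminus\{0\}$ lands in $A^+$, proving $\ii$-adaptedness of $A$.

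For (b), the containment $\nu_\ii(\CC_q[U]\setminus\{0\})\subseteq \nu_\ii(A\setminus\{0\})$ is automatic from the inclusion $\CC_q[U]\subseteq A$. For the reverse, any weight vector $x\in A\setminus\{0\}$ lies in $A'$ inside a Verma-quotient $U_q(\gg)\cdot y$ for some $y\in (A')^+$, and the $\nu_\ii$-values arising on such a quotient coincide with the string cone of $U_q(\nn_-)$; via the canonical identification $\CC_q[U]\cong U_q(\nn_+)$ and the dual canonical basis description of Proposition~\ref{prop:adapted}, this set of strings is exactly $\nu_\ii(\CC_q[U]\setminus\{0\})$. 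Together with (a), this allows Theorem~\ref{thm:qfactoring}(2) to conclude that $\mu$ is an isomorphism.

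The main obstacle is the Verma-module statement inside step (a): namely, that $E_\ii^{(top)}$ lands in the highest-weight line for any $\ii\in R(\wnot)$. This is structural rather than computational and is the price one pays for trading the explicit generation of $A$ for generation of a larger algebra $A'$ by its highest weight vectors. Once this key input is in place, the remaining steps are essentially bookkeeping with weight decompositions and invocation of Theorem~\ref{thm:qfactoring} and Proposition~\ref{prop:adapted}.
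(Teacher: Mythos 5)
Your overall strategy — reduce to Theorem~\ref{thm:qfactoring}(2) by verifying that $A$ is $\ii$-adapted and that $\nu_\ii(A\setminus\{0\})=\nu_\ii(\CC_q[U]\setminus\{0\})$ — is reasonable, but the way you propose to verify those two conditions has genuine gaps. First, the hypothesis is that $A'$ is generated by $(A')^+$ as a $U_q(\gg)$-\emph{module algebra}, not as a $U_q(\gg)$-module; your identity $A'=U_q(\nn_-)\cdot(A')^+$ does not follow. The $U_q(\gg)$-submodule generated by the subalgebra $(A')^+$ need not be closed under multiplication (the product of $u(a)$ and $v(b)$ is controlled by $\Delta(U_q(\gg))$, which is a proper subalgebra of $U_q(\gg)\otimes U_q(\gg)$), so you cannot reduce to finitely generated highest-weight submodules. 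Second, the ``structural heart'' you invoke is false as stated: in a Verma module $M(\lambda)$ with $\lambda$ dominant integral there are singular vectors below the top (already for $\sl_2$, $E(F^{\lambda+1}v_\lambda)=0$), so $E_\ii^{(top)}$ does not send every nonzero vector into the line $\CC v_\lambda$. The weaker statement that $E_\ii^{(top)}$ lands in $M(\lambda)^+$ — which is what $\ii$-adaptedness actually requires — is left unproved, and it does not follow from Proposition~\ref{prop:adapted}, because $M(\lambda)\cong U_q(\nn_-)$ only as a $U_q(\nn_-)$-module: the $E_i$-action on $M(\lambda)$ depends on $\lambda$ and is not the skew-derivation action on $\CC_q[U]$ whose string parametrization you are borrowing. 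The same objection applies to your step (b).

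The paper's proof runs in the opposite direction and avoids proving adaptedness by hand. One applies Theorem~\ref{thm:qfactoring}(1) to $\mu':(A')^+\otimes\CC_q[U]\to A'$ to get injectivity, and then shows that the image of $\mu'$ is a $U_q(\gg)$-module subalgebra of $A'$; since it contains $(A')^+$ and the generation hypothesis is exactly generation as a module algebra, the image is all of $A'$ and $\mu'$ is an isomorphism. The substantive computations are that for $a\in(A')^+$ one has
\[
E_i\bigl(x_ja-K_j(a)x_j\bigr)=0
\qquad\text{and}\qquad
E_i\Bigl(F_j(a)+x_j\tfrac{K_j^{-2}(a)-a}{q_j-q_j^{-1}}\Bigr)=0 ,
\]
so that $x_i a$ and $F_i(ax)$ can be rewritten inside $\mu'((A')^+\otimes\CC_q[U])$. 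Theorem~\ref{thm:qfactoring}(2), applied to the isomorphism $\mu'$, then \emph{outputs} that $A'$ is $\ii$-adapted and $\nu_\ii(A'\setminus\{0\})=\nu_\ii(\CC_q[U]\setminus\{0\})$; these properties restrict to the $U_q(\bb_+)$-submodule $A$ (using $A\cap(A')^+=A^+$ and the sandwich $\nu_\ii(\CC_q[U]\setminus\{0\})\subseteq\nu_\ii(A\setminus\{0\})\subseteq\nu_\ii(A'\setminus\{0\})$), and a second application of Theorem~\ref{thm:qfactoring}(2) gives that $\mu$ is an isomorphism. If you want to salvage your write-up, replace the Verma-module analysis with this surjectivity argument; the two displayed identities are the missing idea.
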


Corollary \ref{cor:qhighest} will be proved in Section \ref{pf:qhighest} and provides us with the means to prove Theorem \ref{thm:qG/U}, which we do in Section \ref{pf:qG/U}. In the meantime, there are two families of quantities that arose in the proof of Corollary \ref{cor:qhighest}:
	\[x_ia-K_i(a)x_i\quad\text{and}\quad F_i(a)+x_i\frac{K_i^{-2}(a)-a}{q_i-q_i^{-1}}\]
where $i\in I$ and $a\in A'$. These quantities are equally valid to consider for $a\in A$, without the assumed presence of $A'$. If $a\in A^+$, then both of these quantities are also in $A^+$. It is therefore natural to ask what relations the families of operators $L_i-R_iK_i$ and $F_i+L_i\frac{K_i^{-2}-1}{q_i-q_i^{-1}}$ satisfy, where $L_i$ (respectively $R_i$) represents left (respectively right) multiplication by $x_i$. Or to put it another way, do these operators indicate the action of a known algebra which is somehow related to $U_q(\gg)$? We can answer in the affirmative. It can be proved that both of the families of operators observed in fact satisfy the quantum Serre relations and the two families ``almost" commute with each other. This resembles an action of the Hopf algebra $U_q(\gg^*)$, which we now define for the reader's convenience.

\begin{definition}
	As an algebra, $U_q(\gg^*)$ is generated by $\{K_i^{\pm1}, F_{i,1}, F_{i,2}\ |\ i\in I\}$ subject to the following relations for $i,j\in I$ and $k\in \{1,2\}$:
		\begin{center}
		$\begin{array}{c c}
			K_i^{\pm1}K_j^{\pm1}=K_j^{\pm1}K_i^{\pm1}; & K_iK_i^{-1}=K_i^{-1}K_i=1;\\[.2cm]
			F_{i,1}F_{j,2}=F_{j,2}F_{i,1}; & K_iF_{j,k}K_i^{-1}=q_i^{-c_{i,j}}F_{j,k};
		\end{array}$
		\[\sum_{\ell=0}^{1-c_{i,j}}(-1)^kF_{i,k}^{(\ell)}F_{j,k}F_{i,k}^{(1-c_{i,j}-\ell)}=0 \text{ if }i\ne j.\]
		\end{center}
	The comultiplication $\Delta$, counit $\epsilon$, and antipode $S$ are given on generators by
	
	\begin{center}
		$\begin{array}{c c c}
			\Delta(K_i^{\pm1})=K_i^{\pm1}\otimes K_i^{\pm1}; & \epsilon(K_i^{\pm1})=1; & S(K_i^{\pm1})=K_i^{\mp1};\\\\
			\Delta(F_{i,1})=F_{i,1}\otimes 1+K_i^{-1}\otimes F_{i,1}; & \epsilon(F_{i,1})=0; & S(F_{i,1})=-K_iF_{i,1};\\\\
			\Delta(F_{i,2})=F_{i,2}\otimes K_i^{-1}+1\otimes F_{i,2}; & \epsilon(F_{i,2})=0; & S(F_{i,2})=-F_{i,2}K_i.
		\end{array}$
	\end{center}
\end{definition}

After some tweaking and combining of our operators with the inherited Cartan action, we see that our operators really do indicate the presence of a $U_q(\gg^*)$-module algebra structure. The following theorem summarizes this and is proved in Section \ref{pf:qgd,qg}.

\begin{theorem}
\label{thm:qgd}
	Let $A$ be a $U_q(\gg)$-module algebra containing $\CC_q[U]$ as a $U_q(\gg)$-module subalgebra. Then $A$ is a $U_q(\gg^*)$-module algebra with action given by
	\[K_i^{\pm 1}\triangleright a = K_i^{\pm1}(a),\quad F_{i,1}\triangleright a = F_i(a)-\frac{x_ia-K_i^{-1}(a)x_i}{q_i-q_i^{-1}},\quad F_{i,2}\triangleright a = \frac{x_iK_i^{-1}(a)-ax_i}{q_i-q_i^{-1}}.\]
	In particular, the subalgebra $A^+$ is invariant under this action of $U_q(\gg^*)$ and is therefore a $U_q(\gg^*)$-module subalgebra.
\end{theorem}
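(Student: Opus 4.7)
The plan is to verify each defining relation of $U_q(\gg^*)$ for the proposed action and then check invariance of $A^+$. The Cartan relations $K_iK_j=K_jK_i$ are inherited from $U_q(\gg)$; the relation $K_iF_{j,k}K_i^{-1}=q_i^{-c_{ij}}F_{j,k}$ follows from the observation that $x_j$ has $U_q(\gg)$-weight $-\alpha_j$, so both $F_{j,1}\triangleright$ and $F_{j,2}\triangleright$ are weight-$(-\alpha_j)$ operators. For the module algebra compatibility, set $\mathcal{F}_i(a):=\tfrac{x_ia-K_i^{-1}(a)x_i}{q_i-q_i^{-1}}$, so that $F_{i,1}\triangleright=F_i-\mathcal{F}_i$. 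Both $F_i$ and $\mathcal{F}_i$ satisfy the twisted Leibniz rule $D(ab)=D(a)b+K_i^{-1}(a)D(b)$ --- the former by the $U_q(\gg)$-coproduct of $F_i$, the latter by direct telescoping --- so their difference does as well, and symmetrically $F_{i,2}\triangleright$ satisfies the Leibniz rule compatible with $\Delta(F_{i,2})=F_{i,2}\otimes K_i^{-1}+1\otimes F_{i,2}$. Invariance of $A^+$ for $a\in A^+$ follows by computing $E_j(F_{i,k}\triangleright a)$ using $E_j(x_i)=\delta_{ij}$, $E_jK_i^{-1}=q_i^{c_{ij}}K_i^{-1}E_j$, and $[E_j,F_i]=\delta_{ij}\tfrac{K_i-K_i^{-1}}{q_i-q_i^{-1}}$; the signs in the defining formulas make the resulting contributions cancel exactly. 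For the cross-commutation $F_{i,1}F_{j,2}=F_{j,2}F_{i,1}$, I would expand both composites on an arbitrary $a$ and combine terms using the Leibniz rule for $F_i$ on the products $x_ja$ and $ax_j$ together with the explicit value $F_i(x_j)=\tfrac{x_ix_j-q_i^{c_{ij}}x_jx_i}{q_i-q_i^{-1}}$ in $\CC_q[U]$; careful bookkeeping shows that the terms mixing $F_i$ with multiplications by $x_j$ pair off against the inner quadratic terms between the two sides.

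The core of the argument is the quantum Serre relations. For $F_{i,2}$, I write $\partial_i:=(q_i-q_i^{-1})F_{i,2}\triangleright=\lambda_i-\rho_i$, where $\lambda_i(a)=x_iK_i^{-1}(a)$ and $\rho_i(a)=ax_i$, and verify the quasi-commutation $\lambda_i\rho_j=q_i^{c_{ij}}\rho_j\lambda_i$. Iterating, $\partial_i^n$ becomes a $q_i$-weighted sum of operators of the shape ``left-multiplication by $x_i^\alpha$, Cartan twist, right-multiplication by $x_i^\beta$'' with $\alpha+\beta=n$. Substituting into the Serre sum and collecting, each coefficient is a $q_i$-binomial combination matching one that appears in the quantum Serre relation among the $x_i$'s in $\CC_q[U]$; since those relations hold inside $\CC_q[U]\subset A$, every coefficient vanishes and the Serre relations for $F_{i,2}\triangleright$ follow.

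Finally, for $F_{i,1}\triangleright=F_i-\mathcal{F}_i$, the Serre combination expands via a non-commutative binomial into (i) the pure Serre sum in $F_i$, which vanishes since $A$ is a $U_q(\gg)$-module algebra; (ii) the pure Serre sum in $\mathcal{F}_i$, which vanishes by the argument of the previous paragraph applied to $\mathcal{F}_i$ (an inner-derivation operator of the same shape); and (iii) mixed terms of the form $F_i^{(a)}\mathcal{F}_j^{(b)}F_i^{(c)}$ and their permutations. Controlling (iii) is the main obstacle: the strategy is to compute $[F_i,\mathcal{F}_j]$ explicitly and show, combined with the $q_i$-binomial coefficients from the Serre expansion, that the mixed contributions reorganize into scalar multiples of the quantum Serre combination in the $x_i$'s, which again vanishes in $\CC_q[U]$. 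The key simplification is that on $\CC_q[U]$ itself the operators $F_i$ and $\mathcal{F}_i$ coincide, so the mixed terms are governed entirely by the $\CC_q[U]$-multiplicative structure; this reduction is what ultimately collapses everything to the already established Serre relations for the $x_i$'s.
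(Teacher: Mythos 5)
Your routine verifications are sound: the Cartan relations, the twisted Leibniz rules showing compatibility with the coproducts of $F_{i,1}$ and $F_{i,2}$ (the telescoping for $\mathcal{F}_i$ is correct), and the $E_j$-computation showing $A^+$ is invariant all check out. But the heart of the theorem --- which the introduction explicitly flags as ``the most non-trivial part of the proof'' --- is the quantum Serre relations for the operators $F_{i,1}\triangleright = F_i - \mathcal{F}_i$, and there your proposal has a genuine gap. First, the decomposition of the Serre combination in $F_i-\mathcal{F}_i$ into ``(i) pure $F$, (ii) pure $\mathcal{F}$, (iii) mixed terms of the form $F_i^{(a)}\mathcal{F}_j^{(b)}F_i^{(c)}$'' is not a valid expansion: $F_i$ and $\mathcal{F}_i$ do not commute, so $(F_i-\mathcal{F}_i)^k$ does not split binomially, and the mixed terms include all interleavings. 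Second, even granting some expansion, the cancellation of the mixed contributions is only asserted (``careful bookkeeping shows\dots'', ``the strategy is to\dots''); the observation that $F_i$ and $\mathcal{F}_i$ agree on $\CC_q[U]$ tells you $F_{i,1}\triangleright$ kills $\CC_q[U]$, but the mixed operators act on arbitrary $a\in A$, and no mechanism is given that actually forces the reorganization into Serre combinations of the $x_i$'s. The cross-commutation $[F_{i,1},F_{j,2}]=0$ and the Serre relations for $F_{i,2}$ are likewise left at the level of a plausible but unexecuted plan (for $F_{i,2}$ the quasi-commutation $\lambda_i\rho_j=q_i^{c_{ij}}\rho_j\lambda_i$ is correct and that computation could be completed, but the $q$-power bookkeeping is not done).

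The paper circumvents all of these computations structurally. It realizes $U_q(\nn_-)$ as the Nichols algebra $\mathcal{B}(V)$ of $V=\mathrm{span}\{F_i\}\in{}_{\mathcal K}^{\mathcal K}\mathcal{YD}$, embeds it into $A$ via $F_i\mapsto x_i/(q_i-q_i^{-1})$, and shows (Theorems \ref{thm:adjoint} and \ref{thm:op}) that the adjoint action of a Nichols-algebra subalgebra inside the smash product $A\rtimes\mathcal K$ restricts to $A$; since the adjoint action is an algebra action of $\mathcal{B}(V)\cong U_q(\nn_-)$, the Serre relations for $F_{i,2}\triangleright$ come for free. For $F_{i,1}\triangleright$, the difference formula is obtained from Theorem \ref{thm:diff}: the elements $1\otimes F_i - F_i\otimes 1$ generate a second copy of $\mathcal{B}(V)$ inside the braided cross product $A\underline{\rtimes}\mathcal{B}(V)$ (Corollary \ref{cor:embed}, via the trivialization $\tau$ of Theorem \ref{thm:braidedtriv}), and its adjoint action preserves $A\otimes 1$ and is exactly $F_i-\mathcal{F}_i$; again the Serre relations are inherited rather than verified. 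The commutation of the two families is Theorem \ref{thm:commute}. If you want to keep a direct computational route, you would need to actually prove that the adjoint-type operators define an algebra map out of $U_q(\nn_-)$ --- which is precisely the content your sketch defers --- so the missing idea is the identification of your operators with (twisted) adjoint actions of a braided Hopf subalgebra, from which the relations follow without bookkeeping.
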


Theorem \ref{thm:qgd} is in some sense a statement about the existence of a functor. To make this precise, we introduce a category whose objects bear properties similar to those found in Theorem \ref{thm:qfactoring}.

\begin{definition}
	Let $\mathcal{C}_\gg^q$ be the category whose objects consist of pairs $(A,\varphi_A)$, where 
	\begin{itemize}
		\item $A$ is an adapted $U_q(\gg)$-module algebra such that $\nu_{\bf i}(A\setminus \{0\})=\nu_{\bf i}(\CC_q[U]\setminus\{0\})$ for all ${\bf i}\in R(\wnot)$.
		\item $\varphi_A:\CC_q[U]\hookrightarrow A$ is an embedding of $U_q(\gg)$-module algebras.
	\end{itemize}
	A morphism $(A,\varphi_A)\to (B,\varphi_B)$ in $\mathcal{C}_\gg^q$ is a homomorphism of $U_q(\gg)$-module algebras $\psi:A\to B$ such that $\psi\circ\varphi_A=\varphi_B$.
\end{definition}

Given a homomorphism of $U_q(\gg)$-module algebras $\psi:A\to B$, it follows that $\psi(A^+)\subseteq B^+$, so $\psi|_{A^+}$ may be thought of as a map of $\CC(q)$-algebras $A^+\to B^+$. If $\psi$ is a morphism in $\mathcal{C}_\gg^q$, $(A,\varphi_A){\to} (B,\varphi_B)$, then actually $\psi|_{A^+}$ is a homomorphism of $U_q(\gg^*)$-module algebras. As a consequence, we have the following corollary.

\begin{corollary}
	There is a functor $(-)^+:\mathcal{C}_\gg^q\to U_q(\gg^*)-\textbf{ModAlg}$ (denoted $P_q\circ F_q$ in Section \ref{sect:Introduction}) which assigns to an object $(A,\varphi_A)$ of $\mathcal{C}_\gg^q$ its subalgebra of highest weight vectors $A^+$, equipped with the $U_q(\gg^*)$-module algebra structure of Theorem \ref{thm:qgd}. The functor $(-)^+$ is given on morphisms by restriction.
\end{corollary}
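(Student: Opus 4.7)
The plan is to verify the three standard requirements of a functor: that the assignment on objects lands in $U_q(\gg^*)\text{-}\mathbf{ModAlg}$, that the assignment on morphisms produces a morphism of $U_q(\gg^*)$-module algebras, and that identities and composition are preserved. The first requirement is essentially already a theorem, while the second requires a short compatibility check, and the third is automatic.

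On objects: let $(A,\varphi_A)\in \mathcal{C}_\gg^q$. By definition of $\mathcal{C}_\gg^q$, $A$ is a $U_q(\gg)$-module algebra containing $\varphi_A(\CC_q[U])\cong \CC_q[U]$ as a $U_q(\gg)$-module subalgebra, so the hypotheses of Theorem~\ref{thm:qgd} are satisfied. That theorem equips $A$ with a $U_q(\gg^*)$-module algebra structure, restricting to one on $A^+$. Thus $(-)^+$ is well-defined on objects.

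On morphisms: suppose $\psi:(A,\varphi_A)\to(B,\varphi_B)$ is a morphism in $\mathcal{C}_\gg^q$. Since $\psi$ is a map of $U_q(\gg)$-module algebras, it satisfies $\psi(A^+)\subseteq B^+$, is multiplicative on $A^+$, and intertwines the Cartan action $K_i^{\pm1}\triangleright$. It remains to check that $\psi|_{A^+}$ intertwines each $F_{i,k}\triangleright$ for $k=1,2$. Let $x_i^A=\varphi_A(x_i)$ and $x_i^B=\varphi_B(x_i)$; the condition $\psi\circ\varphi_A=\varphi_B$ gives $\psi(x_i^A)=x_i^B$. Combining this with $\psi(F_i(a))=F_i(\psi(a))$ and $\psi(K_i^{-1}(a))=K_i^{-1}(\psi(a))$, the formulas of Theorem~\ref{thm:qgd} yield, for any $a\in A^+$,
\[
\psi(F_{i,1}\triangleright a)=F_i(\psi(a))-\frac{x_i^B\psi(a)-K_i^{-1}(\psi(a))x_i^B}{q_i-q_i^{-1}}=F_{i,1}\triangleright \psi(a),
\]
and an identical computation handles $F_{i,2}$. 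Hence $\psi|_{A^+}$ is a homomorphism of $U_q(\gg^*)$-module algebras.

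Finally, functoriality: if $\psi=\mathrm{id}_A$ then $\psi|_{A^+}=\mathrm{id}_{A^+}$, and given a second morphism $\psi':(B,\varphi_B)\to(C,\varphi_C)$ we have $(\psi'\circ\psi)|_{A^+}=\psi'|_{B^+}\circ \psi|_{A^+}$, since restriction commutes with composition. There is no genuine obstacle in this corollary: the deep content—that the operators in \eqref{eq:fihatq} satisfy the quantum Serre relations and almost-commute appropriately—has already been absorbed into Theorem~\ref{thm:qgd}. The only substantive step left is the naturality check in the morphism paragraph, and it reduces to the single observation that morphisms of $\mathcal{C}_\gg^q$ are required to send the distinguished generators $x_i^A$ to $x_i^B$, which is exactly the data the $U_q(\gg^*)$-action formulas depend on beyond the ambient $U_q(\gg)$-structure.
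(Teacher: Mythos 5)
Your proposal is correct and follows essentially the same route as the paper, which establishes the corollary via the preceding observation that $\psi(A^+)\subseteq B^+$ and that $\psi|_{A^+}$ intertwines the $U_q(\gg^*)$-action because the formulas of Theorem \ref{thm:qgd} depend only on the $U_q(\gg)$-structure and the distinguished elements $\varphi_A(x_i)$, which $\psi$ carries to $\varphi_B(x_i)$. The only cosmetic point is that your computation uses multiplicativity of $\psi$ on all of $A$ (for products such as $x_i^A a$ with $x_i^A\notin A^+$), not merely on $A^+$, but this is of course available since $\psi$ is an algebra homomorphism on $A$.
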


Theorem \ref{thm:qfactoring} strongly suggests that $(-)^+$ might actually be an equivalence of categories. In fact, this is the case, but in order to describe a quasi-inverse, we need the following theorem which describes a $U_q(\gg)$-module algebra structure on $A\otimes \CC_q[U]$ if $A$ is a $U_q(\gg^*)$-module algebra.

\begin{theorem}
\label{thm:qg}
	If $A$ is a $U_q(\gg^*)$-module algebra, then $A\otimes \CC_q[U]$ has the structure of a $U_q(\gg)$-module algebra determined by:
	\begin{align*}
	(1\otimes x_i)(a\otimes 1)&=K_i(a)\otimes x_i+(q_i-q_i^{-1})F_{i,2}K_i(a)\otimes 1\\
	K_i^{\pm1}\triangleright(a\otimes x)&=K_i^{\pm1}(a)\otimes K_i^{\pm1}(x)\\
	E_i\triangleright(a\otimes x)&=a\otimes E_i(x)\\
	F_i\triangleright(a\otimes x)&=(F_{i,1}(a)+F_{i,2}K_i(a))\otimes x+\frac{K_i(a)-K_i^{-1}(a)}{q_i-q_i^{-1}}\otimes x_ix+K_i^{-1}(a)\otimes F_i(x).
	\end{align*}
\end{theorem}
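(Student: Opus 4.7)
The plan is twofold: first, to equip the vector space $A\otimes \CC_q[U]$ with a $\CC(q)$-algebra structure realizing the prescribed cross-commutation, and second, to verify that the formulas for $K_i^{\pm 1}, E_i, F_i$ define a $U_q(\gg)$-action making $A\otimes \CC_q[U]$ into a module algebra.

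For the algebra structure, I would realize $A\otimes \CC_q[U]$ as a twisted tensor product in which $A\otimes 1$ and $1\otimes \CC_q[U]$ are subalgebras and the cross-commutation rule $(1\otimes x_i)(a\otimes 1) = K_i(a)\otimes x_i+(q_i-q_i^{-1})F_{i,2}K_i(a)\otimes 1$ is imposed. Associativity requires two compatibilities. Compatibility with multiplication in $A$ reduces, after computing $(1\otimes x_i)((aa')\otimes 1)$ two ways, to the identity
\[F_{i,2}\bigl(K_i(a)K_i(a')\bigr) = F_{i,2}K_i(a)\cdot a' + K_i(a)\cdot F_{i,2}K_i(a'),\]
which follows from $\Delta(F_{i,2})=F_{i,2}\otimes K_i^{-1}+1\otimes F_{i,2}$ in $U_q(\gg^*)$ together with the $U_q(\gg^*)$-module algebra structure on $A$. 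Compatibility with the quantum Serre relations in $\CC_q[U]$ is obtained by pushing an element $a\in A$ leftward through the relation $\sum_k(-1)^k x_i^{(k)}x_jx_i^{(1-c_{ij}-k)}=0$; the resulting expression, whose coefficients are compositions of $F_{i,2},F_{j,2}$ and Cartan elements applied to $a$, collapses by the quantum Serre relations satisfied by the $F_{i,2}$ in $U_q(\gg^*)$.

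Once the algebra is in place, the relations $K_iK_j=K_jK_i$, $K_iE_jK_i^{-1}=q_i^{c_{ij}}E_j$, $K_iF_jK_i^{-1}=q_i^{-c_{ij}}F_j$, and the quantum Serre relations for the $E_i$ are immediate from the formulas on simple tensors (since $E_i$ touches only the $\CC_q[U]$-factor). The commutator $[E_i,F_j]\,\triangleright(a\otimes x)=\delta_{ij}\frac{K_i-K_i^{-1}}{q_i-q_i^{-1}}\triangleright(a\otimes x)$ is verified directly: the three pieces of the $F_j$-action combine with $E_i$ so that only the middle, $x_i$-insertion term contributes a nonzero commutator, producing precisely the Cartan term via $E_i(x_i)=1$ and $E_i(x_j)=0$ for $i\ne j$. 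The module algebra axioms for $K_i^{\pm 1}$ and $E_i$ reduce to the known module algebra structure on $\CC_q[U]$.

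The principal obstacle will be the quantum Serre relation for the $F_i$-action together with the module algebra axiom for $F_i$ under the coproduct $\Delta(F_i)=F_i\otimes 1+K_i^{-1}\otimes F_i$. Since $F_i\triangleright$ splits as the sum of three contributions --- left action by $F_{i,1}+F_{i,2}K_i$ on the $A$-factor, the mixed insertion term $\tfrac{K_i-K_i^{-1}}{q_i-q_i^{-1}}\otimes x_i(\cdot)$, and $F_i$-action on the $\CC_q[U]$-factor --- expanding $F_i^{(k)}F_jF_i^{(1-c_{ij}-k)}$ produces a large sum of mixed compositions. After careful bookkeeping, I expect this sum to collapse to zero by simultaneously invoking: the quantum Serre relations for $F_{i,1}$ and for $F_{i,2}$ on $A$; the commutativity $F_{i,1}F_{j,2}=F_{j,2}F_{i,1}$; the Cartan relations $K_iF_{j,k}K_i^{-1}=q_i^{-c_{ij}}F_{j,k}$ in $U_q(\gg^*)$; and the quantum Serre relations for the $F_i$ acting on $\CC_q[U]$ itself. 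This tripartite bookkeeping, which precisely mirrors the emergence of $U_q(\gg^*)$ in Theorem \ref{thm:qgd}, is the computational heart of the proof; every other axiom reduces to routine verification on generators.
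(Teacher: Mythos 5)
Your overall architecture is the right one, and the first half of your plan is sound: the twisted multiplication is well defined for exactly the reason you give. Your compatibility (a) is the statement that $a\mapsto K_i(a)\otimes x_i+(q_i-q_i^{-1})F_{i,2}K_i(a)\otimes 1$ respects products in $A$, which follows from $\Delta(F_{i,2})$; your compatibility (b) is the statement that the assignment $x_i\mapsto K_i\otimes x_i+(q_i-q_i^{-1})F_{i,2}K_i\otimes 1$ kills the quantum Serre relations, which is precisely Lemma~\ref{lem:qsr} applied to the two families $y_i=K_i\otimes x_i$ and $z_i=(q_i-q_i^{-1})F_{i,2}K_i\otimes 1$ (the paper packages this as the coaction $\delta$ of Proposition~\ref{prop:ydCqU} and gets associativity from Proposition~\ref{prop:ydtensor}). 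Likewise, the $[E_i,F_j]$ and Cartan relations are routine, as you say.

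The gap is in the ``computational heart.'' You do not prove the quantum Serre relations for the operators $F_i\rhd$; you assert that a large expansion ``is expected to collapse'' using Serre relations for $F_{i,1}$ and $F_{i,2}$, their mutual commutativity, the Cartan relations, and the Serre relations for $F_i$ on $\CC_q[U]$. That list of ingredients is not sufficient as stated: the three summands of $F_i\rhd$ do not individually satisfy Serre relations, and the cross terms between the $x_i$-insertion operators, the $F_{j,2}K_j$ operators on $A$, and $F_j$ acting on $\CC_q[U]$ require further identities (essentially the Yetter--Drinfeld compatibility \eqref{eq:ydcomp} and the fact that $\delta$ is an algebra map), none of which appear in your plan. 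The correct organizing principle, which the paper uses to avoid the bookkeeping entirely, is to rewrite
\[
F_i\rhd z \;=\; F_{i,1}\rhd z \;+\; \frac{(1\otimes x_i)z-(K_i^{-1}\rhd z)(1\otimes x_i)}{q_i-q_i^{-1}},
\]
i.e.\ as (module action) $+$ (quantum adjoint action of $1\otimes x_i$), and to observe that this is the adjoint action of the diagonally embedded elements $1\otimes F_i+F_i\otimes 1$ in the braided cross product $(A\otimes\CC_q[U])\underline{\rtimes}\,U_q(\nn_-)$; since those elements generate a copy of the Nichols algebra $U_q(\nn_-)$ (Corollary~\ref{cor:embed}), the Serre relations hold automatically (Theorems~\ref{thm:adjoint} and~\ref{thm:sum}). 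Without either this structural argument or a genuinely carried-out rank-two computation (which in type $G_2$ involves degree-four Serre relations in a three-term operator), your proposal does not establish the existence of the $U_q(\gg)$-action. The same remark applies, less severely, to the module-algebra axiom for $F_i$: the paper reduces it to generators of $\CC_q[U]$ by a subalgebra argument, a step you should make explicit.
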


Theorem \ref{thm:qg} will be proved in Section \ref{pf:qgd,qg}. Since the action of each $E_i$ is completely described on the $\CC_q[U]$ factor and $\CC_q[U]$ is adapted, we have the following corollary.

\begin{corollary}
\label{cor:qgcor}
	There is a functor $(-)\otimes \CC_q[U]:U_q(\gg^*)-\textbf{ModAlg}\to \mathcal{C}_\gg^q$ which assigns to a $U_q(\gg^*)$-module algebra $A$, the pair $(A\otimes \CC_q[U], 1\otimes \text{id})$, where $A\otimes \CC_q[U]$ is given the $U_q(\gg)$-module structure of Theorem \ref{thm:qg}. The functor $(-)\otimes \CC_q[U]$ is given on morphisms by $\psi\mapsto \psi\otimes \text{id}$.
\end{corollary}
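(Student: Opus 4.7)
The plan is to verify that $A \mapsto (A \otimes \CC_q[U], 1 \otimes \mathrm{id})$ lands in $\mathcal{C}_\gg^q$ and that $\psi \mapsto \psi \otimes \mathrm{id}$ is functorial. Theorem \ref{thm:qg} already supplies the $U_q(\gg)$-module algebra structure on $A \otimes \CC_q[U]$, so the remaining tasks are: (i) show $A \otimes \CC_q[U]$ is adapted with $\nu_\ii(A \otimes \CC_q[U] \setminus \{0\}) = \nu_\ii(\CC_q[U] \setminus \{0\})$ for every $\ii \in R(\wnot)$; (ii) check that $1 \otimes \mathrm{id}$ is a $U_q(\gg)$-module algebra embedding; (iii) for a $U_q(\gg^*)$-module algebra homomorphism $\psi$, verify $\psi \otimes \mathrm{id}$ is a morphism in $\mathcal{C}_\gg^q$; and (iv) observe that the identity and composition axioms reduce to those of $(-) \otimes \mathrm{id}_{\CC_q[U]}$ on linear maps.

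The heart of the argument is (i), whose crucial input is the formula $E_i \triangleright (a \otimes x) = a \otimes E_i(x)$ from Theorem \ref{thm:qg}, identifying $E_i$ on $A \otimes \CC_q[U]$ with $\mathrm{id}_A \otimes E_i$. I would fix $\ii = (i_1, \ldots, i_m) \in R(\wnot)$ and use the dual canonical basis $\mathcal{B}^{dual}$ of $\CC_q[U]$, which is $\ii$-adapted by Proposition \ref{prop:adapted}. Any nonzero $z \in A \otimes \CC_q[U]$ expands uniquely as $z = \sum_\gamma a_\gamma \otimes b_\gamma$ with $a_\gamma \in A$ and $b_\gamma \in \mathcal{B}^{dual}$. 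Using linear independence of the $b_\gamma$ at each stage, I would show inductively that $\ell_{i_k}$ at the $k$-th stage equals the maximum $k$-th $\nu_\ii$-coordinate over the $\gamma$'s still surviving. Consequently $E_\ii^{(\mathrm{top})}(z) \in A \otimes 1$ (since $E_\ii^{(\mathrm{top})}(b_\gamma) = 1$ for adapted basis elements), and $\nu_\ii(z) = \max_{\gamma : a_\gamma \neq 0} \nu_\ii(b_\gamma) \in \nu_\ii(\CC_q[U] \setminus \{0\})$; the reverse containment is immediate via $x \mapsto 1 \otimes x$. This gives $\ii$-adaptedness for every reduced $\ii$ for $\wnot$ together with the required $\nu_\ii$ equality, and applied to $z \in (A \otimes \CC_q[U])^+$ it forces $\nu_\ii(z) = 0$ and hence $(A \otimes \CC_q[U])^+ = A \otimes 1$.

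For (ii), substituting $a = 1$ into the formulas of Theorem \ref{thm:qg} and using $K_i^{\pm 1}(1) = 1$ and $F_{i,k}(1) = \epsilon(F_{i,k}) \cdot 1 = 0$ collapses every $U_q(\gg)$-action on $1 \otimes x$ to $1 \otimes h(x)$, so $1 \otimes \mathrm{id}$ is a $U_q(\gg)$-equivariant injective algebra map. For (iii), the algebra-homomorphism property of $\psi \otimes \mathrm{id}$ reduces to preservation of the braiding $(1 \otimes x_i)(a \otimes 1) = K_i(a) \otimes x_i + (q_i - q_i^{-1})F_{i,2}K_i(a) \otimes 1$, which holds because $\psi$ intertwines $K_i^{\pm 1}$ and $F_{i,2}$; $U_q(\gg)$-equivariance is a termwise check using that $\psi$ intertwines $K_i^{\pm 1}$, $F_{i,1}$, and $F_{i,2}$; commutativity with the embeddings is tautological. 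Step (iv) is immediate. The main obstacle I expect is the careful bookkeeping in (i): rigorously proving, by induction along $\ii$, that $\nu_\ii$ on $A \otimes \CC_q[U]$ is computed as a termwise maximum after adapted basis expansion. Everything else is routine verification once the formulas of Theorem \ref{thm:qg} are in hand.
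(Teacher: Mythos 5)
Your overall strategy is exactly the one the paper intends: the corollary is dispatched there by the single observation preceding its statement, namely that $E_i$ acts as $\mathrm{id}_A\otimes E_i$, so adaptedness and the condition $\nu_{\bf i}(A\otimes\CC_q[U]\setminus\{0\})=\nu_{\bf i}(\CC_q[U]\setminus\{0\})$ are inherited from $\CC_q[U]$; your steps (ii)--(iv) are the routine verifications the paper leaves implicit, and they check out against the formulas of Theorem \ref{thm:qg}.

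The one place where your justification in (i) needs adjusting is the phrase ``using linear independence of the $b_\gamma$ at each stage.'' The $b_\gamma\in\mathcal{B}^{dual}$ are independent at the start, but after applying $E_{i_1}^{(top)}$ the images $E_{i_1}^{(top)}(b_\gamma)$ of distinct basis elements with the same $\ell_{i_1}$ are not a priori distinct, so a termwise maximum computed by independence of the \emph{second} factors could in principle be destroyed by cancellation among the $A$-coefficients. The clean repair is to expand $z=\sum_\gamma a_\gamma\otimes b_\gamma$ with the \emph{first}-factor coefficients $a_\gamma$ linearly independent in $A$ (and $b_\gamma\in\CC_q[U]\setminus\{0\}$ arbitrary): then $E_{\bf i}^{({\bf j})}(z)=\sum_\gamma a_\gamma\otimes E_{\bf i}^{({\bf j})}(b_\gamma)$ vanishes if and only if every $E_{\bf i}^{({\bf j})}(b_\gamma)$ does, the surviving $a_\gamma$ at each stage remain independent, and one gets $\nu_{\bf i}(z)=\max_\gamma\nu_{\bf i}(b_\gamma)$ and $E_{\bf i}^{(top)}(z)\in A\otimes E_{\bf i}^{(top)}(\CC_q[U])\subseteq A\otimes\CC(q)$ with no cancellation issue, reducing everything to the fact that $\CC_q[U]$ itself is ${\bf i}$-adapted with the stated $\nu_{\bf i}$-image (Proposition \ref{prop:adapted} combined with Theorem \ref{thm:qfactoring}(2) applied to $A=A_0=\CC_q[U]$). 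With that substitution your argument is complete and agrees with the paper's.
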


The following theorem says that $(-)\otimes \CC_q[U]$ is the promised quasi-inverse for $(-)^+$.

\begin{theorem}
\label{thm:qequiv}
	The functors $(-)^+:\mathcal{C}_\gg^q\to U_q(\gg^*)-{\bf ModAlg}$ and $(-)\otimes \CC_q[U]:U_q(\gg^*)-{\bf ModAlg}\to \mathcal{C}_\gg^q$ are quasi-inverses of each other and thus provide equivalences of categories.
\end{theorem}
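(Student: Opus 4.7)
My approach is to exhibit natural isomorphisms witnessing that both compositions $Q \circ P$ and $P \circ Q$ are isomorphic to the identity functors, where $P = (-)^+$ and $Q = (-) \otimes \CC_q[U]$. The natural candidates are clear: for $(A,\varphi_A) \in \mathcal{C}_\gg^q$, the multiplication map $\mu_A: A^+ \otimes \CC_q[U] \to A$, and for a $U_q(\gg^*)$-module algebra $B$, the projection $\pi_B: (B \otimes \CC_q[U])^+ \to B$ onto the first tensor factor.

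For $\mu_A$, the defining properties of objects of $\mathcal{C}_\gg^q$ (adaptedness and matching $\nu_\ii$-images) together with Proposition \ref{prop:adapted} put us squarely in the setting of Theorem \ref{thm:qfactoring}(2), so $\mu_A$ is automatically a vector space isomorphism. What remains is to upgrade this to an isomorphism of $U_q(\gg)$-module algebras, with the source given the structure prescribed by Theorem \ref{thm:qg} applied to the $U_q(\gg^*)$-module algebra $A^+$ of Theorem \ref{thm:qgd}. Compatibility with the $K_i^{\pm 1}$- and $E_i$-actions is immediate from the formulas, and the multiplicativity check reduces to the single identity $\mu_A((1 \otimes x_i)(a \otimes 1)) = x_i a$ for $a \in A^+$, which follows directly from the defining formula for $F_{i,2}\triangleright$ in Theorem \ref{thm:qgd} (via the computation $(q_i - q_i^{-1})F_{i,2}K_i(a) = x_i a - K_i(a) x_i$). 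Compatibility with $F_i$ is then an analogous and cleanly cancelling substitution combining the formulas of Theorems \ref{thm:qg} and \ref{thm:qgd} with the Leibniz rule for $F_i$ on $A$.

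For $\pi_B$, observe from Theorem \ref{thm:qg} that $E_i$ acts on $B \otimes \CC_q[U]$ only through the second tensor factor, so $B \otimes 1 \subseteq (B \otimes \CC_q[U])^+$; the reverse inclusion follows from Theorem \ref{thm:qfactoring}(2) applied with $A_0 = \CC_q[U]$, whose hypotheses hold by Proposition \ref{prop:adapted} together with the evident matching of $\nu_\ii$-images arising from the fact that $E_i$ is insensitive to the left factor. Thus $\pi_B$ is a $\CC(q)$-algebra isomorphism. What remains is to verify that the $U_q(\gg^*)$-structure produced by Theorem \ref{thm:qgd} on $(B \otimes \CC_q[U])^+$ agrees under $\pi_B$ with the original structure on $B$: substituting $b \otimes 1$ into the Theorem \ref{thm:qgd} formulas for $F_{i,1}\triangleright$ and $F_{i,2}\triangleright$ and expanding $(1 \otimes x_i)(b \otimes 1)$ and $(b \otimes 1)(1 \otimes x_i)$ via the twisted commutation relation of Theorem \ref{thm:qg}, the $\otimes x_i$ contributions cancel and one reads off exactly $F_{i,1}(b) \otimes 1$ and $F_{i,2}(b) \otimes 1$.

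The main obstacle is this last step: it requires carefully tracking several cancellations among terms of the form $K_i^{\pm 1}(b) \otimes x_i$ and $F_{i,k} K_i^{\pm 1}(b) \otimes 1$ to confirm that the recipes of Theorems \ref{thm:qgd} and \ref{thm:qg} are honestly inverse to each other on $B \otimes 1$, and likewise that the $F_i$-compatibility in the $\mu_A$-argument holds on the nose rather than up to a stray multiple of $x_i x$. Naturality of both families of maps is then immediate from the functoriality of $P$ and $Q$ recorded in the preceding corollaries: morphisms in $\mathcal{C}_\gg^q$ restrict on highest weight subalgebras to morphisms of $U_q(\gg^*)$-module algebras, and morphisms in $U_q(\gg^*)$-$\mathbf{ModAlg}$ tensor with $\mathrm{id}_{\CC_q[U]}$ to give morphisms in $\mathcal{C}_\gg^q$, and in both cases the maps $\mu_A$ and $\pi_B$ are manifestly compatible with these assignments.
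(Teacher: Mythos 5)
Your proposal is correct and takes essentially the same route as the paper: the paper also uses the multiplication map $A^+\otimes\CC_q[U]\to A$ as the natural isomorphism in one direction and the identification $B\cong(B\otimes\CC_q[U])^+$ (via the inclusion $b\mapsto b\otimes 1$, the inverse of your projection $\pi_B$) in the other, with the same generator-level checks of multiplicativity, $F_i$-equivariance, and agreement of the two $U_q(\gg^*)$-structures. The cancellations you flag as the main obstacle do close up exactly as you predict, using $(q_i-q_i^{-1})F_{i,2}K_i(a)=x_ia-K_i(a)x_i$.
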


Theorem \ref{thm:qequiv} is proved in section \ref{pf:qequiv}. Now, it is well-known that if $A$ and $B$ are $U_q(\gg)$-weight module algebras, then so is the braided tensor product $A\underline{\otimes}B$. Here $A\underline{\otimes}B$ has multiplication given by \[(a\otimes b)(a'\otimes b')=a\mathcal{R}_{(2)}(a')\otimes \mathcal{R}_{(1)}(b)b',\] where $\mathcal{R}$ is the universal $R$-matrix for $U_q(\gg)$ and we use sumless Sweedler notation $\mathcal{R}=\mathcal{R}_{(1)}\otimes \mathcal{R}_{(2)}$. The $R$-matrix is of the form 
\[\mathcal{R}=q^{\sum\limits_{i,j}d_i(C^{-1})_{i,j}H_i\otimes H_j}\left(\prod\limits_{\alpha>0}^{\leftarrow} e_{q_\alpha^{-2}}^{(q_{\alpha}-q_{\alpha}^{-1})E_\alpha\otimes F_\alpha}\right)\]
(see \cite[Section 3.3]{majid} for details). At this point, we may need to use the field $\CC(q^{1/d})$ where $d$ is the determinant of $C$ or else assume that $d_i(C^{-1})_{i,j}\in \ZZ$ for all $i,j\in I$, but this is a small matter which doesn't affect our overall approach. We define a subcategory of $\mathcal{C}_\gg^q$ on which the tensor product defined above will make sense.
 
\begin{definition}
	Let $\underline{\mathcal{C}}_\gg^q$ be the full subcategory of $\mathcal{C}_\gg^q$ whose objects consist of pairs $(A,\varphi_A)$, where $A$ is additionally assumed to be a $U_q(\gg)$-weight module algebra.
\end{definition}

The following proposition is then clear.

\begin{proposition}
\label{prop:qequiv}
	The functors $(-)^+$ and $(-)\otimes \CC_q[U]$ restrict to equivalences between $\underline{\mathcal{C}}_\gg^q$ and $U_q(\gg^*)-{\bf WModAlg}$, the category of $U_q(\gg^*)$-weight module algebras.
\end{proposition}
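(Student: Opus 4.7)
The plan is to deduce the proposition from the equivalence already established in Theorem \ref{thm:qequiv}, by tracking the weight decomposition through both functors. The key observation is that the Cartan subalgebra $\mathcal{K}$ (generated by the $K_i^{\pm 1}$) sits inside both $U_q(\gg)$ and $U_q(\gg^*)$, and both functors $(-)^+$ and $(-)\otimes \CC_q[U]$ preserve this Cartan action without modification. Thus the whole argument reduces to checking that each functor sends weight objects to weight objects; the quasi-inverse natural isomorphisms from Theorem \ref{thm:qequiv} do not disturb the underlying vector space or the $K_i^{\pm 1}$ action, so they restrict automatically.

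For $(-)^+$: given $(A,\varphi_A) \in \underline{\mathcal{C}}_\gg^q$ with weight decomposition $A = \bigoplus_{\lambda} A_\lambda$ as a $U_q(\gg)$-module, the $E_i$'s shift weights by $\alpha_i$, so the highest weight subalgebra inherits the decomposition $A^+ = \bigoplus_\lambda (A^+ \cap A_\lambda)$. By Theorem \ref{thm:qgd}, the action of $K_i^{\pm 1}$ in the $U_q(\gg^*)$-module structure on $A^+$ is exactly the restriction of the original $U_q(\gg)$ Cartan action, so the same decomposition is a weight decomposition for $U_q(\gg^*)$. Hence $A^+$ lies in $U_q(\gg^*)-\textbf{WModAlg}$.

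For $(-)\otimes \CC_q[U]$: given a $U_q(\gg^*)$-weight module algebra $A = \bigoplus_\lambda A_\lambda$, the algebra $\CC_q[U]$ is itself a $U_q(\gg)$-weight module algebra with $x_i$ of weight $-\alpha_i$, so $A \otimes \CC_q[U]$ carries a bigrading $\bigoplus_{\lambda,\mu} A_\lambda \otimes \CC_q[U]_\mu$. By Theorem \ref{thm:qg}, $K_i^{\pm 1}$ acts on $A \otimes \CC_q[U]$ by the diagonal rule $K_i^{\pm 1}(a) \otimes K_i^{\pm 1}(x)$, which is diagonal on this bigrading. Combined with the adaptedness of $\CC_q[U]$ already required for membership in $\mathcal{C}_\gg^q$, this shows that $A \otimes \CC_q[U]$ lies in $\underline{\mathcal{C}}_\gg^q$. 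There is no real obstacle here; the content of the proposition is simply the compatibility of weight structures with both constructions, which is why the authors are content to call it ``clear.''
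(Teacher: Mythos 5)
Your argument is correct and is precisely the reasoning the paper leaves implicit when it declares the proposition ``clear'' after Theorem \ref{thm:qequiv}: since $\underline{\mathcal{C}}_\gg^q$ and $U_q(\gg^*)$-{\bf WModAlg} are full subcategories, one only needs that each functor lands in the weight subcategory, which follows because $(-)^+$ restricts the original Cartan action and $(-)\otimes \CC_q[U]$ acts diagonally by $K_i^{\pm1}\otimes K_i^{\pm1}$. No discrepancy with the paper.
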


If $(A,\varphi_A)$ and $(B,\varphi_B)$ are objects of $\underline{\mathcal{C}}_\gg^q$, then we already saw that $A\underline{\otimes}B$ is also a $U_q(\gg)$-weight module algebra. Furthermore, it is obvious that $1\otimes \varphi_B$ and $\varphi_A\otimes 1$ are injections $\CC_q[U]\hookrightarrow A\underline{\otimes} B$. However, it is not immediately obvious that $A\underline{\otimes} B$ is adapted with $\nu_{\bf i}(A\underline{\otimes} B\setminus\{0\})=\nu_{\bf i}(\CC_q[U]\setminus\{0\})$ for all ${\bf i}\in R(\wnot)$. Nevertheless, this is the case, which the following proposition asserts.

\begin{proposition}
\label{prop:qtensor}
	If $(A,\varphi_A)$ and $(B,\varphi_B)$ are objects of $\underline{\mathcal{C}}_\gg^q$, then $(A\underline{\otimes}B, 1\otimes \varphi_B)$ and $(A\underline{\otimes}B, \varphi_A\otimes 1)$ are objects of $\underline{\mathcal{C}}_\gg^q$ as well.
\end{proposition}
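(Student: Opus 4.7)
Both $(A\underline{\otimes}B,1\otimes\varphi_B)$ and $(A\underline{\otimes}B,\varphi_A\otimes 1)$ must be shown to lie in $\underline{\mathcal{C}}_\gg^q$; by symmetry I focus on the first. The weight-module-algebra property is inherited automatically from $A$ and $B$. For the embedding $1\otimes\varphi_B$ to be an algebra homomorphism I would compute the braided product: $(1\otimes b)(1\otimes b') = 1\cdot \mathcal{R}_{(2)}(1)\otimes \mathcal{R}_{(1)}(b)\cdot b'$ collapses to $1\otimes bb'$, because the $F_\alpha$-terms of $\mathcal{R}_{(2)}$ annihilate the unit and the Cartan part of $\mathcal{R}$ acts trivially on weight-zero vectors, leaving only the identity component of $\mathcal{R}$. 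The map is patently $U_q(\gg)$-equivariant and injective.

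The crux is adaptedness and $\nu_\ii$-matching. Fix $\ii\in R(\wnot)$. Proposition \ref{prop:adapted} endows $\CC_q[U]$ with the $\ii$-adapted basis $\mathcal{B}^{dual}$, so Theorem \ref{thm:qfactoring}(1) supplies injectivity of $\mu\colon (A\underline{\otimes}B)^+\otimes \CC_q[U]\to A\underline{\otimes}B$. By Theorem \ref{thm:qfactoring}(2), once $\mu$ is shown to be surjective, $A\underline{\otimes}B$ will be $\ii$-adapted with $\nu_\ii(A\underline{\otimes}B\setminus\{0\})=\nu_\ii(\CC_q[U]\setminus\{0\})$. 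To prove surjectivity I would apply Corollary \ref{cor:qhighest}: choose enlargements $\tilde A\supseteq A,\tilde B\supseteq B$ in $U_q(\gg)$-\textbf{ModAlg}, each generated by its highest weight vectors (existing by constructions parallel to those used for Theorem \ref{thm:qG/U}), and form $\tilde C=\tilde A\underline{\otimes}\tilde B\supseteq A\underline{\otimes}B$. The inclusions $\tilde A^+\otimes 1\subseteq \tilde C^+$ and $1\otimes \tilde B^+\subseteq \tilde C^+$ follow from $E_i(a\otimes 1)=E_i(a)\otimes K_i(1)+a\otimes E_i(1)=E_i(a)\otimes 1$, which vanishes on $\tilde A^+\otimes 1$ (and symmetrically). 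The $U_q(\gg)$-module subalgebra of $\tilde C$ generated by $\tilde A^+\otimes 1\cup 1\otimes\tilde B^+$ restricts, via the counit axiom, to all of $\tilde A$ on the slice $\tilde A\otimes 1$ and to all of $\tilde B$ on $1\otimes\tilde B$; the identity $(a\otimes 1)(1\otimes b)=a\otimes b$ then combines these to exhaust $\tilde C$. Corollary \ref{cor:qhighest} yields the desired surjectivity.

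The main obstacle is producing the enlargements $\tilde A,\tilde B$: being in $\underline{\mathcal{C}}_\gg^q$ does not guarantee that an algebra is generated by its highest weight vectors—$\CC_q[U]$ itself, whose highest weight subalgebra is merely $\CC(q)\cdot 1$, is already a counterexample. Building $\tilde A$ explicitly requires either an appropriate localization (so that the relations of Theorem \ref{thm:qg} become invertible, allowing generators $x_i$ to be expressed via $F_i\triangleright$ acting on nontrivial highest weight elements) or an embedding into a universal object like $\CC_q[G/U]$, paralleling the technique used for Theorem \ref{thm:qG/U}.
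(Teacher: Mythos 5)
Your plan routes everything through Corollary \ref{cor:qhighest}, which requires embedding $A\underline{\otimes}B$ into a $U_q(\gg)$-module algebra generated by its highest weight vectors, and you candidly acknowledge that you cannot produce the required enlargements $\tilde A\supseteq A$, $\tilde B\supseteq B$ from the hypotheses. That is a genuine gap, not a deferrable technicality: membership in $\underline{\mathcal{C}}_\gg^q$ gives no such enlargement for free, and actually constructing one (e.g.\ a quantum analogue of the $A^+\otimes\CC[T]$ trick used at the end of Section \ref{sect:classical}, or a localization) is a nontrivial piece of work that your proposal only gestures at. As written, the argument is incomplete exactly at its load-bearing step.

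The detour is also unnecessary, and this is where the paper's proof differs. Since $A$ and $B$ are \emph{already} objects of $\underline{\mathcal{C}}_\gg^q$, Theorems \ref{thm:qfactoring} and \ref{thm:qequiv} give $U_q(\gg)$-module algebra isomorphisms $A\cong A^+\otimes\CC_q[U]$ and $B\cong B^+\otimes\CC_q[U]$; in particular $A\underline{\otimes}B$ is generated as an algebra by $A^+\otimes 1$, $1\otimes B^+$, $\varphi_A(x_i)\otimes 1$ and $1\otimes\varphi_B(x_i)$. One then shows directly that the image of $\mu_L:(A\underline{\otimes}B)^+\otimes[\varphi_A(\CC_q[U])\otimes\CC(q)]\to A\underline{\otimes}B$ is a subalgebra (by the same computation as in the proof of Corollary \ref{cor:qhighest}, which only uses that $x_ja-K_j(a)x_j$ and $F_j(a)+x_j\frac{K_j^{-2}(a)-a}{q_j-q_j^{-1}}$ are highest weight vectors; it does not need an ambient algebra generated by highest weight vectors). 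This subalgebra visibly contains $A^+\otimes 1$, $1\otimes B^+$, the elements $\varphi_A(x_i)\otimes 1$, and the highest weight vectors $\varphi_A(x_i)\otimes 1-1\otimes\varphi_B(x_i)$, hence also $1\otimes\varphi_B(x_i)$, hence the whole generating set. So $\mu_L$ is surjective, Theorem \ref{thm:qfactoring}(2) yields adaptedness and the $\nu_{\bf i}$-matching, and the proposition follows. If you want to salvage your own route, you must actually construct $\tilde A$ and $\tilde B$; otherwise, replace the appeal to Corollary \ref{cor:qhighest} by the direct generating-set argument above.
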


Proposition \ref{prop:qtensor} is proved in Section \ref{pf:qtensor}. Proposition \ref{prop:qequiv} allows us to turn Proposition \ref{prop:qtensor} into a statement about $U_q(\gg^*)$-module algebras. We define two ``fusion" products on the category of $U_q(\gg^*)$-weight module algebras, namely the following:
	\begin{align*}
		A*B&:=((A\otimes \CC_q[U])\underline{\otimes} (B\otimes \CC_q[U]),1\otimes 1\otimes 1\otimes \text{id})^+\\
		A\star B&:=((A\otimes \CC_q[U])\underline{\otimes} (B\otimes \CC_q[U]),1\otimes \text{id}\otimes 1\otimes 1)^+.
	\end{align*}
	
These fusion products are associative, but not monoidal due to the easy observation that there is no unit object. The reader may be bothered that the objects $(A\underline{\otimes}B,1\otimes \varphi_B)$ and $(A\underline{\otimes}B,\varphi_A\otimes 1)$ are not (necessarily at least) isomorphic despite having equal underlying $U_q(\gg)$-module algebras. An attempt to force a common quotient leads to the discovery of an interesting $U_q(\gg^*)$-module algebra structure on $A\otimes B$ if $A$ and $B$ are $U_q(\gg^*)$-weight module algebras.

\begin{proposition}
\label{prop:qfusion}
Let $A$ and $B$ be $U_q(\gg^*)$-weight module algebras. Then the $\CC(q)$-vector space $A\otimes B$ has the structure of a $U_q(\gg^*)$-weight module algebra satisfying the following equations
	\begin{align*}
		(a\otimes b)(a'\otimes b')&=q^{(|a'|,|b|)}aa'\otimes bb'\\
		K_i^{\pm1}\rhd(a\otimes b)&=K_i^{\pm1}(a)\otimes K_i^{\pm1}(b)\\
		F_{i,1}\rhd(a\otimes b)&=K_i^{-1}(a)\otimes F_{i,1}(b)\\
		F_{i,2}\rhd(a\otimes b)&=F_{i,2}(a)\otimes K_i^{-1}(b).
	\end{align*}
for weight vectors $a,a'\in A$, $b,b'\in B$, and $i\in I$, where $|\cdot|$ indicates weight.
\end{proposition}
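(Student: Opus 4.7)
The plan is to check the four requirements of a $U_q(\gg^*)$-weight module algebra structure on $A\otimes B$: (i) associativity of the twisted multiplication, (ii) that the given formulas define a $U_q(\gg^*)$-module structure, (iii) the module-algebra (twisted Leibniz) property, and (iv) the weight condition. Part (iv) is immediate, since $K_i$ acts as the scalar $q^{(\alpha_i,|a|+|b|)}$ on a tensor of weight vectors $a\otimes b$, which is therefore itself a weight vector of weight $|a|+|b|$. Part (i) is a two-line calculation: by bilinearity of $(\cdot,\cdot)$ and additivity of weight under multiplication in $A$ and $B$, both bracketings of $(a\otimes b)(a'\otimes b')(a''\otimes b'')$ yield the common prefactor $q^{(|a'|,|b|)+(|a''|,|b|)+(|a''|,|b'|)}$.

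For part (ii), I plan to verify the defining relations of $U_q(\gg^*)$ on tensors of weight vectors. The Cartan relations and the weight identities $K_iF_{j,k}K_i^{-1}=q_i^{-c_{i,j}}F_{j,k}$ reduce to the corresponding identities already holding in $A$ and $B$. The quantum Serre relations for the $F_{i,1}$'s hold because $K_i^{-1}$ (on $A$) and $F_{i,1}$ (on $B$) commute, so $(F_{i,1}\rhd)^n$ on $A\otimes B$ acts as $K_i^{-n}\otimes F_{i,1}^n$ and a Serre-type combination pulls out a single common $K$-scalar on the $A$-factor, reducing to the Serre identity in $B$; the $F_{i,2}$ case is symmetric. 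The most subtle relation is $F_{i,1}F_{j,2}=F_{j,2}F_{i,1}$: computing both orders of composition on $a\otimes b$ produces the same tensor up to the scalar $q_i^{c_{i,j}}q_j^{-c_{j,i}}=q^{d_ic_{i,j}-d_jc_{j,i}}$, which equals $1$ precisely by the symmetrizability $d_ic_{i,j}=d_jc_{j,i}$ of the Cartan matrix.

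Part (iii) is the main obstacle. I plan to verify the Leibniz rule $X\rhd(xy)=(X_{(1)}\rhd x)(X_{(2)}\rhd y)$ for each generator $X\in\{K_i^{\pm 1},F_{i,1},F_{i,2}\}$. The $K_i$ case is immediate. For $F_{i,1}$, whose coproduct in $U_q(\gg^*)$ is $F_{i,1}\otimes 1+K_i^{-1}\otimes F_{i,1}$, the action prescribed here uses only the second summand, and the apparently missing first summand must be compensated by the cocycle $q^{(|a'|,|b|)}$ built into the twisted multiplication. The case of $F_{i,2}$, dual in structure, is handled analogously via $\Delta(F_{i,2})=F_{i,2}\otimes K_i^{-1}+1\otimes F_{i,2}$.

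To carry out part (iii) efficiently I plan to note that $A\otimes B$ is generated as a $\CC(q)$-algebra by the images of the embeddings $\iota_1\colon a\mapsto a\otimes 1$ and $\iota_2\colon b\mapsto 1\otimes b$, subject to the single relation $\iota_2(b)\iota_1(a)=q^{(|a|,|b|)}\iota_1(a)\iota_2(b)$ on weight vectors, and that the actions simplify drastically on these images: $F_{i,1}$ vanishes on $\iota_1(A)$ and restricts to $\iota_2\circ F_{i,1}$ on $\iota_2(B)$, while $F_{i,2}$ restricts to $\iota_1\circ F_{i,2}$ on $\iota_1(A)$ and vanishes on $\iota_2(B)$. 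Leibniz then need only be checked on products of two $\iota_k$-images: the two ``parallel'' cases $(\iota_1,\iota_1)$ and $(\iota_2,\iota_2)$ reduce directly to the Leibniz rules already holding in $A$ and $B$, whereas in the two ``mixed'' cases the twist $q^{(|a|,|b|)}$ exactly absorbs the $q$-exponent produced by commuting the ``missing'' summand of $\Delta(F_{i,k})$ past the other $\iota$-factor, as a direct weight computation confirms.
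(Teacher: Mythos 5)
Your proof is correct, but it takes a genuinely different route from the paper's. The paper does not verify the axioms directly: it embeds $A\otimes B$ into the fusion products $A*B$ and $A\star B$ via $a\otimes b\mapsto (a\otimes 1)\otimes(b\otimes 1)$, observes that this image is a subalgebra closed under $K_i^{\pm1}$ and $F_{i,1}$ inside $A*B$ and under $K_i^{\pm1}$ and $F_{i,2}$ inside $A\star B$, and so inherits the quantum Serre relations and the module--algebra compatibility for each family separately from the already-established $U_q(\gg^*)$-structures on $A*B$ and $A\star B$; the only computation it performs from scratch is the cross-commutation $F_{i,1}\rhd(F_{j,2}\rhd(-))=F_{j,2}\rhd(F_{i,1}\rhd(-))$. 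Your approach re-proves everything by hand, which is feasible precisely because each $F_{i,k}$ acts ``one-sidedly'': a word in the $F_{j,1}\rhd$'s acts as a fixed product of $K^{-1}$'s on the $A$-factor tensored with the same word on the $B$-factor, so the Serre sum factors and vanishes; the cross-commutation reduces to $q^{d_ic_{i,j}-d_jc_{j,i}}=1$; and in the Leibniz check the cocycle $q^{(|a'|,|b|)}$ absorbs the weight shift $|F_{i,k}(x)|=|x|-\alpha_i$ coming from the ``surviving'' coproduct summand. What the paper's argument buys is brevity at the cost of invoking the machinery of Theorems \ref{thm:qgd}, \ref{thm:qg} and Proposition \ref{prop:qtensor}; what yours buys is a self-contained verification that makes the mechanism transparent. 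One small point: your reduction of the Leibniz rule to products of two $\iota_k$-images is legitimate but not free --- it requires the standard observation (used elsewhere in the paper, e.g.\ in the proofs of Theorem \ref{thm:qg} and Proposition \ref{prop:ydCqU}) that the set of elements satisfying the Leibniz identity against all of $A\otimes B$ is a subalgebra, which in turn uses that $\Delta$ of each generator is a sum of tensors of generators. Alternatively, the direct computation on arbitrary pure tensors $(a\otimes b)(a'\otimes b')$ is scarcely longer and avoids the reduction altogether.
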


Proposition \ref{prop:qfusion} is proved in Section \ref{pf:qfusion} and induces a fusion product on $\underline{\mathcal{C}}_\gg^q$:
	\[(A,\varphi_A)\diamond (B,\varphi_B):=(((A,\varphi_A)^+\otimes (B,\varphi_B)^+)\otimes \CC_q[U],1\otimes 1\otimes \text{id}).\]
Just like for $*$ and $\star$, there is no unit object for $\diamond$, so it is not a monoidal tensor product.


\subsection{Classical Factorization}
\label{sect:classical}

Throughout this section, all tensor products will be taken over $\CC$ unless otherwise specified and written $-\otimes-$ rather than $-\otimes_\CC -$. We also assume henceforth that every algebra is commutative unless otherwise stated, with the exception of previously referenced algebras such as $\CC_q[U]$.

The semisimple complex Lie algebra $\gg$ with Cartan matrix $C=(c_{i,j})$ is generated by elements $\{e_i,f_i,h_i\ |\ i\in\nolinebreak I\}$ subject to the following relations:
	$$
		[h_i,h_j]=0,~[e_i,f_j]=\delta_{i,j}h_i,~\null[h_i,e_j]=c_{i,j}e_j,~[h_i,f_j]=c_{i,j}f_j,~
(\text{ad } e_i)^{1-c_{i,j}}(e_j)=(\text{ad } f_i)^{1-c_{i,j}}(f_j)=0,$$
where as usual $(\text{ad }x)(y)=[x,y]$ for $x,y\in \gg$. The universal enveloping algebra $U(\gg)$ of $\gg$ is a noncommutative Hopf algebra on the same generators and relations, where $[x,y]=xy-yx$ for $x,y\in U(\gg)$. The comultiplication of $U(\gg)$ is given on generators by 
	\[\Delta(x)=x\otimes 1+1\otimes x\]
for $x\in \{e_i,f_i,h_i\ |\ i\in I\}$.
	
We denote by $\nn_+$ (respectively $\bb_-$) the Lie subalgebra of $\gg$ generated by all $e_i$ (respectively $h_i$ and $f_i$). We will assume henceforth that for any $i\in I$, the action of $e_i$ on any $\nn_+$-module is \emph{locally nilpotent}. In other words, if $M$ is an $\nn_+$-module, we will assume that for each $x\in M$ and $i\in I$, there exists some $n\ge 0$ such that $e_i^n(x)=0$. Note that every $\gg$-module is also a $\nn_+$-module, so we are assuming these are ``bounded above" as well. We do not assume the same for the action of $f_i$. For an $\nn_+$-module $M$, we designate $M^+:=\{m\in M\ |\ e_i(m)=0\ \forall i\in I\}$, the set of \emph{highest weight vectors}. If $A$ is an $\nn_+$-module algebra, then $A^+$ is an $\nn_+$-module subalgebra. For $n\in \ZZ_{\ge 0}$ and $i\in I$, we will use the notation $e_i^{(n)}=\frac{1}{n!}e_i^n$. 

Suppose $M$ is an $\mathfrak{n}_+$-module algebra. For each $i\in I$ and $x\in M\setminus\{0\}$, set $\ell_i(x)=\max\{\ell\in\ZZ_{\ge0}\ |\ e_i^\ell(x)\ne 0\}$ and $e_i^{(top)}(x)=e_i^{(\ell_i(x))}(x)$. 
Given ${\bf i}\in I^m$ for some $m\ge 0$, and $x\in M\setminus\{0\}$, we also use the shorthand 
	\[e_{\bf i}^{(top)}(x)=e_{i_m}^{(top)}e_{i_{m-1}}^{(top)}\cdots e_{i_1}^{(top)}(x)\]
and define $\nu_{\bf i}:M\setminus \{0\}\to \ZZ_{\ge 0}^m$, $x\mapsto(a_1,a_2,\ldots,a_m)$ by the following:
	\[a_k=\ell_{i_k}(e_{i_{k-1}}^{(top)}e_{i_{k-2}}^{(top)}\cdots e_{i_1}^{(top)}(x)).\]
Lastly, for ${\bf j}=(j_1,\ldots,j_m)\in \ZZ_{\ge 0}^m$, we set $e_{\bf i}^{({\bf j})}:=e_{i_m}^{(j_m)}e_{i_{m-1}}^{(j_{m-1})}\cdots e_{i_1}^{(j_1)}$.

\begin{definition}
	Let $A$ be an $\nn_+$-module algebra, $w\in W$, and ${\bf i}\in R(w)$. If $e_{\bf i}^{(top)}(x)\in A^+$ for all $x\in A\setminus\{0\}$, then we say $A$ is \emph{{\bf i}-adapted}. We say a basis $\mathcal{B}$ for $A$ is an \emph{{\bf i}-adapted basis} if
	\begin{enumerate}
		\item $e_{\bf i}^{(top)}(b)=1$ for all $b\in \mathcal{B}$.
		\item The restriction of $\nu_{\bf i}$ to $\mathcal{B}$ is an injective map $\mathcal{B}\hookrightarrow \ZZ_{\ge0}^m$, where $m$ is the length of $w$.
	\end{enumerate}
If there exists any $w\in W$ and ${\bf i}\in R(w)$ so that $A$ is {\bf i}-adapted, then we say more generally that $A$ is \emph{adapted}.
\end{definition}

As in the quantum case, if $A_0$ possesses an ${\bf i}$-adapted basis for some ${\bf i}\in R(w)$ and is a ``large enough" $\nn_+$-module subalgebra of $A$, then $A$ is factorizable over $A_0$. The following theorem makes this precise.

\begin{theorem}
\label{thm:factoring}
Let $A$ be an $\mathfrak{n}_+$-module algebra. Suppose $A_0$ be an $\mathfrak{n}_+$-module subalgebra of $A$ possessing an {\bf i}-adapted basis $\mathcal{B}$ for some reduced $\ii$. Then
\begin{enumerate}
	\item The restriction $\mu:A^+\otimes A_0\to A$ of the multiplication in $A$ is an injective homomorphism of $\nn_+$-modules.
	\item The map $\mu$ is an isomorphism if and only if $A$ is {\bf i}-adapted and $\nu_{\bf i}(A\setminus\{0\})=\nu_{\bf i}(A_0\setminus\{0\})$.
\end{enumerate}
\end{theorem}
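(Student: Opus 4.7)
The plan is to mimic the proof of Theorem \ref{thm:qfactoring} essentially verbatim, specialized to $q=1$. The only Hopf-algebraic input needed is that each $e_i$ is primitive in $U(\gg)$, so $\Delta(e_i^{(n)})=\sum_{k=0}^n e_i^{(k)}\otimes e_i^{(n-k)}$, yielding the Leibniz-type identity $e_\ii^{(\jj)}(a^+\cdot b)=a^+\cdot e_\ii^{(\jj)}(b)$ for $a^+\in A^+$, $b\in A$, and $\jj\in \ZZ_{\ge 0}^m$. From this I would first establish two triangularity facts that drive everything else: (a) for $y\in A\setminus\{0\}$ and $\jj'\in \ZZ_{\ge 0}^m$ with $\jj'>_\mathrm{lex}\nu_\ii(y)$, one has $e_\ii^{(\jj')}(y)=0$; and (b) $\nu_\ii$ is lex-subadditive, i.e.\ $\nu_\ii(y_1+y_2)\le_\mathrm{lex}\max(\nu_\ii(y_1),\nu_\ii(y_2))$ whenever $y_1+y_2\ne 0$. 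Both follow directly from the definition of the $\ell_{i_k}$'s by an induction along the coordinates of $\nu_\ii$ and are the main technical verification of the proof.

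For part (1), I would first use the basis $\mathcal{B}$ to rewrite any element of $A^+\otimes A_0$ as $\sum_{\beta\in\mathcal{B}}a_\beta^+\otimes\beta$ with finitely many nonzero $a_\beta^+$. Assuming $\mu$ annihilates this tensor, pick $\beta_*\in\mathcal{B}$ whose $\nu_\ii$-value is lex-maximal among those with $a_\beta^+\ne 0$, and apply $e_\ii^{(\nu_\ii(\beta_*))}$ to the relation $\sum a_\beta^+\cdot\beta=0$. The Leibniz identity, fact (a), and the adapted-basis property $e_\ii^{(top)}(\beta_*)=1$ collapse the sum to $a_{\beta_*}^+=0$; iterating over the remaining $\beta$ finishes.

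For the ``if'' direction of part (2), I would induct on $\nu_\ii(x)$ in the (well-founded) lex order on $\ZZ_{\ge 0}^m$. The base case $\nu_\ii(x)=0$ means $x\in A^+$, so $x=x\cdot 1\in\mathrm{Im}(\mu)$. For the inductive step, the hypothesis $\nu_\ii(A\setminus\{0\})=\nu_\ii(A_0\setminus\{0\})$ combined with the equality $\nu_\ii(A_0\setminus\{0\})=\nu_\ii(\mathcal{B})$ (deduced from (b) and the injectivity of $\nu_\ii|_\mathcal{B}$) produces a unique $\beta\in\mathcal{B}$ with $\nu_\ii(\beta)=\nu_\ii(x)=:\jj$. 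Setting $y^+:=e_\ii^{(top)}(x)\in A^+$ by the $\ii$-adaptedness of $A$, the Leibniz identity gives $e_\ii^{(\jj)}(x-y^+\beta)=y^+-y^+\cdot 1=0$; combining this with facts (a) and (b) forces either $x-y^+\beta=0$ or $\nu_\ii(x-y^+\beta)<_\mathrm{lex}\jj$, and in either case the inductive hypothesis places $x$ in the image of $\mu$.

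For the ``only if'' direction, assume $\mu$ is an isomorphism and fix any $x\in A\setminus\{0\}$. Write $x=\sum_k a_k^+ b_k$ with distinct $b_k\in\mathcal{B}$ and nonzero $a_k^+$, and let $b_*$ maximize $\nu_\ii$ in lex order among the $b_k$. Fact (a) together with the Leibniz identity yields $e_\ii^{(\nu_\ii(b_*))}(x)=a_*^+\in A^+$, while fact (b) bounds $\nu_\ii(x)\le_\mathrm{lex}\nu_\ii(b_*)$; combined, these pin $\nu_\ii(x)=\nu_\ii(b_*)$ and give $e_\ii^{(top)}(x)=a_*^+\in A^+$. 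This simultaneously establishes that $A$ is $\ii$-adapted and that $\nu_\ii(A\setminus\{0\})\subseteq\nu_\ii(\mathcal{B})=\nu_\ii(A_0\setminus\{0\})$; the reverse inclusion is trivial since $A_0\subseteq A$. The principal technical obstacle throughout is verifying (a) and (b); these are purely combinatorial statements about the crystal-like statistic $\nu_\ii$ and are proved by a bookkeeping induction along the length $m$ of $\ii$, exactly paralleling their quantum analogues used in the proof of Theorem \ref{thm:qfactoring}.
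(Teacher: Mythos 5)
Your proposal is correct and takes essentially the same route as the paper, which proves this statement by declaring it "nearly identical" to the proof of Theorem \ref{thm:qfactoring}: the same induction on $\nu_\ii$ in the lexicographic well-order, with your triangularity facts (a) and (b) being exactly the (largely implicit) ingredients of that argument.
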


Theorem \ref{thm:factoring} demonstrates a close relationship between being {\bf i}-adapted and being factorizable over an $\nn_+$-module subalgebra possessing an {\bf i}-adapted basis. The following theorem explores this relationship from a different angle.

\begin{theorem}
\label{thm:classify}
	Let $A$ be an ${\bf i}$-adapted $\nn_+$-module algebra for some reduced $\ii$ and suppose $A_0$ is an $\nn_+$-module subalgebra of $A$. Then $A$ is factorizable over $A_0$ if and only if $A_0$ possesses an ${\bf i}$-adapted basis and $\nu_{\bf i}(A_0\setminus\{0\})=\nu_{\bf i}(A\setminus\{0\})$.
\end{theorem}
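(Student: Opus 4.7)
The plan is to mirror the proof of the quantum analogue Theorem \ref{thm:qclassify}, with classical operators $e_i^{(n)}$ replacing their quantum counterparts. The ``if'' direction is immediate from Theorem \ref{thm:factoring}(2): the combined hypotheses --- $A$ is $\ii$-adapted (given), $A_0$ possesses an $\ii$-adapted basis (given), and $\nu_\ii(A_0\setminus\{0\}) = \nu_\ii(A\setminus\{0\})$ (given) --- are exactly what that theorem requires to conclude that $\mu$ is an isomorphism, i.e.\ that $A$ is factorizable over $A_0$.

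For the ``only if'' direction, assume $\mu:A^+\otimes A_0\to A$ is an isomorphism. My first step is to show $A_0^+=\CC$. The restriction $\mu^+:A^+\otimes A_0^+\to A^+$ is itself an isomorphism: injectivity is automatic, while for surjectivity one decomposes any $a\in A^+$ as $a=\sum_j b_jc_j$ via $\mu^{-1}$ (with $\{b_j\}$ part of a fixed basis of $A^+$, $c_j\in A_0$), so that $0=e_i(a)=\sum_jb_je_i(c_j)$ forces $e_i(c_j)=0$ for all $i,j$ by linear independence of the $b_j$'s, showing $c_j\in A_0^+$. Then for any $a\in A_0^+$, both $1\otimes a$ and $a\otimes 1$ lie in $A^+\otimes A_0^+$ and are sent by $\mu^+$ to the same element $a\in A^+$, so injectivity of $\mu^+$ forces $1\otimes a=a\otimes 1$, which is possible only if $a\in\CC\cdot 1$.

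The second step is to prove $\nu_\ii(A\setminus\{0\})=\nu_\ii(A_0\setminus\{0\})$. The inclusion $\supseteq$ is trivial; for the reverse, write $x\in A\setminus\{0\}$ as $x=\sum_jb_jc_j$ via $\mu^{-1}$. Since $b_j\in A^+$, the identity $e_i^{(n)}(b_jc_j)=b_je_i^{(n)}(c_j)$ holds, and injectivity of $\mu$ combined with linear independence of the $b_j$ yields $\ell_{i_1}(x)=\max_j\ell_{i_1}(c_j)$ and $e_{i_1}^{(top)}(x)=\sum_{j\in J_1}b_je_{i_1}^{(top)}(c_j)$ with $J_1$ indexing the maximizers. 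Iterating shows $\nu_\ii(x)$ equals the lexicographic maximum of $\{\nu_\ii(c_j)\}$, hence is realized in $A_0$. The third step constructs the $\ii$-adapted basis: $A_0$ inherits $\ii$-adaptedness as an $\nn_+$-submodule, so $e_\ii^{(top)}(c)\in A^+\cap A_0=A_0^+=\CC^*$ for every nonzero $c\in A_0$. For each $\mathbf{a}\in\nu_\ii(A_0\setminus\{0\})$, pick $c_\mathbf{a}\in A_0$ with $\nu_\ii(c_\mathbf{a})=\mathbf{a}$ and rescale so that $e_\ii^{(top)}(c_\mathbf{a})=1$. To see $\{c_\mathbf{a}\}$ is a basis: linear independence follows by applying $e_\ii^{(\mathbf{a}^*)}$ to a hypothetical vanishing relation (with $\mathbf{a}^*$ the lex-maximal surviving index) to force the top coefficient to vanish; spanning follows by lex-descent, since for $y\in A_0\setminus\{0\}$ with $\nu_\ii(y)=\mathbf{a}_0$ and $e_\ii^{(top)}(y)=\lambda\in\CC^*$, the difference $y-\lambda c_{\mathbf{a}_0}$ either vanishes or satisfies $\nu_\ii(y-\lambda c_{\mathbf{a}_0})<_{\mathrm{lex}}\mathbf{a}_0$.

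Both descent arguments depend on two valuation-like properties of $\nu_\ii$: (i) if $z\ne 0$ and $\nu_\ii(z)<_{\mathrm{lex}}\mathbf{b}$ then $e_\ii^{(\mathbf{b})}(z)=0$; and (ii) if $z_1,z_2,z_1+z_2$ are all nonzero then $\nu_\ii(z_1+z_2)\le_{\mathrm{lex}}\max(\nu_\ii(z_1),\nu_\ii(z_2))$. Together with the defining property that $e_\ii^{(\nu_\ii(z))}(z)\ne 0$, these yield the stated conclusions: if $e_\ii^{(\mathbf{a}_0)}(y-\lambda c_{\mathbf{a}_0})=0$ while $\nu_\ii(y-\lambda c_{\mathbf{a}_0})\le_{\mathrm{lex}}\mathbf{a}_0$ by (ii), equality with $\mathbf{a}_0$ is ruled out by the defining property, forcing strict inequality. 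The main obstacle is establishing (i) and (ii), each of which requires a careful induction on the length of $\ii$ using the recursive identity $\nu_\ii(z)=(\ell_{i_1}(z),\nu_{(i_2,\ldots,i_m)}(e_{i_1}^{(top)}(z)))$, splitting on whether the leading coordinates of $\nu_\ii(z)$ and $\mathbf{b}$ (resp.\ of $\nu_\ii(z_1)$ and $\nu_\ii(z_2)$) agree; once these are in hand, everything else is formal linear algebra.
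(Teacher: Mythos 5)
Your proposal is correct and follows essentially the same route as the paper's proof of the quantum analogue (Theorem \ref{thm:qclassify}), which the paper declares to carry over verbatim: the ``if'' direction is Theorem \ref{thm:factoring}(2), and the ``only if'' direction establishes $A_0^+=\CC$, builds the $\ii$-adapted basis by normalizing one element per value of $\nu_\ii$ and running the same lexicographic independence/spanning inductions, and deduces $\nu_\ii(A_0\setminus\{0\})=\nu_\ii(A\setminus\{0\})$ from the decomposition $x=\sum b_jc_j$. Your only departures are cosmetic --- you prove the $\nu_\ii$ equality before constructing the basis rather than after, and you spell out the injectivity argument for $A_0^+=\CC$ and the valuation properties (i)--(ii) of $\nu_\ii$ that the paper uses implicitly --- all of which are sound.
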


The proofs of Theorems \ref{thm:factoring} and \ref{thm:classify} are nearly identical to those of Theorems \ref{thm:qfactoring} and \ref{thm:qclassify}, respectively, so we do not replicate them here. We now restrict our focus to a specific $\gg$-module algebra, $\CC[U]$. Actually, $\CC[U]$ is a specialization of $\CC_q[U]$ to $q=1$. This is accomplished as follows. 

It is well-known (see, e.g. \cite[Section 4]{berensteingreenstein}) that $\CC_q[U]$ admits a form $\underline{\CC_q[U]}$ over $\mathbb{A}=\ZZ[q,q^{-1}]$ which has both a PBW-basis and dual canonical basis. That is, the structure constants of the aforementioned bases belong to $\mathbb{A}$. It is also well-known (see, e.g., \cite[Section 3.3]{berensteingreenstein}) that $E_i^{(n)}(\underline{\CC_q[U]})\subset \underline{\CC_q[U]}$ for all $i\in I$ and $n\in \ZZ_{\ge 0}$. In particular, the quotient of $\underline{\CC_q[U]}$ by the ideal $(q-1)$ generated by $q-1$ is a commutative algebra canonically isomorphic to $\ZZ[U]$. Tensoring by $\CC$, we obtain $\CC[U]$ as the classical limit of $\CC_q[U]$. The action of $E_i$ specializes to the derivations which generate the action of $\nn_+$ on $\CC[U]$.

This in particular implies the well-known fact that $\CC[U]$ is a Poisson algebra with the Poisson bracket given by 
\[\{f,g\}=\dfrac{\tilde{f}\tilde{g}-\tilde{g}\tilde{f}}{q-1} \mod (q-1)\]
for all $f,g\in \CC[U]$, where $\tilde{f}$ and $\tilde{g}$ denote any representatives of $f$ and $g$, respectively, modulo $(q-1)$. Since $\CC_q[U]$ is generated by $\{x_i\ |\ i\in I\}$, $\CC[U]$ has Poisson generators which we denote by slight abuse of notation $\{x_i\ |\ i\in I\}$. The quantum Serre relations for the quantum $x_i$ imply the following relations for the ``classical" versions
	\[\varepsilon(i,j,1-c_{i,j})=0  \text{ if }i\ne j,\]
where $\varepsilon(i,j,n)$ is defined inductively by $\varepsilon(i,j,0)=x_j$, $\varepsilon(i,j,n+1)=\{x_i,\varepsilon(i,j,n)\}-d_i(c_{i,j}+2n)x_i\varepsilon(i,j,n)$.
The $\gg$-module structure on $\CC[U]$ is summarized in the following equations:
	\begin{align*}
		h_i(x_j)&=-c_{i,j}x_j\quad \forall i,j\in I\\
		h_i(\{x,y\})&=\{h_i(x),y\}+\{x,h_i(y)\}\quad \forall i\in I,\ x,y\in \CC[U]\\
		e_i(x_j)&=\delta_{i,j} \quad \forall i,j\in I\\
		e_i(\{x,y\})&=\{e_i(x),y\}+\{x,e_i(y)\}+d_ie_i(x)h_i(y)-d_ih_i(x)e_i(y)\quad \forall i\in I,\ x,y\in \CC[U]\\
		f_i(x)&=\frac{1}{2d_i}\{x_i,x\}+\frac{1}{2}x_ih_i(x)\quad \forall i\in I,\ x\in \CC[U].
	\end{align*}


\begin{remark}
\label{rem:nilpotentf}
	Comparing the defining relations of $\CC[U]$ with the action of $f_i$ thereon, one sees that \linebreak$f_i(x_i)=-x_i^{2}$ and $\varepsilon(i,j,n)=(2d_i)^nf_i^n(x_j)$, so that $f_i^{1-c_{i,j}}(x_j)=0$ if $i\ne j$. Observe also that \linebreak$\{x_i,x\}=2d_if_i(x)-d_ix_ih_i(x)$ for $x\in \CC[U]$ and $i\in I$. It follows that $\CC[U]$ is generated as a $\gg$-module algebra by $\{x_i\ |\ i\in I\}$.
\end{remark}

\begin{example}
\label{ex:mat}
Consider a $3\times 2$ matrix with complex coefficients:
	$A=\begin{pmatrix*}
			a_{1,1} & a_{1,2}\\
			a_{2,1} & a_{2,2}\\
			a_{3,1} & a_{3,2}
		\end{pmatrix*}$.
		
If $a_{1,1}\ne0$ and $a_{1,1}a_{2,2}-a_{1,2}a_{2,1}\ne0$, then $A$ has Gauss factorization

	\[A=\begin{pmatrix*}
			1 & 0 & 0\\
			\frac{a_{2,1}}{a_{1,1}} & 1 & 0\\
			\frac{a_{3,1}}{a_{1,1}} & \frac{a_{1,1}a_{3,2}-a_{1,2}a_{3,1}}{a_{1,1}a_{2,2}-a_{1,2}a_{2,1}} & 1
		\end{pmatrix*}
		\begin{pmatrix*}
			a_{1,1} & a_{1,2} \\
			0 & \frac{a_{1,1}a_{2,2}-a_{1,2}a_{2,1}}{a_{1,1}}\\
			0 & 0
		\end{pmatrix*}.\]
		
Denote by $x_{i,j}$, the $(i,j)$-th coordinate function in $\CC[Mat_{3,2}]$, i.e. $x_{i,j}(A)=a_{i,j}$. The Gauss factorization above implies that upon localization of $\CC[Mat_{3,2}]$ by the principal minors $x_{1,1}$ and $\Delta_2=x_{1,1}x_{2,2}-x_{1,2}x_{2,1}$, we obtain an isomorphism of algebras
	\[\CC[Mat_{3,2}][x_{1,1}^{-1}, \Delta_2^{-1}]\cong \CC\left[\frac{x_{2,1}}{x_{1,1}},\frac{x_{3,1}}{x_{1,1}},\frac{x_{1,1}x_{3,2}-x_{1,2}x_{3,1}}{\Delta_2}\right]\otimes \CC\left[x_{1,1}^{\pm1},x_{1,2},\Delta_2^{\pm1}\right]\]
	
	The natural action of $\sl_3(\CC)$ extends to the localized algebra and a short examination verifies that 
		\[\left(\CC[Mat_{3,2}][x_{1,1}^{-1}, \Delta_2^{-1}]\right)^+=\CC\left[x_{1,1}^{\pm1},x_{1,2},\Delta_2^{\pm1}\right]~\text{and}~\CC\left[\frac{x_{2,1}}{x_{1,1}},\frac{x_{3,1}}{x_{1,1}},\frac{x_{1,1}x_{3,2}-x_{1,2}x_{3,1}}{\Delta_2}\right]\cong \CC[U],\]
	where the isomorphism is an isomorphism of $\sl_3(\CC)$-module algebras and the generators $x_1$ and $x_2$ of $\CC[U]$ are mapped to by $\dfrac{x_{2,1}}{x_{1,1}}$ and $\dfrac{x_{1,1}x_{3,2}-x_{1,2}x_{3,1}}{\Delta_2}$, respectively.
\end{example}

\begin{example}
\label{ex:qmat}
	Recall that $\CC_q[Mat_{3,2}]$ is generated by $\{x_{i,j}\ |\ 1\le i\le 3,\ 1\le j\le 2\}$, subject to relations
		\begin{align*}
			x_{k,j}x_{i,j}&=qx_{i,j}x_{k,j} \text{ if }i<k,\\
			x_{i,k}x_{i,j}&=qx_{i,j}x_{i,k} \text{ if }j<k,\\
			x_{k,\ell}x_{i,j}&=x_{i,j}x_{k,\ell}\text{ if }i<k\text{ and }j>\ell,\\
			x_{k,\ell}x_{i,j}&=x_{i,j}x_{k,\ell}+(q-q^{-1})x_{i,\ell}x_{k,j}\text{ if }i<k\text{ and }j<\ell.
		\end{align*}		
Then
$\CC_q[Mat_{3,2}][x_{1,1}^{-1}, \Delta_2^{-1}]\cong  \CC_q\left[x_{1,1}^{\pm1},x_{1,2},\Delta_2^{\pm1}\right]
			\otimes \CC_q\left[x_{1,1}^{-1}x_{2,1},x_{1,1}^{-1}x_{3,1},\Delta_2^{-1}(x_{1,1}x_{3,2}-q^{-1}x_{1,2}x_{3,1})\right]$, 
where $\Delta_2=x_{1,1}x_{2,2}-q^{-1}x_{1,2}x_{2,1}$ and $\CC_q[-]$ denotes the subalgebra of $\CC_q[Mat_{3,2}][x_{1,1}^{-1}, \Delta_2^{-1}]$ generated by those elements appearing inside the brackets.
	
	The natural action of $U_q(\sl_3(\CC))$ extends to the localized algebra and a short examination verifies that 
		\[\left(\CC_q[Mat_{3,2}][x_{1,1}^{-1}, \Delta_2^{-1}]\right)^+=\CC_q\left[x_{1,1}^{\pm1},x_{1,2},\Delta_2^{\pm1}\right]\quad \text{and}\]
		\[\CC_q\left[x_{1,1}^{-1}x_{2,1},x_{1,1}^{-1}x_{3,1},\Delta_2^{-1}(x_{1,1}x_{3,2}-q^{-1}x_{1,2}x_{3,1})\right]\cong \CC[U],\]
	where the isomorphism is an isomorphism of $U_q(\sl_3(\CC))$-module algebras and the generators $x_1$ and $x_2$ of $\CC_q[U]$ are mapped to by $x_{1,1}^{-1}x_{2,1}$ and $\Delta_2^{-1}(x_{1,1}x_{3,2}-q^{-1}x_{1,2}x_{3,1})$, respectively.
\end{example}

Now, since $\CC_q[U]$ possessed an $\ii$-adapted basis for every $\ii\in R(\wnot)$ and the action of $e_i$ on $\CC[U]$ is induced by that of $E_i$ on $\CC_q[U]$, it follows that $\CC[U]$ possesses an $\ii$-adapted basis for each $\ii\in R(\wnot)$. Combining this fact with Theorem \ref{thm:factoring} leads us to the following corollary.

\begin{corollary}
\label{cor:highest}
	Let $A$ be a $\gg$-module algebra containing $\CC[U]$ as a $\gg$-module subalgebra. The map \linebreak$\mu:A^+\otimes \CC[U]\to A$ as in Theorem \ref{thm:factoring} is an isomorphism of $\nn_+$-modules if and only if there exists a $\gg$-module algebra $A'$ which contains $A$ as a $\gg$-module subalgebra and is generated by $(A')^+$ as a $\gg$-module algebra.
\end{corollary}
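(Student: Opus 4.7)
This is an iff, and I would prove the two directions by separate arguments. For the reverse direction ($A'$ exists implies $\mu$ is an isomorphism), fix $\ii \in R(\wnot)$; my goal is to verify the two hypotheses of Theorem~\ref{thm:factoring}(2) for the pair $(A,\CC[U])$. First I claim $A'$ is $\ii$-adapted: because $A'$ is generated by $(A')^+$ as a $\gg$-module algebra and $\bb_+$ acts on weight vectors in $(A')^+$ by scalars (with $\nn_+$ acting by zero), every element of $A'$ is a $\CC$-linear combination of products $u_1(v_1)\cdots u_k(v_k)$ with $u_j \in U(\nn_-)$ and $v_j \in (A')^+$, and the local nilpotency of the $e_i$'s together with the Leibniz rule then ensures that $e_\ii^{(top)}$ sends every nonzero element of $A'$ into $(A')^+$. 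Since $A\subseteq A'$ is $\nn_+$-stable, $e_\ii^{(top)}(A\setminus\{0\})\subseteq A\cap (A')^+ = A^+$, so $A$ itself is $\ii$-adapted. The classical specialization $q\to 1$ of Proposition~\ref{prop:adapted} gives $\CC[U]$ an $\ii$-adapted basis, hence $\nu_\ii(\CC[U]\setminus\{0\}) = \ZZ_{\ge 0}^m$ with $m = \ell(\wnot)$. Since $\CC[U]\subseteq A$ and $\nu_\ii(A\setminus\{0\})\subseteq \ZZ_{\ge 0}^m$, these two sets are equal, and Theorem~\ref{thm:factoring}(2) yields that $\mu$ is an isomorphism.

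For the forward direction ($\mu$ an isomorphism implies existence of $A'$), the na\"ive choice $A' = A$ cannot always work: when $A = \CC[U]$, $A^+ = \CC$, which only generates $\CC\subsetneq A$ as a $\gg$-module algebra. Instead, I would enlarge $A$ by adjoining highest weight vectors corresponding to the fundamental weights. By Main Theorem~\ref{thm:construction}, $A^+$ is canonically a $\bb_-$-module algebra with $f_i\triangleright a = f_i(a) - h_i(a)x_i$, and $A\cong A^+\otimes\CC[U]$ arises via the classical analog of Theorem~\ref{thm:qg}. The basic affine space algebra $\CC[G/U]$ is a $\gg$-module algebra generated by $(\CC[G/U])^+ = \CC[\chi_{\omega_i} : i\in I]$, and $\CC[U]$ can be identified with a $\gg$-module subalgebra of a localization of $\CC[G/U]$ along the big Bruhat cell. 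I would therefore take $A' := A^+\otimes\CC[G/U]$, endowed with a $\gg$-module algebra structure induced by the one on $A^+\otimes\CC[U]$ (after suitable localization so that $A\hookrightarrow A'$), and verify that $(A')^+ \supseteq A^+\otimes (\CC[G/U])^+$ generates all of $A'$ as a $\gg$-module algebra.

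The main obstacle lies in the forward direction: defining the $\gg$-module algebra structure on $A^+\otimes \CC[G/U]$ so as to properly extend the one on $A^+\otimes\CC[U]\cong A$, and verifying the embedding $A\hookrightarrow A'$ at the level of $\gg$-module algebras. An alternative route that might bypass these technicalities could appeal to Theorem~\ref{thm:G/U}: localizations of $\CC[G/U]^{\otimes n}$ are themselves factorizable and generated by their highest weight vectors, so one might hope to realize an arbitrary factorizable $A$ inside such a tensor power for sufficiently large $n$.
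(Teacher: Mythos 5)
Your reverse direction has two genuine gaps. First, the claim that generation of $A'$ by $(A')^+$ forces $e_{\ii}^{(top)}(A'\setminus\{0\})\subseteq (A')^+$ does not follow from ``local nilpotency plus the Leibniz rule'': $e_i^{(top)}$ is multiplicative on products (in a domain) but it is \emph{not additive}, and a general element of $A'$ is a linear combination of products $u_1(v_1)\cdots u_k(v_k)$, where cancellation of top terms destroys any control over $e_{\ii}^{(top)}$. Establishing $\ii$-adaptedness of $A'$ is precisely the hard content of the corollary, and the paper reaches it by a different route: one first proves that $\mu':(A')^+\otimes\CC[U]\to A'$ is \emph{surjective}, by checking that its image is a $\gg$-module subalgebra containing the generating set $(A')^+$ --- the key identity being $f_i(a)=\bigl(f_i(a)-h_i(a)x_i\bigr)+h_i(a)x_i$ with $f_i(a)-h_i(a)x_i\in(A')^+$ for $a\in(A')^+$, which uses the copy of $\CC[U]$ inside $A'$ in an essential way (your adaptedness argument never invokes it, which is a warning sign). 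Only then does Theorem \ref{thm:factoring}(2), applied to the isomorphism $\mu'$, yield that $A'$ is $\ii$-adapted with $\nu_\ii(A'\setminus\{0\})=\nu_\ii(\CC[U]\setminus\{0\})$, and these properties descend to $A$. Second, your assertion that $\nu_\ii(\CC[U]\setminus\{0\})=\ZZ_{\ge0}^m$ is false in general: an $\ii$-adapted basis only requires $\nu_\ii$ to be \emph{injective} on $\mathcal{B}$, and the image of $\nu_\ii$ on the dual canonical basis is the string cone for $\ii$, a proper subcone of $\ZZ_{\ge0}^m$ for most reduced words (already for $\gg=\sl_3$ and $\ii=(1,2,1)$). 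The correct way to obtain $\nu_\ii(A\setminus\{0\})=\nu_\ii(\CC[U]\setminus\{0\})$ is the sandwich $\nu_\ii(\CC[U]\setminus\{0\})\subseteq\nu_\ii(A\setminus\{0\})\subseteq\nu_\ii(A'\setminus\{0\})=\nu_\ii(\CC[U]\setminus\{0\})$.

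For the forward direction you correctly observe that $A'=A$ cannot work, but your proposed construction is left as a sketch with the hard step (endowing $A^+\otimes\CC[G/U]$ with a compatible $\gg$-module algebra structure and embedding $A$ into it) explicitly unresolved. The paper's construction avoids $\CC[G/U]$ altogether: take the algebra $\CC[T]$ with basis $\{v_\lambda\}_{\lambda\in\Lambda}$, made into a $\bb_-$-module algebra by $h_i(v_\lambda)=(\alpha_i^\vee,\lambda)v_\lambda$ and $f_i(v_\lambda)=0$; then $A':=(A^+\otimes\CC[T])\otimes\CC[U]$ is a $\gg$-module algebra by Theorems \ref{thm:b-} and \ref{thm:g}, it contains $A\cong A^+\otimes\CC[U]$, and the computation $((1\otimes v_{-\omega_i})\otimes 1)\bigl[f_i\rhd((1\otimes v_{\omega_i})\otimes 1)\bigr]=1\otimes1\otimes x_i$ shows that it is generated by its highest weight vectors. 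Your $\CC[G/U]$ idea, once localized so that $\CC[U]$ actually embeds, essentially reproduces $\CC[T]\otimes\CC[U]$, but as written it is not a proof.
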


The proof of the ``if" part of Corollary \ref{cor:highest} is very similar to the proof of Corollary \ref{cor:qhighest} so we do not reproduce it here. The ``only if" part will be a very easy consequence of the discussion at the end of Section \ref{sect:classical}, so we will address it there. Also, just as in the quantum setting, Corollary \ref{cor:highest} leads to a proof of Theorem \ref{thm:G/U}, which is nearly identical to that of Theorem \ref{thm:qG/U}, so we will not include it here. We do, however, note that instead of two families of quantities as arose in the proof of Corollary \ref{cor:qhighest}, only one arises in the proof of the ``if" part of Corollary \ref{cor:highest}:
\[f_i(a)-h_i(a)x_i\]
where $i\in I$ and $a\in A'$. As in the quantum case, if $a\in A^+$, then $f_i(a)-h_i(a)x_i\in A^+$. Once again it is natural to ask what relations the family of operators $f_i-m_ih_i$ satisfies, where $m_i$ denotes multiplication by $x_i$. Or to put it another way, do these operators indicate the action of a known algebra or Lie algebra which is somehow related to $\gg$? Again, we can answer in the affirmative, which the following theorem summarizes.

\begin{theorem}
\label{thm:b-}
	Let $A$ be a $\gg$-module algebra containing $\CC[U]$ as a $\gg$-module subalgebra. Then $A$ has another structure of a $\bb_-$-module algebra with action given by the formulas
		\[h_i\rhd a=h_i(a),\quad f_i\rhd a=f_i(a)-h_i(a)x_i.\]
	In particular, the subalgebra $A^+$ is invariant under this $\bb_-$ action and is therefore a $\bb_-$-module subalgebra.
\end{theorem}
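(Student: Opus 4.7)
The plan is to check in turn that $\widehat{f_i}=f_i-L_{x_i}h_i$ (with $L_{x_i}$ denoting multiplication by $x_i$) is a derivation of $A$, that the operators $h_i$ and $\widehat{f_i}$ satisfy the Chevalley relations $[h_i,h_j]=0$ and $[h_i,\widehat{f_j}]=-c_{i,j}\widehat{f_j}$, that they satisfy the Serre relations $(\mathrm{ad}\,\widehat{f_i})^{1-c_{i,j}}(\widehat{f_j})=0$ for $i\ne j$, and finally that $A^+$ is preserved. The first point is automatic: commutativity of $A$ makes $-L_{x_i}h_i$ a derivation (since $-x_i h_i(ab)=-x_i h_i(a)\cdot b+a\cdot(-x_i h_i(b))$), so $\widehat{f_i}=f_i-L_{x_i}h_i$ is a sum of two derivations.

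The Chevalley relation follows from the operator identity $[h_i,L_{x_j}]=L_{h_i(x_j)}=-c_{i,j}L_{x_j}$: writing $[h_i,\widehat{f_j}]=[h_i,f_j]-[h_i,L_{x_j}]h_j=-c_{i,j}f_j+c_{i,j}L_{x_j}h_j=-c_{i,j}\widehat{f_j}$. Invariance of $A^+$ uses a similar one-line calculation: expanding $[e_k,\widehat{f_i}]$ with the help of $[e_k,f_i]=\delta_{k,i}h_i$, $[e_k,L_{x_i}]=\delta_{k,i}\cdot\mathrm{id}$, and $[e_k,h_i]=-c_{i,k}e_k$ yields the cancellation $[e_k,\widehat{f_i}]=c_{i,k}L_{x_i}e_k$, which visibly annihilates any $a\in A^+$.

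The main obstacle is the Serre identity $S_{i,j}:=(\mathrm{ad}\,\widehat{f_i})^{1-c_{i,j}}(\widehat{f_j})=0$. Since iterated commutators of derivations are again derivations, $S_{i,j}$ is itself a derivation of $A$. My approach is to expand each $\widehat{f_k}=f_k-L_{x_k}h_k$ inside $S_{i,j}$ and separate the resulting terms by the number of $L_{x_k}$ factors. The part with no $L_{x_k}$ is precisely $(\mathrm{ad}\,f_i)^{1-c_{i,j}}(f_j)$, which vanishes by the Serre relation of $\gg$. The remaining mixed terms have to cancel among themselves; the key identities for this bookkeeping are $[L_{x_i},h_j]=c_{j,i}L_{x_i}$, $[f_j,L_{x_i}]=L_{f_j(x_i)}$, the commutativity of $A$, and most crucially the classical Serre relation inside $\CC[U]$, namely $f_i^{1-c_{i,j}}(x_j)=0$ (Remark \ref{rem:nilpotentf}). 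This cancellation is the combinatorial heart of the argument and mirrors the quantum version in the proof of Theorem \ref{thm:qgd}; thanks to commutativity and the absence of braiding, the classical version is substantially cleaner, but it is still the step that does all the work.
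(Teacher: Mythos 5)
Your treatment of the routine parts is correct and complete: $\widehat{f_i}=f_i-L_{x_i}h_i$ is a derivation because $A$ is commutative, the relations $[h_i,h_j]=0$ and $[h_i,\widehat{f_j}]=-c_{i,j}\widehat{f_j}$ follow from $[h_i,L_{x_j}]=L_{h_i(x_j)}=-c_{i,j}L_{x_j}$, and the computation $[e_k,\widehat{f_i}]=c_{i,k}L_{x_i}e_k$ does establish $\widehat{f_i}(A^+)\subseteq A^+$. The problem is that the one step you defer --- the Serre identity $(\mathrm{ad}\,\widehat{f_i})^{1-c_{i,j}}(\widehat{f_j})=0$ --- is the entire substance of the theorem (the introduction says explicitly that this is the hard part), and your proposal does not prove it. Saying that ``the remaining mixed terms have to cancel among themselves'' after listing the relevant commutator identities is an assertion, not an argument; nothing in your sketch explains \emph{why} the terms of positive $L$-degree vanish, and they do not in fact vanish by pairwise cancellation. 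In the paper's explicit expansion one finds
\begin{align*}
 (\mathrm{ad}\,\widehat{f_i})^{(n)}(\widehat{f_j})=\sum_{k=0}^n\Bigl(\prod_{\ell=n-k}^{n-1}(c_{i,j}+\ell)\Bigr)&\Bigl[x_i^{(k)}\otimes (\mathrm{ad}\,f_i)^{(n-k)}(f_j)-\tfrac{d_i}{d_j}\,x_i^{(k-1)}f_i^{(n-k)}(x_j)\otimes f_i\\
 &\quad-\tfrac{1}{d_j}\,x_i^{(k)}f_i^{(n-k)}(x_j)\otimes ((n-k)d_ih_i+d_jh_j)\Bigr],
\end{align*}
and at $n=1-c_{i,j}$ the $k=0$ block dies because $(\mathrm{ad}\,f_i)^{(n)}(f_j)=0$ and $f_i^{(1-c_{i,j})}(x_j)=0$, while every block with $k\ge 1$ dies because the scalar $\prod_{\ell=n-k}^{n-1}(c_{i,j}+\ell)$ contains the factor $c_{i,j}+(n-1)=0$. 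Each surviving group of terms is individually zero for a specific structural or numerical reason; establishing the displayed formula by induction is precisely the combinatorial work your proposal omits.

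For comparison, the paper's primary route avoids this induction altogether: it proves the Serre relations first in the quantum setting, by embedding $\CC_q[U]\rtimes U_q(\bb_-)$ into $U_q(\bb_-)\otimes U_q(\bb_-)$ via the trivializing isomorphism $\tau$ of Theorem \ref{thm:triv}, observing that the image of $1\otimes F_i-x_i\otimes \frac{1-K_i^{-2}}{q_i-q_i^{-1}}$ is the sum $K_i^{-1}\otimes F_i+F_iK_i^{-2}\otimes K_i^{-2}$ of two $q$-commuting families each satisfying the quantum Serre relations, invoking Lemma \ref{lem:qsr}, and then specializing at $q=1$ via the $\mathscr{A}$-form to obtain Proposition \ref{prop:hat}. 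Either that argument or the explicit inductive formula above must be supplied before your proof can be considered complete.
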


Theorem \ref{thm:b-} is proved in Section \ref{pf:b-,g}. As in the quantum case, Theorem \ref{thm:b-} is in some sense a statement about the existence of a functor. To make this precise, we introduce a category whose objects bear properties similar to those found in Theorem \ref{thm:factoring}.

\begin{definition}
	Let $\mathcal{C}_\gg$ be the category whose objects consist of pairs $(A,\varphi_A)$, where 
	\begin{itemize}
		\item $A$ is an adapted $\gg$-module algebra such that $\nu_{\bf i}(A\setminus \{0\})=\nu_{\bf i}(\CC[U]\setminus \{0\})$ for all ${\bf i}\in R(\wnot)$;
		\item $\varphi_A:\CC[U]\hookrightarrow A$ is an embedding of $\gg$-module algebras.
	\end{itemize}
	A morphism $(A,\varphi_A)\to (B,\varphi_B)$ in $\mathcal{C}_\gg$ is a homomorphism of $\gg$-module algebras $\psi:A\to B$ such that $\psi\circ \varphi_A=\varphi_B$.
\end{definition}

Given a homomorphism of $\gg$-module algebras $\psi:A\to B$, it follows that $\psi(A^+)\subseteq B^+$, so $\psi|_{A^+}$ may be thought of as a map of $\CC$-algebras $A^+\to B^+$. If $\psi$ is a morphism in $\mathcal{C}_\gg$, $(A,\varphi_A)\to (B,\varphi_B)$, then actually $\psi|_{A^+}$ is a homomorphism of $\bb_-$-module algebras, where the $\bb_-$-module structure is the one given in Theorem \ref{thm:b-}. As a consequence, we have the following corollary.

\begin{corollary}
	There is a functor $(-)^+:\mathcal{C}_\gg\to \bb_--{\bf ModAlg}$ (denoted $P\circ F$ in Section \ref{sect:Introduction}) which assigns to an object $(A,\varphi_A)$ of $\mathcal{C}_\gg$ its subalgebra of highest weight vectors $A^+$, equipped with the $\bb_-$-module algebra structure of Theorem \ref{thm:b-}. The functor $(-)^+$ is given on morphisms by restriction.
\end{corollary}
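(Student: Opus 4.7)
The plan is to check (i) the assignment is well defined on objects, (ii) it is well defined on morphisms, and (iii) it preserves identities and compositions. Step (i) is nothing but a packaging of Theorem \ref{thm:b-}: given an object $(A,\varphi_A)$ of $\mathcal{C}_\gg$, the embedding $\varphi_A:\CC[U]\hookrightarrow A$ identifies a distinguished copy of $\CC[U]$ inside $A$, and Theorem \ref{thm:b-} (applied using $x_i:=\varphi_A(x_i)$) endows $A$ with a $\bb_-$-module algebra structure leaving $A^+$ invariant; restricting gives the $\bb_-$-module algebra $(A^+,\rhd)$. Thus the assignment on objects depends on the pair $(A,\varphi_A)$, not merely on $A$ — this is why the datum of the embedding is part of the definition of $\mathcal{C}_\gg$.

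For step (ii), let $\psi:(A,\varphi_A)\to(B,\varphi_B)$ be a morphism in $\mathcal{C}_\gg$. First I would observe that $\psi$ is $\gg$-equivariant, so for $a\in A^+$ and $i\in I$ we have $e_i(\psi(a))=\psi(e_i(a))=0$, giving $\psi(A^+)\subseteq B^+$. Being the restriction of an algebra homomorphism, $\psi|_{A^+}$ is a $\CC$-algebra map. The Cartan compatibility is immediate: $\psi(h_i\rhd a)=\psi(h_i(a))=h_i(\psi(a))=h_i\rhd\psi(a)$ because the $\bb_-$-Cartan action is inherited from $\gg$. The only nontrivial verification is compatibility with $f_i\rhd$, and it is precisely here that the condition $\psi\circ\varphi_A=\varphi_B$ enters:
\begin{equation*}
\psi(f_i\rhd a)=\psi\bigl(f_i(a)-h_i(a)\varphi_A(x_i)\bigr)=f_i(\psi(a))-h_i(\psi(a))\,\psi(\varphi_A(x_i))=f_i(\psi(a))-h_i(\psi(a))\,\varphi_B(x_i)=f_i\rhd\psi(a),
\end{equation*}
where the first equality uses $\psi$-linearity and the definition of $\rhd$ on $A$, the second uses that $\psi$ is a $\gg$-module map and an algebra map, and the third uses $\psi\circ\varphi_A=\varphi_B$. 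Hence $\psi|_{A^+}$ is a morphism of $\bb_-$-module algebras.

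Step (iii) is automatic: restriction preserves identity maps and is compatible with composition, so if $\psi_1:(A,\varphi_A)\to(B,\varphi_B)$ and $\psi_2:(B,\varphi_B)\to(C,\varphi_C)$ are morphisms in $\mathcal{C}_\gg$, then $(\psi_2\circ\psi_1)|_{A^+}=\psi_2|_{B^+}\circ\psi_1|_{A^+}$ and $\mathrm{id}_A|_{A^+}=\mathrm{id}_{A^+}$. This confirms functoriality.

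There is no genuine obstacle — all nontrivial content has been absorbed into Theorem \ref{thm:b-}. The single point that deserves emphasis is the interplay between the two axioms in the definition of a morphism in $\mathcal{C}_\gg$: $\gg$-equivariance handles $e_i$, $h_i$, and the ``bare'' $f_i$ term, while the compatibility $\psi\circ\varphi_A=\varphi_B$ is exactly what is needed to transport the correction term $-h_i(a)x_i$ — built from the distinguished generators of $\CC[U]$ — from $A$ to $B$. Without the latter condition, restriction would fail to intertwine the $f_i\rhd$-actions, explaining why the embedding $\varphi_A$ is an essential piece of the data of an object of $\mathcal{C}_\gg$.
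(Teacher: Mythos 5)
Your proposal is correct and follows exactly the route the paper intends: the paper's own "proof" is just the preceding paragraph observing that $\psi(A^+)\subseteq B^+$ and that $\psi|_{A^+}$ intertwines the $\bb_-$-actions, and your computation with $f_i\rhd$ (using $\gg$-equivariance together with $\psi\circ\varphi_A=\varphi_B$) is precisely the verification being left implicit there. No discrepancies.
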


Again, we hope for a quasi-inverse functor for $(-)^+$, but we must first state the following theorem.

\begin{theorem}
\label{thm:g}
	If $A$ is a $\bb_-$-module algebra, then the usual tensor product of commutative algebras $A\otimes \CC[U]$ has the structure of a $\gg$-module algebra determined by:
	\begin{align*}
		h_i\rhd(a\otimes x)&=h_i(a)\otimes x+a\otimes h_i(x)\\
		e_i\rhd(a\otimes x)&=a\otimes e_i(x)\\
		f_i\rhd(a\otimes x)&=f_i(a)\otimes x+h_i(a)\otimes x_ix+a\otimes f_i(x).
	\end{align*}
\end{theorem}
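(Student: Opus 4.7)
The plan is a direct verification that the prescribed formulas define a $\gg$-action by derivations on $A\otimes\CC[U]$. Since $\gg$ is presented by the Chevalley generators $h_i,e_i,f_i$ subject to the Chevalley-Serre relations, it suffices to establish: (i) each generator acts as a derivation of the commutative algebra $A\otimes\CC[U]$; (ii) the bracket relations $[h_i,h_j]=0$, $[h_i,e_j]=c_{i,j}e_j$, $[h_i,f_j]=-c_{i,j}f_j$, and $[e_i,f_j]=\delta_{i,j}h_i$ hold as operators on $A\otimes\CC[U]$; and (iii) the Serre relations among the $e_i$'s and among the $f_i$'s are satisfied.

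For (i), the operator $h_i\rhd$ is the standard tensor action associated to the primitive coproduct $h_i\mapsto h_i\otimes 1+1\otimes h_i$, and is therefore a derivation because $h_i$ is a derivation of each factor. The operator $e_i\rhd$ acts only on $\CC[U]$, where it is a derivation. For $f_i\rhd$, I would decompose it as $f_i\rhd = D_i^A + T_i + D_i^U$, with $D_i^A(a\otimes x)=f_i(a)\otimes x$, $T_i(a\otimes x)=h_i(a)\otimes x_ix$, and $D_i^U(a\otimes x)=a\otimes f_i(x)$, and verify the Leibniz rule summand by summand. The only point worth noting is that $T_i$ is a derivation; this follows from $h_i$ being a derivation of $A$ combined with the commutativity of $\CC[U]$, which lets the factor $x_i$ distribute correctly.

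The bracket relations in (ii) are straightforward computations. The identity $h_i(x_j)=-c_{i,j}x_j$ on $\CC[U]$ and the $\bb_-$-commutators on $A$ immediately yield $[h_i,e_j]\rhd=c_{i,j}e_j\rhd$ and $[h_i,f_j]\rhd=-c_{i,j}f_j\rhd$; the cross term $T_j$ contributes a factor $-c_{i,j}$ via $h_i(x_j)=-c_{i,j}x_j$ that exactly matches the $A$-component. For $[e_i,f_j]$, the only nontrivial contribution comes from $e_i$ passing through the cross term $T_j$: using $e_i(x_j)=\delta_{i,j}$ and the $\gg$-relation $[e_i,f_j]=\delta_{i,j}h_i$ on $\CC[U]$, the operator collapses to $\delta_{i,j}(h_j(a)\otimes x + a\otimes h_i(x)) = \delta_{i,j}\,h_i\rhd(a\otimes x)$.

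The Serre relation in the $e_i$'s is automatic, since $e_i\rhd$ acts only on the second factor and $\CC[U]$ is a $\gg$-module algebra. The main obstacle is the Serre relation in the $f_i$'s. My approach is to expand the alternating Serre sum $\sum_k(-1)^k(f_i\rhd)^{(k)}(f_j\rhd)(f_i\rhd)^{(1-c_{i,j}-k)}$ using $f_i\rhd = D_i^A + T_i + D_i^U$ and to group the resulting terms by the pattern of $A$-type, $T$-type, and $U$-type factors. Contributions consisting only of $D_i^A$'s together with a central $D_j^A$ cancel by the Serre relations for the $f_i$'s in $\bb_-$ acting on $A$; those consisting only of $D_i^U$'s together with a central $D_j^U$ cancel by the Serre relations in $\gg$ acting on $\CC[U]$. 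The remaining mixed terms, which involve at least one cross operator $T_k$, must cancel among themselves; the key inputs are the vanishing $f_i^{1-c_{i,j}}(x_j)=0$ in $\CC[U]$ (Remark~\ref{rem:nilpotentf}), the eigenvalue rule $h_k(x_\ell)=-c_{k,\ell}x_\ell$, and the $\bb_-$-commutators between $h_k$ and $f_\ell$ on $A$. This combinatorial bookkeeping is the technical core of the argument.
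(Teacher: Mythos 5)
The skeleton of your argument is fine, and the easy parts check out: the derivation property, the brackets $[h_i,e_j]$, $[h_i,f_j]$, $[e_i,f_j]=\delta_{i,j}h_i$ (your computation of the cross term via $e_i(x_j)=\delta_{i,j}$ is exactly right), and the $e$-Serre relations. The genuine gap is in the $f$-Serre relations, which is precisely the step the paper treats as the main difficulty. First, your term-grouping is wrong as stated: after removing the words built purely from $D^A$'s and purely from $D^U$'s, the leftover terms do \emph{not} all contain a cross operator $T_k$ --- there are words mixing $D^A$'s and $D^U$'s with no $T$ at all. (These do vanish, but not by your stated mechanism; one must observe that $D_i^A+D_i^U$ is the coproduct action of $f_i$ on a tensor product of two $U(\nn_-)$-modules satisfying Serre, so the whole $T$-free part vanishes at once.) Second, and more seriously, the cancellation of the terms that do contain a $T_k$ is asserted, not proved, and it is not routine: $T_i$ commutes with neither $D_j^A$ nor $D_j^U$ (e.g.\ $[T_i,D_j^A](a\otimes x)=-c_{i,j}f_j(a)\otimes x_ix$), so the expansion produces a large family of terms whose cancellation hinges on exact coefficient identities. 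The paper records the closed formula one needs for this route, namely the inductive expression for $(\operatorname{ad}\hat f_i)^{(n)}(\hat f_j)$ with coefficients $\prod_{\ell=n-k}^{n-1}(c_{i,j}+\ell)$, together with $f_i^{1-c_{i,j}}(x_j)=0$; without establishing such a formula your ``bookkeeping'' step is an unproved claim, not a proof.

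For comparison, the paper avoids this computation entirely by a structural argument: it shows (Proposition \ref{prop:hat}) that $h_i\mapsto 1\otimes h_i$, $f_i\mapsto 1\otimes f_i-x_i\otimes h_i$ defines an algebra homomorphism $U(\bb_-)\to\CC[U]\rtimes U(\bb_-)$, by conjugating with the trivialization $\tau$ of $U_q(\bb_-)\rtimes U_q(\bb_-)$ (Theorem \ref{thm:triv}), invoking Lemma \ref{lem:qsr} for the quantum Serre relations of the resulting elements $K_i^{-1}\otimes F_i+F_iK_i^{-2}\otimes K_i^{-2}$, and specializing at $q=1$. It then twists by the sign automorphism $s\otimes\mathrm{id}$ to land in $\CC[U]^{op}\rtimes U(\bb_-)$ (Lemma \ref{lem:b-embed}), whose natural action on $A\otimes\CC[U]^{op}$ yields exactly the prescribed $f_i\rhd$. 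If you want to complete your direct approach, you must either prove the displayed $(\operatorname{ad}\hat f_i)^{(n)}(\hat f_j)$ identity by induction or substitute a structural argument of this kind; as written, the core of the theorem remains unestablished.
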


Theorem \ref{thm:g} will be proved in Section \ref{pf:b-,g}. Since the action of each $e_i$ is completely described on the $\CC[U]$ factor and $\CC[U]$ is adapted, the following corollary is almost immediate.

\begin{corollary}
	There is a functor $(-)\otimes \CC[U]:\bb_--{\bf ModAlg}\to \mathcal{C}_\gg$ which assigns to a $\bb_-$-module algebra $A$, the pair $(A\otimes \CC[U],1\otimes \text{id})$, where $A\otimes \CC[U]$ is given the $\gg$-module algebra structure of Theorem \ref{thm:g}. The functor $(-)\otimes \CC[U]$ is given on morphisms by $\psi\mapsto \psi\otimes \text{id}$.
\end{corollary}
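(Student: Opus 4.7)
The plan is to verify, in order, that: (i) $(A\otimes \CC[U],1\otimes \text{id})$ is a well-defined object of $\mathcal{C}_\gg$; (ii) $\psi\otimes\text{id}$ is a morphism in $\mathcal{C}_\gg$ whenever $\psi:A\to B$ is a morphism of $\bb_-$-module algebras; and (iii) identities and composition are preserved. Parts (ii) and (iii) are essentially formal once (i) is established, so the substantive content lies in (i).

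Theorem \ref{thm:g} already equips $A\otimes \CC[U]$ with the structure of a $\gg$-module algebra, so for (i) it remains to check: (a) that $1\otimes \text{id}:\CC[U]\hookrightarrow A\otimes \CC[U]$ is an embedding of $\gg$-module algebras; (b) that $A\otimes \CC[U]$ is adapted; and (c) that $\nu_{\bf i}((A\otimes \CC[U])\setminus\{0\}) = \nu_{\bf i}(\CC[U]\setminus\{0\})$ for every ${\bf i}\in R(\wnot)$. Claim (a) is a direct inspection of the formulas in Theorem \ref{thm:g} combined with the observation that $h_i$, $e_i$, and $f_i$ all annihilate $1\in A$ (they are primitive in $U(\gg)$ and $A$ is a module algebra).

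The core step is (b) and (c), which follow from a single computation based on the critical observation that, by Theorem \ref{thm:g}, $e_i$ acts only on the second tensor factor: $e_i\rhd(a\otimes x) = a\otimes e_i(x)$. Fixing a $\CC$-basis $\{a_\alpha\}$ of $A$, every nonzero $y\in A\otimes \CC[U]$ has a unique expression $y = \sum_\alpha a_\alpha\otimes x_\alpha$ with finitely many nonzero $x_\alpha\in \CC[U]$, and $e_j\rhd y = \sum_\alpha a_\alpha\otimes e_j(x_\alpha)$. An induction along $\bf i$ shows that $\ell_{i_j}$ at each stage equals the maximum over the surviving $x_\alpha$, so that $e_{\bf i}^{(top)}(y) = \sum_{\alpha\in S(y)} a_\alpha\otimes e_{\bf i}^{(top)}(x_\alpha)$, where $S(y)$ is the set of $\alpha$ achieving $\nu_{\bf i}(x_\alpha)=\nu_{\bf i}(y)$. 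Using the known ${\bf i}$-adaptedness of $\CC[U]$ (the classical limit of Proposition \ref{prop:adapted} for every ${\bf i}\in R(\wnot)$), we have $e_{\bf i}^{(top)}(x_\alpha)\in \CC[U]^+=\CC$, hence $e_{\bf i}^{(top)}(y)\in A\otimes \CC\subset (A\otimes \CC[U])^+$, which gives (b). The same analysis yields (c): the inclusion $\subseteq$ because $\nu_{\bf i}(y) = \nu_{\bf i}(x_{\alpha_0})$ for any $\alpha_0\in S(y)$, and the reverse inclusion because $\nu_{\bf i}(1\otimes x) = \nu_{\bf i}(x)$ for any $x\in \CC[U]\setminus\{0\}$.

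For (ii), $\psi\otimes\text{id}$ is a $\CC$-algebra homomorphism by the universal property of the tensor product, commutes with the $\gg$-action by inspection of the formulas in Theorem \ref{thm:g} (using that $\psi$ is $\bb_-$-equivariant and that the action on the second factor is independent of $A$), and intertwines the embeddings $1\otimes \text{id}$. Part (iii) is then immediate from $(\psi_2\circ \psi_1)\otimes \text{id} = (\psi_2\otimes\text{id})\circ(\psi_1\otimes\text{id})$ and $\text{id}_A\otimes \text{id} = \text{id}_{A\otimes\CC[U]}$. The main obstacle is clearly (b), the adaptedness of $A\otimes \CC[U]$, which hinges entirely on the fortunate fact that the $e_i$ action is concentrated on the $\CC[U]$ factor.
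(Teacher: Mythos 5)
Your proposal is correct and follows exactly the route the paper has in mind: the paper's entire justification is the remark that the $e_i$-action from Theorem \ref{thm:g} is concentrated on the $\CC[U]$ factor and that $\CC[U]$ is adapted, which is precisely the decomposition $y=\sum_\alpha a_\alpha\otimes x_\alpha$ argument you carry out in detail. The remaining checks (the embedding $1\otimes\mathrm{id}$, equivariance of $\psi\otimes\mathrm{id}$, functoriality) are the same formal verifications the paper leaves implicit.
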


As promised, $(-)\otimes \CC[U]$ is the desired quasi-inverse for $(-)^+$.

\begin{theorem}
\label{thm:equiv}
	The functors $(-)^+:\mathcal{C}_\gg\to \bb_--{\bf ModAlg}$ and $(-)\otimes \CC[U]:\bb_--{\bf ModAlg}\to \mathcal{C}_\gg$ are quasi-inverses of each other and thus provide equivalences of categories.
\end{theorem}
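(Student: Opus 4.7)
The plan is to produce natural isomorphisms
\[
\eta_A\colon A^+\otimes \CC[U]\xrightarrow{\ \sim\ } A, \qquad \varepsilon_B\colon (B\otimes\CC[U])^+\xrightarrow{\ \sim\ } B,
\]
showing that the two composites $((-)\otimes\CC[U])\circ (-)^+$ and $(-)^+\circ ((-)\otimes\CC[U])$ are naturally isomorphic to the respective identity functors. I take $\eta_A$ to be the restriction of multiplication, $a\otimes x\mapsto a\cdot\varphi_A(x)$, and $\varepsilon_B$ to be the identification $b\otimes 1\mapsto b$.

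For $\eta_A$, I apply Theorem \ref{thm:factoring} with $A_0=\varphi_A(\CC[U])\cong \CC[U]$. Since the classical limit of the dual canonical basis gives an $\mathbf{i}$-adapted basis of $\CC[U]$ for each $\mathbf{i}\in R(\wnot)$ (in the spirit of Proposition \ref{prop:adapted}, as noted preceding Corollary \ref{cor:highest}), and by the very definition of $\mathcal{C}_\gg$ we have $\nu_\mathbf{i}(A\setminus\{0\})=\nu_\mathbf{i}(\CC[U]\setminus\{0\})$ for every $\mathbf{i}\in R(\wnot)$ with $A$ itself $\mathbf{i}$-adapted for a suitable such $\mathbf{i}$, Theorem \ref{thm:factoring}(2) furnishes the desired bijection of $\nn_+$-modules. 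Commutativity of $A$ together with the usual tensor multiplication on $A^+\otimes\CC[U]$ prescribed by Theorem \ref{thm:g} makes $\eta_A$ an algebra map, and $\eta_A$ clearly sends $1\otimes x$ to $\varphi_A(x)$, so the embeddings from $\CC[U]$ are preserved. Compatibility with $e_i$ and $h_i$ is immediate from the Leibniz rule and the vanishing $e_i(a)=0$ for $a\in A^+$. The $f_i$ check is the delicate one: combining Theorems \ref{thm:b-} and \ref{thm:g},
\[
\eta_A\bigl(f_i\triangleright(a\otimes x)\bigr) = \bigl(f_i(a)-h_i(a)x_i\bigr)x + h_i(a)\,x_i x + a\,f_i(x) = f_i(a)x + a f_i(x) = f_i\bigl(\eta_A(a\otimes x)\bigr),
\]
the two $h_i(a)x_ix$ contributions cancelling exactly.

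For $\varepsilon_B$, the $e_i$-action on $B\otimes\CC[U]$ from Theorem \ref{thm:g} is concentrated in the $\CC[U]$-factor, so
\[
(B\otimes\CC[U])^+ = B\otimes \CC[U]^+ = B\otimes \CC,
\]
using $\CC[U]^+=\CC$ (the derivations $e_i$ of $\CC[U]$ annihilate only constants; equivalently, $\CC[U]$ is the graded dual of the Verma module $M_0$ whose highest weight space is one-dimensional). Thus $\varepsilon_B$ is a bijection of algebras. To see it is a homomorphism of $\bb_-$-module algebras, I plug $a\otimes 1$ into the formulas of Theorem \ref{thm:b-}, computing the right-hand sides by the $\gg$-structure of Theorem \ref{thm:g}:
\[
h_i\triangleright(a\otimes 1)=h_i(a)\otimes 1,\qquad f_i\triangleright(a\otimes 1)= f_i(a)\otimes 1 + h_i(a)\otimes x_i - h_i(a)\otimes x_i = f_i(a)\otimes 1,
\]
recovering precisely the original $\bb_-$-action on $B$.

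Naturality of $\eta$ follows since any morphism $\psi\colon(A,\varphi_A)\to(C,\varphi_C)$ in $\mathcal{C}_\gg$ is multiplicative and satisfies $\psi\circ\varphi_A=\varphi_C$, yielding $\psi\circ\eta_A=\eta_C\circ(\psi|_{A^+}\otimes\mathrm{id})$; naturality of $\varepsilon$ is the trivial observation that $(\phi\otimes\mathrm{id})|_{B\otimes 1}=\phi$. I expect the main obstacle in the argument to be the $f_i$-compatibility of $\eta_A$: the specific shapes of the twisted $\bb_-$-action on $A^+$ in Theorem \ref{thm:b-} and the twisted $\gg$-action on $A^+\otimes\CC[U]$ in Theorem \ref{thm:g} are engineered precisely so that this single computation closes, while all other verifications reduce to routine Leibniz manipulations.
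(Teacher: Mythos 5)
Your proposal is correct and follows essentially the same route as the paper, which proves the quantum analogue (Theorem \ref{thm:qequiv}) by exactly this construction: the unit is multiplication $a\otimes x\mapsto a\,\varphi_A(x)$, shown to be an isomorphism via Theorem \ref{thm:factoring}/\ref{thm:qfactoring} and checked to intertwine the twisted $f_i$-actions of Theorems \ref{thm:b-} and \ref{thm:g}, while the counit is $b\mapsto b\otimes 1$ onto $(B\otimes\CC[U])^+=B\otimes\CC$, with the two $h_i(a)\otimes x_i$ terms cancelling just as in your computation. The paper explicitly omits the classical proof as being identical to the quantum one, and your write-up supplies precisely that omitted argument.
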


The proof of Theorem \ref{thm:equiv} is very similar to that of \ref{thm:qequiv}, so we do not include it here. Now, it is well-known that if $A$ and $B$ are $\gg$-module algebras, then so is $A\otimes B$. Here $A\otimes B$ has the na\"ive multiplication $(a\otimes b)(a'\otimes b')=aa'\otimes bb'$. So if $(A,\varphi_A)$ and $(B,\varphi_B)$ are objects of $\mathcal{C}_\gg$, then $A\otimes B$ is a $\gg$-module algebra. Furthermore, it is obvious that $1\otimes \varphi_B$ and $\varphi_A\otimes 1$ are injections $\CC[U]\hookrightarrow A{\otimes} B$. However, it is not immediately obvious that $A{\otimes} B$ is adapted with $\nu_{\bf i}(A{\otimes} B\setminus\{0\})=\nu_{\bf i}(\CC[U]\setminus\{0\})\ \forall {\bf i}\in R(\wnot)$. Nevertheless, this is the case and the following proposition asserts as much and is proved in Section \ref{pf:tensor}.

\begin{proposition}
\label{prop:tensor}
	If $(A,\varphi_A)$ and $(B,\varphi_B)$ are objects of $\mathcal{C}_\gg$, then $(A\otimes B, 1\otimes \varphi_B)$ and $(A\otimes B,\varphi_A\otimes 1)$ are objects of $\mathcal{C}_\gg$ as well. 
\end{proposition}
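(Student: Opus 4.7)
The plan is to invoke Theorem \ref{thm:classify}, which reduces the verification that $(A\otimes B, 1\otimes \varphi_B)\in \mathcal{C}_\gg$ (the case $(A\otimes B, \varphi_A\otimes 1)$ being entirely symmetric) to establishing that $A\otimes B$ is $\ii$-adapted for some $\ii\in R(\wnot)$ with $\nu_\ii(A\otimes B\setminus\{0\})=\nu_\ii(\CC[U]\setminus\{0\})$; the required $\ii$-adapted basis of $1\otimes\varphi_B(\CC[U])$ is provided by the classical analog of Proposition \ref{prop:adapted}. The routine verifications that $A\otimes B$ is a $\gg$-module algebra under the diagonal coproduct action and that $1\otimes\varphi_B$ and $\varphi_A\otimes 1$ are $\gg$-module algebra embeddings of $\CC[U]$ are immediate from the definitions.

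Since $A, B\in \mathcal{C}_\gg$, Theorem \ref{thm:factoring} supplies multiplication $\nn_+$-module isomorphisms $A\cong A^+\otimes \varphi_A(\CC[U])$ and $B\cong B^+\otimes \varphi_B(\CC[U])$; tensoring yields an $\nn_+$-module isomorphism
\[ A\otimes B\;\cong\; A^+\otimes B^+\otimes \bigl(\CC[U]\otimes \CC[U]\bigr), \]
where $\nn_+$ annihilates $A^+\otimes B^+$ and acts on the rightmost factor via the standard diagonal coproduct action. For any $z\in A\otimes B$ written as $z=\sum_\alpha m_\alpha\otimes \zeta_\alpha$ with $\{m_\alpha\}\subset A^+\otimes B^+$ linearly independent and each $\zeta_\alpha\in \CC[U]\otimes \CC[U]\setminus\{0\}$, a short induction on the length of $\ii$ shows $\nu_\ii(z)=\max_\alpha \nu_\ii(\zeta_\alpha)$ in lex order and
\[ e_\ii^{(top)}(z)=\sum_{\alpha:\nu_\ii(\zeta_\alpha)=\nu_\ii(z)}m_\alpha\otimes e_\ii^{(top)}(\zeta_\alpha), \]
which is nonzero by linear independence of the $m_\alpha$. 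Consequently, both the $\ii$-adaptedness of $A\otimes B$ and the equality $\nu_\ii(A\otimes B\setminus\{0\})=\nu_\ii(\CC[U]\setminus\{0\})$ reduce to the analogous statements for $\CC[U]\otimes \CC[U]$ with its diagonal $\nn_+$-action.

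The main obstacle is therefore to prove that $\CC[U]\otimes \CC[U]$ is factorizable over $1\otimes \CC[U]$; Theorem \ref{thm:factoring}(2) then delivers the required $\ii$-adaptedness and $\nu_\ii$-equality for every $\ii\in R(\wnot)$. I would handle this via the ``if'' direction of Corollary \ref{cor:highest}: it suffices to exhibit a $\gg$-module algebra $A'\supset \CC[U]\otimes\CC[U]$ that is generated by $(A')^+$ as a $\gg$-module algebra. A natural choice is $A'=\CC[G/U]\otimes \CC[G/U]$ with the diagonal $\gg$-action, into which $\CC[U]\otimes \CC[U]$ embeds (up to localization) via the inclusion $\CC[U]\hookrightarrow \CC[G/U]$ furnished by Theorem \ref{thm:G/U}. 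That $A'$ is generated by $(A')^+$ follows from two observations: (a) $\CC[G/U]$ is itself generated by its highest-weight-vector subalgebra (which contains the fundamental-weight functions); and (b) for any $\gg$-module algebras $A'_1, A'_2$, both $(A'_1)^+\otimes 1$ and $1\otimes (A'_2)^+$ lie in $(A'_1\otimes A'_2)^+$, and together they generate $A'_1\otimes A'_2$ as a $\gg$-module algebra (since each $A'_k\otimes 1$ is a $\gg$-module subalgebra isomorphic to $A'_k$). Combining this with the reduction of the previous paragraph and a final appeal to Theorem \ref{thm:classify} yields the proposition.
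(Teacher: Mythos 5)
Your proof is correct, but it takes a genuinely different route from the paper's. The paper works directly with $A\otimes B$: it shows the multiplication map $\mu:(A\otimes B)^+\otimes (\varphi_A(\CC[U])\otimes 1)\to A\otimes B$ is surjective by first proving a small lemma that the image of such a $\mu$ is always a $\gg$-module subalgebra (via the identity $f_i(cx)=(f_i(c)-h_i(c)x_i)x+h_i(c)x_ix+cf_i(x)$ and the fact that $f_i(c)-h_i(c)x_i\in C^+$ for $c\in C^+$ --- the same mechanism as in Corollary \ref{cor:highest}), and then checking that the image contains $A^+\otimes 1$, $1\otimes B^+$, $\varphi_A(x_i)\otimes 1$ and $\varphi_A(x_i)\otimes 1-1\otimes\varphi_B(x_i)$, hence a generating set. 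Your argument instead factors through a universal case: the $\nn_+$-module decomposition $A\otimes B\cong A^+\otimes B^+\otimes(\CC[U]\otimes\CC[U])$, with $\nn_+$ acting only on the last factor, reduces both $\ii$-adaptedness and the computation of $\nu_{\ii}(A\otimes B\setminus\{0\})$ to the single algebra $\CC[U]\otimes\CC[U]$ (your lex-maximum formula for $e_{\ii}^{(top)}$ on sums $\sum m_\alpha\otimes\zeta_\alpha$ is sound), and you then settle that case by embedding into a large algebra generated by its highest weight vectors and invoking Corollary \ref{cor:highest}. What your route buys is that the nontrivial verification is performed once for $\CC[U]\otimes\CC[U]$ rather than reproved for each pair $(A,B)$; what it costs is a dependence on the representation-theoretic input behind Theorem \ref{thm:G/U}, which the paper's self-contained generation argument avoids. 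One spot to tighten: $\CC[U]$ embeds not into $\CC[G/U]$ itself but into its localization $\CC[B]=\CC[G/U][\mathcal{S}^{-1}]$, so your ambient algebra must be taken to be $\CC[B]\otimes\CC[B]$; this is harmless, since the inverted elements $v_\lambda^{\pm1}$ are themselves highest weight vectors, so $\CC[B]$ --- and hence $\CC[B]\otimes\CC[B]$ by your generation observation --- is still generated by its highest weight vectors as a $\gg$-module algebra.
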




Theorem \ref{thm:equiv} and Proposition \ref{prop:tensor} allow us to define two ``fusion" products on the category of $\bb_-$-module algebras, namely the following:
	\begin{align*}
		A*B&:=((A\otimes \CC[U]){\otimes} (B\otimes \CC[U]),1\otimes 1\otimes 1\otimes \text{id})^+\\
		A\star B&:=((A\otimes \CC[U]){\otimes} (B\otimes \CC[U]),1\otimes \text{id}\otimes 1\otimes 1)^+.
	\end{align*}

Unfortunately, as in the quantum case, these fusion products are not monoidal as there is no unit object. Furthermore, the objects $(A\otimes B,1\otimes \varphi_B)$ and $(A\otimes B,\varphi_A\otimes 1)$ are not necessarily isomorphic, despite having equal underlying $\gg$-module algebras. However, given $\bb_-$-module algebras $A$ and $B$, we of course have the natural $\bb_-$-module algebra structure on $A\otimes B$ satisfying
$$		(a\otimes b)(a'\otimes b')=aa'\otimes bb',~h_i(a\otimes b)=h_i(a)\otimes b+a\otimes h_i(b),~
		f_i(a\otimes b)=f_i(a)\otimes b+a\otimes f_i(b)$$
for $a,a'\in A$, $b,b'\in B$, and $i\in I$. This induces a more symmetric fusion product on $\mathcal{C}_\gg$:
	\begin{equation}
	\label{eq:fusion}
		(A,\varphi_A)\diamond (B,\varphi_B):=(((A,\varphi_A)^+\otimes (B,\varphi_B)^+)\otimes \CC[U],1\otimes 1\otimes \text{id}).
	\end{equation}

\begin{remark}
\label{rmk:unitissue}
	Since $\bb_-$-{\bf ModAlg} is a monoidal category with unit object $\CC$, the product $\diamond$ makes $\mathcal{C}_\gg$ into a monoidal category with unit object $\CC[U]=\CC\otimes\CC[U]$. Given objects $(A,\varphi_A)$ and $(B,\varphi_B)$ of $\mathcal{C}_\gg$, $(A,\varphi_A)\diamond(B,\varphi_B)$ is easily observed to be isomorphic to the quotient object
	\[(A\!\!\!\underset{\CC[U]}{\otimes}\! \!\! B,\pi\circ(1\otimes \varphi_B))\cong(A\!\!\!\underset{\CC[U]}{\otimes}\! \!\! B,\pi\circ(\varphi_A\otimes 1))\]
	where $A\!\!\!\underset{\CC[U]}{\otimes}\! \!\! B$ is the quotient of the $\gg$-module algebra $A\otimes B$ by the ideal generated by all elements of the form $\varphi_A(x)\otimes 1-1\otimes \varphi_B(x)$ for $x\in \CC[U]$ and $\pi:A\otimes B\to A\!\!\!\underset{\CC[U]}{\otimes}\! \!\! B$ is the quotient map.
\end{remark}
	
This natural structure also results in a very easy proof of the ``only if" part of Corollary \ref{cor:highest}. 
Denote by $\CC[T]$ the algebra with basis $\{v_\lambda\ |\ \lambda\in \Lambda\}$ (where $v_0=1$) and multiplication $v_\lambda v_\mu=v_{\lambda+\mu}$. We make it into a $\bb_-$-module algebra with $\bb_-$-module structure given by $h_i(v_\lambda)=(\alpha_i^\vee,\lambda)v_\lambda$ and $f_i(v_\lambda)=0$. Suppose $\mu:A^+\otimes \CC[U]\to A$ as in Theorem \ref{thm:factoring} is an isomorphism. We give $A^+$ the structure of a $\bb_-$-module algebra as in Theorem \ref{thm:b-}. Then Theorem \ref{thm:g} allows us to make $(A^+\otimes \CC[T])\otimes \CC[U]$ into a $\gg$-module algebra. Now $A$ is clearly a $\gg$-module subalgebra and 
\begin{align*}
	((1\otimes v_{-\omega_i})\otimes 1)\left[f_i\rhd ((1\otimes v_{\omega_i})\otimes 1)\right]&=((1\otimes v_{-\omega_i})\otimes 1)\left((1\otimes h_i(v_{\omega_i}))\otimes x_i\right)\\
		&=(\alpha_i^\vee, \omega_i)(1\otimes v_{-\omega_i}v_{\omega_i}\otimes x_i)\\
		&=1\otimes 1\otimes x_i
\end{align*}
showing that $(A^+\otimes \CC[T])\otimes \CC[U]$ is generated by $(A^+\otimes \CC[T])\otimes \CC=((A^+\otimes \CC[T])\otimes \CC[U])^+$ as a $\gg$-module algebra and proving the ``only if" part of Corollary \ref{cor:highest}.


\section{Proofs}
\label{sect:Proofs}

In many proofs, we will use the fact that $\ZZ_{\ge 0}^m$ is well-ordered by the lexicographic order. For given $w\in W$, ${\bf i}\in R(w)$, and $U_q(\bb_+)$-module $M$, we have that $\nu_{\bf i}(M)$ is well-ordered, allowing us to induct on $\nu_{\bf i}(x)$ for $x\in M$.

\subsection{Proof of Theorem \ref{thm:qfactoring}}
\label{pf:qfactoring}

For ${\bf j}\in \nu_{\bf i}(\mathcal{B})$, let $b_{\bf j}$ be the unique element of $\mathcal{B}$ such that $\nu_{\bf i}(b_{\bf j})={\bf j}$.

\begin{enumerate}
\item We first observe that since $A$ is a $U_q(\gg)$-module algebra and $A^+$ and $A_0$ are $U_q(\bb_+)$-submodules, $\mu$ is a homomorphism of $U_q(\bb_+)$-modules. Hence we simply show that $\mu$ is injective.
	Now each nonzero element $a\in A^+\otimes A_0$ can be written \[a=\sum_{k=1}^n a_k\otimes b_{{\bf j}_k}\] for some $n>0$, $a_k\in A^+\setminus \{0\}$, and ${\bf j}_k\in \nu_{\bf i}(A_0\setminus\{0\})$. We may assume ${\bf j}_k<{\bf j}_{l}$ if $1\le k<l\le n$ so that 
	\[E_{\bf i}^{{\bf j}_l}(b_{{\bf j}_k})=\begin{cases} 0 & \text{if } k<l\\ 1 & \text{if } k=\ell\end{cases}.\] Suppose for the sake of contradiction that $\mu(a)=0$. Then
$$		0=E_{\bf i}^{({\bf j}_n)}(\mu(a))=\mu\left(E_{\bf i}^{({\bf j}_n)}(a)\right)=\mu(a_n\otimes 1)=a_n$$
	which is a contradiction. Hence $\mu(a)=0$ if and only if $a=0$, showing that $\mu$ is injective.

\item	($\Rightarrow$) Suppose $\mu$ is an isomorphism. Given nonzero $a\in A$, write
	\[a=\mu\left(\sum_{k=1}^n a_k\otimes b_{{\bf j}_k}\right)\]
	as in (1). Then, since $\mu$ is injective, it is clear that $\nu_{\bf i}(a)={\bf j}_n=\nu_{\bf i}(b_{{\bf j}_n})$, showing that \linebreak{$\nu_{\bf i}(A\setminus\{0\})\subseteq\nu_{\bf i}(A_0\setminus\{0\})$.} But since $A_0\subseteq A$, it follows that $\nu_{\bf i}(A\setminus\{0\})=\nu_{\bf i}(A_0\setminus\{0\})$. Also, as seen above 
	\[E_{\bf i}^{(top)}(a)=E_{\bf i}^{({\bf j}_n)}(a)=\mu(a_n\otimes 1)=a_n\in A^+,\] 
	so we see that $A$ is ${\bf i}$-adapted.\\
($\Leftarrow$) Suppose that $A$ is {\bf i}-adapted and $\nu_{\bf i}(A\setminus\{0\})=\nu_{\bf i}(A_0\setminus\{0\})$. By (1), we already know that $\mu$ is an injective $U_q(\bb_+)$-module homomorphism. Hence we simply use induction to show that $\mu$ is surjective. We first note that since $A$ is {\bf i}-adapted, if $\nu_{\bf i}(a)=(0,0,\ldots,0)$, then $a=E_{\bf i}^{(top)}(a)\in A^+$. In other words,
		\[\{a\in A\setminus\{0\}\ |\ \nu_{\bf i}(a)=(0,0,\ldots,0)\}=A^+\setminus \{0\}\subset \mu(A^+\otimes A_0).\] 
	Let $a\in A\setminus\{0\}$ and suppose $a'\in \mu(A^+\otimes A_0)$ for all $a'\in A\setminus\{0\}$ such that $\nu_{\bf i}(a')<\nu_{\bf i}(a)$. We have 
	\[E_{\bf i}^{(\nu_{\bf i}(a))}(a-\mu(E_{\bf i}^{(top)}(a)\otimes b_{\nu_{\bf i}(a)}))=0.\]
	Hence either $a-\mu(E_{\bf i}^{(top)}(a)\otimes b_{\nu_{\bf i}(a)})=0$ or $\nu_{\bf i}(a-\mu(E_{\bf i}^{(top)}(a)\otimes b_{\nu_{\bf i}(a)}))<\nu_{\bf i}(a)$. In the former case, $a\in \mu(A^+\otimes A_0)$. In the latter case, $a-\mu(E_{\bf i}^{(top)}(a)\otimes b_{\nu_{\bf i}(a)})\in \mu(A^+\otimes A_0)$ and so
		\[a=(a-\mu(E_{\bf i}^{(top)}(a)\otimes b_{\nu_{\bf i}(a)}))+\mu(E_{\bf i}^{(top)}(a)\otimes b_{\nu_{\bf i}(a)})\in \mu(A^+\otimes A_0).\]
	So we have shown that $a\in \mu(A^+\otimes A_0)$. By induction, $\mu$ is surjective. Hence $\mu$ is an isomorphism.
\end{enumerate}\qed

\subsection{Proof of Theorem \ref{thm:qclassify}}
\label{pf:qclassify}

$(\Leftarrow)$ Suppose $A_0$ possesses an {\bf i}-adapted basis and $\nu_{\bf i}(A_0\setminus\{0\})=\nu_{\bf i}(A\setminus\{0\})$. Then by Theorem \ref{thm:qfactoring}, $\mu:A^+\otimes A_0\to A$ is an isomorphism of $U_q(\bb_+)$-modules.

$(\Rightarrow)$ Suppose $\mu:A^+\otimes A_0\to A$ is an isomorphism of $U_q(\bb_+)$-modules. Hence we must have $(A_0)^+=\CC(q)$ or else $\mu$ would fail to be injective. Also, since $A$ is {\bf i}-adapted, $A_0$ is as well. Now for each ${\bf j}\in \nu_{\bf i}(A_0\setminus\{0\})$ choose $b_{\bf j}\in A_0\setminus\{0\}$ such that $E_{\bf i}^{(top)}(b_{\bf j})=1$ and $\nu_{\bf i}(b_{\bf j})={\bf j}$ (note that $b_{(0,\cdots,0)}=1$). We claim that $\mathcal{B}=\{b_{\bf j}\ |\ {\bf j}\in \nu_{\bf i}(A_0\setminus \{0\})\}$ is an ${\bf i}$-adapted basis for $A_0$. To prove that $\mathcal{B}$ is linearly independent and spans $A_0$, we mimic the proofs that $\mu$ is injective and surjective (respectively) in Theorem \ref{thm:qfactoring}.
	Suppose \[\sum_{k=1}^n r_kb_{{\bf j}_k}=0\] for some $r_k\in \CC(q)$ and ${\bf j}_k\in\nu_{\bf i}(A_0\setminus\{0\})$ such that ${\bf j}_k<{\bf j}_l$ if $k<l$. Then
		\[0=E_{\bf i}^{({\bf j}_n)}\left(\sum_{k=1}^n r_kb_{{\bf j}_k}\right)=r_n.\]
	By induction, each $r_k=0$. It follows that $\mathcal{B}$ is linearly independent. 
	Note that \[\{a\in A_0\setminus \{0\}\ |\ \nu_{\bf i}(a)=(0,0,\ldots,0)\}=(A_0)^+\setminus\{0\}=\CC(q)^{\times}\subseteq \text{span}_{\CC(q)}(\mathcal{B}).\]
	Let $a\in A_0\setminus\{0\}$ and suppose $a'\in \text{span}_{\CC(q)}(\mathcal{B})$ for all $a'\in A_0\setminus\{0\}$ such that $\nu_{\bf i}(a')<\nu_{\bf i}(a)$. We have
	\[E_{\bf i}^{(\nu_{\bf i}(a))}(a-E_{\bf i}^{(top)}(a)b_{\nu_{\bf i}(a)})=0.\]
	Hence either $a-E_{\bf i}^{(top)}(a)b_{\nu_{\bf i}(a)}=0$ or $\nu_{\bf i}(a-E_{\bf i}^{(top)}(a)b_{\nu_{\bf i}(a)})<\nu_{\bf i}(a)$. In the former case $a\in \text{span}_{\CC(q)}(\mathcal{B})$. In the latter case $a-E_{\bf i}^{(top)}(a)b_{\nu_{\bf i}(a)}\in \text{span}_{\CC(q)}(\mathcal{B})$ and so 
	\[a=(a-E_{\bf i}^{(top)}(a)b_{\nu_{\bf i}(a)})+E_{\bf i}^{(top)}(a)b_{\nu_{\bf i}(a)}\in \text{span}_{\CC(q)}(\mathcal{B}).\]
	So we have shown that $a\in \text{span}_{\CC(q)}(\mathcal{B})$. By induction, $\mathcal{B}$ spans $A_0$. Hence we have shown that $\mathcal{B}$ is a basis for $A_0$. By construction, it is in fact an {\bf i}-adapted basis for $A_0$.
	
	In light of $\mathcal{B}$'s existence, a typical element of $A$ is of the form $\mu\left(\sum\limits_{k=1}^n a_k\otimes b_{{\bf j}_k}\right)$
	for some $a_k\in A^+$ and ${\bf j}_k\in \nu_{\bf i}(A_0\setminus\{0\})$. It is now clear that $\nu_{\bf i}\left(\mu\left(\sum\limits_{k=1}^n a_k\otimes b_{{\bf j}_k}\right)\right)=\max\{{\bf j}_k\ |\ k=1,\ldots,n\}$
so that \linebreak$\nu_{\bf i}(A_0\setminus\{0\})\supseteq \nu_{\bf i}(A\setminus\{0\})$. Since $A_0\subseteq A$, we have $\nu_{\bf i}(A_0\setminus\{0\})\subseteq\nu_{\bf i}(A\setminus\{0\})$ and so $\nu_{\bf i}(A_0\setminus\{0\})=\nu_{\bf i}(A\setminus\{0\})$. 
		\qed

\subsection{Proof of Proposition \ref{prop:adapted}}
\label{pf:adapted}
We have already observed that for any ${\bf i}\in R(\wnot)$, the restriction of $\nu_{\bf i}$ to $\mathcal{B}^{dual}$ is an injective map $\mathcal{B}^{dual}\hookrightarrow \ZZ_{\ge 0}^m$, where $m$ is the length of $\wnot$. Hence it suffices to show that $E_{\bf i}^{(top)}(b)=1$ for all ${\bf i}\in R(\wnot)$ and $b\in \mathcal{B}^{dual}$. To do this we need the following lemma.

\begin{lemma}
\label{lem:ewtop}
Given $w\in W$ and ${\bf i, i'}\in R(w)$, $E_{\bf i}^{(top)}(b)=E_{\bf i'}^{(top)}(b)$ for all $b\in \mathcal{B}^{dual}$.
\end{lemma}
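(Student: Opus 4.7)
The plan is to invoke Matsumoto's theorem to reduce to the case where ${\bf i}$ and ${\bf i'}$ differ by a single braid move, and then verify invariance in each case by working within an appropriate rank-two subalgebra.

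First I would record the key compatibility of $\mathcal{B}^{dual}$ with the divided-power top operators: for any $b \in \mathcal{B}^{dual}$ and any $k \in I$, the element $E_k^{(\ell_k(b))}(b)$ is a nonzero scalar multiple of another element of $\mathcal{B}^{dual}$. This is a consequence of the crystal-basis property: on the top of the $k$-string of $b$ in the crystal $B(\infty)$, the divided power $E_k^{(\ell_k(b))}$ agrees with the iterated Kashiwara operator $\tilde{e}_k^{\ell_k(b)}$ up to a nonzero scalar that preserves the dual canonical lattice. Iterating along any reduced word ${\bf i}$ for $w$, it follows that $E_{\bf i}^{(top)}(b)$ is a scalar multiple of a single element of $\mathcal{B}^{dual}$, so the statement is at least well-posed.

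By Matsumoto's theorem, any two reduced expressions ${\bf i}, {\bf i'} \in R(w)$ are connected by a finite sequence of braid moves: 2-moves swapping commuting simple reflections, and rank-two braid moves of the form $s_i s_j s_i \cdots = s_j s_i s_j \cdots$ associated to a rank-two parabolic subdiagram. It thus suffices to verify $E_{\bf i}^{(top)}(b) = E_{\bf i'}^{(top)}(b)$ when ${\bf i}, {\bf i'}$ differ by a single such move; by peeling off the unaffected prefix and suffix, one further reduces to checking invariance on the intermediate element $b' := E_{i_{p-1}}^{(top)} \cdots E_{i_1}^{(top)}(b)$ for the segment $(i_p, \ldots, i_{p+r})$ altered by the move. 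The 2-move case is immediate, since $c_{i,j} = 0$ forces $E_i E_j = E_j E_i$ at all divided-power levels, and the $i$- and $j$-strings of $b'$ in $B(\infty)$ are independent.

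The substantive step is the rank-two braid move, in which the altered segments of ${\bf i}$ and ${\bf i'}$ are the two reduced words for the longest element of the rank-two Weyl group $\langle s_i, s_j \rangle$. Here I would restrict to the rank-two subalgebra $U_q(\mathfrak{g}')$ generated by $\{K_i^{\pm 1}, E_i, F_i, K_j^{\pm 1}, E_j, F_j\}$, use that $\mathcal{B}^{dual}$ is compatible with the inclusion $U_q(\mathfrak{g}') \hookrightarrow U_q(\mathfrak{g})$, and observe that applying the full string of top operators for the longest element of the rank-two Weyl group amounts to projecting onto the $\mathfrak{g}'$-highest weight line inside the crystal component of $b'$, which is independent of the reduced word chosen. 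The main obstacle is making this ``full-top equals highest-weight-projection'' claim rigorous, especially in type $G_2$ where the braid relation has length six and the underlying string-cone combinatorics is intricate. A cleaner alternative I would prefer is to characterize $E_{\bf i}^{(top)}(b)$ intrinsically as the end-vertex of the iterated top-Kashiwara-operator path on $B(\infty)$ associated to $w$, and observe that the braid invariance of such paths depends only on $w$ and not on the reduced word, bypassing the explicit rank-two case entirely.
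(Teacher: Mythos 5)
Your reduction to a single braid move via Matsumoto's theorem, and the use of the fact that $E_j^{(top)}$ preserves $\mathcal{B}^{dual}$ to peel off the unaffected prefix, match the paper's argument. But the rank-two case --- which you correctly identify as the substantive step --- is left with a genuine gap, and the ``cleaner alternative'' you propose does not close it. First, the claim that applying the full string of top operators for the longest element of $\langle s_i,s_j\rangle$ ``projects onto the $\gg'$-highest weight line inside the crystal component of $b'$'' has no meaning here: $\CC_q[U]$ is not an integrable module over the rank-two subalgebra $U_q(\gg')$ (the $F$'s do not act locally nilpotently), so it does not decompose into finite-dimensional $\gg'$-components with highest weight lines. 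Second, your proposed bypass --- characterizing $E_{\bf i}^{(top)}(b)$ as the endpoint of an iterated top-Kashiwara-operator path and asserting that ``the braid invariance of such paths depends only on $w$'' --- is circular: that braid invariance is precisely the crystal shadow of the statement being proved, and it is exactly what must be established in the rank-two case. Moreover, even granting the crystal-level statement, the lemma asserts equality of actual elements $E_{\bf i}^{(top)}(b)=E_{\bf i'}^{(top)}(b)$ in $\CC_q[U]$, not merely agreement of crystal vertices modulo lower-order terms, so a $B(\infty)$-combinatorics argument alone would still leave something to check.

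The paper fills this gap with structure you do not invoke: the factorization $\CC_q[U]=(\CC_q[U]_{>w_{i,j}})(\CC_q[U]_{\le w_{i,j}})$ into quantum unipotent subgroups, the vanishing $E_i(\CC_q[U]_{>w_{i,j}})=E_j(\CC_q[U]_{>w_{i,j}})=0$, an explicit rank-two computation showing that both $E_{\bf i}^{(top)}$ and $E_{\bf i'}^{(top)}$ send $\CC_q[U]_{\le w_{i,j}}\cap\mathcal{B}^{dual}$ to $1$, and Kimura's multiplicative compatibility $b'b''=b+\xi$ with $b'\in\CC_q[U]_{>w_{i,j}}\cap\mathcal{B}^{dual}$, $b''\in\CC_q[U]_{\le w_{i,j}}\cap\mathcal{B}^{dual}$, and $\nu_{\bf i}(\xi)<\nu_{\bf i}(b)$, $\nu_{\bf i'}(\xi)<\nu_{\bf i'}(b)$. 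From these one gets $E_{\bf i}^{(top)}(b)=E_{\bf i}^{(top)}(b'b'')=b'E_{\bf i}^{(top)}(b'')=b'$ and the same for ${\bf i'}$. Some substitute for this package (or a genuinely independent proof of the rank-two invariance, including type $G_2$) is required; without it your argument does not go through.
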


\begin{proof}
Now $\CC_q[U]$ factors as the product of two subalgebras: $\CC_q[U]=(\CC_q[U]_{>w})(\CC_q[U]_{\le w})$ (see \cite{kimura}, for example, where they are respectively denoted ${\bf U}_q^-(>w,-1)$ and ${\bf U}_q^-(\le w,-1)$). In fact, these subalgebras can be described explicitly as follows. For any reduced word $\ii\in R(\wnot)$ such that $s_{i_1}\cdots s_{i_k}=w$, consider elements $X_1,\cdots,X_m$ as in \cite[Section 4]{berensteingreenstein}, where $m$ is the length of $\wnot$. This choice guarantees that monomials $X^{\bf a}=X_1^{a_1}\cdots X_m^{a_m}$ for ${\bf a}\in \ZZ_{\ge 0}^m$ form a basis for $\CC_q[U]$. It follows that those $X^{\bf a}$ with $a_\ell=0$ for $\ell>k$ form a basis for $\CC_q[U]_{\le w}$ and those $X^{\bf a}$ with $a_\ell=0$ for $\ell\le k$ form a basis for $\CC_q[U]_{>w}$. Since $X_1=x_{i_1}$ and these two subalgebras are orthogonal with respect to Lusztig's pairing (under which multiplication by $x_i$ and action by $E_i$ are adjoint), we obtain the following well-known fact:
	\[E_{i}(\CC_q[U]_{>w})=0\]
for any $i\in I$ such that $\ell(s_iw)<\ell(w)$. In particular, this implies that 
	\[E_{i}(\CC_q[U]_{>w_{i,j}})=E_{j}(\CC_q[U]_{>w_{i,j}})=0,\]
where $w_{i,j}$ is the longest element in the subgroup generated by $s_i$ and $s_j$.
	
It is well-known that any two reduced words for a fixed $w\in W$ are related by a series of rank two relations. Hence it suffices to show the lemma when ${\bf i}$ and ${\bf i'}$ differ by a single rank two relation. But it is also well-known that $E_j^{(top)}(b)\in \mathcal{B}^{dual}$ for all $j\in I$ and $b\in \mathcal{B}^{dual}$. For any ${\bf j}\in R(w)$, the operator $E_{\bf j}^{(top)}$ is by definition just the composition of operators $E_{j_\ell}^{(top)}\cdots E_{j_2}^{(top)}E_{j_1}^{(top)}$, where $\ell$ is the length of $w$. This reduces the problem to the case when $w$ is the longest element of a rank two parabolic subgroup of $W$. We will therefore assume for the rest of the proof that ${\bf i}=(i,j,\ldots)$ and ${\bf i'}=(j,i,\ldots)$ the only two distinct reduced words for $w_{i,j}$. An explicit (and apparently well-known) computation verifies that	
	\[E_{\bf i}^{(top)}(\CC_q[U]_{\le w_{i,j}}\cap \mathcal{B}^{dual})=E_{\bf i'}^{(top)}(\CC_q[U]_{\le w_{i,j}}\cap \mathcal{B}^{dual})=1.\] 
According to \cite[Theorem 3.14]{kimura}, for each $b\in \mathcal{B}^{dual}$, there exist $b'\in \CC_q[U]_{>w_{i,j}}\cap \mathcal{B}^{dual}$, $b''\in \CC_q[U]_{\le w_{i,j}}\cap \nolinebreak\mathcal{B}^{dual}$, and $\xi\in \CC_q[U]$ such that $\nu_{\bf i}(\xi)<\nu_{\bf i}(b)$, $\nu_{\bf i'}(\xi)<\nu_{\bf i'}(b)$, and 
		\[b'b''=b+\xi.\]	
	Hence
$$			E_{\bf i}^{(top)}(b)=E_{\bf i}^{(top)}(b+\xi)=E_{\bf i}^{(top)}(b'b'')=b'E_{\bf i}^{(top)}(b'')=b'.$$
	Likewise, $E_{\bf i'}^{(top)}(b)=b'$, so the lemma is proved.
\end{proof}

In light of Lemma \ref{lem:ewtop}, given $w\in W$ and $b\in \mathcal{B}^{dual}$, we may unambiguously define $E_w^{(top)}(b):=E_{\bf i}^{(top)}(b)$ for any ${\bf i}\in R(w)$. Now given $j\in I$, there exists some ${\bf i}\in R(\wnot)$ such that if ${\bf i}=(i_1,\ldots,i_m)$, then $i_m=j$. It follows that $E_j(E_{\wnot}^{(top)}(b))=0$ for all $j\in I$, i.e. $E_{\wnot}^{(top)}(b)\in (\CC_q[U])^+=\CC(q)$. Since $E_j^{(top)}(b)\in \mathcal{B}^{dual}$ for all $j\in I$ and $b\in \mathcal{B}^{dual}$, it follows that $E_{\wnot}^{(top)}(b)=1$ for $b\in \mathcal{B}^{dual}$. \qed

\subsection{Proof of Corollary \ref{cor:qhighest}}
\label{pf:qhighest}

Let ${\bf i}\in R(\wnot)$. As previously remarked, $\CC_q[U]$ possesses an {\bf i}-adapted basis. Then Theorem \ref{thm:qfactoring} (1) says that $\mu':(A')^+\otimes \CC_q[U]\to A'$ is an injective homomorphism of $U_q(\bb_+)$-modules.
	
	We now show that $\mu'((A')^+\otimes \CC_q[U])$ is a $U_q(\gg)$-module subalgebra of $A'$ and hence is equal to $A'$ by the assumption that $(A')^+$ generates $A'$. To see that $\mu'((A')^+\otimes \CC_q[U])$ is a subalgebra of $A'$, it suffices to show that $x_ia\in \mu'((A')^+\otimes \CC_q[U])$ for $i\in I$ and $a\in (A')^+$. For this, we observe that 
	\[E_i(x_ja-K_j(a)x_j)=\delta_{ij}K_i(a)-\delta_{ij}K_j(a)=0\]
	and hence $x_ja-K_j(a)x_j\in (A')^+$. Then
$$		x_ia=K_i(a)x_i+(x_ia-K_i(a)x_i)\\
			=\mu'(K_i(a)\otimes x_i+(x_ia-K_i(a)x_i)\otimes 1)\\
			\in \mu'((A')^+\otimes \CC_q[U]).$$
So $\mu'((A')^+\otimes \CC_q[U])$ is a subalgebra of $A'$. 

Now we need to show that $\mu'((A')^+\otimes \CC_q[U])$ is closed under the action of $U_q(\gg)$. By Theorem \ref{thm:qfactoring} (1), $\mu'$ is $U_q(\bb_+)$-invariant and hence it suffices to show that $\mu'((A')^+\otimes\CC_q[U])$ is closed under the action of $F_i$ for $i\in I$. Observe that for $a\in (A')^+$ and $x\in \CC_q[U]$, $\mu'(a\otimes x)=ax$, so we will simply compute the action of $F_i$ on such an element. However, before doing so, we note that for $a\in (A')^+$ and $i,j\in I$, we have 
	\[E_i\left(F_j(a)+x_j\frac{K_j^{-2}(a)-a}{q_j-q_j^{-1}}\right)=\delta_{ij}\frac{K_i(a)-K_i^{-1}(a)}{q_i-q_i^{-1}}+\delta_{ij}\frac{K_i^{-1}(a)-K_i(a)}{q_i-q_i^{-1}}=0,\]
showing that $F_j(a)+x_j\dfrac{K_j^{-2}(a)-a}{q_i-q_i^{-1}}\in (A')^+$. Now we compute:

		\begin{align*}
			F_i(ax)&=F_i(a)x+K_i^{-1}(a)F_i(x)\\
			&=\left(F_i(a)+x_i\frac{K_i^{-2}(a)-a}{q_i-q_i^{-1}}\right)x-x_i\frac{K_i^{-2}(a)-a}{q_i-q_i^{-1}}x+K_i^{-1}(a)F_i(x)\\
			&=\mu'\left(\left(F_i(a)+x_i\frac{K_i^{-2}(a)-a}{q_i-q_i^{-1}}\right)\otimes x+K_i^{-1}(a)\otimes F_i(x)\right)-\mu'(1\otimes x_i)\mu'\left(\frac{K_i^{-2}(a)-a}{q_i-q_i^{-1}}\otimes x\right)\\
								&\in \mu'((A')^+\otimes \CC_q[U]).
		\end{align*}
	So $\mu'((A')^+\otimes \CC_q[U])$ is closed under the action of $U_q(\gg)$ and we may conclude that $\mu'((A')^+\otimes \CC_q[U])=A'$. Hence $\mu'$ is an isomorphism. By Theorem \ref{thm:qfactoring} (2), this implies that $A'$ is {\bf i}-adapted and \linebreak$\nu_{\bf i}(A'\setminus\{0\})=\nu_{\bf i}(\CC_q[U]\setminus\{0\})$. We deduce that $A$ is {\bf i}-adapted and \[\nu_{\bf i}(\CC_q[U]\setminus\{0\})\subseteq\nu_{\bf i}(A\setminus\{0\})\subseteq\nu_{\bf i}(A'\setminus\{0\})=\nu_{\bf i}(\CC_q[U]\setminus\{0\}),\] i.e. $\nu_{\bf i}(\CC_q[U]\setminus\{0\})=\nu_{\bf i}(A\setminus\{0\})$. Hence applying Theorem \ref{thm:qfactoring} (2) again, $\mu$ is an isomorphism.
\qed

\subsection{Proof of Theorem \ref{thm:qG/U}}
\label{pf:qG/U}

It is well-known (see, e.g., \cite[Section 6.1]{berensteinrupel}) that $\CC_q[B]:=U_q(\bb_+)^*$ is naturally a $U_q(\gg)$-module algebra and its $U_q(\gg)$-module subalgebra generated by all highest weight vectors $v_\lambda$ for $\lambda\in \Lambda^+$ is isomorphic as a $U_q(\gg)$-module to the direct sum of all simple $V_\lambda$ for $\lambda\in \Lambda^+$ and thus identifies with the $q$-deformation $\CC_q[G/U]$ of $\CC[G/U]$. 
By the construction, the highest weight vectors satisfy $v_\lambda v_\mu=v_{\lambda+\mu}$ in $\CC_q[B]$. Before showing the factorizability of $\CC_q[G/U]$ after localization, we recall the definition of right Ore sets (which allow for Ore localizations and are sometimes also called right denominator sets) for the reader's convenience.

\begin{definition}
	Let $R$ be any unital ring. A submonoid $\mathcal{S}\subset R\setminus \{0\}$ is called a \emph{right Ore set} if the following conditions are satisfied for $r\in R$ and $s\in \mathcal{S}$:
		\begin{enumerate}
			\item $r\mathcal{S}\cap sR\ne \varnothing$.
			\item If $sr=0$, then $\exists s'\in \mathcal{S}$ such that $rs'=0$.
		\end{enumerate}
\end{definition}

Recall (see, e.g., \cite{jordan}) that an element $p$ of a ring $R$ is \emph{normal} if $pR=Rp$. It is immediate (and well-known) that for any ring $R$, any submonoid $\mathcal{S}\subset R\setminus \{0\}$ consisting of normal elements that aren't zero-divisors is automatically both right and left Ore. In what follows, we will refer to these as \emph{normal} Ore sets. In particular, $\mathcal{S}=\{v_{\lambda}\ |\ \lambda\in \Lambda^+\}\subset \CC_q[G/U]$ is a normal Ore set and the Ore localization $(\CC_q[G/U])[\mathcal{S}^{-1}]$ is isomorphic to $\CC_q[B]$ as $U_q(\gg)$-module algebras. The following lemmas allow us to create normal Ore sets in the $n$-fold braided tensor product $\CC_q[G/U]^{\underline{\otimes}n}$.

\begin{lemma}
\label{lem:ore}
	Let $\kk$ be any field and suppose $A$ and $B$ are $\kk$-algebras such that the $\kk$-vector space $A\otimes_\kk B$ has the structure of a $\kk$-algebra satisfying
$$			(a\otimes 1)(a'\otimes b')=aa'\otimes b',~(a\otimes b)(1\otimes b')=a\otimes bb'
			$$
	for $a,a'\in A$ and $b,b'\in B$. If $\mathcal{S}$ is a normal Ore set in $B$ such that \[(1\otimes s)((A\setminus \{0\})\otimes 1)=((A\setminus\{0\})\otimes 1)(1\otimes s)\] for $s\in \mathcal{S}$, then $1\otimes \mathcal{S}:=\{1\otimes s\ |\ s\in \mathcal{S}\}$ is a normal Ore set in $A\otimes_\kk B$.
\end{lemma}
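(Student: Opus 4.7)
The plan is to verify that $1\otimes\mathcal{S}$ is a submonoid of $(A\otimes_\kk B)\setminus\{0\}$ consisting of normal elements that are not zero-divisors, since the paper already observes that any such submonoid is automatically both right and left Ore. Closure under multiplication is immediate from $(1\otimes s)(1\otimes s')=1\otimes ss'$, a special case of the second given multiplication rule, and $1\otimes s\ne 0$ because $s\ne 0$ in a tensor product over a field. For normality, by bilinearity it suffices to check on pure tensors that $(1\otimes s)(a\otimes b)\in(A\otimes_\kk B)(1\otimes s)$. I would factor $a\otimes b=(a\otimes 1)(1\otimes b)$, use the hypothesis to produce $a'\in A$ with $(1\otimes s)(a\otimes 1)=(a'\otimes 1)(1\otimes s)$, and use normality of $s$ in $B$ to produce $b''\in B$ with $sb=b''s$, so that $(1\otimes s)(1\otimes b)=(1\otimes b'')(1\otimes s)$. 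Chaining gives $(1\otimes s)(a\otimes b)=(a'\otimes b'')(1\otimes s)$, and the reverse inclusion is symmetric using the other direction of the set equality in the hypothesis.

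To show $1\otimes s$ is not a zero-divisor, I would analyze left and right multiplication as $\kk$-linear endomorphisms of $A\otimes_\kk B$ and show each is injective. Right multiplication by $1\otimes s$ sends $a\otimes b\mapsto a\otimes bs$ directly from the multiplication rules, so it equals $\mathrm{id}_A\otimes R_s$, where $R_s$ denotes right multiplication by $s$ on $B$; this is injective because $R_s$ is injective (as $s$ is not a zero-divisor in $B$) and every $\kk$-vector space is flat. For left multiplication, the hypothesis provides a twist map $\phi_s:A\to A$ defined by $(1\otimes s)(a\otimes 1)=(\phi_s(a)\otimes 1)(1\otimes s)$; this is well-defined and $\kk$-linear, since $(a'-a'')\otimes s=0$ forces $a'=a''$ when $s\ne 0$, and is injective because $\phi_s(a)=0$ forces $a\otimes s=0$, hence $a=0$. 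Left multiplication by $1\otimes s$ then factors as $\phi_s\otimes L_s$, where $L_s$ is left multiplication by $s$ on $B$, which is injective as a tensor product of injective $\kk$-linear maps.

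The main subtlety will be the well-definedness and $\kk$-linearity of the twist map $\phi_s$, together with checking that the set-theoretic form of the hypothesis $(1\otimes s)((A\setminus\{0\})\otimes 1)=((A\setminus\{0\})\otimes 1)(1\otimes s)$ is strong enough to produce the element $a'$ appearing in the normality calculation. Both points reduce to the observation that $x\otimes s=0$ implies $x=0$ in a tensor product of vector spaces over a field, so I expect no serious obstacle beyond careful bookkeeping.
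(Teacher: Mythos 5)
Your proposal is correct and follows essentially the same route as the paper: both arguments hinge on the twist $a\mapsto a'$ supplied by the hypothesis on the $A$-factor together with injectivity of left/right multiplication by $s$ on $B$, combined via linear algebra over the field $\kk$. Your packaging of left multiplication as $\phi_s\otimes L_s$ is just a more structural rendering of the paper's element-wise computation with a linearly independent family $\{b_k\}$ (and your explicit verification of normality fills in a step the paper declares clear), so there is no substantive difference.
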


\begin{proof}
	It is clear that $1\otimes \mathcal{S}$ is a multiplicative set containing $1\otimes 1$ and that $(1\otimes s)(A\otimes_\kk B)=(A\otimes_\kk B)(1\otimes s)$ for $s\in \mathcal{S}$, so we simply show that $1\otimes \mathcal{S}$ does not contain any zero-divisors. Fix $s\in \mathcal{S}$. Now an arbitrary nonzero element $x\in A\otimes_\kk B$ can be written in the form $x=\sum\limits_{k=1}^n a_k\otimes b_k$
	for some $a_k\in A\setminus \{0\}$ and $b_k\in B\setminus\{0\}$. We may assume that $\{b_k\}_{k=1}^n$ is a linearly independent set. Since $s$ is not a zero-divisor in $B$, it follows that $\{sb_k\}_{k=1}^n$ is a linearly independent set, as is $\{b_ks\}_{k=1}^n$. Also, by assumption, for each $k=1,\ldots, n$, there exists $a_k'\in A\setminus\{0\}$ such that $(1\otimes s)(a_k\otimes 1)=a_k'\otimes s$. Then
$$(1\otimes s)x=(1\otimes s)\left(\sum_{k=1}^n a_k\otimes b_k\right)=\sum_{k=1}^n a_k'\otimes sb_k\ne 0,$$
$$x(1\otimes s)=\left(\sum_{k=1}^n a_k\otimes b_k\right)(1\otimes s)=\sum_{k=1}^na_k\otimes b_ks\ne 0.$$
	Since $x$ was an arbitrary element of $A\otimes_\kk B$, we have shown that $1\otimes s$ is not a zero-divisor in $A\otimes_\kk B$.
\end{proof}

\begin{lemma}
\label{lem:braidore}
Let $A$ and $B$ be $U_q(\gg)$-weight module algebras and let $\mathcal{S}$ be a normal Ore set in $B$ consisting of highest weight vectors. Then $1\otimes \mathcal{S}$ is a normal Ore set in the braided tensor product $A\underline{\otimes} B$.
\end{lemma}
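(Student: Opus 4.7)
The plan is to apply Lemma \ref{lem:ore} with the base field $\kk=\CC(q)$ and the underlying algebra taken to be the braided tensor product $A\underline{\otimes}B$. Two tasks must be accomplished: first, confirming that the braided multiplication satisfies the two identities $(a\otimes 1)(a'\otimes b')=aa'\otimes b'$ and $(a\otimes b)(1\otimes b')=a\otimes bb'$ that Lemma \ref{lem:ore} requires of its ambient multiplication; second, establishing the set-theoretic commutation $(1\otimes s)((A\setminus\{0\})\otimes 1)=((A\setminus\{0\})\otimes 1)(1\otimes s)$ for every $s\in\mathcal{S}$.

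The first task is routine. Writing $\mathcal{R}=\mathcal{R}_{(1)}\otimes\mathcal{R}_{(2)}$ in sumless Sweedler notation, the braided formula $(a\otimes b)(a'\otimes b')=a\mathcal{R}_{(2)}(a')\otimes \mathcal{R}_{(1)}(b)b'$ immediately yields the two desired identities by combining the module algebra axiom $h(1)=\epsilon(h)\cdot 1$ with the standard counit properties $(\epsilon\otimes \text{id})(\mathcal{R})=1=(\text{id}\otimes\epsilon)(\mathcal{R})$ of the universal $R$-matrix.

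The essential content is the commutation step. Since $A$ is a weight module algebra, I would decompose any nonzero $a\in A$ as a finite sum $a=\sum_\lambda a_\lambda$ of nonzero weight vectors $a_\lambda$ of weight $\lambda$. For $s\in\mathcal{S}$, a highest weight vector of some weight $\mu$, I compute $(1\otimes s)(a\otimes 1)=\mathcal{R}_{(2)}(a)\otimes \mathcal{R}_{(1)}(s)$. The key observation is that, in the explicit form of $\mathcal{R}$ given in Section \ref{sect:quantum}, every factor of the quasi-$R$-matrix $\prod_{\alpha>0}^{\leftarrow} e_{q_\alpha^{-2}}^{(q_\alpha-q_\alpha^{-1})E_\alpha\otimes F_\alpha}$ of positive order places an $E_\alpha$ (with $\alpha>0$) in the first tensor slot, which annihilates $s$. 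Only the Cartan factor $q^{\sum_{i,j}d_i(C^{-1})_{i,j}H_i\otimes H_j}$ contributes, and it acts diagonally on the weight vectors $a_\lambda$, scaling each by a nonzero scalar $c_{\lambda,\mu}$ determined by $\lambda$ and $\mu$. Hence
\[(1\otimes s)(a\otimes 1)=\Bigl(\sum_\lambda c_{\lambda,\mu}\,a_\lambda\Bigr)\otimes s=(a'\otimes 1)(1\otimes s),\]
where $a':=\sum_\lambda c_{\lambda,\mu}\,a_\lambda$ is nonzero precisely when $a$ is. Running the same argument with $c_{\lambda,\mu}^{-1}$ in place of $c_{\lambda,\mu}$ gives the reverse inclusion, so the set-theoretic equality holds. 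Lemma \ref{lem:ore} then delivers the conclusion that $1\otimes\mathcal{S}$ is a normal Ore set in $A\underline{\otimes}B$.

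The only mildly delicate point is verifying that the action of $\mathcal{R}_{(1)}$ on a highest weight vector collapses to the purely Cartan contribution; this is standard but uses the weight-module hypothesis in an essential way, since the diagonal scaling is what produces the sought normality. No serious obstacle is anticipated.
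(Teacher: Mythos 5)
Your proposal is correct and follows essentially the same route as the paper: both reduce to Lemma \ref{lem:ore}, decompose $a$ into weight components, and use the highest-weight property of $s$ to collapse the braiding to the diagonal Cartan scaling $q^{(|a_\lambda|,|s|)}$, whose invertibility gives the two-sided set equality. The only difference is that you spell out why the quasi-$R$-matrix contributes trivially and why the multiplication identities of Lemma \ref{lem:ore} hold, details the paper leaves implicit.
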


\begin{proof}
	In light of Lemma \ref{lem:ore}, it suffices to show that $(1\otimes s)((A\setminus \{0\})\otimes 1)=((A\setminus\{0\})\otimes 1)(1\otimes s)$ for $s\in \mathcal{S}$. Since $\mathcal{S}$ consists of highest weight vectors in $B$, we have the commutation relation 
	\[(1\otimes s)(a\otimes 1)=q^{(|a|,|s|)} a\otimes s\]
	for weight vectors $a\in A$ and $s\in \mathcal{S}$ of weight $|a|$ and $|s|$, respectively. Let us denote $q_{s,a}:=q^{(|a|,|s|)}$. Now an arbitrary nonzero element $a\in A\setminus\{0\}$ is of the form 
		$\sum_{k=1}^n a_k$, 
	where each $a_k\in A\setminus\{0\}$ is a weight vector. We may assume $|a_k|\ne |a_l|$ if $k\ne l$. Then for $s\in \mathcal{S}$,
		\[\sum_{k=1}^n q_{s,a_k}a_k\ne 0\quad \text{ and } \quad \sum_{k=1}^n q_{s,a_k}^{-1}a_k\ne 0.\]
	Therefore since 
		\begin{align*}
			(1\otimes s)\left(\left(\sum_{k=1}^n a_k\right)\otimes 1\right)&=\left(\left(\sum_{k=1}^n q_{s,a_k}a_k\right)\otimes 1\right)(1\otimes s)\quad \text{ and }\\
			\left(\left(\sum_{k=1}^n a_k\right)\otimes 1\right)(1\otimes s)&=(1\otimes s)\left(\left(\sum_{k=1}^n q_{s,a_k}^{-1}a_k\right)\otimes 1\right),
		\end{align*}
	it follows that $(1\otimes s)((A\setminus \{0\})\otimes 1)=((A\setminus\{0\})\otimes 1)(1\otimes s)$ for $s\in \mathcal{S}$ and so the lemma is proven.
\end{proof}

By Lemma \ref{lem:braidore} and induction, $\mathcal{S}':=1\otimes \cdots\otimes 1\otimes \mathcal{S}$ is a normal Ore set in $\CC_q[G/U]^{\underline{\otimes}n}$. Furthermore, it is clear that 
	\[\CC_q[G/U]^{\underline{\otimes}n}[\mathcal{S}'^{-1}]\cong \CC_q[G/U]^{\underline{\otimes}(n-1)}\underline{\otimes} \CC_q[B]\]
as $U_q(\gg)$-module algebras and $\CC_q[G/U]^{\underline{\otimes}(n-1)}\underline{\otimes}\CC_q[B]$ is generated by $(\CC_q[G/U]^{\underline{\otimes}(n-1)}\underline{\otimes}\CC_q[B])^+$ as a $U_q(\gg)$-module algebra. We now have an embedding of $U_q(\gg)$-module algebras 
	\[\CC_q[U]\hookrightarrow \CC_q[B]\subset\CC_q[G/U]^{\underline{\otimes}(n-1)}\underline{\otimes}\CC_q[B].\]
Then by Corollary \ref{cor:qhighest}, $\CC_q[G/U]^{\underline{\otimes}(n-1)}\underline{\otimes}\CC_q[B]\cong (\CC_q[G/U]^{\underline{\otimes} n})[\mathcal{S}'^{-1}]$ is factorizable over $\CC_q[U]$.\qed

\subsection{Proofs of Theorems \ref{thm:qgd} and \ref{thm:qg}}
\label{pf:qgd,qg}

Let $H$ be a Hopf algebra with invertible antipode (e.g. $H=\mathcal{K}$). We will refer to Yetter-Drinfeld modules of various kinds: ${}_H^H\mathcal{YD}$, ${}_H\mathcal{YD}^H$, and $\mathcal{YD}_H^H$ (see, e.g., \cite[Section 2]{cwy}). The side of the subscript denotes the side on which $H$ will act, while the side of the superscript denotes the side on which $H$ will coact. We use sumless Sweedler notation to write left coactions $x\mapsto x^{(-1)}\otimes x^{(0)}$ and right coactions $x\mapsto x^{(0)}\otimes x^{(1)}$. To distinguish the structure maps of a Nichols algebra (a Hopf algebra in the appropriate Yetter-Drinfeld category, see for example \cite{bazlov}) from those of $H$, we underline them. For instance, we write the braided comultiplication $\underline{\Delta}(b)=\underline{b}_{(1)}\otimes\underline{b}_{(2)}$.

We start with some results that will play key roles in the proofs of the Theorems \ref{thm:qgd} and \ref{thm:qg}.

\begin{theorem}
\label{thm:adjoint}
Let $A$ be a left $H$-module algebra and suppose $V\in {}_H^H\mathcal{YD}$ is such that the Nichols algebra $\mathcal{B}(V)$ is a left $H$-module subalgebra of $A$, where ${}_H^H\mathcal{YD}$ is the category of left-left Yetter-Drinfeld modules over $H$. Then $A$ can be given a left $\mathcal{B}(V)$-module structure via
	\[v\rhd a=va-(v^{(-1)}(a))v^{(0)}.\]
\end{theorem}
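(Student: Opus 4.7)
The strategy is to realize the $\mathcal{B}(V)$-action on $A$ as the restriction of the \emph{inner adjoint action} of the bosonization $R:=\mathcal{B}(V)\# H$, which is a classical (non-braided) Hopf algebra. Since by hypothesis $\mathcal{B}(V)\hookrightarrow A$ is an inclusion of $H$-module algebras, it lifts to an algebra embedding $R=\mathcal{B}(V)\# H\hookrightarrow A\# H$ of smash-product algebras.

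I would then invoke the following general principle: for any Hopf algebra $K$ embedded as a subalgebra of an associative algebra $S$, the inner adjoint formula
\[ \mathrm{ad}(x)(y):=x_{(1)}\,y\,S(x_{(2)})\qquad (x\in K,\ y\in S) \]
makes $S$ into a $K$-module algebra (this is a routine consequence of $\Delta$ being multiplicative and $S$ being anti-multiplicative). Applied with $K=R$ and $S=A\# H$, this produces an $R$-module algebra structure on $A\# H$. The heart of the proof is then to verify that (i) the resulting $R$-action preserves the subspace $A\#1\subset A\# H$, and (ii) on generators $v\#1$ for $v\in V\subset\mathcal{B}(V)\subset R$, the restricted action satisfies $\mathrm{ad}(v\#1)(a\#1)=T_v(a)\#1$, where $T_v(a)=va-v^{(-1)}(a)v^{(0)}$.

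The key calculation uses the bosonization coproduct on primitive $v\in V$,
\[ \Delta(v\#1)=(v\#1)\otimes(1\#1)+(1\# v^{(-1)})\otimes(v^{(0)}\#1), \]
together with the antipode formula $S(v\#1)=-\bigl(S(v^{(-1)})_{(1)}(v^{(0)})\# S(v^{(-1)})_{(2)}\bigr)$ (derived from $\mu(S\otimes\mathrm{id})\Delta=\epsilon$ applied to $v\#1$). Plugging these into $\mathrm{ad}(v\#1)(a\#1)$ and expanding the smash-product multiplications, the first Sweedler summand contributes $va\#1$, and the cross term $(1\# v^{(-1)})(a\#1)\,S(v^{(0)}\#1)$ collapses—after applying the Yetter--Drinfeld coassociativity
\[ v^{(-1)}\otimes(v^{(0)})^{(-1)}\otimes(v^{(0)})^{(0)}=v^{(-1)}{}_{(1)}\otimes v^{(-1)}{}_{(2)}\otimes v^{(0)} \]
and the standard antipode identity $h_{(1)}S(h_{(2)})=\epsilon(h)$—to $-v^{(-1)}(a)v^{(0)}\#1$. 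A parallel (and much easier) check yields $\mathrm{ad}(1\# h)(a\#1)=h(a)\#1$. Since $V\cup H$ generates $R$ and $\mathrm{ad}$ is an algebra homomorphism $R\to\mathrm{End}(A\# H)$, the action of all of $R$ preserves $A\#1\cong A$; restriction to the subalgebra $\mathcal{B}(V)\subset R$ delivers the desired $\mathcal{B}(V)$-module structure, automatically satisfying $(b_1b_2)\rhd a=b_1\rhd(b_2\rhd a)$.

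The main obstacle will be the Sweedler-level bookkeeping in the cross-term computation—in particular, correctly coordinating the Hopf coproduct $\Delta$ on the $H$-factor with the iterated $\rho$-coaction on the $V$-factor via Yetter--Drinfeld coassociativity, as well as correctly deriving the antipode on elements of the form $v\#1$. Once this core identity is verified on the generators of $R$, everything else is formal, following from the general theory of inner adjoint actions of sub-Hopf-algebras.
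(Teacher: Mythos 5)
Your proposal is correct and follows essentially the same route as the paper: both realize the action as the restriction of the inner adjoint action of the bosonization $\mathcal{B}(V)\rtimes H$ sitting inside the smash product $A\rtimes H$, verify that $A\cong A\otimes 1$ is preserved, and evaluate on primitive $v\in V$ to recover the formula $v\rhd a=va-(v^{(-1)}(a))v^{(0)}$. The only cosmetic difference is that the paper checks invariance of $A$ for arbitrary $u\in\mathcal{B}(V)$ by a direct Sweedler computation, whereas you deduce it from the generators $V\cup H$ via multiplicativity of the adjoint action; both are valid.
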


\begin{proof}
	Consider the Hopf algebra $\widetilde{H}:=\mathcal{B}(V)\rtimes H$, where $\Delta(u)=\underline{u}_{(1)}(\underline{u}_{(2)})^{(-1)}\otimes (\underline{u}_{(2)})^{(0)}$ and \linebreak $S(u)=S(u^{(-1)})\underline{S}(u^{(0)})$ for $u\in \mathcal{B}(V)$. Then $\widetilde{H}$ can naturally be considered as a subalgebra of $\widetilde{A}:=A\rtimes H$. Hence $\widetilde{A}$ is an $\widetilde{H}$-module algebra under the adjoint action:
		\[\widetilde{h}\rhd \widetilde{a}=\widetilde{h}_{(1)}\widetilde{a} S(\widetilde{h}_{(2)}).\]
	We observe that $A$ is preserved under the restriction of this action to $\mathcal{B}(V)$ (note that for convenience we will write, e.g., $a$ instead of $a\otimes 1$):
	\begin{align*}
		u\rhd a&=u_{(1)}aS(u_{(2)})\\
			&=\underline{u}_{(1)}(\underline{u}_{(2)})^{(-1)}aS(((\underline{u}_{(2)})^{(0)})^{(-1)})\underline{S}(((\underline{u}_{(2)})^{(0)})^{(0)})\\
			&=\underline{u}_{(1)}(\underline{u}_{(2)})^{(-2)}aS((\underline{u}_{(2)})^{(-1)})\underline{S}((\underline{u}_{(2)})^{(0)})\\
			&=\underline{u}_{(1)}((\underline{u}_{(2)})^{(-3)}(a))(\underline{u}_{(2)})^{(-2)}S((\underline{u}_{(2)})^{(-1)})\underline{S}((\underline{u}_{(2)})^{(0)})\\
			&=\underline{u}_{(1)}((\underline{u}_{(2)})^{(-2)}(a))\varepsilon((\underline{u}_{(2)})^{(-1)})\underline{S}((\underline{u}_{(2)})^{(0)})\\
			&=\underline{u}_{(1)}((\underline{u}_{(2)})^{(-1)}(a))\underline{S}((\underline{u}_{(2)})^{(0)})\in A
	\end{align*}
	for $u\in \mathcal{B}(V)$ and $a\in A$. In fact, it is clear that $A$ has become a left $\widetilde{H}$-module algebra. Now computing the given action for $v\in V$ and $a\in A$, we find
$$			v\rhd a=\underline{v}_{(1)}((\underline{v}_{(2)})^{(-1)}(a))\underline{S}((\underline{v}_{(2)})^{(0)})
=va+v^{(-1)}(a)\underline{S}(v^{(0)})
=va-v^{(-1)}(a)v^{(0)},$$
	as required. The second and third equalities follow from the fact that every element of $V$ is a primitive element of the braided Hopf algebra $\mathcal{B}(V)$.
\end{proof}

Of course, Theorem \ref{thm:adjoint} has a natural counterpart with ``left" replaced by ``right".

\begin{theorem}
\label{thm:op}
Let $A$ be a right $H$-module algebra and suppose $V\in \mathcal{YD}_H^H$ is such that the Nichols algebra $\mathcal{B}(V)$ is a right $H$-module subalgebra of $A$. Then $A$ can be given a right $\mathcal{B}(V)$-module structure via 
	\[a\blacktriangleleft v= av-v^{(0)}((a) v^{(1)}).\]
\end{theorem}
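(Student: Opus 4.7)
The strategy is to mirror the proof of Theorem \ref{thm:adjoint} with every action and coaction flipped to the right. The machinery we need is the right bosonization: since $V \in \mathcal{YD}_H^H$, the Nichols algebra $\mathcal{B}(V)$ sits in the same category, and we can form the Hopf algebra $\widetilde{H} := H \ltimes \mathcal{B}(V)$ whose multiplication uses the right $H$-action on $\mathcal{B}(V)$, whose comultiplication is
\[
\Delta(u) = (\underline{u}_{(1)})^{(0)} \otimes (\underline{u}_{(1)})^{(1)} \underline{u}_{(2)},
\]
and whose antipode is $S(u) = \underline{S}(u^{(0)}) S(u^{(1)})$ for $u \in \mathcal{B}(V)$. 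Because $\mathcal{B}(V)$ is assumed to be a right $H$-module subalgebra of $A$, the analogous smash product $\widetilde{A} := H \ltimes A$ contains $\widetilde{H}$ as a Hopf subalgebra.

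Next, I would equip $\widetilde{A}$ with the right adjoint action of $\widetilde{H}$,
\[
\widetilde{a} \blacktriangleleft \widetilde{h} = S(\widetilde{h}_{(1)}) \, \widetilde{a} \, \widetilde{h}_{(2)},
\]
which makes $\widetilde{A}$ into a right $\widetilde{H}$-module algebra by general principles. The heart of the argument is then to check that $A \subset \widetilde{A}$ is preserved under the restriction of this action to $\mathcal{B}(V) \subset \widetilde{H}$, by a Sweedler-calculus manipulation that is the mirror of the chain of equalities in the proof of Theorem \ref{thm:adjoint}: one expands $\Delta(u)$ using the formula above, moves $a$ through the factors using the right $H$-action, and uses the antipode identity $\underline{S}(u^{(1)}) u^{(2)} = \varepsilon(u) \cdot 1$ to collapse the intermediate $H$-factors, leaving an element of $A$.

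Once invariance of $A$ is established, restricting to $v \in V$ finishes the proof, since every $v \in V$ is primitive in $\mathcal{B}(V)$, i.e.\ $\underline{\Delta}(v) = v \otimes 1 + 1 \otimes v$. Plugging this into the restricted adjoint action, the two surviving terms are $av$ and $-v^{(0)}((a)v^{(1)})$, yielding exactly the claimed formula. I expect the main obstacle to be purely notational: faithfully tracking the right-handed Sweedler indices for both $\Delta$ and the coaction $v \mapsto v^{(0)} \otimes v^{(1)}$, and making sure the antipode appears on the correct side in each step so that the final formula matches the stated sign and order of multiplication. No genuinely new ideas beyond those of Theorem \ref{thm:adjoint} should be required.
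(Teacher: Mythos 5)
Your proposal is correct and is exactly what the paper intends: the paper gives no separate proof of Theorem \ref{thm:op}, simply noting that it is the left-to-right mirror of Theorem \ref{thm:adjoint}, and your right-handed bosonization $H\ltimes\mathcal{B}(V)$, right adjoint action $\widetilde{a}\blacktriangleleft\widetilde{h}=S(\widetilde{h}_{(1)})\widetilde{a}\widetilde{h}_{(2)}$, and final evaluation on primitives all reproduce the stated formula. (Only a cosmetic point: $\widetilde{H}$ is merely a subalgebra of $\widetilde{A}=H\ltimes A$, which is not itself a Hopf algebra.)
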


Given any ring $R$, a right $R$-module is naturally a left $R^{op}$-module, giving us the following obvious corollary.

\begin{corollary}
\label{cor:op}
	In the assumptions of Theorem \ref{thm:op}, if $H$ is commutative, then $A$ can be  given a left $\mathcal{B}(V)^{op}$-module structure via
	\[v\blacktriangleright a=av-v^{(0)}(v^{(1)}(a)).\]
\end{corollary}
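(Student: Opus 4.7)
The plan is to obtain this corollary directly from Theorem~\ref{thm:op}, the statement being essentially a change of notational convention enabled by the commutativity of $H$, with no substantive new input required.

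First, I would apply Theorem~\ref{thm:op} to equip $A$ with the right $\mathcal{B}(V)$-module structure $a \blacktriangleleft v = av - v^{(0)}((a)v^{(1)})$. The universal identification invoked in the paragraph preceding the corollary---for any ring $R$, every right $R$-module is automatically a left $R^{op}$-module, with the $R^{op}$-action defined by $(r, m) \mapsto m \cdot r$---specializes at $R = \mathcal{B}(V)$ to yield a left $\mathcal{B}(V)^{op}$-module structure on $A$ given by $v \blacktriangleright a := a \blacktriangleleft v$. This step is purely formal and requires no verification beyond what Theorem~\ref{thm:op} already provides, since reversing the order of multiplication in $\mathcal{B}(V)$ reverses the direction of the module axiom.

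Second, I would use the commutativity of $H$ to re-express the right $H$-action appearing in the formula as a left $H$-action, in order to match the form of the stated expression. On any right $H$-module $M$, the prescription $h(m) := m \cdot h$ defines a left $H$-action exactly when $h(h'(m)) = (hh')(m)$, i.e.\ $(m \cdot h') \cdot h = m \cdot (hh')$, and this identity holds universally iff $H$ is commutative. Substituting this convention, $(a)v^{(1)}$ becomes $v^{(1)}(a)$, and the displayed formula $v \blacktriangleright a = av - v^{(0)}((a)v^{(1)})$ transforms into the asserted $v \blacktriangleright a = av - v^{(0)}(v^{(1)}(a))$.

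There is no real obstacle here: both steps are routine bookkeeping. The only point worth recording carefully is that the commutativity hypothesis on $H$ is precisely what is needed to give the notation $v^{(1)}(a)$ on the right-hand side of the formula a consistent meaning as a left $H$-action, thereby reconciling the form of the corollary with the right-handed formalism of Theorem~\ref{thm:op}.
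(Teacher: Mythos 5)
Your proposal is correct and matches the paper's own (implicit) argument: the paper derives this corollary from Theorem \ref{thm:op} via exactly the same right-module-to-left-$\mathcal{B}(V)^{op}$-module identification, with commutativity of $H$ serving only to let the right $H$-action $(a)v^{(1)}$ be rewritten as the left action $v^{(1)}(a)$. Nothing further is needed.
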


\begin{remark}
	If $A$ is a $(\mathcal{B}(V)\rtimes H)$-module algebra (e.g. Theorem \ref{thm:adjoint}), then we can form the \emph{braided cross product} $A\underline{\rtimes}\mathcal{B}(V)$ which, as a vector space, is just $A\otimes \mathcal{B}(V)\subset A\rtimes (\mathcal{B}(V)\rtimes H)$ and it is a subalgebra. Furthermore, it is an $H$-module algebra. We note that if $A$ is additionally a $\mathcal{B}(V)$-module algebra in ${}_H^H\mathcal{YD}$, then our definition of $A\underline{\rtimes}\mathcal{B}(V)$ matches that of $A\rtimes \mathcal{B}(V)$. However, we don't require that $A$ is even an $H$-comodule, which is why we use a different notation. Similarly, we can form the braided tensor product $A\underline{\otimes}\mathcal{B}(V)$ (which is an $H$-module algebra) even if $A$ is an $H$-module algebra and is not in ${}_H^H\mathcal{YD}$, simply satisfying $(1\otimes v)(a\otimes 1)=(v^{(-1)}\rhd a)\otimes v^{(0)}$. This corresponds to the braided cross product $A\underline{\rtimes}\mathcal{B}(V)$, where $\mathcal{B}(V)\rtimes H$ acts on $A$ by the ``trivial" action:
$(u\otimes h)\rhd a=\underline{\varepsilon}(u)h\rhd a\quad \text{for }u\in \mathcal{B}(V),\ h\in H,\ a\in A$.
\end{remark}

\begin{theorem}
\label{thm:braidedtriv}
	Let $V\in {}_H^H\mathcal{YD}$ and suppose $A$ is an $H$-module algebra containing $\mathcal{B}(V)$ as an $H$-module subalgebra. Then the linear map $\tau:=(\mu_A\otimes id)\circ(id\otimes \iota\otimes id)\circ (id\otimes \underline{\Delta}):A\underline{\rtimes}\mathcal{B}(V)\to A\underline{\otimes}\mathcal{B}(V)$ is an $H$-module algebra isomorphism with inverse $\tau^{-1}=(\mu_A\otimes id)\circ(id\otimes \iota\otimes id)\circ (id\otimes \underline{S}\otimes id)\circ (id\otimes \underline{\Delta})$, where $\iota: \mathcal{B}(V)\to A$ is the inclusion and the implied $\mathcal{B}(V)\rtimes H$ action on $A$ is that of Theorem \ref{thm:adjoint}.
\end{theorem}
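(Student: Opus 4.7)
The map $\tau$ is the braided analogue of the classical isomorphism $A \rtimes B \cong A \otimes B$, $a \otimes b \mapsto a\iota(b_{(1)}) \otimes b_{(2)}$, available whenever a Hopf algebra $B$ embeds compatibly (via an algebra map $\iota$) into a $B$-module algebra $A$. My plan is first to verify that $\tau$ and $\tau^{-1}$ are mutually inverse: by direct Sweedler calculation,
\[\tau^{-1}\circ\tau(a \otimes u) \;=\; a\,\iota\!\left(\underline{u}_{(1)}\underline{S}(\underline{u}_{(2)})\right) \otimes \underline{u}_{(3)},\]
which collapses via the braided antipode axiom $\underline{u}_{(1)}\underline{S}(\underline{u}_{(2)}) = \underline{\varepsilon}(u)\cdot 1$ in $\mathcal{B}(V)$ (remaining valid in $A$ because $\iota$ is an algebra inclusion) and the braided counit axiom to $a \otimes u$; the reverse composition $\tau\circ\tau^{-1}$ is symmetric.

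Next I would check $H$-equivariance. Both $A \underline{\rtimes} \mathcal{B}(V)$ and $A \underline{\otimes} \mathcal{B}(V)$ carry the diagonal $H$-action on the underlying vector space $A \otimes \mathcal{B}(V)$. Since $V \in {}_H^H\mathcal{YD}$ the braided coproduct $\underline{\Delta}$ is $H$-linear; the inclusion $\iota$ is $H$-linear because $\mathcal{B}(V) \subset A$ is an $H$-module subalgebra; and $\mu_A$ is $H$-linear because $A$ is an $H$-module algebra. So the composition $\tau$ is $H$-linear.

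The substantive content is showing $\tau$ is multiplicative. I would set $W := \{x \in A \underline{\rtimes} \mathcal{B}(V) : \tau(xy) = \tau(x)\tau(y)\text{ for all }y\}$; plugging in $y = 1$ (and using $\tau(1) = 1$) shows $W$ is a unital subalgebra. Since $(a \otimes 1)(1 \otimes u) = a \otimes u$ and $(1 \otimes v)(1 \otimes v') = 1 \otimes vv'$ in $A \underline{\rtimes} \mathcal{B}(V)$---the latter from $v \rhd 1 = \iota(v) - \epsilon(v^{(-1)})\iota(v^{(0)}) = 0$ and the counit axiom for the $H$-coaction---the elements $a \otimes 1$ and $1 \otimes v$ ($a \in A$, $v \in V$) generate $A \underline{\rtimes} \mathcal{B}(V)$ as a unital algebra, so it suffices to show these lie in $W$. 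That $a \otimes 1 \in W$ is immediate from the $A$-linearity of $\tau$ built into its definition, so the crux reduces to verifying $\tau((1 \otimes v)(a' \otimes u')) = \tau(1 \otimes v)\tau(a' \otimes u')$ for all $a' \in A$ and $u' \in \mathcal{B}(V)$.

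The main obstacle lies in this last verification, with the essential case being $u' = 1$. Using $\Delta(v) = v \otimes 1 + v^{(-1)} \otimes v^{(0)}$ in $\widetilde{H} = \mathcal{B}(V) \rtimes H$ together with the adjoint formula $v \rhd a' = \iota(v)a' - v^{(-1)}(a')\iota(v^{(0)})$ of Theorem \ref{thm:adjoint}, one expands
\[(1 \otimes v)(a' \otimes 1) \;=\; \iota(v)a' \otimes 1 \;-\; v^{(-1)}(a')\iota(v^{(0)}) \otimes 1 \;+\; v^{(-1)}(a') \otimes v^{(0)}\]
inside $A \underline{\rtimes} \mathcal{B}(V)$, applies $\tau$ termwise (using $\underline{\Delta}(v^{(0)}) = v^{(0)} \otimes 1 + 1 \otimes v^{(0)}$ on the third summand), and compares with $\tau(1 \otimes v)\tau(a' \otimes 1) = (\iota(v) \otimes 1 + 1 \otimes v)(a' \otimes 1)$ computed in $A \underline{\otimes} \mathcal{B}(V)$ via the braiding $(1 \otimes v)(a' \otimes 1) = v^{(-1)}(a') \otimes v^{(0)}$. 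The apparent discrepancy $-v^{(-1)}(a')\iota(v^{(0)}) \otimes 1$ cancels exactly against the first piece of $\tau(v^{(-1)}(a') \otimes v^{(0)})$, and both sides collapse to $\iota(v)a' \otimes 1 + v^{(-1)}(a') \otimes v^{(0)}$. Extension to arbitrary $u'$ then proceeds by induction on the PBW degree of $u'$ using $(a' \otimes u') = (a' \otimes 1)(1 \otimes u')$; the delicate part is the careful propagation of nested Yetter-Drinfeld coactions through this induction, but the cancellations are mechanical once the base case is in hand.
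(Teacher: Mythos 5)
Your overall strategy coincides with the paper's: both prove that $\tau$ and $\tau^{-1}$ are mutually inverse via the braided antipode and counit axioms, and both establish multiplicativity by reducing to a computation with a primitive $v\in V$, where the Yetter--Drinfeld compatibility and the adjoint-action formula of Theorem \ref{thm:adjoint} produce the key cancellation. Your verification of mutual inverses, of $H$-equivariance, of the generation of $A\underline{\rtimes}\mathcal{B}(V)$ by $a\otimes 1$ and $1\otimes v$, and your base-case computation for $(1\otimes v)(a'\otimes 1)$ are all correct; the last of these is exactly the $b=1$ specialization of the paper's central display.

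The one place your argument is not yet a proof is the final sentence: the passage from $u'=1$ to arbitrary $u'$ is not ``mechanical,'' and the induction as set up does not close without an extra input. Concretely, writing $(1\otimes v)(a'\otimes u')=\bigl[(1\otimes v)(a'\otimes 1)\bigr](1\otimes u')$ and expanding the bracket as $\sum_k c_k\otimes w_k$ with $w_k\in\CC\oplus V$, you still need $\tau\bigl((c\otimes w)(1\otimes u')\bigr)=\tau(c\otimes w)\,\tau(1\otimes u')$, i.e.\ that $\tau$ commutes with right multiplication by $1\otimes u'$ on elements not covered by your base case. After stripping off $c$, this is precisely the statement that $b\mapsto\iota(\underline{b}_{(1)})\otimes\underline{b}_{(2)}$ is an algebra map $\mathcal{B}(V)\to A\underline{\otimes}\mathcal{B}(V)$; this holds because $\underline{\Delta}$ is multiplicative into $\mathcal{B}(V)\underline{\otimes}\mathcal{B}(V)$ (a braided-bialgebra axiom) and $\iota\otimes\mathrm{id}$ is an algebra map between the two braided tensor products ($\iota$ being $H$-equivariant), but it has to be invoked explicitly rather than absorbed into ``cancellations.'' The paper avoids the issue by running the key computation against a general second factor $a'\otimes b$ from the outset, using $\underline{\Delta}(v^{(0)}b)=\underline{\Delta}(v^{(0)})\,\underline{\Delta}(b)$ together with the relabeling $v^{(-1)}(a'\underline{b}_{(1)})=v^{(-2)}(a')\,v^{(-1)}(\underline{b}_{(1)})$ coming from coassociativity of the coaction. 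If you insert the multiplicativity of $(\iota\otimes\mathrm{id})\circ\underline{\Delta}$ as an explicit lemma, your version closes as well.
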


\begin{proof}
We first verify that $\tau$ and $\tau^{-1}$ are truly mutually inverse (and hence that we are justified in using the name $\tau^{-1}$). For $a\in A$ and $b\in \mathcal{B}(V)$, we directly compute
$$(\tau\circ\tau^{-1})(a\otimes b)=\tau(a\underline{S}(\underline{b}_{(1)})\otimes \underline{b}_{(2)})=aS(\underline{b}_{(1)})\underline{b}_{(2)}\otimes \underline{b}_{(3)}
=a\underline{\varepsilon}(\underline{b}_{(1)})\otimes \underline{b}_{(2)}=a\otimes\underline{\varepsilon}(\underline{b}_{(1)})\underline{b}_{(2)}=a\otimes b,$$
$$(\tau^{-1}\circ\tau)(a\otimes b)=\tau^{-1}(a\underline{b}_{(1)}\otimes \underline{b}_{(2)})
=a\underline{b}_{(1)}\underline{S}(\underline{b}_{(2)})\otimes \underline{b}_{(3)}
=a\underline{\varepsilon}(\underline{b}_{(1)})\otimes \underline{b}_{(2)}
=a\otimes\underline{\varepsilon}(\underline{b}_{(1)})\underline{b}_{(2)}=a\otimes b.
$$	
	Since $\tau\circ \tau^{-1}$ and $\tau^{-1}\circ \tau$ act as the identity on pure tensors, they are both the identity homomorphism. Hence $\tau$ and $\tau^{-1}$ are mutually inverse. We conclude by verifying that $\tau$ is actually a homomorphism of algebras (and hence that $\tau^{-1}$ is as well). For $v\in V$, $a,a'\in A$, and $b\in \mathcal{B}(V)$, we have
	
	\begin{align*}
		\tau(a\otimes v)\tau(a'\otimes b)&=(av\otimes 1+a\otimes v)(a'\underline{b}_{(1)}\otimes \underline{b}_{(2)})\\
			&=ava'\underline{b}_{(1)}\otimes \underline{b}_{(2)}+a(v^{(-1)} (a'\underline{b}_{(1)}))\otimes v^{(0)}\underline{b}_{(2)}\\
			&=ava'\underline{b}_{(1)}\otimes \underline{b}_{(2)}+a(v^{(-2)} (a'))(v^{(-1)}( \underline{b}_{(1)}))\otimes v^{(0)}\underline{b}_{(2)}\\
			&=ava'\underline{b}_{(1)}\otimes \underline{b}_{(2)}-a(v^{(-1)} (a'))v^{(0)}\underline{b}_{(1)}\otimes \underline{b}_{(2)}+a(v^{(-1)}(a'))v^{(0)}\underline{b}_{(1)}\otimes \underline{b}_{(2)}\\
			&\hspace{0.5cm}+a(v^{(-2)}(a'))(v^{(-1)}( \underline{b}_{(1)}))\otimes v^{(0)}\underline{b}_{(2)}\\
			&=\tau(ava'\otimes b-a(v^{(-1)}( a'))v^{(0)}\otimes b+a(v^{(-1)}( a'))\otimes v^{(0)}b)\\
			&=\tau(a(va'-(v^{(-1)}(a'))v^{(0)})\otimes b+a(v^{(-1)}(a'))\otimes v^{(0)}b)\\
			&=\tau(a(v\rhd a')\otimes b+a(v^{(-1)}(a'))\otimes v^{(0)}b)\\
			&=\tau((a\otimes v)(a'\otimes b)).
	\end{align*}

	It is clear that $\{b\in \mathcal{B}(V)\ |\ \tau(a\otimes b)\tau(a'\otimes b')=\tau((a\otimes b)(a'\otimes b'))\ \forall a,a'\in A, b'\in \mathcal{B}(V)\}$ is a subalgebra of $A\underline{\rtimes}\mathcal{B}(V)$. We have shown it contains $V$, so it must be equal to $\mathcal{B}(V)$. Now, since pure tensors span $A\underline{\rtimes} \mathcal{B}(V)$ and $\tau$ is a linear map, it follows that $\tau$ respects multiplication. The theorem is proved.
\end{proof}

\begin{corollary}
\label{cor:embed}
	Let $V\in {}_H^H\mathcal{YD}$. Then there are injective $H$-module algebra homomorphisms \linebreak$\mathcal{B}(V)\to \mathcal{B}(V)\underline{\rtimes}\mathcal{B}(V)$ and $\mathcal{B}(V)\to \mathcal{B}(V)\underline{\otimes}\mathcal{B}(V)$ given by $v\mapsto 1\otimes v-v\otimes 1$  and $v\mapsto 1\otimes v+v\otimes 1$, respectively, for $v\in V$.
\end{corollary}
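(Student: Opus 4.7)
The plan is to exhibit each embedding as a composition of $H$-module algebra homomorphisms already built in Theorems \ref{thm:adjoint} and \ref{thm:braidedtriv}, so that the only content beyond quoting previous results is a one-line computation on generators.

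For the embedding $\mathcal{B}(V)\to\mathcal{B}(V)\underline{\otimes}\mathcal{B}(V)$, $v\mapsto 1\otimes v+v\otimes 1$, the plan is simply to take the braided coproduct $\underline{\Delta}$ itself. Since $\mathcal{B}(V)$ is a Hopf algebra in ${}_H^H\mathcal{YD}$, the map $\underline{\Delta}$ is an $H$-linear algebra homomorphism once $\mathcal{B}(V)\underline{\otimes}\mathcal{B}(V)$ carries its braided multiplication (which, as noted in the remark before Theorem \ref{thm:braidedtriv}, agrees with the $H$-module algebra structure at hand). Every $v\in V$ is primitive in $\mathcal{B}(V)$, so $\underline{\Delta}(v)=1\otimes v+v\otimes 1$, matching the stated formula, and since $V$ generates $\mathcal{B}(V)$ as an algebra this determines the extension uniquely. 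Injectivity is immediate from $(\underline{\epsilon}\otimes\operatorname{id})\circ\underline{\Delta}=\operatorname{id}_{\mathcal{B}(V)}$.

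For the embedding $\mathcal{B}(V)\to\mathcal{B}(V)\underline{\rtimes}\mathcal{B}(V)$, $v\mapsto 1\otimes v-v\otimes 1$, the plan is to apply Theorem \ref{thm:adjoint} with $A=\mathcal{B}(V)$ (trivially an $H$-module subalgebra of itself), so that $\mathcal{B}(V)$ becomes a left $\mathcal{B}(V)$-module algebra and the braided cross product $\mathcal{B}(V)\underline{\rtimes}\mathcal{B}(V)$ is defined. Theorem \ref{thm:braidedtriv} then yields the $H$-module algebra isomorphism $\tau:\mathcal{B}(V)\underline{\rtimes}\mathcal{B}(V)\to\mathcal{B}(V)\underline{\otimes}\mathcal{B}(V)$ with explicit inverse $\tau^{-1}(a\otimes b)=a\,\underline{S}(\underline{b}_{(1)})\otimes\underline{b}_{(2)}$. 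Composing the right-factor inclusion $\iota_R:\mathcal{B}(V)\hookrightarrow\mathcal{B}(V)\underline{\otimes}\mathcal{B}(V)$, $b\mapsto 1\otimes b$ (manifestly an injective $H$-module algebra homomorphism), with $\tau^{-1}$ produces the desired $H$-module algebra embedding. Evaluating on generators using $\underline{\Delta}(v)=v\otimes 1+1\otimes v$ and $\underline{S}(v)=-v$ for primitive $v\in V\subset\mathcal{B}(V)$ gives $\tau^{-1}(1\otimes v)=1\otimes v-v\otimes 1$, as required; injectivity is automatic from $\iota_R$ being injective and $\tau^{-1}$ being an isomorphism.

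There is no genuine obstacle in this proof: the substantive work—constructing the adjoint-type $\mathcal{B}(V)$-action in Theorem \ref{thm:adjoint} and the braided triangle isomorphism $\tau$ in Theorem \ref{thm:braidedtriv}—has already been done. The only subtlety is keeping track of the sign coming from $\underline{S}(v)=-v$ on primitives, which is precisely the source of the opposing signs in the two embeddings.
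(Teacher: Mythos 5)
Your proof is correct and follows essentially the same route as the paper: both embeddings are obtained by restricting the isomorphisms $\tau$ and $\tau^{-1}$ of Theorem \ref{thm:braidedtriv} (with $A=\mathcal{B}(V)$) to the right tensor factor, and your identification of the second map with the braided coproduct $\underline{\Delta}$ is just an explicit naming of what $\tau|_{1\underline{\rtimes}\mathcal{B}(V)}$ computes to. The sign bookkeeping via $\underline{S}(v)=-v$ on primitives matches the paper's computation exactly.
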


\begin{proof}
	Let $\tau$ be as in Theorem \ref{thm:braidedtriv}, where $A=\mathcal{B}(V)$. Restrict $\tau^{-1}$ to $\mathcal{B}(V)\cong 1\underline{\otimes} \mathcal{B}(V)\subset \mathcal{B}(V)\underline{\otimes}\mathcal{B}(V)$ and observe that $\tau^{-1}(1\otimes v)=1\otimes v-v\otimes 1$ for $v\in V$.
	
	Similarly, restrict $\tau$ to $\mathcal{B}(V)\cong 1\underline{\rtimes}\mathcal{B}(V)\subset \mathcal{B}(V)\underline{\rtimes}\mathcal{B}(V)$ and note that $\tau(1\otimes v)=1\otimes v+v\otimes 1$ for $v\in V$.
\end{proof}

\begin{theorem}
\label{thm:diff}
	Let $V\in {}_H^H\mathcal{YD}$ and set $\widetilde{H}=\mathcal{B}(V)\rtimes H$. Let $A$ be a left $\widetilde{H}$-module algebra and suppose $A$ contains an $\widetilde{H}$-module subalgebra isomorphic to $\mathcal{B}(V)$ with the ``adjoint" action: 
	\[(u\otimes h)\rhd u'=\underline{u}_{(1)}([(\underline{u}_{(2)})^{(-1)}h](u'))\underline{S}((\underline{u}_{(2)})^{(0)})\quad \text{for } u,u'\in \mathcal{B}(V),\ h\in H.\] 
	Then there is a left $\mathcal{B}(V)$ action $\blacktriangleright$ on $A$ given by
		\[v\blacktriangleright a=(v\rhd a)-[va-(v^{(-1)}( a))v^{(0)}]	\quad \text{for }v\in V,\ a\in A.\]
\end{theorem}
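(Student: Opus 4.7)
My plan is to realize $\blacktriangleright$ as the difference of two $\widetilde{H}$-module algebra structures on $A$ that coincide on the subalgebra $\mathcal{B}(V) \subset A$. Restricting the given action $\rhd$ to $H \subset \widetilde{H}$ makes $A$ into an $H$-module algebra containing $\mathcal{B}(V)$ as an $H$-module subalgebra, so Theorem \ref{thm:adjoint} yields a second $\widetilde{H}$-module algebra structure $\rhd'$ on $A$ with $v \rhd' a = va - v^{(-1)}(a) v^{(0)}$ for $v \in V$. The hypothesis that $\mathcal{B}(V) \subset A$ carries the adjoint $\widetilde{H}$-action forces $\rhd$ and $\rhd'$ to agree on $\mathcal{B}(V) \subset A$, since expanding the adjoint formula for $v$ acting on $u' \in \mathcal{B}(V)$ via $\underline{\Delta}(v) = v \otimes 1 + 1 \otimes v$ yields exactly $vu' - v^{(-1)}(u') v^{(0)}$. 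Hence the prescribed formula can be rewritten as $v \blacktriangleright a = v \rhd a - v \rhd' a$, and in particular $v \blacktriangleright u = 0$ for $u \in \mathcal{B}(V) \subset A$.

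Next, I would derive the braided Leibniz rule
\[v \blacktriangleright (u a) = v^{(-1)}(u) \cdot (v^{(0)} \blacktriangleright a), \qquad v \in V,\ u \in \mathcal{B}(V) \subset A,\ a \in A,\]
by expanding $v \rhd (ua)$ and $v \rhd'(ua)$ using the $\widetilde{H}$-module algebra property with $\Delta_{\widetilde{H}}(v) = v \otimes 1 + v^{(-1)} \otimes v^{(0)}$ and subtracting; the $(v \rhd u - v \rhd' u)\, a$ terms cancel because $\rhd$ and $\rhd'$ agree on $\mathcal{B}(V) \subset A$. To extend $\blacktriangleright$ to all of $\mathcal{B}(V)$, I would mimic Theorem \ref{thm:braidedtriv} by forming the braided cross product $A \underline{\rtimes}_\rhd \mathcal{B}(V) \subset A \rtimes \widetilde{H}$ using the given $\rhd$-action, and transferring its multiplication to $A \otimes \mathcal{B}(V)$ via the bijection $\tau(a \otimes b) := a \underline{b}_{(1)} \otimes \underline{b}_{(2)}$. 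A direct calculation shows the transferred product satisfies
\[(1 \otimes v) \cdot (a \otimes 1) = (v \blacktriangleright a) \otimes 1 + v^{(-1)}(a) \otimes v^{(0)},\]
so this algebra is a deformation of the usual braided tensor product in which $\blacktriangleright$ appears as the ``anomaly'' term. The $\mathcal{B}(V)$-action on $A$ is then recovered by the rule $u \blacktriangleright a := (\text{id} \otimes \varepsilon)\bigl((1 \otimes u) \cdot (a \otimes 1)\bigr)$, which matches the prescribed formula on $V$, and its multiplicativity should follow from the associativity of the transferred algebra.

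The main obstacle will be the final step: extracting $(uv) \blacktriangleright a = u \blacktriangleright (v \blacktriangleright a)$ cleanly from the associativity of this transferred algebra. Since $(1 \otimes u) \cdot (a \otimes 1)$ has contributions in every graded component of $A \otimes \mathcal{B}(V)$, applying $\text{id} \otimes \varepsilon$ does not obviously commute with multiplication, and auxiliary formulas for $(1 \otimes u) \cdot (c \otimes w)$ with $w \in \mathcal{B}(V)$ are needed. I expect this to proceed by induction on the degree of $u$, using the braided Leibniz rule to reduce each step to the case $u \in V$ acting on products $u'a'$ with $u' \in \mathcal{B}(V) \subset A$, and exploiting the vanishing $v \blacktriangleright u' = 0$ to kill the potentially obstructing terms at each stage.
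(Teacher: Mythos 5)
Your setup is sound and your objects are the right ones: the difference $v\blacktriangleright a=v\rhd a-v\rhd'a$, the vanishing $v\blacktriangleright u=0$ on $\mathcal{B}(V)\subset A$, and the braided cross product $A\underline{\rtimes}\mathcal{B}(V)$ with the elements $1\otimes v-v\otimes 1$ all appear in the paper's argument. But there is a genuine gap at exactly the point you flag as ``the main obstacle'': you never establish that the operators $v\blacktriangleright$ for $v\in V$ assemble into an action of the whole Nichols algebra $\mathcal{B}(V)$, i.e.\ that they satisfy its defining relations. Your proposed route --- define $u\blacktriangleright a:=(\mathrm{id}\otimes\varepsilon)\bigl((1\otimes u)\cdot(a\otimes 1)\bigr)$ in the transferred algebra and extract multiplicativity from associativity by induction on degree --- is not carried out, and as you yourself note, $\mathrm{id}\otimes\varepsilon$ is not an algebra map for the transferred product, so the required triangularity statement (that $(1\otimes v)\cdot(c\otimes b)$ has no component in $A\otimes 1$ when $b$ has positive degree) is an unproved auxiliary claim on which everything rests. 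Also, the first paragraph's framing of $\blacktriangleright$ as a ``difference of two module structures'' proves nothing by itself: the difference of two actions of a non-primitively-generated algebra is not an action, so this observation carries no weight beyond the case $v\in V$.

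The paper closes this gap without any induction. By Theorem \ref{thm:braidedtriv} and Corollary \ref{cor:embed}, the assignment $v\mapsto 1\otimes v-v\otimes 1$ extends to an embedding of $H$-module algebras $\mathcal{B}(V)\hookrightarrow A\underline{\rtimes}\mathcal{B}(V)$; its image is therefore an honest copy of $\mathcal{B}(V)$ sitting inside the $H$-module algebra $A\underline{\rtimes}\mathcal{B}(V)$ as an $H$-module subalgebra. One then applies Theorem \ref{thm:adjoint} to \emph{that} copy, obtaining a genuine $\mathcal{B}(V)$-action
\[v\cdot(a\otimes u)=(1\otimes v-v\otimes 1)(a\otimes u)-\bigl[v^{(-1)}(a\otimes u)\bigr]\bigl(1\otimes v^{(0)}-v^{(0)}\otimes 1\bigr)\]
on all of $A\underline{\rtimes}\mathcal{B}(V)$, with the relations of $\mathcal{B}(V)$ holding automatically because Theorem \ref{thm:adjoint} already produced a module structure. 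The only computation left is the one-line check that this action preserves $A\otimes 1$ and that $v\cdot(a\otimes 1)=\bigl[(v\rhd a)-(va-v^{(-1)}(a)v^{(0)})\bigr]\otimes 1$. If you replace your $(\mathrm{id}\otimes\varepsilon)$-extraction by this direct application of Theorem \ref{thm:adjoint} to the antipode-twisted copy of $\mathcal{B}(V)$, your argument becomes complete.
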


\begin{proof}
	We first observe that $\mathcal{B}(V)\underline{\rtimes}\mathcal{B}(V)$ is an $H$-module subalgebra of $A\underline{\rtimes}\mathcal{B}(V)$. By Corollary \ref{cor:embed}, the elements $1\otimes v-v\otimes 1\in A\underline{\rtimes}\mathcal{B}(V)$ ($v\in V$) generate an $H$-module algebra isomorphic to $\mathcal{B}(V)$. Then by Theorem \ref{thm:adjoint}, we can define an action of $\mathcal{B}(V)$ on $A\underline{\rtimes}\mathcal{B}(V)$ by 
		\[v\cdot (a\otimes u)=(1\otimes v-v\otimes 1)(a\otimes u)-[v^{(-1)}(a\otimes u)](1\otimes v^{(0)}-v^{(0)}\otimes 1).\]

Now we need only observe that this action preserves $A=A\underline{\rtimes} 1\subset A\underline{\rtimes}\mathcal{B}(V)$ and acts in the prescribed manner:
	\begin{align*}
		v\cdot (a\otimes 1)&=(1\otimes v-v\otimes 1)(a\otimes 1)-[v^{(-1)}(a\otimes 1)](1\otimes v^{(0)}-v^{(0)}\otimes 1)\\
			&=(v\rhd a)\otimes 1+(v^{(-1)}( a))\otimes v^{(0)}-va\otimes 1-(v^{(-1)}( a))\otimes v^{(0)}+(v^{(-1)}( a))v^{(0)}\otimes 1\\
			&=[(v\rhd a)-[va-(v^{(-1)}( a))v^{(0)}]]\otimes 1.
	\end{align*}
\end{proof}

\begin{theorem}
\label{thm:sum}
	Let $V\in {}_H^H\mathcal{YD}$ and set $\widetilde{H}=\mathcal{B}(V)\rtimes H$. Let $A$ be a left $\widetilde{H}$-module algebra and suppose $A$ contains an $\widetilde{H}$-module subalgebra isomorphic to $\mathcal{B}(V)$ with the trivial action:
	\[(u\otimes h)\rhd u'=[\underline{\varepsilon}(u)h]( u')\quad \text{for } u,u'\in \mathcal{B}(V),\ h\in H.\] 
	Then there is a left $\mathcal{B}(V)$ action $\blacktriangleright$ on $A$ given by 
		\[v\blacktriangleright a=(v\rhd a)+[va-(v^{(-1)}( a))v^{(0)}]
	\quad \text{for }v\in V,\ a\in A.\]
\end{theorem}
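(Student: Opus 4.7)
The plan is to mirror the proof of Theorem \ref{thm:diff}, substituting the "sum" embedding $v \mapsto 1 \otimes v + v \otimes 1$ from Corollary \ref{cor:embed} in place of the "difference" embedding, and observing that the trivial action hypothesis is precisely what forces the relevant subalgebra of $A \underline{\rtimes} \mathcal{B}(V)$ to behave like $\mathcal{B}(V)\underline{\otimes}\mathcal{B}(V)$ rather than $\mathcal{B}(V)\underline{\rtimes}\mathcal{B}(V)$.

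The first step is to observe that the subalgebra of $A\underline{\rtimes}\mathcal{B}(V)$ generated by $\mathcal{B}(V) \otimes 1$ (the given subalgebra of $A$) and $1 \otimes \mathcal{B}(V)$ is naturally isomorphic, as an $H$-module algebra, to the braided tensor product $\mathcal{B}(V)\underline{\otimes}\mathcal{B}(V)$. This follows because the cross product relation $(1 \otimes u)(a \otimes 1) = \underline{u}_{(1)}((\underline{u}_{(2)})^{(-1)}\rhd a) \otimes (\underline{u}_{(2)})^{(0)}$ in $A\underline{\rtimes}\mathcal{B}(V)$, when restricted to $a \in \mathcal{B}(V) \subset A$ acted on trivially, collapses under $\underline{u}_{(1)}\rhd a = \underline{\varepsilon}(\underline{u}_{(1)}) a$ to $(u^{(-1)}(a)) \otimes u^{(0)}$, which is exactly the multiplication rule in the braided tensor product.

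Next, Corollary \ref{cor:embed} provides an injective $H$-module algebra homomorphism $\mathcal{B}(V) \hookrightarrow \mathcal{B}(V)\underline{\otimes}\mathcal{B}(V) \subset A\underline{\rtimes}\mathcal{B}(V)$ sending $v \mapsto 1 \otimes v + v \otimes 1$. Applying Theorem \ref{thm:adjoint} to the $H$-module algebra $A\underline{\rtimes}\mathcal{B}(V)$ and the Nichols subalgebra just produced, we obtain a $\mathcal{B}(V)$-action on $A\underline{\rtimes}\mathcal{B}(V)$ given by
\[v \cdot x = (1 \otimes v + v \otimes 1) x - (v^{(-1)}(x))(1 \otimes v^{(0)} + v^{(0)} \otimes 1).\]
The final task is to verify that this action preserves $A = A\underline{\rtimes}1$ and produces the claimed formula. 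Specializing to $x = a \otimes 1$ and expanding via $(1 \otimes v)(a \otimes 1) = (v \rhd a) \otimes 1 + (v^{(-1)}(a)) \otimes v^{(0)}$ and $(v \otimes 1)(a \otimes 1) = va \otimes 1$ (the latter being ordinary multiplication in $A$), the "mixed" contribution $(v^{(-1)}(a)) \otimes v^{(0)}$ cancels against the corresponding term produced on the right, leaving $v \cdot (a \otimes 1) = [(v \rhd a) + (va - (v^{(-1)}(a))v^{(0)})] \otimes 1$, as desired.

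The only real subtlety lies in the first step: one must carefully confirm that the trivial action hypothesis really does collapse the cross product multiplication to the braided tensor product multiplication on all of $\mathcal{B}(V)$ (not just on the primitive part $V$). This is handled by the counit identity above together with the fact that $\mathcal{B}(V)$ is generated by $V$, so that once the identification is verified on generators it propagates to the entire subalgebra. Everything downstream is then a direct and entirely parallel copy of the argument for Theorem \ref{thm:diff}, so no further machinery is required.
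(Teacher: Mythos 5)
Your proposal is correct and follows essentially the same route as the paper's proof: embed $\mathcal{B}(V)$ into $A\underline{\rtimes}\mathcal{B}(V)$ via $v\mapsto 1\otimes v+v\otimes 1$ using Corollary \ref{cor:embed}, apply Theorem \ref{thm:adjoint} to that copy, and check that the resulting action preserves $A\otimes 1$ with the mixed terms cancelling. The only difference is that you spell out explicitly why the trivial-action hypothesis makes the subalgebra generated by $\mathcal{B}(V)\otimes 1$ and $1\otimes\mathcal{B}(V)$ a copy of $\mathcal{B}(V)\underline{\otimes}\mathcal{B}(V)$, a point the paper leaves implicit (via its remark on braided cross products with trivial actions); this is a worthwhile clarification, not a departure.
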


\begin{proof}
	By Corollary \ref{cor:embed}, the elements $1\otimes v+v\otimes 1\in A\underline{\rtimes}\mathcal{B}(V)$ ($v\in V$) generate an $H$-module algebra isomorphic to $\mathcal{B}(V)$. Then by Theorem \ref{thm:adjoint}, we can define an action of $\mathcal{B}(V)$ on $A\underline{\rtimes}\mathcal{B}(V)$ by 
		\[v\cdot (a\otimes u)=(1\otimes v+v\otimes 1)(a\otimes u)-[v^{(-1)} (a\otimes u)](1\otimes v^{(0)}+v^{(0)}\otimes 1).\]
	Now we need only observe that this action preserves $A=A\underline{\rtimes}1\subset A\underline{\rtimes}\mathcal{B}(V)$ and acts in the prescribed manner:
		\begin{align*}
			v\cdot (a\otimes 1)&=(1\otimes v+v\otimes 1)(a\otimes 1)-[v^{(-1)} (a\otimes 1)](1\otimes v^{(0)}+v^{(0)}\otimes 1)\\
				&=(v\rhd a)\otimes 1+(v^{(-1)}( a))\otimes v^{(0)}+va\otimes 1-(v^{(-1)}( a))\otimes v^{(0)}-(v^{(-1)}( a))v^{(0)}\otimes 1\\
				&=[(v\rhd a)+[va-(v^{(-1)}( a))v^{(0)}]]\otimes 1
		\end{align*}
\end{proof}

\begin{theorem}
\label{thm:commute}
	Let $\hat{A}$ be a left $H$-module algebra and suppose that $V_1\in {}_H^H\mathcal{YD}$ and $V_2\in {}_H\mathcal{YD}^H$ are $H$-submodules of $\hat{A}$. For $v_i\in V_i$, define the following actions on $\hat{A}$:
	\[v_1\rhd a=v_1a-v_1^{(-1)}(a)v_1^{(0)}\quad v_2\blacktriangleright a=v_2^{(0)}v_2^{(1)}(a)-av_2.\]
	If (1) $v_1\rhd v_2=v_2\blacktriangleright v_1=0$ and (2) $v_1^{(-2)}(v_2^{(0)})v_1^{(-1)}(v_2^{(1)}(a))v_1^{(0)}=v_2^{(0)}v_2^{(1)}(v_1^{(-1)}(a))v_2^{(2)}(v_1^{(0)})$ for all $v_i\in V_i$ and $a\in \hat{A}$, then $v_1\rhd (v_2\blacktriangleright a)=v_2\blacktriangleright(v_1\rhd a)$ for $v_i\in V_i$ and $a\in \hat{A}$.
\end{theorem}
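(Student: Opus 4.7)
The plan is to verify $v_1 \rhd (v_2 \blacktriangleright a) = v_2 \blacktriangleright (v_1 \rhd a)$ by direct expansion. Applying the given formulas produces four terms on each side, so the difference has eight terms in $\hat{A}$ which I would regroup into four cancelling pairs. The two purely multiplicative terms $-v_1 a v_2$ (one from each side) cancel at once. Next, the ``mixed coaction'' terms $-v_1^{(-1)}(v_2^{(0)} v_2^{(1)}(a)) v_1^{(0)}$ and $+v_2^{(0)} v_2^{(1)}(v_1^{(-1)}(a) v_1^{(0)})$ should be expanded using the module-algebra property $h(xy) = h_{(1)}(x) h_{(2)}(y)$ together with coassociativity of the two coactions, yielding $-v_1^{(-2)}(v_2^{(0)}) v_1^{(-1)}(v_2^{(1)}(a)) v_1^{(0)}$ and $+v_2^{(0)} v_2^{(1)}(v_1^{(-1)}(a)) v_2^{(2)}(v_1^{(0)})$ respectively; these are precisely $-\text{LHS}$ and $+\text{RHS}$ of condition (2), so they also sum to zero.

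The four remaining terms form two more pairs,
\begin{equation*}
v_1 v_2^{(0)} v_2^{(1)}(a) - v_2^{(0)} v_2^{(1)}(v_1 a) \quad \text{and} \quad v_1^{(-1)}(a v_2) v_1^{(0)} - v_1^{(-1)}(a) v_1^{(0)} v_2,
\end{equation*}
both handled by condition (1). For the first pair, I would expand $v_2^{(1)}(v_1 a) = v_2^{(1)}(v_1) v_2^{(2)}(a)$ using iterated right coaction on $v_2$ and apply condition (1) in the form $v_2^{(0)} v_2^{(1)}(v_1) = v_1 v_2$ under the iterated Sweedler sum. Dually, for the second pair I would expand $v_1^{(-1)}(a v_2) = v_1^{(-2)}(a) v_1^{(-1)}(v_2)$ using iterated left coaction on $v_1$ and apply condition (1) in the form $v_1^{(-1)}(v_2) v_1^{(0)} = v_1 v_2$ to the last two iterated slots. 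Each pair then collapses to zero.

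The only real subtlety, and the main obstacle, is the Sweedler bookkeeping in this last step: one must justify that condition (1) may be invoked \emph{componentwise} under an iterated coaction, i.e.\ that each $v_i^{(0)}$ arising in an iterated coaction still lies in $V_i$. This is immediate, since the coactions land in $V_1 \otimes H$ and $H \otimes V_2$ respectively, so every Sweedler component labelled $v_i^{(0)}$ is an element of $V_i$ to which condition (1) applies directly. Once this identification is made, all four pairs cancel and no hypotheses beyond (1) and (2) are needed.
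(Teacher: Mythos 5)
Your proof is correct and is essentially the computation the paper performs: both sides expand into four terms, one pair is identical, one pair matches via hypothesis (2) after applying the module-algebra axiom and coassociativity, and the remaining two pairs cancel by applying hypothesis (1) to the Yetter--Drinfeld components $v_1^{(0)}\in V_1$ and $v_2^{(0)}\in V_2$, exactly as you argue. The only cosmetic difference is that the paper first packages the expansion into twisted Leibniz rules for $\rhd$ and $\blacktriangleright$ on products before cancelling, whereas you expand everything at once.
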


\begin{proof}
	We first note that if $v_1\rhd v_2=v_2\blacktriangleright v_1=0$, then $v_1v_2=v_1^{(-1)}(v_2)v_1^{(0)}=v_2^{(0)}v_2^{(1)}(v_1)$ and for $a,b\in A$, we have
$v_1\rhd(ab)=(v_1\rhd a)b+v_1^{(-1)}(a)(v_1^{(0)}\rhd b); \quad v_2\blacktriangleright(ab)=(v_2^{(0)}\blacktriangleright a)v_2^{(1)}(b)+a(v_2\triangleright b)$.
	Now we simply compute:
		\begin{align*}
			v_1\rhd (v_2\blacktriangleright a)&=v_1\rhd(v_2^{(0)}v_2^{(1)}(a)-av_2)\\
				&=(v_1\rhd v_2^{(0)})v_2^{(1)}(a)+v_1^{(-1)}(v_2^{(0)})(v_1^{(0)}\rhd v_2^{(1)}(a))-(v_1\rhd a)v_2-v_1^{(-1)}(a)(v_1^{(0)}\rhd v_2)\\
				&=v_1^{(-1)}(v_2^{(0)})(v_1^{(0)}\rhd v_2^{(1)}(a))-(v_1\rhd a)v_2\\
				&=v_1^{(-1)}(v_2^{(0)})(v_1^{(0)}v_2^{(1)}(a)-(v_1^{(0)})^{(-1)}(v_2^{(1)}(a))(v_1^{(0)})^{(0)})-(v_1a-v_1^{(-1)}(a)v_1^{(0)})v_2\\
				&=v_1^{(-1)}(v_2^{(0)})v_1^{(0)}v_2^{(1)}(a)-v_1^{(-2)}(v_2^{(0)})v_1^{(-1)}(v_2^{(1)}(a))v_1^{(0)}-v_1av_2+v_1^{(-1)}(a)v_1^{(0)}v_2\\\\
			v_2\blacktriangleright(v_1\rhd a)&=v_2\blacktriangleright(v_1a-v_1^{(-1)}(a)v_1^{(0)})\\
				&=(v_2^{(0)}\blacktriangleright v_1)v_2^{(1)}(a)+v_1(v_2\blacktriangleright a)-(v_2^{(0)}\blacktriangleright v_1^{(-1)}(a))v_2^{(1)}(v_1^{(0)}+v_1^{(-1)}(a)(v_2\triangleright v_1^{(0)}\\
				&=v_1(v_2\blacktriangleright a)-(v_2^{(0)}\blacktriangleright v_1^{(-1)}(a))v_2^{(1)}(v_1^{(0)})\\
				&=v_1(v_2^{(0)}v_2^{(1)}(a)-av_2)-((v_2^{(0)})^{(0)}(v_2^{(0)})^{(1)}(v_1^{(-1)}(a))-v_1^{(-1)}(a)v_2^{(0)})v_2^{(1)}(v_1^{(0)})\\
				&=v_1v_2^{(0)}v_2^{(1)}(a)-v_1av_2-v_2^{(0)}v_2^{(1)}(v_1^{(-1)}(a))v_2^{(2)}(v_1^{(0)})+v_1^{(-1)}(a)v_2^{(0)}v_2^{(1)}(v_1^{(0)})
		\end{align*}
	Comparing terms, we see that the two quantities are indeed equal.
\end{proof}

We are now ready to prove Theorem \ref{thm:qgd}.

\noindent {\bf Proof of Theorem \ref{thm:qgd}}.
Let $V=\text{span}_{\CC(q)}\{F_i\ |\ i\in I\}\subset U_q(\bb_-)$. Then $V\in {}_\mathcal{K}^\mathcal{K}\mathcal{YD}$ with structure given by 
	\[K_i^{\pm1}\rhd F_j=q_i^{\mp c_{i,j}}F_j;\quad \delta_L(F_i)=K_i^{-1}\otimes F_i.\]
Let $V'=V$ as a vector space, but $V'\in {}_\mathcal{K}\mathcal{YD}^\mathcal{K}$ with structure given by
	\[K_i^{\pm1}\rhd F_j=q_i^{\mp c_{i,j}}F_j;\quad \delta_R(F_i)=F_i\otimes K_i^{-1}.\]
Note that we can also consider $V'$ as an object of $\mathcal{YD}_\mathcal{K}^\mathcal{K}$ since $\mathcal{K}$ is commutative. It is well-known (see, e.g., \cite{ahs} or \cite{lusztig}, though Lusztig never used the term ``Nichols algebra") that the corresponding Nichols algebras are isomorphic to $U_q(\nn_-)$ as $\mathcal{K}$-module algebras in the obvious way, i.e. $F_i\mapsto F_i$. 

By assumption, $\CC_q[U]\subset A$, so there is a natural embedding of $U_q(\bb_-)$-module algebras $U_q(\nn_-)\hookrightarrow A$ given by $F_i\mapsto \frac{x_i}{q_i-q_i^{-1}}$.  Theorems \ref{thm:adjoint} and \ref{thm:diff} then imply that there is a $U_q(\nn_-)$ action on $A$ given by 
	\[F_i\rhd a=F_i(a)-\frac{x_ia-K_i^{-1}(a)x_i}{q_i-q_i^{-1}},\]
matching the proposed action of $F_{i,1}$.

Now utilizing a slightly different embedding $U_q(\nn_-)\hookrightarrow A,\ F_i\mapsto -\frac{x_i}{q_i-q_i^{-1}}$, Corollary \ref{cor:op} gives another action of $U_q(\nn_-)\cong U_q(\nn_-)^{op}$ on $A$:
	\[F_i\rhd a=\frac{x_iK_i^{-1}(a)-ax_i}{q_i-q_i^{-1}},\]
matching the proposed action of $F_{i,2}$.

It is easily observed that we have made $A$ into both a $\mathcal{B}(V)\rtimes \mathcal{K}$-module algebra and a $\mathcal{B}(V')\rtimes \mathcal{K}$-module algebra.

We now wish to show that the operators $F_{i,1}$ and $F_{j,2}$ commute. To do so, we construct the braided cross product $\hat{A}:=A\underline{\rtimes} U_q(\nn_-)$, where the $F_i$ act as $F_{i,1}$. As above, we define ``clever" embeddings of $V$ and $V'$ into $\hat{A}$, namely $F_i\mapsto \frac{x_i}{q_i-q_i^{-1}}\otimes1$ and $F_i\mapsto 1\otimes F_i$, respectively. It is easily checked that the hypotheses of Theorem \ref{thm:commute} are satisfied. Furthermore, the actions defined in Theorem \ref{thm:commute} preserve $A$ and match the actions of $F_{i,1}$ and $F_{j,2}$ on $A$, showing that the prescribed actions of $F_{i,1}$ and $F_{j,2}$ do, in fact, commute.\qed

In light of Theorem \ref{thm:qgd}, $\CC_q[U]$ is a $U_q(\gg^*)$-module algebra with action given by
	\[K_i^{\pm1}\rhd x_j=q_i^{\mp c_{i,j}}x_j; \quad F_{i,1}\rhd x_j=0;\quad  F_{i,2}\rhd x_j=\frac{q_i^{c_{i,j}}x_ix_j-x_jx_i}{q_i-q_i^{-1}}.\]
	
	We make $\CC_q[U]$ into a $U_q(\gg^*)$-comodule algebra via the algebra homomorphism $\delta:\CC_q[U]\to U_q(\gg^*)\otimes \CC_q[U]$ given on generators by
	\[\delta(x_i)=K_i\otimes x_i+(q_i-q_i^{-1})F_{i,2}K_i\otimes 1.\]
The fact that this gives a well-defined algebra homomorphism follows immediately from the following lemma, which can be deduced from the fact that in \cite[1.2.6]{lusztig}, $r:{\bf f}\to {\bf f}\otimes {\bf f}$, $\theta_i\mapsto \theta_i\otimes 1+1\otimes \theta_i$ is well-defined.

\begin{lemma}
\label{lem:qsr}
	Let $R$ be any $\CC(q)$-algebra and suppose $\{y_i\}_{i\in I},\{z_i\}_{i\in I}\subseteq R$ are two families of elements satisfying the quantum Serre relations. If
		\[z_jy_i=q_i^{c_{i,j}}y_iz_j\quad \text{for }i,j\in I\]
	then $\{y_i+z_i\}_{i\in I}$ also satisfies the quantum Serre relations.
\end{lemma}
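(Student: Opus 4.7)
\smallskip
\noindent\textbf{Proof plan.} The strategy is to realize $y_i+z_i$ as the image of $\theta_i$ under a composition of Lusztig's coproduct $r$ with a suitably twisted multiplication map, thereby reducing the quantum Serre relations for $\{y_i+z_i\}$ to the already-established well-definedness of $r$.

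First, let ${\bf f}$ denote the $\CC(q)$-algebra generated by symbols $\theta_i$ ($i\in I$) modulo the quantum Serre relations, graded by root weight via $|\theta_i|=\alpha_i$. Since by hypothesis $\{y_i\}$ and $\{z_i\}$ each satisfy these relations, the universal property of ${\bf f}$ produces algebra homomorphisms $\phi,\psi:{\bf f}\to R$ determined by $\phi(\theta_i)=y_i$ and $\psi(\theta_i)=z_i$.

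Next, equip ${\bf f}\underline{\otimes}{\bf f}$ with the braided multiplication
\[
(a\otimes b)(a'\otimes b')=q^{(|a'|,|b|)}\,aa'\otimes bb'
\]
for weight-homogeneous $a,a',b,b'$. This is precisely Lusztig's braided tensor product, and \cite[1.2.6]{lusztig} states that the assignment $\theta_i\mapsto \theta_i\otimes 1+1\otimes \theta_i$ extends to a well-defined algebra homomorphism $r:{\bf f}\to {\bf f}\underline{\otimes}{\bf f}$; I would take this as a black box.

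Now I would define $\mu_R:{\bf f}\underline{\otimes}{\bf f}\to R$ by $\mu_R(a\otimes b)=\phi(a)\psi(b)$. The hypothesis $z_jy_i=q_i^{c_{i,j}}y_iz_j=q^{(\alpha_i,\alpha_j)}y_iz_j$ reads on generators as $\psi(\theta_j)\phi(\theta_i)=q^{(|\theta_i|,|\theta_j|)}\phi(\theta_i)\psi(\theta_j)$, and a straightforward induction on total degree promotes this to $\psi(b)\phi(a')=q^{(|a'|,|b|)}\phi(a')\psi(b)$ for all weight-homogeneous $a',b\in {\bf f}$. This is exactly the identity required for $\mu_R$ to be an algebra map. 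Composing, $\mu_R\circ r:{\bf f}\to R$ is then a $\CC(q)$-algebra homomorphism sending $\theta_i\mapsto y_i+z_i$, so the images automatically satisfy the quantum Serre relations.

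The only real obstacle is bookkeeping: one must check that the sign and direction of Lusztig's braiding convention in \cite[1.2.6]{lusztig} line up with the given commutation $z_jy_i=q_i^{c_{i,j}}y_iz_j$ (rather than its inverse). Once this matching of conventions is verified, both the induction promoting the commutation to all homogeneous degrees and the verification that $\mu_R$ respects multiplication are formal consequences of the defining braided product above.
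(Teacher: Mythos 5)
Your proposal is correct and follows essentially the same route as the paper, which proves the lemma only by citing that Lusztig's twisted coproduct $r:{\bf f}\to{\bf f}\otimes{\bf f}$, $\theta_i\mapsto\theta_i\otimes1+1\otimes\theta_i$, is well-defined \cite[1.2.6]{lusztig}. Your writeup simply makes that deduction explicit (universal property of ${\bf f}$, the twisted multiplication map $\mu_R$, and the degree induction matching $q_i^{c_{i,j}}=q^{(\alpha_i,\alpha_j)}$ to the braiding factor), and all of those steps check out.
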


It is easily checked that $(\text{id}\otimes \delta)\circ\delta=(\Delta\otimes \text{id})\circ \delta$ and $(\epsilon\otimes \text{id})\circ \delta=\text{id}$. 

\begin{proposition}
\label{prop:ydCqU}
	The above action and coaction make $\CC_q[U]$ into an algebra in the category ${}_{U_q(\gg^*)}^{U_q(\gg^*)}\mathcal{YD}$ of left-left Yetter-Drinfeld modules over $U_q(\gg^*)$.
\end{proposition}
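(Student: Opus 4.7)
The plan is to verify in turn: (i) that $\delta$ extends to a counital coassociative algebra homomorphism, (ii) the Yetter--Drinfeld compatibility condition, and (iii) that multiplication and unit of $\CC_q[U]$ are morphisms in ${}_{U_q(\gg^*)}^{U_q(\gg^*)}\mathcal{YD}$. The module structure is already given by Theorem \ref{thm:qgd}.

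\emph{Step 1.} Well-definedness of $\delta$ as an algebra map is the application of Lemma \ref{lem:qsr} anticipated in the paragraph preceding the proposition: write $\delta(x_i) = y_i + z_i$ with $y_i = K_i\otimes x_i$ and $z_i = (q_i-q_i^{-1})F_{i,2}K_i\otimes 1$. The $y_i$ inherit the quantum Serre relations from the $x_i$ since the $K_i$ commute and act on each $x_i$ by a single scalar after normal-ordering, and the $z_i$ reduce to checking that $\{F_{i,2}K_i\}$ satisfies quantum Serre in $U_q(\gg^*)$, which follows from the Serre relations for $\{F_{i,2}\}$ and the rescaling $F_{i,2}K_j = q_j^{c_{j,i}}K_jF_{i,2}$. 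A short calculation using $K_iF_{j,2} = q_i^{-c_{i,j}}F_{j,2}K_i$ yields $z_jy_i = q_i^{c_{i,j}}y_iz_j$, completing the hypotheses of Lemma \ref{lem:qsr}. Coassociativity $(\id\otimes\delta)\circ\delta = (\Delta\otimes\id)\circ\delta$ and counitality $(\epsilon\otimes\id)\circ\delta = \id$ are then checked directly on each generator $x_i$ and extend automatically by the algebra-map property.

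\emph{Step 2.} The Yetter--Drinfeld compatibility
\[\sum h_{(1)}x^{(-1)}\otimes h_{(2)}\rhd x^{(0)} = \sum (h_{(1)}\rhd x)^{(-1)}h_{(2)}\otimes (h_{(1)}\rhd x)^{(0)}\]
is multiplicative in $x$ (using that $\delta$ is an algebra map and that $\CC_q[U]$ is a $U_q(\gg^*)$-module algebra) and in $h$ (using that $\Delta$ is an algebra map), so it suffices to verify it for $h\in\{K_i^{\pm 1}, F_{i,1}, F_{i,2}\}$ and $x = x_j$. The case $h = K_i^{\pm 1}$ is immediate since $K_i$ is grouplike and acts by a weight-scalar. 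The case $h = F_{i,1}$ collapses because $F_{i,1}\rhd x_j = 0$ and $F_{i,1}\rhd 1 = 0$; matching the two surviving terms then reduces to the commutation $F_{i,1}F_{j,2} = F_{j,2}F_{i,1}$ in $U_q(\gg^*)$ together with the scalar identity $q_i^{c_{i,j}} = q_j^{c_{j,i}}$ coming from $d_ic_{i,j} = d_jc_{j,i}$. The case $h = F_{i,2}$ is the substantive one: expanding both sides with $\Delta(F_{i,2}) = F_{i,2}\otimes K_i^{-1} + 1\otimes F_{i,2}$, using $\delta(x_j) = K_j\otimes x_j + (q_j-q_j^{-1})F_{j,2}K_j\otimes 1$, and computing $\delta\bigl((q_i^{c_{i,j}}x_ix_j - x_jx_i)/(q_i-q_i^{-1})\bigr)$ as a product via the algebra-map property, one reconciles the two sides by rewriting products $F_{i,2}F_{j,2}K_jK_i^{-1}$ using the relation $F_{i,2}F_{j,2}K_i^{-1} = q_i^{c_{i,j}}K_i^{-1}F_{i,2}F_{j,2}$ and invoking the Serre-type commutation between $x_i$ and $x_j$ in $\CC_q[U]$.

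\emph{Step 3.} Being an algebra in ${}_{U_q(\gg^*)}^{U_q(\gg^*)}\mathcal{YD}$ additionally requires that multiplication and unit are morphisms. The comodule-algebra condition is precisely that $\delta$ is an algebra homomorphism, established in Step 1. The module-algebra condition is Theorem \ref{thm:qgd} applied to $A = \CC_q[U]$. The unit is compatible since $\delta(1) = 1\otimes 1$ and $h\rhd 1 = \epsilon(h)$.

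The principal obstacle will be Step 2 for $h = F_{i,2}$, which requires genuine interplay between the Serre relations in $\CC_q[U]$ and the $K$-commutation and Serre relations in $U_q(\gg^*)$; every other verification is either a direct computation on one generator or a formal consequence of $\delta$ being an algebra homomorphism.
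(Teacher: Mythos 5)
Your proposal is correct and follows essentially the same route as the paper: establish $\delta$ as a counital, coassociative algebra homomorphism via Lemma \ref{lem:qsr}, then verify the Yetter-Drinfeld compatibility \eqref{eq:ydcomp} on the generators $h\in\{K_i^{\pm1},F_{i,1},F_{i,2}\}$ and $x=x_j$ and propagate it by showing that the set of $x$ (respectively $h$) for which it holds is a subalgebra. The only difference is one of emphasis: you sketch the generator-level checks (which the paper dismisses as ``easily checked''; note the $F_{i,2}$ case in fact closes using only the $K$-commutation relations, not the Serre relations), while the paper writes out the two multiplicativity inductions in full.
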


\begin{proof}
	We need only verify that the compatibility condition is satisfied, i.e. that
		\begin{equation} 
		\label{eq:ydcomp}
			h_{(1)}x^{(-1)}\otimes (h_{(2)}\rhd x^{(0)})=(h_{(1)}\rhd x)^{(-1)}h_{(2)}\otimes (h_{(1)}\rhd x)^{(0)}
		\end{equation}
	for $h\in U_q(\gg^*)$ and $x\in \CC_q[U]$. It is easily checked that \eqref{eq:ydcomp} is satisfied for $h\in \{K_i^{\pm1},F_{i,1},F_{i,2}\ |\ i\in I\}$ and $x\in \{x_i\ |\ i\in I\}$. Suppose \eqref{eq:ydcomp} is satisfied for some $x,x'\in \CC_q[U]$ and all $h\in \{K_i^{\pm1},F_{i,1},F_{i,2}\ |\ i\in I\}$. Then since $\Delta(\{K_i^{\pm1},F_{i,1},F_{i,2}\ |\ i\in I\})\subset \{K_i^{\pm1},F_{i,1},F_{i,2}\ |\ i\in I\}\otimes \{K_i^{\pm1},F_{i,1},F_{i,2}\ |\ i\in I\}$, we observe:
		
	\begin{align*}
		h_{(1)}(xx')^{(-1)}\otimes (h_{(2)}\rhd (xx')^{(0)})&=h_{(1)}x^{(-1)}(x')^{(-1)}\otimes (h_{(2)}\rhd (x^{(0)}(x')^{(0)}))\\
		&=h_{(1)}x^{(-1)}(x')^{(-1)}\otimes (h_{(2)}\rhd x^{(0)})(h_{(3)}\rhd (x')^{(0)})\\
		&=(h_{(1)}\rhd x)^{(-1)}h_{(2)}(x')^{(-1)}\otimes (h_{(1)}\rhd x)^{(0)}(h_{(3)}\rhd (x')^{(0)})\\
		&=(h_{(1)}\rhd x)^{(-1)}(h_{(2)}\rhd x')^{(-1)}h_{(3)}\otimes (h_{(1)}\rhd x)^{(0)}(h_{(2)}\rhd x')^{(0)}\\
		&=((h_{(1)}\rhd x)(h_{(2)}\rhd x'))^{(-1)}h_{(3)}\otimes ((h_{(1)}\rhd x)(h_{(2)}\rhd x'))^{(0)}\\
		&=(h_{(1)}\rhd (xx'))^{(-1)}h_{(2)}\otimes (h_{(1)}\rhd (xx'))^{(0)}
	\end{align*}
for $h\in \{K_i^{\pm1},F_{i,1},F_{i,2}\ |\ i\in I\}$. Hence we see that the set of all $x\in \CC_q[U]$ such that \eqref{eq:ydcomp} holds for all $h\in \{K_i^{\pm1},F_{i,1},F_{i,2}\ |\ i\in I\}$ is a subalgebra of $\CC_q[U]$ containing $\{x_i\ |\ i\in I\}$. Namely, \eqref{eq:ydcomp} holds for all $x\in \CC_q[U]$ and $h\in \{K_i^{\pm1},F_{i,1},F_{i,2}\ |\ i\in I\}$. Now suppose \eqref{eq:ydcomp} holds for some $h,h'\in U_q(\gg^*)$ and all $x\in \CC_q[U]$. Then we observe:
	
	\begin{align*}
		(hh')_{(1)}x^{(-1)}\otimes ((hh')_{(2)}\rhd x^{(0)})&=h_{(1)}h_{(1)}'x^{(-1)}\otimes ((h_{(2)}h_{(2)}')\rhd x^{(0)})\\
		&=h_{(1)}h_{(1)}'x^{(-1)}\otimes (h_{(2)}\rhd(h_{(2)}'\rhd x^{(0)}))\\
		&=h_{(1)}(h_{(1)}'\rhd x)^{(-1)}h_{(2)}'\otimes (h_{(2)}\rhd (h_{(1)}'\rhd x)^{(0)})\\
		&=(h_{(1)}\rhd (h_{(1)}'\rhd x))^{(-1)}h_{(2)}h_{(2)}'\otimes (h_{(1)}\rhd (h_{(1)}'\rhd x))^{(0)}\\
		&=((h_{(1)}h_{(1)}')\rhd x)^{(-1)}h_{(2)}h_{(2)}'\otimes ((h_{(1)}h_{(1)}')\rhd x)^{(0)}\\
		&=((hh')_{(1)}\rhd x)^{(-1)}(hh')_{(2)}\otimes ((hh')_{(1)}\rhd x)^{(0)}
	\end{align*}
for $x\in \CC_q[U]$. Hence we see that the set of all $h\in U_q(\gg^*)$ such that \eqref{eq:ydcomp} holds for all $x\in \CC_q[U]$ is a subalgebra of $U_q(\gg^*)$ containing $\{K_i^{\pm1},F_{i,1},F_{i,2}\ |\ i\in I\}$. Namely, \eqref{eq:ydcomp} holds for all $x\in \CC_q[U]$ and $h\in U_q(\gg^*)$.
\end{proof}


The following proposition is probably well-known, but a source was not quickly found, so we provide a proof here.

\begin{proposition}
\label{prop:ydtensor}
	Let $H$ be a $\kk$-bialgebra, $A$ an $H$-module algebra, and $B$ an algebra in ${}_H^H\mathcal{YD}$. Then the $H$-module $A\otimes_\kk B$ is an $H$-module algebra with multiplication given by
	\[(a\otimes b)(a'\otimes b')=a(b^{(-1)}\rhd a')\otimes b^{(0)}b',\quad \text{for all }a,a'\in A,\ b,b'\in B,\]
where $\rhd$ is the action of $H$ and $\delta(b)=b^{(-1)}\otimes b^{(0)}$ is the coaction of $H$ in sumless Sweedler notation.
\end{proposition}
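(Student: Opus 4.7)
The plan is to establish three properties of the proposed multiplication on the tensor product $H$-module $A \otimes_\kk B$: (i) it is associative with unit $1_A \otimes 1_B$; (ii) it is compatible with the standard tensor-product $H$-action $h \rhd (a \otimes b) = h_{(1)} \rhd a \otimes h_{(2)} \rhd b$; and (iii) $h \rhd (1_A \otimes 1_B) = \epsilon(h)(1_A \otimes 1_B)$. The last of these is immediate from the $H$-module algebra axioms for $A$ and $B$, and the unit axiom follows from $\delta(1_B) = 1_H \otimes 1_B$, the counit axiom $(\epsilon \otimes \text{id})\delta = \text{id}$, and $h \rhd 1_A = \epsilon(h) 1_A$.

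For associativity, the key is to reduce both $(a \otimes b)((a' \otimes b')(a'' \otimes b''))$ and $((a \otimes b)(a' \otimes b'))(a'' \otimes b'')$ to a common ``fully split'' expression. On the one side, one uses the $H$-module algebra property of $A$ to expand $b^{(-1)} \rhd (a'(b'^{(-1)} \rhd a''))$, which introduces $\Delta(b^{(-1)})$; coassociativity of the coaction, expressed as $\Delta(b^{(-1)}) \otimes b^{(0)} = b^{(-1)} \otimes b^{(0)(-1)} \otimes b^{(0)(0)}$, then rewrites the result. On the other side, one needs in addition the comodule algebra property $\delta(bc) = \delta(b) \delta(c)$ of $B$ (part of the definition of an algebra in ${}_H^H\mathcal{YD}$) to expand $(b^{(0)} b')^{(-1)} \otimes (b^{(0)} b')^{(0)}$. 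The two resulting expressions then agree term by term.

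The main obstacle is condition (ii). Using $\Delta^3(h) = h_{(1)} \otimes h_{(2)} \otimes h_{(3)} \otimes h_{(4)}$, the left-hand side $h \rhd ((a \otimes b)(a' \otimes b'))$ straightforwardly expands to $(h_{(1)} \rhd a)((h_{(2)} b^{(-1)}) \rhd a') \otimes (h_{(3)} \rhd b^{(0)})(h_{(4)} \rhd b')$ using the $H$-module algebra property of $A$. The right-hand side $(h_{(1)} \rhd (a \otimes b))(h_{(2)} \rhd (a' \otimes b'))$ expands instead to $(h_{(1)} \rhd a)(((h_{(2)} \rhd b)^{(-1)} h_{(3)}) \rhd a') \otimes (h_{(2)} \rhd b)^{(0)}(h_{(4)} \rhd b')$. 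Bridging these two forms requires the Yetter-Drinfeld compatibility
\[(h_{(1)} \rhd b)^{(-1)} h_{(2)} \otimes (h_{(1)} \rhd b)^{(0)} = h_{(1)} b^{(-1)} \otimes h_{(2)} \rhd b^{(0)},\]
applied to the middle pair $(h_{(2)}, h_{(3)})$ of the $\Delta^3(h)$ string, which converts the right-hand side into the left-hand side exactly. This invocation of the YD axiom, which is the only place where the hypothesis $B \in {}_H^H\mathcal{YD}$ is essential, is the crux of the proof; everything else is a bookkeeping exercise in the Sweedler calculus of module and comodule algebras.
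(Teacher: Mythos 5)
Your proposal is correct and follows essentially the same route as the paper: associativity via the comodule-algebra property of $B$ together with coassociativity and the module-algebra property of $A$, and $H$-equivariance of the multiplication via the Yetter--Drinfeld compatibility applied to the middle pair of the Sweedler string $\Delta^3(h)$, exactly as in the paper's computation. The only (harmless) additions are your explicit checks of the unit axioms.
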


\begin{proof}
	We first show that $A\otimes_\kk B$ is indeed an associative algebra under the prescribed multiplication. For $a,a',a''\in A$ and $b,b',b''\in B$, we have
		\begin{align*}
			((a\otimes b)(a'\otimes b'))(a''\otimes b'')&=(a(b^{(-1)}\rhd a')\otimes b^{(0)}b)(a''\otimes b'')\\
			&=a(b^{(-1)}\rhd a')((b^{(0)}b')^{(-1)}\rhd a'')\otimes (b^{(0)}b)^{(0)}b''\\
			&=a(b^{(-1)}\rhd a')((b^{(0)})^{(-1)}(b')^{(-1)}\rhd a'')\otimes (b^{(0)})^{(0)}(b')^{(0)}b''\\
			&=a(b^{(-2)}\rhd a')(b^{(-1)}(b')^{(-1)}\rhd a'')\otimes b^{(0)}(b')^{(0)}b''\\
			&=a(b^{(-1)}\rhd (a'((b')^{(-1)}\rhd a'')))\otimes b^{(0)}(b')^{(0)}b''\\
			&=(a\otimes b)(a'((b')^{(-1)}\rhd a'')\otimes (b')^{(0)}b'')\\
			&=(a\otimes b)((a'\otimes b')(a''\otimes b'')).
		\end{align*}
	Hence the prescribed multiplication is associative. We now verify that $A\otimes_\kk B$ is indeed an $H$-module algebra. For $h\in H$, $a,a'\in A$, and $b,b'\in B$, we have
		\begin{align*}
			h\rhd ((a\otimes b)(a'\otimes b'))&=h\rhd(a(b^{(-1)}\rhd a')\otimes b^{(0)}b')\\
			&=(h_{(1)}\rhd a)(h_{(2)}b^{(-1)}\rhd a')\otimes (h_{(3)}\rhd b^{(0)})(h_{(4)}\rhd b')\\
			&=(h_{(1)}\rhd a)((h_{(2)}\rhd b)^{(-1)}h_{(3)}\rhd a')\otimes (h_{(2)}\rhd b)^{(0)}(h_{(4)}\rhd b')\\
			&=(h_{(1)}\rhd a)((h_{(2)}\rhd b)^{(-1)}\rhd (h_{(3)}\rhd a'))\otimes (h_{(2)}\rhd b)^{(0)}(h_{(4)}\rhd b')\\
			&=((h_{(1)}\rhd a)\otimes (h_{(2)}\rhd b))((h_{(3)}\rhd a')\otimes(h_{(4)}\rhd b'))\\
			&=(h_{(1)}\rhd (a\otimes b))(h_{(2)}\rhd (a'\otimes b')).
		\end{align*}
$$			h\rhd (1\otimes 1)=(h_{(1)}\otimes 1)\otimes (h_{(2)}\rhd 1)
=\varepsilon(h_{(1)})\otimes \varepsilon(h_{(2)})
=\varepsilon(h_{(1)}\varepsilon(h_{(2)}))\otimes 1 
=\varepsilon(h)\otimes 1.$$
	The proposition is proven.
\end{proof}

\noindent {\bf Proof of Theorem \ref{thm:qg}}. By Propositions \ref{prop:ydCqU} and \ref{prop:ydtensor} we may give $A\otimes \CC_q[U]$ a $U_q(\gg^*)$-module algebra structure satisfying
	\begin{align*}
		(1\otimes x_i)(a\otimes 1)&=K_i(a)\otimes x_i+(q_i-q_i^{-1})F_{i,2}K_i(a)\otimes 1\\
		K_{i,1}^{\pm1}\rhd (a\otimes x)&=K_i^{\pm1}(a)\otimes K_i^{\pm1}(x)\\
		F_{i,1}\rhd (a\otimes x)&=F_{i,1}(a)\otimes x+K_i^{-1}(a)\otimes F_{i,1}(x)\\
		F_{i,2}\rhd (a\otimes x)&=F_{i,2}(a)\otimes K_i^{-1}(x)+a\otimes F_{i,2}(x)
	\end{align*}
for $i\in I$, $a\in A$, and $x\in \CC_q[U]$.

Now by Theorem \ref{thm:sum}, there is a left action of $U_q(\nn_-)$ on $A\otimes \CC_q[U]$ given by 
	\begin{align*}
		F_i\rhd(a\otimes x)&=F_{i,1}\rhd(a\otimes x)+\frac{(1\otimes x_i)(a\otimes x)-(K_i^{-1}\rhd (a\otimes x))(1\otimes x_i)}{q_i-q_i^{-1}}\\
			&=F_{i,1}(a)\otimes x+\frac{K_i(a)\otimes x_ix+(q_i-q_i^{-1})F_{i,2}K_i(a)\otimes x-K_i^{-1}(a)\otimes K_i^{-1}(x)x_i}{q_i-q_i^{-1}}\\
			&=(F_{i,1}(a)+F_{i,2}K_i(a))\otimes x+\frac{K_i(a)\otimes x_ix-K_i^{-1}(a)\otimes K_i^{-1}(x)x_i}{q_i-q_i^{-1}}\\
			&=(F_{i,1}(a)+F_{i,2}K_i(a))\otimes x+\frac{K_i(a)\otimes x_ix-K_i^{-1}(a)\otimes x_ix+K_i^{-1}(a)\otimes x_ix-K_i^{-1}(a)\otimes K_i^{-1}(x)x_i}{q_i-q_i^{-1}}\\
			&=(F_{i,1}(a)+F_{i,2}K_i(a))\otimes x+\frac{K_i(a)-K_i^{-1}(a)}{q_i-q_i^{-1}}\otimes x_ix+K_i^{-1}(a)\otimes F_i(x),
	\end{align*}
matching the proposed action of $F_i$.

Furthermore, it is obvious that $E_i\rhd (a\otimes x)=a\otimes E_i(x)$ yields a well-defined action of $U_q(\nn_+)$ on $A\otimes \CC_q[U]$. It is now straight-forward to check that
		\begin{align*}
			K_i^{\pm1}\triangleright (K_j^{\pm1}\triangleright (a\otimes x))&=K_j^{\pm1}\triangleright (K_i^{\pm 1}\triangleright (a\otimes x))\\
			K_i\triangleright(E_{j}\triangleright(K_i^{-1}((a\otimes x))))&=q_i^{c_{i,j}}E_{j}\triangleright (a\otimes x)\\
			K_i\triangleright(F_{j}\triangleright(K_i^{-1}((a\otimes x))))&=q_i^{-c_{i,j}}F_{j}\triangleright (a\otimes x)\\
			E_i\triangleright (F_j\triangleright (a\otimes x))-F_j\triangleright (E_i\triangleright (a\otimes x))&=\delta_{i,j}\frac{K_i\triangleright (a\otimes x)-K_i^{-1}\triangleright (a\otimes x)}{q_i-q_i^{-1}}
		\end{align*}
		\begin{align*}
			K_i^{\pm1}(1\otimes 1)&=1\otimes 1\\
			E_i(1\otimes 1)&=0\\
			F_i(1\otimes 1)&=0.
		\end{align*}
	
Hence we have given $A\otimes \CC_q[U]$ the structure of a $U_q(\gg)$-module. To see that it is in fact a module algebra, we need to check the following.
		\begin{align}
		\label{eq:Kcomp}	K_i^{\pm1}\triangleright ((a\otimes x)(a'\otimes x'))&=(K_i^{\pm1}\triangleright (a\otimes x))(K_i^{\pm 1}\triangleright (a'\otimes x'))\\
		\label{eq:Ecomp}	E_{i}\triangleright ((a\otimes x)(a'\otimes x'))&=(E_{i}\triangleright (a\otimes x))(K_i\triangleright (a'\otimes x'))+(a\otimes x)(E_{i}\triangleright (a'\otimes x'))\\
		\label{eq:Fcomp}	F_{i}\triangleright ((a\otimes x)(a'\otimes x'))&=(F_{i}\triangleright (a\otimes x))(a'\otimes x')+(K_i^{-1}\triangleright(a\otimes x))(F_{i}\triangleright (a'\otimes x')).
		\end{align}
		
	Rather than direct verification, we begin by observing that 
$$			h\triangleright ((a\otimes 1)z)=(h_{(1)}\triangleright (a\otimes 1))(h_{(2)}\triangleright z)~\text{ and}~
			h\triangleright ((a\otimes x_j)z)=(h_{(1)}\triangleright (a\otimes x_j))(h_{(2)}\triangleright z)$$
		for $h\in \{K_i^{\pm 1}, E_i, F_i\ |\ i\in I\},\ j\in I,\ a\in A$, and $z\in A\otimes \CC_q[U]$. Let \[Y=\{x\in \CC_q[U]\ |\ h\triangleright ((a\otimes x)z)=(h_{(1)}\triangleright (a\otimes x))(h_{(2)}\triangleright z) \text{ for all }a\in A,\ z\in A\otimes \CC_q[U]\text{ and }h\in \{K_i^{\pm1},E_i,F_i\ |\ i\in I\}\}.\] Then $Y$ is clearly a $\CC(q)$-vector space (containing 1 and $x_j$). We show that $Y$ is closed under multiplication. Suppose $x,x'\in Y,$ $a\in A,$ and $z\in A\otimes \CC_q[U]$. Then for $h\in \{K_i^{\pm 1},E_i,F_i\ |\ i\in I\}$,
		\begin{align*}
			h\triangleright ((a\otimes xx')z)&=h\triangleright ((a\otimes x)(1\otimes x')z)\\
				&=(h_{(1)}\triangleright (a\otimes x))(h_{(2)}\triangleright ((1\otimes x')z))\\
				&=(h_{(1)}\triangleright (a\otimes x))(h_{(2)}\triangleright (1\otimes x'))(h_{(3)}(z))\\
				&=(h_{(1)}\triangleright (a\otimes x)(1\otimes x'))(h_{(2)}(z))\\
				&=(h_{(1)}\triangleright (a\otimes xx'))(h_{(2)}(z)).
		\end{align*}
		Hence $xx'\in Y$ and we have shown that $Y$ is closed under multiplication. It follows that $Y$ is a $\CC(q)$-subalgebra of $\CC_q[U]$ containing $x_j$ and hence is actually $\CC_q[U]$ itself. Hence we have verified equations \eqref{eq:Kcomp}, \eqref{eq:Ecomp}, and \eqref{eq:Fcomp}. It follows that the given structure makes $A\otimes \CC_q[U]$ into a $U_q(\gg)$-module algebra. \qed

\subsection{Proof of Theorem \ref{thm:qequiv}}
\label{pf:qequiv}

We begin by constructing a natural isomorphism $\psi: (-)^+\otimes\CC_q[U]\Rightarrow \text{id}_{\mathcal{C}_\gg^q}$.	For every object $(A,\varphi_A)$ of $\mathcal{C}_\gg^q$, set $\psi_{(A,\varphi_A)}:=m_A\circ (\iota_A\otimes \varphi_A)$, where $\iota_A$ is the inclusion $A^+\hookrightarrow A$ and $m_A:A\otimes A\to A$ is multiplication. As an abuse of notation, we will write $\psi_A$ when context is clear. Since $\psi_A$ is clearly a linear map, we check that it respects multiplication and is $U_q(\gg)$-equivariant. One easily computes
	\[\psi_A((a\otimes 1)(a'\otimes x'))=\psi_A(a\otimes 1)\psi_A(a'\otimes x')\quad \text{and} \quad \psi_A((a\otimes x_i)(a'\otimes x'))=\psi_A(a\otimes x_i)\psi_A(a'\otimes x').\]
	Let $Y=\{x\in \CC_q[U]\ |\ \psi_A((a\otimes x)z)=\psi_A(a\otimes x)\psi_A(z)\ \forall a\in A, z\in A\otimes \CC_q[U]\}$. We have seen that $1,x_i\in Y$ for $i\in I$, so the computations
	\begin{align*}
		\psi_A((a\otimes (x+y))z)&=\psi_A((a\otimes x)z+(a\otimes y)z)\\
			&=\psi_A((a\otimes x)z)+\psi_A((a\otimes y)z)\\
			&=\psi_A(a\otimes x)\psi_A(z)+\psi_A(a\otimes y)\psi_A(z)\\
			&=(\psi_A(a\otimes x)+\psi_A(a\otimes y))\psi_A(z)\\
			&=\psi_A(a\otimes (x+y))\psi_A(z)
	\end{align*}
	\begin{align*}
		\psi_A((a\otimes xy)z)&=\psi_A((a\otimes x)(1\otimes y)z)\\
			&=\psi_A(a\otimes x)\psi_A((1\otimes y)z)\\
			&=\psi_A(a\otimes x)\psi_A(1\otimes y)\psi_A(z)\\
			&=\psi_A((a\otimes x)(1\otimes y))\psi_A(z)\\
			&=\psi_A((a\otimes xy))\psi_A(z)
	\end{align*}
show that $Y$ is a subalgebra of $\CC_q[U]$ containing a generating set. Hence $Y=\CC_q[U]$, i.e. $\psi_A$ is a homomorphism of algebras. Now we verify that $\psi_A$ is $U_q(\gg)$-invariant. For $i\in I$, we have
		\begin{align*}
			K_i^{\pm1}(\psi_A(a\otimes x))&=K_i^{\pm1}(a\varphi_A(x))\\
				&=K_i^{\pm1}(a)K_i^{\pm1}(\varphi_A(x))\\
				&=K_i^{\pm1}(a)\varphi_A(K_i^{\pm1}(x))\\
				&=\psi_A(K_i^{\pm1}(a)\otimes K_i^{\pm1}(x))\\
				&=\psi_A(K_i^{\pm1}\rhd (a\otimes x))
		\end{align*}
		\begin{align*}
			E_i(\psi_A(a\otimes x))&=E_i(a\varphi_A(x))\\
							&=E_i(a)K_i(\varphi_A(x))+aE_i(\varphi_A(x))\\
							&=a\varphi_A(E_i(x))\\
							&=\psi_A(a\otimes E_i(x))\\
							&=\psi_A(E_i\rhd(a\otimes x))
		\end{align*}
		\begin{align*}
			F_i(\psi_A(a\otimes x))&=F_i(a\varphi_A(x))\\
							&=F_i(a)\varphi_A(x)+K_i^{-1}(a)F_i(\varphi_A(x))\\
							&=\left(F_i(a)-\frac{\varphi_A(x_i)a-K_i^{-1}(a)\varphi_A(x_i)}{q_i-q_i^{-1}}\right)\varphi_A(x)+\frac{\varphi_A(x_i)a-K_i(a)\varphi_A(x_i)}{q_i-q_i^{-1}}\varphi_A(x)\\
							&\hspace{0.5cm}+\frac{K_i(a)-K_i^{-1}(a)}{q_i-q_i^{-1}}\varphi_A(x_i)\varphi_A(x)+K_i^{-1}(a)\varphi_A(F_i(x))\\
							&=\left(F_{i,1}(a)+F_{i,2}K_i(a)\right)\varphi_A(x)+\frac{K_i(a)-K_i^{-1}(a)}{q_i-q_i^{-1}}\varphi(x_ix)+K_i^{-1}(a)\varphi_A(F_i(x))\\
							&=\psi_A\left(\left(F_{i,1}(a)+F_{i,2}K_i(a)\right)\otimes x+\frac{K_i(a)-K_i^{-1}(a)}{q_i-q_i^{-1}}\otimes x_ix+K_i^{-1}(a)\otimes F_i(x)\right)\\
							&=\psi_A(F_i\rhd (a\otimes x))
		\end{align*}
	So $\psi_A$ is a homomorphism of $U_q(\gg)$-modules and thus a homomorphism of $U_q(\gg)$-module algebras.  By Theorem \ref{thm:qfactoring}, $\psi_A$ is an isomorphism of $U_q(\gg)$-module algebras. Now 
$$			\psi_A\circ (1\otimes \text{id})=m_A\circ (\iota_A\otimes \varphi_A)\circ(1\otimes \text{id})=m_A\circ(1\otimes \varphi_A)
=\varphi_A.$$
	Hence $\psi_A$ is a morphism of $\mathcal{C}_\gg^q$. To show that $\psi_A$ is an isomorphism in $\mathcal{C}_\gg^q$, we make the following easy observation.
	
\begin{lemma}
\label{lem:qA-iso}
	A morphism between objects of $\mathcal{C}_\gg^q$ is an isomorphism if and only if the underlying homomorphism of $U_q(\gg)$-module algebras is an isomorphism.
\end{lemma}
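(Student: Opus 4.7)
The plan is to observe that this lemma is essentially a formal consequence of the definition of $\mathcal{C}_\gg^q$, where morphisms are just $U_q(\gg)$-module algebra homomorphisms subject to a single compatibility condition with the distinguished embeddings of $\CC_q[U]$.

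For the ``only if'' direction, I would note that if $\psi:(A,\varphi_A)\to(B,\varphi_B)$ is an isomorphism in $\mathcal{C}_\gg^q$, then by definition there exists an inverse morphism $\psi':(B,\varphi_B)\to(A,\varphi_A)$ with $\psi'\circ\psi=\mathrm{id}_A$ and $\psi\circ\psi'=\mathrm{id}_B$ as morphisms of $\mathcal{C}_\gg^q$. Forgetting the embedding data, $\psi'$ is a $U_q(\gg)$-module algebra homomorphism that serves as a two-sided inverse to $\psi$, so $\psi$ is an isomorphism of $U_q(\gg)$-module algebras.

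For the converse, suppose $\psi:(A,\varphi_A)\to(B,\varphi_B)$ is a morphism in $\mathcal{C}_\gg^q$ whose underlying $\psi:A\to B$ is an isomorphism of $U_q(\gg)$-module algebras. It is standard that the inverse $\psi^{-1}:B\to A$ is then automatically a $U_q(\gg)$-module algebra homomorphism (the inverse of a bijective $U_q(\gg)$-equivariant algebra map is $U_q(\gg)$-equivariant and multiplicative). The only thing to check is that $\psi^{-1}$ is itself a morphism in $\mathcal{C}_\gg^q$, i.e.\ that $\psi^{-1}\circ\varphi_B=\varphi_A$. This follows at once by applying $\psi^{-1}$ on the left to the defining equation $\psi\circ\varphi_A=\varphi_B$.

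I do not expect any genuine obstacle here: the content is purely formal, and the lemma is recorded only to license the passage from the isomorphism $\psi_A$ of $U_q(\gg)$-module algebras constructed just above to an isomorphism of the corresponding objects of $\mathcal{C}_\gg^q$, which is the step needed to complete the proof of Theorem \ref{thm:qequiv}.
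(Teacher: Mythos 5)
Your proof is correct and follows exactly the paper's argument: the ``only if'' direction is immediate by forgetting the embedding data, and the converse is obtained by applying $\psi^{-1}$ to the identity $\psi\circ\varphi_A=\varphi_B$ to check that $\psi^{-1}$ is again a morphism in $\mathcal{C}_\gg^q$. No gaps.
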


\begin{proof}
	It is clear that the homomorphism of $U_q(\gg)$-module algebras which underlies an isomorphism between objects of $\mathcal{C}_\gg^q$ is actually an isomorphism, so we simply show the converse. Let $(A,\varphi_A)$ and $(B,\varphi_B)$ be objects of $\mathcal{C}_\gg^q$ and $\xi:A\to B$ a morphism between them such that $\xi$ is an isomorphism of $U_q(\gg)$-module algebras. Then $\xi\circ \varphi_A=\varphi_B$. Hence we have $\xi^{-1}\circ \varphi_B=\xi^{-1}\circ\xi\circ\varphi_A=\varphi_A$ and so $\xi^{-1}$ is a morphism $(B,\varphi_B)\to(A,\varphi_A)$. Thus $\xi$ is an isomorphism in $\mathcal{C}_\gg^q$.
\end{proof}
	
	Hence $\psi_A$ is actually an isomorphism in $\mathcal{C}_\gg^q$. If we can show that $\psi:=(\psi_A)_{(A,\varphi_A)\in \mathcal{C}_\gg^q}$ is a natural transformation between $(-)^+\otimes \CC_q[U]$ and $\text{id}_{\mathcal{C}_\gg^q}$, then we will have shown that it is a natural isomorphism.
	Let $(A,\varphi_A)$ and $(B,\varphi_B)$ be objects of $\mathcal{C}_\gg^q$ and $\xi:A\to B$ a morphism. Then 
		\begin{align*}
			\psi_B\circ(\xi|_{A^+}\otimes \text{id})&=m_B\circ (\iota_B\otimes \varphi_B)\circ(\xi|_{A^+}\otimes \text{id})\\
				&=m_B\circ(\xi|_{A^+}\otimes \varphi_B)\\
				&=m_B\circ (\xi|_{A^+}\otimes (\xi\circ\varphi_A))\\
				&=m_B\circ (\xi\otimes\xi)\circ(\iota_A\otimes \varphi_A)\\
				&=\xi\circ m_A\circ (\iota_A\otimes \varphi_A)\\
				&=\xi\circ \psi_A
		\end{align*}
	Hence $\psi:(-)^+\otimes\CC_q[U]\Rightarrow \text{id}_{\mathcal{C}_\gg^q}$ is a natural transformation and therefore a natural isomorphism.
	
	Now for every $U_q(\gg^*)$-module $A$, let $\eta_A=\text{id}\otimes1:A\to (A\otimes \CC_q[U])^+$. Then $\eta_A$ is obviously an injective homomorphism of algebras. We need to show that $\eta_A$ is a homomorphism of $U_q(\gg^*)$-module algebras, namely that $\eta_A$ respects the action of $U_q(\gg^*)$. So we make the following computations.
$$		K_i^{\pm1}(\eta_A(a))=K_i^{\pm1}(a\otimes 1)=K_i^{\pm1}(a)\otimes 1
=\eta_A(K_i^{\pm1}(a)),$$
	\begin{align*}
		F_{i,1}(\eta_A(a))&=F_{i,1}(a\otimes 1)\\
			&=F_i(a\otimes 1)-\frac{(1\otimes x_i)(a\otimes 1)-K_i^{-1}(a\otimes 1)(1\otimes x_i)}{q_i-q_i^{-1}}\\
			&=\left(F_{i,1}(a)+F_{i,2}K_i(a)\right)\otimes1+\frac{K_i(a)-K_i^{-1}(a)}{q_i-q_i^{-1}}\otimes x_i-\frac{K_i(a)\otimes x_i+F_{i,2}K_i(a)\otimes 1-K_i^{-1}(a)\otimes x_i}{q_i-q_i^{-1}}\\
			&=F_{i,1}(a)\otimes 1\\
			&=\eta_A(F_{i,1}(a))
	\end{align*}
	\begin{align*}
		F_{i,2}(\eta_A(a))&=F_{i,2}(a\otimes 1)\\
			&=\frac{(1\otimes x_i)K_i^{-1}(a\otimes 1)-(a\otimes 1)(1\otimes x_i)}{q_i-q_i^{-1}}\\
			&=\frac{(1\otimes x_i)(K_i^{-1}(a)\otimes 1)-a\otimes x_i}{q_i-q_i^{-1}}\\
			&=\frac{K_iK_i^{-1}(a)\otimes x_i+(q_i-q_i^{-1})F_{i,2}K_iK_i^{-1}(a)\otimes 1-a\otimes x_i}{q_i-q_i^{-1}}\\
			&=F_{i,2}(a)\otimes 1\\
			&=\eta_A(F_{i,2}(a)).
	\end{align*}
	
	Hence $\eta_A$ respects the action of $U_q(\gg)$. Our last step is to show that $\eta_A$ is surjective. Given an arbitrary element $\displaystyle \sum_{k=1}^n a_k\otimes x_{{\bf j}_k}\in (A\otimes \CC_q[U])^+$ with ${\bf j}_k<{\bf j}_l$ if $k<l$, we have
$$			\sum_{k=1}^n a_k\otimes x_{{\bf j}_k}=E_{\bf i}^{(top)}\left(\sum_{k=1}^n a_k\otimes x_{{\bf j}_k}\right)=a_n\otimes 1.$$
	Hence $(A\otimes \CC_q[U])^+=A\otimes \CC(q)$, so $\eta_A$ is surjective and therefore an isomorphism. One easily checks that $\eta:=(\eta_A)_{A\in U_q(\gg^*)-{\bf ModAlg}}$ is a natural transformation. Since each $\eta_A$ is an isomorphism, $\eta$ is a natural isomorphism $\eta:\text{id}_{U_q(\gg^*)-{\bf ModAlg}}\Rightarrow(-\otimes \CC_q[U])^+$.
	
	We have now shown that $(-)^+\otimes \CC_q[U]\cong \text{id}_{\mathcal{C}_\gg^q}$ and $(-\otimes \CC_q[U])^+\cong \text{id}_{U_q(\gg^*)-{\bf ModAlg}}$, so $\CC_q[U]\otimes -$ and $(-)^+$ are quasi-inverse equivalences of categories.\qed
	
\subsection{Proof of Proposition \ref{prop:qtensor}}
\label{pf:qtensor}

We know by Theorem \ref{thm:qfactoring} that $A\cong A^+\otimes \CC_q[U]$ and $B\cong B^+\otimes \CC_q[U]$ and Theorem \ref{thm:qequiv} says this is an isomorphism of $U_q(\gg)$-module algebras. We now consider the map
	\[\mu_L:(A\underline{\otimes} B)^+\otimes [\varphi_A(\CC_q[U])\otimes \CC(q)]\to A\underline{\otimes} B\cong (A^+\otimes \CC_q[U])\underline{\otimes}(B^+\otimes \CC_q[U])\]
as in Theorem \ref{thm:qfactoring}. As in the proof of Corollary \ref{cor:qhighest} (Section \ref{pf:qhighest}), $\mu_L((A\underline{\otimes} B)^+\otimes [\varphi_A(\CC_q[U])\otimes \CC(q)])$ is a subalgebra of $A\underline{\otimes} B$. Since $A^+\otimes \CC(q)$, $\CC(q)\otimes B^+$, and $\{\varphi_A(x_i)\otimes 1-1\otimes \varphi_B(x_i)\ |\ i\in I\}$ are all contained in $(A\underline{\otimes} B)^+$ and $\{\varphi_A(x_i)\otimes 1\ |\ i\in I\}\subseteq \varphi_A(\CC_q[U])\otimes \CC(q)$, it follows that $\mu_L((A\underline{\otimes} B)^+\otimes [\varphi_A(\CC_q[U])\otimes \CC(q)])$ contains all of these sets. Hence $\mu_L((A\underline{\otimes} B)^+\otimes [\varphi_A(\CC_q[U])\otimes \CC(q)])$ contains a generating set of $A\underline{\otimes} B$. Being a subalgebra, it follows that $\mu_L((A\underline{\otimes} B)^+\otimes [\varphi_A(\CC_q[U])\otimes \CC(q)])=A\underline{\otimes} B$, i.e. $\mu_L$ is surjective. Hence $\mu_L$ is an isomorphism and by Theorem \ref{thm:qfactoring}, $A\underline{\otimes} B$ is adapted and $\nu_{\bf i}(A\underline{\otimes} B\setminus\{0\})=\nu_{\bf i}(\CC_q[U]\setminus\{0\})\ \forall {\bf i}\in R(\wnot)$. Since $A\underline{\otimes}B$ is a $U_q(\gg)$-weight module algebra and $1\otimes \varphi_B$ and $\varphi_A\otimes 1$ are injections, the proposition follows.\qed

\subsection{Proof of Proposition \ref{prop:qfusion}}
\label{pf:qfusion}

The vector space $A\otimes B$ is naturally viewed as a subspace of $A*B$ (or $A\star B$ if you prefer) via $a\otimes b\mapsto (a\otimes 1)\otimes (b\otimes 1)$. In fact, this subspace is actually a subalgebra since
	\[((a\otimes 1)\otimes (b\otimes 1))((a'\otimes 1)\otimes (b'\otimes 1))=q^{(|a'|,|b|)}(aa'\otimes 1)\otimes (bb'\otimes 1)\]
for weight vectors $a,a'\in A\text{ and }b,b'\in B$ of weight $|a|,|a'|,|b|,$ and $|b'|$, respectively. Hence we may equip $A\otimes B$ with this multiplication. 

By design, the prescribed actions of $K_i$ and $F_{i,1}$ on $A\otimes B$ match those on $(A\otimes \CC(q))\otimes (B\otimes \CC(q))\subset A*B$, while the prescribed actions of $K_i^{\pm1}$ and $F_{i,2}$ match those on $(A\otimes \CC(q))\otimes (B\otimes \CC(q))\subset A\star B$. A straightforward check verifies that $F_{i,1}\rhd(F_{j,2}\rhd(a\otimes b))=F_{j,2}\rhd(F_{i,1}\rhd(a\otimes b))$ for $a\in A,\ b\in B$, and $i,j\in I$, so it follows that the prescribed action of $U_q(\gg^*)$ on $A\otimes B$ is well-defined and compatible with multiplication. \qed

\subsection{Proofs of Theorems \ref{thm:b-} and \ref{thm:g}}
\label{pf:b-,g}

We begin with a theorem analogous to Theorem \ref{thm:braidedtriv}. Actually, it follows from \cite[7.3.3]{montgomery}, but for convenience, we give a self-contained proof. 

\begin{theorem}
\label{thm:triv}
	Let $H$ be a Hopf algebra over a field $\kk$ and suppose $A$ is an $H$-module algebra containing $H$ as a subalgebra. Then giving $A$ the structure of an $H$-module algebra via the adjoint action: \[h\rhd a=h_{(1)}aS(h_{(2)})\quad \text{for } h\in H,\ a\in A,\] the linear map $\tau:=(m\otimes id)\circ (id\otimes \iota\otimes id)\circ (id\otimes \Delta):A\rtimes H\to A\otimes_\kk H$ is an algebra isomorphism with inverse $\tau^{-1}=(m\otimes id)\circ (id\otimes\iota\otimes id)\circ(id\otimes S\otimes id)\circ (id\otimes \Delta)$, where $\iota:H\hookrightarrow A$ is the inclusion.
\end{theorem}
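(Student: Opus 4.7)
The plan is to mirror the proof of Theorem \ref{thm:braidedtriv}, which is the braided analogue of the present statement. The key conceptual point is that once $H$ sits inside $A$ and we endow $A$ with the adjoint $H$-module algebra structure, the smash product $A \rtimes H$ becomes isomorphic to the ordinary tensor product algebra $A \otimes_\kk H$ (in which $(a \otimes h)(a' \otimes h') = aa' \otimes hh'$). Concretely, $\tau$ sends $a \otimes h$ to $a h_{(1)} \otimes h_{(2)}$, absorbing the ``first half'' of $\Delta(h)$ into the $A$-factor, while $\tau^{-1}$ sends $a \otimes h$ to $a S(h_{(1)}) \otimes h_{(2)}$, undoing this by means of the antipode.

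First I would verify that $\tau$ and $\tau^{-1}$ are mutual inverses on pure tensors. Applying $\tau \circ \tau^{-1}$ to $a \otimes h$ yields
\[a S(h_{(1)}) h_{(2)} \otimes h_{(3)} = a \epsilon(h_{(1)}) \otimes h_{(2)} = a \otimes h,\]
after invoking coassociativity to write $\Delta^{(2)}(h)=h_{(1)}\otimes h_{(2)}\otimes h_{(3)}$ and the antipode axiom $S(h_{(1)}) h_{(2)} = \epsilon(h)$; the computation of $\tau^{-1} \circ \tau$ is symmetric, using $h_{(1)} S(h_{(2)}) = \epsilon(h)$ instead.

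Next I would verify that $\tau$ respects multiplication. Expanding the smash product via the adjoint action gives $(a \otimes h)(a' \otimes h') = a h_{(1)} a' S(h_{(2)}) \otimes h_{(3)} h'$; applying $\tau$ then produces
\[a h_{(1)} a' S(h_{(2)}) h_{(3)} h'_{(1)} \otimes h_{(4)} h'_{(2)} \;=\; a h_{(1)} a' h'_{(1)} \otimes h_{(2)} h'_{(2)},\]
by one more use of the antipode axiom. A direct expansion of $\tau(a \otimes h) \tau(a' \otimes h')$ in the ordinary tensor product algebra yields the same expression. Since pure tensors span $A \rtimes H$ and $\tau$ is linear, multiplicativity on pure tensors is all that is required; and then $\tau^{-1}$ is automatically multiplicative as a two-sided inverse to an algebra homomorphism.

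The main obstacle, to the extent there is one, is purely notational: tracking Sweedler indices through several applications of coassociativity. There is no genuine structural difficulty here, because all the extra data that appeared in the braided setting (Yetter-Drinfeld coactions, braidings, and the twisted multiplication on $A \underline{\otimes} \mathcal{B}(V)$) collapses to the ordinary flip in the classical case, reducing the target multiplication to the tensor product of algebras. As noted in the statement, the result also appears as \cite[7.3.3]{montgomery}, so the role of this theorem is essentially to record a self-contained formula for the isomorphism that will be invoked in the classical versions of the ``factorization'' results.
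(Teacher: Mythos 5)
Your proposal is correct and follows essentially the same route as the paper: check that $\tau$ and $\tau^{-1}$ are mutual inverses on pure tensors via the antipode axiom and coassociativity, then check multiplicativity on pure tensors and conclude by linearity. The only (immaterial) difference is that the paper verifies multiplicativity by starting from $\tau(a\otimes h)\tau(a'\otimes h')$ and inserting $S(h_{(2)})h_{(3)}=\epsilon(h_{(2)})$, whereas you start from $\tau\bigl((a\otimes h)(a'\otimes h')\bigr)$ and cancel it.
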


\begin{proof}
We first verify that $\tau$ and $\tau^{-1}$ are truly mutually inverse (and hence that we are justified in using the name $\tau^{-1}$). For $a\in A$ and $h\in H$, we directly compute
$$		(\tau\circ\tau^{-1})(a\otimes h)=\tau(aS(h_{(1)})\otimes h_{(2)})
=aS(h_{(1)})h_{(2)}\otimes h_{(3)}
=a\varepsilon(h_{(1)})\otimes h_{(2)}
=a\otimes\varepsilon(h_{(1)})h_{(2)}=a\otimes h,$$
$$		(\tau^{-1}\circ\tau)(a\otimes h)=\tau^{-1}(ah_{(1)}\otimes h_{(2)})
=ah_{(1)}S(h_{(2)})\otimes h_{(3)}
=a\varepsilon(h_{(1)})\otimes h_{(2)}
=a\otimes\varepsilon(h_{(1)})h_{(2)}
=a\otimes h.$$
Since $\tau\circ \tau^{-1}$ and $\tau^{-1}\circ \tau$ act as the identity on pure tensors, they are both the identity homomorphism. Hence $\tau$ and $\tau^{-1}$ are mutually inverse. We conclude by verifying that $\tau$ is actually a homomorphism of algebras (and then $\tau^{-1}$ automatically is as well).
	\begin{align*}
		\tau(a\otimes h)\tau(a'\otimes h')&=(ah_{(1)}\otimes h_{(2)})(a'h'_{(1)}\otimes h'_{(2)})\\
								&=ah_{(1)}a'h'_{(1)}\otimes h_{(2)}h'_{(2)}\\
								&=ah_{(1)}a'h'_{(1)}\otimes \varepsilon(h_{(2)})h_{(3)}h'_{(2)}\\
								&=ah_{(1)}a'\varepsilon(h_{(2)})h'_{(1)}\otimes h_{(3)}h'_{(2)}\\
								&=ah_{(1)}a'S(h_{(2)})h_{(3)}h'_{(1)}\otimes h_{(4)}h'_{(2)}\\
								&=a(h_{(1)}\rhd a')h_{(2)}h'_{(1)}\otimes h_{(3)}h'_{(2)}\\
								&=\tau(a(h_{(1)}\rhd a')\otimes h_{(2)}h')\\
								&=\tau((a\otimes h)(a'\otimes h'))
	\end{align*}
	
	Again, since pure tensors span $A\otimes_\kk H$ and $\tau$ is a linear map, it follows that $\tau$ respects multiplication. The theorem is proved.
\end{proof}

We now apply Theorem \ref{thm:triv} to the situation when $\kk=\CC(q)$ and $H=A=U_q(\bb_-)$, yielding a trivializing isomorphism $\tau:U_q(\bb_-)\rtimes U_q(\bb_-)\to U_q(\bb_-)\otimes U_q(\bb_-)$. Now we also have an embedding of $U_q(\bb_-)$-module algebras $\CC_q[U]\hookrightarrow U_q(\bb_-)$, $x_i\mapsto (q_i-q_i^{-1})F_i$. This induces an embedding of algebras \linebreak$\iota:\CC_q[U]\rtimes U_q(\bb_-)\hookrightarrow U_q(\bb_-)\rtimes U_q(\bb_-)$. Then applying $\tau$, we have an embedding of algebras \linebreak$\tau\circ\iota:\CC_q[U]\rtimes U_q(\bb_-)\hookrightarrow U_q(\bb_-)\otimes U_q(\bb_-)$. Then we have
	\begin{align*}
		(\tau\circ \iota)\left(1\otimes F_i-x_i\otimes \frac{1-K_i^{-2}}{q_i-q_i^{-1}}\right)=&\tau(1\otimes F_i-F_i\otimes (1-K_i^{-2}))\\
		=&F_i\otimes 1+K_i^{-1}\otimes F_i-F_i\otimes 1+F_iK_i^{-2}\otimes K_i^{-2}\\
		=&K_i^{-1}\otimes F_i+F_iK_i^{-2}\otimes K_i^{-2}.
	\end{align*}
Now the families $\{K_i^{-1}\otimes F_i\}_{i\in I}$ and $\{F_iK_i^{-2}\otimes K_i^{-2}\}_{i\in I}$ clearly satisfy the quantum Serre relations  and 
	\begin{align*}
		(F_jK_j^{-2}\otimes K_j^{-2})(K_i^{-1}\otimes F_i)&=F_jK_j^{-2}K_i^{-1}\otimes K_j^{-2}F_i\\
			&=q_i^{c_{i,j}}K_i^{-1}F_jK_j^{-2}\otimes F_iK_j^{-2}\\
			&=q_i^{c_{i,j}}(K_i^{-1}\otimes F_i)(F_jK_j^{-2}\otimes K_j^{-2})
	\end{align*}
for $i,j\in I$. Then by Lemma \ref{lem:qsr}, both families $\{K_i^{-1}\otimes F_i+F_iK_i^{-2}\otimes K_i^{-2}\}_{i\in I}$ and $\left\{1\otimes F_i-x_i\otimes \frac{1-K_i^{-2}}{q_i-q_i^{-1}}\right\}_{i\in I}$ must also satisfy the quantum Serre relations. Since
	\[(1\otimes K_i)\left(1\otimes F_i-x_i\otimes \frac{1-K_i^{-2}}{q_i-q_i^{-1}}\right)(1\otimes K_i^{-1})=q_i^{-c_{i,j}}\left(1\otimes F_i-x_i\otimes \frac{1-K_i^{-2}}{q_i-q_i^{-1}}\right),\]
	 there is a well-defined homomorphism of algebras $U_q(\bb_-)\to \CC_q[U]\rtimes U_q(\bb_-)$ such that ${K_i^{\pm1}}\mapsto1\otimes K_i^{\pm1}$, ${F_i}\mapsto1\otimes F_i-x_i\otimes \frac{1-K_i^{-2}}{q_i-q_i^{-1}}$. 
		
We now wish to dequantize the above map. Set $\mathscr{A}=\{g\in \CC(q)\ |\ g\text{ is regular at }q=1\}$, a local subring of $\CC(q)$ with maximal ideal $(q-1)\mathscr{A}$. Denote by $U_q(\bb_-)_{\mathscr{A}}$ the $\mathscr{A}$-subalgebra of $U_q(\bb_-)$ generated by $\left\{F_i,\frac{K_i^{\pm1}-1}{q_i-1}\ |\ \forall i\in I\right\}$. In fact, it is a Hopf subalgebra over $\mathscr{A}$. Then $(\mathscr{A}\mathcal{B}^{dual})\rtimes U_q(\bb_-)_\mathscr{A}$ is an $\mathscr{A}$-subalgebra of $\CC_q[U]\rtimes U_q(\bb_-)$. Hence the above map restricts to a homomorphism of $\mathscr{A}$-algebras $U_q(\bb_-)_\mathscr{A}\to  (\mathscr{A}\mathcal{B}^{dual})\rtimes U_q(\bb_-)_\mathscr{A}$, inducing a homomorphism of $\CC$-algebras 
\[U(\bb_-)=U_q(\bb_-)_\mathscr{A}/(q-1)U_q(\bb_-)_\mathscr{A}\to [(\mathscr{A}\mathcal{B}^{dual})\rtimes U_q(\bb_-)_\mathscr{A}]/(q-1)[(\mathscr{A}\mathcal{B}^{dual})\rtimes U_q(\bb_-)_\mathscr{A}]=\CC[U]\rtimes U(\bb_-).\]
Hence the following proposition is proven.

\begin{proposition}
 \label{prop:hat}
 The assignments $h_i\mapsto 1\otimes h_i$ and $f_i\mapsto 1\otimes f_i-x_i\otimes h_i$ define a homomorphism of algebras $\hat{\null}:U(\bb_-)\to \CC[U]\rtimes U(\bb_-)$.
\end{proposition}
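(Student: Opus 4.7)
The plan is to obtain this classical homomorphism as the $q\to 1$ specialization of a quantum analog, essentially repackaging the construction carried out in the preceding paragraphs. I would proceed in three steps.

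First, I would verify the quantum statement: the assignment $K_i^{\pm 1}\mapsto 1\otimes K_i^{\pm 1}$, $F_i\mapsto 1\otimes F_i - x_i\otimes\frac{1-K_i^{-2}}{q_i-q_i^{-1}}$ extends to a well-defined algebra map $U_q(\bb_-)\to \CC_q[U]\rtimes U_q(\bb_-)$. The key tool is the trivializing isomorphism $\tau$ of Theorem \ref{thm:triv}, composed with the embedding $\CC_q[U]\hookrightarrow U_q(\bb_-)$, $x_i\mapsto (q_i-q_i^{-1})F_i$. Under $\tau\circ\iota$ the proposed image of $F_i$ becomes $K_i^{-1}\otimes F_i + F_iK_i^{-2}\otimes K_i^{-2}$. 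Both summands separately satisfy the quantum Serre relations while $q$-commuting through each other via the identity $(F_jK_j^{-2}\otimes K_j^{-2})(K_i^{-1}\otimes F_i) = q_i^{c_{i,j}}(K_i^{-1}\otimes F_i)(F_jK_j^{-2}\otimes K_j^{-2})$, so Lemma \ref{lem:qsr} delivers the quantum Serre relations for the sum. The Cartan weight relations are immediate.

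Second, I would descend to an integral form. Letting $\mathscr{A}\subset\CC(q)$ be the local ring of rational functions regular at $q=1$, I would introduce the $\mathscr{A}$-Hopf subalgebra $U_q(\bb_-)_\mathscr{A}$ generated by $F_i$ and the divided Cartan elements $\tfrac{K_i^{\pm 1}-1}{q_i-1}$, together with the $\mathscr{A}$-form of $\CC_q[U]$ spanned by the dual canonical basis. The only nontrivial integrality check is that $\frac{1-K_i^{-2}}{q_i-q_i^{-1}}$ lies in $U_q(\bb_-)_\mathscr{A}$. Restricting the quantum map and then tensoring with $\CC\cong\mathscr{A}/(q-1)\mathscr{A}$ produces the desired classical homomorphism $U(\bb_-)\to\CC[U]\rtimes U(\bb_-)$. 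Under this specialization $F_i\mapsto f_i$ and $K_i^{\pm 1}\mapsto 1$; an elementary L'H\^opital-style computation shows $\tfrac{1-K_i^{-2}}{q_i-q_i^{-1}}\mapsto h_i$, and $x_i\in\CC_q[U]$ specializes to $x_i\in\CC[U]$. Consequently the image of $F_i$ becomes exactly $1\otimes f_i-x_i\otimes h_i$, as claimed.

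The principal obstacle is the Serre-relation verification in step one; once that is in hand, the integral-form bookkeeping and the $q=1$ reduction are routine. An alternative, purely classical route would check the defining relations of $U(\bb_-)$ directly inside $\CC[U]\rtimes U(\bb_-)$: the Cartan commutators $[\hat h_i,\hat h_j]=0$ and $[\hat h_i,\hat f_j]=c_{i,j}\hat f_j$ follow in a few lines from the action $h_i(x_j)=-c_{i,j}x_j$ on the $\CC[U]$ factor, but the classical Serre relations $(\mathrm{ad}\,\hat f_i)^{1-c_{i,j}}(\hat f_j)=0$ would still require an induction of comparable technical depth, so the specialization strategy remains the efficient choice.
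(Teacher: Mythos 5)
Your proposal is correct and follows essentially the same route as the paper: the paper likewise establishes the quantum map $F_i\mapsto 1\otimes F_i-x_i\otimes\frac{1-K_i^{-2}}{q_i-q_i^{-1}}$ via the trivializing isomorphism of Theorem \ref{thm:triv}, the embedding $x_i\mapsto(q_i-q_i^{-1})F_i$, and Lemma \ref{lem:qsr}, then specializes over the local ring $\mathscr{A}$ at $q=1$ exactly as you describe. Even your remark about the alternative direct verification of the classical Serre relations matches the paper's closing comment on a ``less interesting'' inductive proof.
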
 

Another, less interesting proof of Proposition \ref{prop:hat} involves showing by induction that the elements \linebreak$\hat{f}_i=1\otimes f_i-x_i\otimes h_i\in \CC[U]\rtimes U(\bb_-)$ satisfy

\begin{align*}
 (\text{ad }\hat{f}_i)^{(n)}(\hat{f}_j)=\sum_{k=0}^n\left(\prod_{\ell=n-k}^{n-1}(c_{i,j}+\ell)\right)&\left[x_i^{(k)}\otimes (\text{ad }f_i)^{(n-k)}(f_j)-\frac{d_i}{d_j}\,x_i^{(k-1)}f_i^{(n-k)}(x_j)\otimes f_i\right.\\
\nonumber &\hspace{0.25cm}\left.-\frac{1}{d_j}\,x_i^{(k)}f_i^{(n-k)}(x_j)\otimes ((n-k)d_ih_i+d_jh_j)\right]
 \end{align*}
 
\noindent where we use the conventions $(\text{ad }x)(y)=xy-yx$ and $x_i^{(-1)}=0$. Setting $n=1-c_{i,j}$ and using Remark \ref{rem:nilpotentf}, we verify that 
$(\text{ad }\hat{f}_i)^{(1-c_{i,j})}(\hat{f}_j)=0$.

In any case, we are now ready to prove Theorem \ref{thm:b-}.

\noindent{\bf Proof of Theorem \ref{thm:b-}}.
Since $A$ contains $\CC[U]$ as a $\gg$-module subalgebra, there is an action of $\CC[U]\rtimes U(\bb_-)$ on $A$ given by $(x\otimes g)\blacktriangleright a=xg(a)$. Then $\hat{\null}:U(\bb_-)\to \CC[U]\rtimes U(\bb_-)$ gives rise to an action of $U(\bb_-)$ on $A$ via $g\rhd a=\hat{g}\blacktriangleright a$. Under this action, we have
	\[h_i\rhd a=(1\otimes h_i)\blacktriangleright a=h_i(a)\quad\text{and}\quad f_i\rhd a=(1\otimes f_i-x_i\otimes h_i)\blacktriangleright a=f_i(a)-x_ih_i(a)\]
as prescribed. That $A^+$ is invariant under this action is an easy computation that we will not produce here. \qed

Theorem \ref{thm:b-} of course gives rise to a ``new" $\bb_-$-module algebra structure on $\CC[U]$, satisfying
$$		h_i(x_j)=-c_{i,j}x_j,~h_i(\{x,y\})=\{h_i(x),y\}+\{x,h_i(y)\},~f_i(x)=\frac{1}{2d_i}\{x_i,x\}-\frac{1}{2}x_i h_i(x)$$
for $i\in I$ and $x,y\in \CC[U]$. To distinguish $\CC[U]$ equipped with this action from that with the usual action, we will use $\CC[U]^{op}$ to denote the algebra $\CC[U]$ equipped with the ``new" action. As with $\CC[U]$, we can form the cross product $\CC[U]^{op}\rtimes U(\bb_-)$.

We now consider the well-known homomorphism of algebras $s:\CC[U]\to \CC[U]$ defined by the assignments $s(x_i)=-x_i$ for $i\in I$ and $s(\{x,y\})=-\{s(x),s(y)\}$ for $x,y\in \CC[U]$. We will examine the linear map $s\otimes id: \CC[U]\rtimes U(\bb_-)\to \CC[U]^{op}\rtimes U(\bb_-)$, but first we need the following lemma.

\begin{lemma}
\label{lem:maps}
Given a field $\kk$, let $H$ be a $\kk$-bialgebra generated as a $\kk$-algebra by the subset $X$, $A$ an $H$-module algebra, and $B$ any $\kk$-algebra with multiplication $\mu_B:B\otimes_\kk B\to B$. Given homomorphisms of $\kk$-algebras $\varphi_1:A\to B$ and $\varphi_2:H\to B$, the $\kk$-linear map $\mu_B\circ (\varphi_1\otimes \varphi_2):A\rtimes H\to B$ is a homomorphism of algebras if and only if 
	\begin{equation}
	\label{eq:commute}
		\varphi_2(h)\varphi_1(a)=\varphi_1(h_{(1)}(a))\varphi_2(h_{(2)})
	\end{equation}
for all $h\in X$ and $a\in A$, where we use sumless Sweedler notation: $\Delta(h)=h_{(1)}\otimes h_{(2)}$.
\end{lemma}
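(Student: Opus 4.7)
The plan is to set $\varphi := \mu_B \circ (\varphi_1 \otimes \varphi_2) : A \rtimes H \to B$ and recall that multiplication in the smash product is given by $(a \otimes h)(a' \otimes h') = a h_{(1)}(a') \otimes h_{(2)} h'$. A direct unwinding then shows that the statement ``$\varphi$ is an algebra homomorphism'' is equivalent to the identity $\varphi_1(a)\varphi_2(h)\varphi_1(a')\varphi_2(h') = \varphi_1(a)\varphi_1(h_{(1)}(a'))\varphi_2(h_{(2)})\varphi_2(h')$ for all $a,a' \in A$ and $h,h' \in H$, which in turn reduces (after taking $a=1$, $h'=1$) to the condition \eqref{eq:commute} for all $h \in H$, not just $h \in X$.

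For the forward direction, I would simply apply the assumption that $\varphi$ is a homomorphism to the product $(1 \otimes h)(a \otimes 1) = h_{(1)}(a) \otimes h_{(2)}$, which immediately yields \eqref{eq:commute} for every $h \in H$, in particular for $h \in X$.

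For the reverse direction, the key step is a bootstrapping argument: assuming \eqref{eq:commute} holds on the generating set $X$, define
\[ S := \{h \in H : \varphi_2(h)\varphi_1(a) = \varphi_1(h_{(1)}(a))\varphi_2(h_{(2)}) \text{ for all } a \in A\}. \]
I would verify that $S$ is a subalgebra of $H$ by checking three things: (i) $S$ is a $\kk$-linear subspace (immediate from linearity of $\Delta$, $\varphi_1$, $\varphi_2$), (ii) $1 \in S$ (using $\Delta(1) = 1 \otimes 1$ and $\varphi_2(1)=1_B$), and (iii) $S$ is closed under multiplication. The last is the main calculation: for $h, h' \in S$ and $a \in A$, one chains the defining identities,
\begin{align*}
\varphi_2(hh')\varphi_1(a) &= \varphi_2(h)\bigl(\varphi_2(h')\varphi_1(a)\bigr) = \varphi_2(h)\varphi_1(h'_{(1)}(a))\varphi_2(h'_{(2)}) \\
&= \varphi_1(h_{(1)}(h'_{(1)}(a)))\varphi_2(h_{(2)})\varphi_2(h'_{(2)}) = \varphi_1((hh')_{(1)}(a))\varphi_2((hh')_{(2)}),
\end{align*}
using multiplicativity of $\Delta$ and the fact that $A$ is an $H$-module. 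Since $S \supseteq X$ and $S$ is a subalgebra, $S = H$, so \eqref{eq:commute} holds on all of $H$.

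Finally, with \eqref{eq:commute} promoted to all of $H$, the computation sketched in the first paragraph shows $\varphi((a \otimes h)(a' \otimes h')) = \varphi(a \otimes h)\varphi(a' \otimes h')$, completing the proof. The main obstacle is step (iii) above — closure of $S$ under multiplication — which is really just bookkeeping with Sweedler notation but is the only place where the coalgebra axioms are genuinely used; once this is in place, the rest of the argument is formal.
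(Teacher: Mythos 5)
Your proposal is correct and follows essentially the same route as the paper's proof: the forward direction evaluates the homomorphism on $(1\otimes h)(a\otimes 1)$, and the reverse direction bootstraps \eqref{eq:commute} from the generating set $X$ to all of $H$ by showing the set of elements satisfying it is a subalgebra, then verifies multiplicativity on pure tensors. No gaps.
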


\begin{proof}
To simplify notation, we write $\phi:=\mu_B\circ(\varphi_1\otimes \varphi_2)$.

	$(\Rightarrow)$ If $\phi$ is a homomorphism of algebras, then for $h\in X$ and $a\in A$, we have
$$			\varphi_2(h)\varphi_1(a)=\phi(1\otimes h)\phi(a\otimes 1)
=\phi((1\otimes h)(a\otimes 1))
=\phi(h_{(1)}(a)\otimes h_{(2)})
=\varphi_1(h_{(1)}(a))\varphi_2(h_{(2)}).$$

	$(\Leftarrow)$ Let $Y$ be the subset of $H$ consisting of elements $h$ such that \eqref{eq:commute} holds for all $a\in A$. By assumption, $X\subset Y$, so showing that $Y$ is a subalgebra of $H$ is equivalent to showing that $Y=H$. We compute for $h,h'\in Y$ and $a\in A$:
		\begin{align*}
			\varphi_2(h+h')\varphi_1(a)&=(\varphi_2(h)+\varphi_2(h'))\varphi_1(a)\\
			&=\varphi_2(h)\varphi_1(a)+\varphi_2(h')\varphi_1(a)\\
			&=\varphi_1(h_{(1)}(a))\varphi_2(h_{(2)})+\varphi_1(h'_{(1)}(a))\varphi_2(h'_{(2)})\\
			&=\varphi_1((h+h')_{(1)}(a))\varphi_2((h+h')_{(2)})
		\end{align*}
		\begin{align*}
			\varphi_2(hh')\varphi_1(a)&=\varphi_2(h)\varphi_2(h')\varphi_1(a)\\
			&=\varphi_2(h)\varphi_1(h'_{(1)}(a))\varphi_2(h'_{(2)})\\
			&=\varphi_1(h_{(1)}h'_{(1)}(a))\varphi_2(h_{(2)})\varphi_2(h'_{(2)})\\
			&=\varphi_1(h_{(1)}h'_{(1)}(a))\varphi_2(h_{(2)}h'_{(2)})\\
			&=\varphi_1((hh')_{(1)}(a))\varphi_2((hh')_{(2)}).
		\end{align*}
	So we see that $Y$ is closed under addition and multiplication. Since it obviously contains $1$, $Y$ is a subalgebra of $H$ and hence $Y=H$. Thus \eqref{eq:commute} holds for all $h\in H$ and $a\in A$. Now we compute for $h,h'\in H$ and $a,a'\in A$:
	\begin{align*}
		\phi(a\otimes h)\phi(a'\otimes h')&=\varphi_1(a)\varphi_2(h)\varphi_1(a')\varphi_2(h')\\
		&=\varphi_1(a)\varphi_1(h_{(1)}(a'))\varphi_2(h_{(2)})\varphi_2(h')\\
		&=\varphi_1(ah_{(1)}(a'))\varphi_2(h_{(2)}h')\\
		&=\phi(ah_{(1)}(a')\otimes h_{(2)}h')\\
		&=\phi((a\otimes h)(a'\otimes h')).
	\end{align*}
	Since we already knew that $\phi$ was a $\kk$-linear map, it follows that $\phi$ is a homomorphism of $\kk$-algebras.
\end{proof}

\begin{proposition}
	The linear map $s\otimes id:\CC[U]\rtimes U(\bb_-)\to \CC[U]^{op}\rtimes U(\bb_-)$ is a homomorphism of algebras.
\end{proposition}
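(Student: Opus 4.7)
The plan is to invoke Lemma \ref{lem:maps} with $H = U(\bb_-)$, generating set $X = \{h_i, f_i \mid i \in I\}$, source module-algebra $A = \CC[U]$ (with its standard $\bb_-$-action), target algebra $B = \CC[U]^{op}\rtimes U(\bb_-)$, map $\varphi_1 = s: \CC[U] \to \CC[U]^{op} \hookrightarrow B$, and map $\varphi_2$ the natural embedding $U(\bb_-) \hookrightarrow B$. Since the underlying commutative algebra of $\CC[U]^{op}$ is $\CC[U]$ itself and $s$ is a well-defined algebra automorphism sending $x_i \mapsto -x_i$, $\varphi_1$ is a homomorphism of algebras; $\varphi_2$ is one tautologically. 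Under these identifications $s\otimes \mathrm{id}$ coincides with $\mu_B\circ(\varphi_1\otimes\varphi_2)$, so Lemma \ref{lem:maps} reduces the claim to verifying
\[
\varphi_2(h)\varphi_1(a) \;=\; \varphi_1(h_{(1)}(a))\,\varphi_2(h_{(2)})
\]
for $h\in\{h_i,f_i\}$ and all $a\in\CC[U]$.

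For $h=h_i$, using $\Delta(h_i)=h_i\otimes 1+1\otimes h_i$, expanding both sides in $B$ via the semidirect product rule reduces the identity to $h_i\rhd s(a) = s(h_i(a))$, where $\rhd$ is the modified $\bb_-$-action on $\CC[U]^{op}$ from Theorem \ref{thm:b-}. Since the Cartan part of $\rhd$ is unchanged from the original action on $\CC[U]$, and since $s$ preserves the weight grading (it multiplies each weight-$\mu$ component by $(-1)^{\text{ht}(\mu)}$), we have $h_i\rhd s(a)=h_i(s(a))=s(h_i(a))$, as required.

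For $h=f_i$, using $\Delta(f_i)=f_i\otimes 1+1\otimes f_i$ and $f_i\rhd b=f_i(b)-h_i(b)x_i$ on $\CC[U]^{op}$, the identity reduces to
\[
f_i(s(a))-h_i(s(a))\,x_i \;=\; s(f_i(a))
\]
for all $a\in\CC[U]$. This is the only substantive step. Starting from $f_i(x)=\tfrac{1}{2d_i}\{x_i,x\}+\tfrac12 x_i h_i(x)$ and applying $s(x_i)=-x_i$ and $s(\{x,y\})=-\{s(x),s(y)\}$, one directly computes
\[
s(f_i(a))=\tfrac{1}{2d_i}\{x_i,s(a)\}-\tfrac12 x_i h_i(s(a)) \;=\; f_i(s(a))-x_i h_i(s(a)),
\]
and commutativity of $\CC[U]$ lets one rewrite $x_i h_i(s(a))=h_i(s(a))x_i$, giving precisely the required identity. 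The main obstacle is just tracking the two sign reversals in $s$ (one from $s(x_i)=-x_i$ and one from $s(\{-,-\})=-\{s(-),s(-)\}$) so that the $\tfrac12$-contributions combine to produce exactly the correction term $x_ih_i(s(a))$ needed to match the modified $\bb_-$-action; once this is verified, Lemma \ref{lem:maps} finishes the proof.
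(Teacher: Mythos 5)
Your proposal is correct and follows essentially the same route as the paper: both invoke Lemma \ref{lem:maps} to reduce the claim to the intertwining identities $h_i\rhd s(a)=s(h_i(a))$ and $f_i\rhd s(a)=s(f_i(a))$ (with $\rhd$ the modified $\bb_-$-action on $\CC[U]^{op}$), and both verify the $f_i$ case by the same sign-tracking computation with $s(x_i)=-x_i$ and $s(\{x,y\})=-\{s(x),s(y)\}$. The only cosmetic difference is that you expand $s(f_i(a))$ while the paper expands $f_i(s(a))$; the content is identical.
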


\begin{proof}
By Lemma \ref{lem:maps}, it suffices to show that $(1\otimes h_i)(s(x)\otimes 1)=s(h_i(x))\otimes 1+s(x)\otimes h_i$ and $(1\otimes f_i)(s(x)\otimes 1)=s(f_i(x))\otimes 1+s(x)\otimes f_i$ for $i\in I$ and $x\in \CC[U]$. It is clear that $s$ respects the action of $h_i$, namely $h_i(s(x))=s(h_i(x))$ for $i\in I$ and $x\in \CC[U]$, so 
$$			(1\otimes h_i)(s(x)\otimes 1)=h_i(s(x))\otimes 1+s(x)\otimes h_i=s(h_i(x))\otimes 1+s(x)\otimes h_i,$$
		\begin{align*}
			(1\otimes f_i)(s(x)\otimes 1)&=f_i(s(x))\otimes 1+s(x)\otimes f_i\\
				&=\left(\frac{1}{2d_i}\{x_i,s(x)\}-\frac{1}{2}x_ih_i(s(x))\right)\otimes 1+s(x)\otimes f_i\\
				&=\left(-\frac{1}{2d_i}\{s(x_i),s(x)\}+\frac{1}{2}s(x_i)s(h_i(x))\right)\otimes 1+s(x)\otimes f_i\\
				&=\left(\frac{1}{2d_i}s(\{x_i,x\})+\frac{1}{2}s(x_ih_i(x))\right)\otimes 1 + s(x)\otimes f_i\\
				&=s\left(\frac{1}{2d_i}\{x_i,x\}+\frac{1}{2}x_ih_i(x)\right)\otimes 1+s(x)\otimes f_i\\
				&=s(f_i(x))\otimes 1+s(x)\otimes f_i.
		\end{align*}
	Hence $s\otimes \text{id}$ is a homomorphism of algebras.
\end{proof}

We are now ready to prove Theorem \ref{thm:g}.

\noindent{\bf Proof of Theorem \ref{thm:g}}.
We observe that $A\otimes \CC[U]^{op}$ is naturally a $\bb_-$-module algebra and therefore is also a $\CC[U]^{op}\rtimes U(\bb_-)$-module. 

\begin{lemma}
\label{lem:b-embed}
There is a homomorphism of algebras $U(\bb_-)\to \CC[U]^{op}\rtimes U(\bb_-)$ given on generators by \[h_i\mapsto 1\otimes h_i\quad \text{and}\quad f_i\mapsto 1\otimes f_i+x_i\otimes h_i.\]
\end{lemma}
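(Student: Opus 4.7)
The plan is to obtain the desired homomorphism as a composition of two algebra homomorphisms already constructed in the paper, with no further calculation required.

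First, I would invoke Proposition \ref{prop:hat}, which provides the algebra homomorphism $\hat{}:U(\bb_-)\to \CC[U]\rtimes U(\bb_-)$ defined on generators by $h_i\mapsto 1\otimes h_i$ and $f_i\mapsto 1\otimes f_i - x_i\otimes h_i$. Next, I would invoke the preceding proposition, which gives the algebra homomorphism $s\otimes \mathrm{id}:\CC[U]\rtimes U(\bb_-)\to \CC[U]^{op}\rtimes U(\bb_-)$.

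The claimed map is then simply the composition $(s\otimes \mathrm{id})\circ \hat{}$. To finish, I would verify the images on generators. On $h_i$, we have $h_i\mapsto 1\otimes h_i \mapsto s(1)\otimes h_i = 1\otimes h_i$. On $f_i$, using $s(x_i)=-x_i$ and $\CC$-linearity of $s\otimes \mathrm{id}$, we get
\[
f_i\ \mapsto\ 1\otimes f_i - x_i\otimes h_i\ \mapsto\ s(1)\otimes f_i - s(x_i)\otimes h_i = 1\otimes f_i + x_i\otimes h_i,
\]
matching the desired assignments.

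Since both $\hat{}$ and $s\otimes \mathrm{id}$ are algebra homomorphisms, so is their composition, and this completes the proof. There is no real obstacle here: the heavy lifting (verifying the Serre-type relations in the first factor and checking compatibility with the $\bb_-$-action in the second) was carried out in Proposition \ref{prop:hat} and in the proof that $s\otimes \mathrm{id}$ is an algebra homomorphism. The only potentially subtle point is keeping track of the sign flip coming from $s(x_i)=-x_i$, which precisely converts the $-x_i\otimes h_i$ in Proposition \ref{prop:hat} into the $+x_i\otimes h_i$ required here; this reflects the fact that passing from $\CC[U]$ to $\CC[U]^{op}$ is intertwined by $s$ with the original $\bb_-$-action.
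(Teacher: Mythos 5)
Your proposal is correct and is exactly the paper's own argument: the paper also obtains the map as the composition $(s\otimes \mathrm{id})\circ\hat{\null}$, using Proposition \ref{prop:hat} together with the proposition that $s\otimes \mathrm{id}$ is an algebra homomorphism, and checks the images on generators with the same sign flip $s(x_i)=-x_i$. Nothing is missing.
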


\begin{proof} We already saw that there is a homomorphism of algebras $\hat{\null}:U(\bb_-)\to \CC[U]\rtimes U(\bb_-)$ given on generators by $\hat{h}_i= 1\otimes h_i$ and $\hat{f}_i=1\otimes f_i-x_i\otimes h_i$.
Then the composition $(s\otimes \text{id})\circ \hat{\null}:U(\bb_-)\to \CC[U]^{op}\rtimes U(\bb_-)$ has
$		(s\otimes \text{id})(\hat{h}_i)=(s\otimes \text{id})(1\otimes h_i)=1\otimes h_i,~(s\otimes \text{id})(\hat{f}_i)=(s\otimes \text{id})(1\otimes f_i-x_i\otimes h_i)=1\otimes f_i+x_i\otimes h_i$
	as desired.
\end{proof}

The homomorphism in Lemma \ref{lem:b-embed} induces a different action of $\bb_-$ on $A\otimes \CC[U]^{op}$, namely 
\begin{align*}
	h_i\rhd(a\otimes x)&=h_i(a)\otimes x+a\otimes h_i(x)\\
	f_i\rhd(a\otimes x)&=f_i(a)\otimes x+a\otimes f_i(x)+h_i(a)\otimes x_ix+a\otimes x_ih_i(x)\\
	&=f_i(a)\otimes x+h_i(a)\otimes x_ix+a\otimes \left(\frac{1}{2d_i}\{x_i,x\}-\frac{1}{2}x_ih_i(x)+x_ih_i(x)\right)\\
	&=f_i(a)\otimes x+h_i(a)\otimes x_ix+a\otimes \left(\frac{1}{2d_i}\{x_i,x\}+\frac{1}{2}x_ih_i(x)\right)
\end{align*}

Hence there is a well-defined action of $\bb_-$ on $A\otimes \CC[U]$ (note the lack of $op$) given by
\[h_i\rhd(a\otimes x)=h_i(a)\otimes x+a\otimes h_i(x),\quad f_i\rhd(a\otimes x)=f_i(a)\otimes x+h_i(a)\otimes x_ix+a\otimes f_i(x)\]
as prescribed in the theorem. It is clear by definition that the family of operators $\{1\otimes e_i\}_{i\in I}$ which define the action of $e_i$ satisfy the Serre relations. It is also easily checked that 
	\begin{align*}
		h_i\rhd (e_j\rhd (a\otimes x))-e_j\rhd (h_i\rhd (a\otimes x))&=c_{i,j}e_j\rhd(a\otimes x)\\
		h_i\rhd (f_j\rhd (a\otimes x))-f_j\rhd (h_i\rhd (a\otimes x))&=-c_{i,j}f_j\rhd(a\otimes x)\\
		e_i\rhd (f_j\rhd (a\otimes x))-f_j\rhd(e_i\rhd (a\otimes x))&=\delta_{i,j}h_i\rhd(a\otimes x).
	\end{align*}
	
Hence we have a well-defined action of $\gg$ on $A\otimes \CC[U]$. It is clear that each $e_i$ and $f_i$ acts by derivations and $e_i(1\otimes 1)=f_i(1\otimes 1)=0$, so the theorem is proved.\qed

\subsection{Proof of Proposition \ref{prop:tensor}}
\label{pf:tensor}

We simply show that if $(A,\varphi_A)$ and $(B,\varphi_B)$ are objects of $\mathcal{C}_\gg$, then $A\otimes B$ is adapted with $\nu_{\bf i}(A\otimes B\setminus\{0\})=\nu_{\bf i}(\CC[U]\setminus\{0\})$ for all ${\bf i}\in R(\wnot)$. By Theorem \ref{thm:factoring}, it suffices to show that the map $\mu:(A\otimes B)^+\otimes (\varphi_A(\CC[U])\otimes \CC(q))\to A\otimes B$ is surjective. Again by Theorem \ref{thm:factoring}, we know that $A\cong A^+\otimes \CC[U]$ and $B\cong B^+\otimes \CC[U]$. Theorem \ref{thm:equiv} says that these are isomorphisms of $\gg$-module algebras. We observe that the following elements are contained in the image of $\mu$: $a\otimes 1$ for $a\in A^+$, $1\otimes b$ for $b\in B^+$, $\varphi_A(x_i)\otimes 1$ for $i\in I$, and $\varphi_A(x_i)\otimes 1-1\otimes \varphi_B(x_i)$ for $i\in I$. In fact, the image of $\mu$ is a $\gg$-module subalgebra of $A\otimes B$.
\begin{lemma}
	Let $C$ be a $\gg$-module algebra containing $\CC[U]$ as a $\gg$-module subalgebra and let \linebreak$\mu:C^+\otimes \CC[U]\to C$ be restriction of multiplication to these subalgebras. Then the image of $\mu$ is a $\gg$-module subalgebra of $C$. 
\end{lemma}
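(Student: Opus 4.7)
The plan is to verify the two defining properties in turn: first that $\mu(C^+\otimes \CC[U])$ is a subalgebra of $C$, and then that it is stable under the action of $\gg$. The first is essentially free. Since we are in the classical (commutative) setting, for $a,a'\in C^+$ and $x,x'\in \CC[U]$ we have $(ax)(a'x')=(aa')(xx')$, with $aa'\in C^+$ (because $C^+$ is a subalgebra) and $xx'\in \CC[U]$. Thus the image is multiplicatively closed; it also contains $1=\mu(1\otimes 1)$.

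For $\gg$-stability, I would separate the three types of generators. For $e_i\in \nn_+$ and $a\in C^+$, $x\in \CC[U]$, the Leibniz rule gives $e_i(ax)=e_i(a)x+a\, e_i(x)=a\, e_i(x)$, which lies in $\mu(C^+\otimes \CC[U])$ because $\CC[U]$ is a $\gg$-module subalgebra. For $h_i\in \hh$, similarly $h_i(ax)=h_i(a)x+a\, h_i(x)$, and both $h_i(a)\in C^+$ (the Cartan preserves the space of highest-weight vectors) and $h_i(x)\in \CC[U]$.

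The only genuinely nontrivial case is $f_i\in \nn_-$, because $f_i(a)$ need not lie in $C^+$ when $a\in C^+$. Here I would invoke Theorem \ref{thm:b-}: the modified operator $f_i\rhd a=f_i(a)-h_i(a)x_i$ does preserve $C^+$. Rewriting $f_i(a)=(f_i\rhd a)+h_i(a)x_i$, the Leibniz rule yields
\[
f_i(ax)=(f_i\rhd a)\,x+h_i(a)\,x_ix+a\, f_i(x)=\mu\bigl((f_i\rhd a)\otimes x\bigr)+\mu\bigl(h_i(a)\otimes x_ix\bigr)+\mu\bigl(a\otimes f_i(x)\bigr),
\]
and each summand lies in $\mu(C^+\otimes \CC[U])$ since $f_i\rhd a,\,h_i(a)\in C^+$ and $x_ix,\,f_i(x)\in \CC[U]$.

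Finally, since the image is closed under multiplication and under a generating set $\{e_i,h_i,f_i\}$ of $\gg$ acting by derivations, an easy induction on word length in the generators (using the Leibniz rule to propagate stability along products) shows that the image is closed under all of $\gg$. The main (and really only) obstacle is the $f_i$ case, which is handled exactly by the existence of the $\bb_-$-action of Theorem \ref{thm:b-}; every other part of the argument is a direct application of the Leibniz rule together with the fact that $C^+$ and $\CC[U]$ are already subalgebras of $C$.
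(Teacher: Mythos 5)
Your proof is correct and follows essentially the same route as the paper's: closure under multiplication via commutativity, the Leibniz rule for $e_i$ and $h_i$, and for $f_i$ the same decomposition $f_i(a)=(f_i(a)-h_i(a)x_i)+h_i(a)x_i$ with the first term lying in $C^+$ by Theorem \ref{thm:b-}. The concluding remark about propagating stability from the generators $\{e_i,h_i,f_i\}$ to all of $\gg$ is a standard point the paper leaves implicit.
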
 
\begin{proof}
	Since $\mu$ is $\CC$-linear, it suffices to check that the subspace spanned by the images of pure tensors $c\otimes x$ is closed under multiplication and the $\gg$-action. Now since $\mu(c\otimes x)=cx$, we check only on elements of this form:
$$		(cx)(c'x')=(cc')(xx')=\mu(cc'\otimes xx')\in \mu(C^+\otimes \CC[U]), ~
		e_i(cx)=ce_i(x)=\mu(c\otimes e_i(x))\in \mu(C^+\otimes \CC[U])$$
$$		f_i(cx)=(f_i(c)-h_i(c)x_i)x+h_i(c)x_ix+cf_i(x)=
\mu((f_i(c)-h_i(c)x_i)\otimes x+h_i(c)\otimes x_ix+c\otimes f_i(x))\in \mu(C^+\otimes \CC[U]).$$
	The lemma is proved.
\end{proof}

Since the image of $\mu$ contains a generating set for $A\otimes B$ as a $\gg$-module algebra (see Remark \ref{rem:nilpotentf}), we conclude that $\mu$ is surjective. Hence $\mu$ is an isomorphism and so, by Theorem \ref{thm:factoring}, $A\otimes B$ is adapted with $\nu_{\bf i}(A\otimes B\setminus\{0\})=\nu_{\bf i}(\CC[U]\setminus\{0\})$ for all ${\bf i}\in R(\wnot)$.\qed

\end{document}